\documentclass[reqno,11pt]{amsart}
\usepackage{a4wide,color,eucal,enumerate,mathrsfs}
\usepackage[normalem]{ulem}
\usepackage{amsmath,amssymb,epsfig,amsthm} 
\usepackage[latin1]{inputenc}
\usepackage{psfrag}
\usepackage{hyperref,cleveref,tikz}

\def\dist{{\mathop\mathrm{\,dist\,}}}
\def\loc{{\mathop\mathrm{\,loc\,}}}

\def\dz{\delta}

\def\ez{\epsilon}

\def\kz{\kappa}
\def\bz{\beta}

\def\sz{\sigma}

\def\bint{{\ifinner\rlap{\bf\kern.35em--}
\int\else\rlap{\bf\kern.45em--}\int\fi}\ignorespaces}

\def\bbint{{\ifinner\rlap{\bf\kern.35em--}
\hspace{0.078cm}\int\else\rlap{\bf\kern.45em--}\int\fi}\ignorespaces}

\newcommand{\R}{\mathbb R}

\newcommand{\HH}{\mathscr H}

\newtheorem{thm}{Theorem}[section]
\newtheorem{lem}[thm]{Lemma}
\newtheorem{prop}[thm]{Proposition}
\newtheorem{cor}[thm]{Corollary}
\newtheorem{defn}[thm]{Definition}
\numberwithin{equation}{section}

\theoremstyle{remark}
\newtheorem{rem}[thm]{Remark}

\def\bint{{\ifinner\rlap{\bf\kern.35em--}
\int\else\rlap{\bf\kern.45em--}\int\fi}\ignorespaces}

\newcommand{\mean}[1]{\,-\hskip-1.08em\int_{#1}} 

\usepackage{graphicx}
\usepackage{import}
\usepackage{xifthen}
\usepackage{pdfpages}
\usepackage{transparent}

\title[Sharp stability of Alexandrov's theorem for $C^1$ domains]{Sharp stability of Alexandrov's theorem\\ for $C^1$ domains in the small-excess regime}
\author{Alessio Figalli, Yi Ru-Ya Zhang}
\date{\today}

\address{ETH Z\"urich, Department of Mathematics, R\"amistrasse 101, 8092, Z\"urich, Switzerland}
\email{alessio.figalli@math.ethz.ch}  

\address{Academy of Mathematics and Systems Science, the Chinese Academy of Sciences, Beijing 100190, China}
\email{yzhang@amss.ac.cn}

 \thanks{The second author is funded by the National Key R\&D Program of China (Grant No. 2025YFA1018400 \&  No. 2021YFA1003100), NSFC Grant No. 12288201 \& No. 12571128, the Chinese Academy of Sciences, and CAS Project for Young Scientists in Basic Research, Grant No. YSBR-031.}
\date{\today}
\subjclass[2020]{49Q20; Secondary 53A10, 49Q10}
\keywords{Alexandrov's theorem, geometric stability, almost constant mean curvature}

\begin{document}

\begin{abstract}
We prove a sharp quantitative stability result for Alexandrov's theorem in arbitrary dimension for bounded $C^1$ open sets in a small-excess regime. More precisely, if $E\subset \mathbb R^n$ is a bounded $C^1$ open set with the same volume as the unit ball $B$, small excess, and scalar distributional mean curvature $\mathcal H_{\partial E}\in L^2(\partial E)$, then, up to a translation,
$$
\operatorname{Exc}(E)+|E\Delta B|^2+|\mu-(n-1)|^2
\le C(n)\|\mathcal H_{\partial E}-\mu\|_{L^2(\partial E)}^2
\qquad \forall\,\mu\in \mathbb R.
$$
In other words, both the excess and the symmetric difference from the ball are controlled by the optimal $L^2$-oscillation of the mean curvature. This yields a sharp stability estimate in a genuinely non-parametric regime.

The proof combines a $BV$ version of Fuglede's spectral-gap argument, a star-shaped rearrangement for sets of finite perimeter, quantitative estimates for the part of the boundary contained in the tentacles, and a polyhedral approximation argument for the non-graphical region. 
We note that the $C^1$ regularity assumption enters only as a qualitative technical ingredient of the proof, but all constants in the final estimate depend only on the dimension.
\end{abstract}

\maketitle

\section{Introduction}

Alexandrov's Soap Bubble Theorem has been the subject of extensive investigation. In its most basic form, the theorem states that the only compact, connected, embedded hypersurface in  $\mathbb R^n$ with constant mean curvature is the  $(n-1)$-dimensional sphere $\partial B=\mathbb S^{n-1}$ up to translation and scaling.

A natural and fundamental question arises regarding the stability of this statement: if a compact, connected, embedded hypersurface in $\mathbb R^n$ has nearly constant mean curvature, is it close to a sphere? The answer is known to be negative if the control over the mean curvature is insufficiently strong, as bubbling can occur, resulting in the hypersurface resembling a union of disjoint spheres. For a detailed discussion on this phenomenon, refer to \cite{K1990, K1991, B2011, BM2012, CM2017-2}, and for applications of these types of results, see \cite{S2000, H2014}.

Following a series of foundational results that elucidated the stability of this bubbling behavior \cite{CM2017-2, KM2017, DMMN2018}, several works have focused on understanding the stability of Alexandrov's theorem in the absence of bubbling. The first significant result in this direction was achieved in \cite[Theorems 1.8 and 1.10]{KM2017}, where sets that are Lipschitz-close to a sphere are considered and precise stability estimates are established in terms of the  $L^2$-oscillations of the mean curvature around the ``natural'' value $n-1$ (corresponding to the mean curvature of the unit ball $B$). This result was improved in \cite{MPS2022} (see also \cite{FZ2023}), where the $L^2$ oscillations of the mean curvature are taken around its average.

To state these results more precisely, we need to recall the following definition of nearly spherical sets. In the whole paper, $B$ shall denote the unit ball in $\mathbb R^{n}$ (so, $\partial B=\mathbb S^{n-1}$).

\begin{defn}
\label{def:nearly spherical}
For every function $v\colon \mathbb S^{n-1}\to \mathbb R$ with $\|v\|_{L^\infty(\mathbb S^{n-1})}\le \frac 1 2 $, we write
$$
B+v:=\big\{y\in \mathbb R^n\colon y=t\omega(1+v(\omega))\quad \text{for some }\omega\in \mathbb S^{n-1}, \, 0\le t<1\big\}.
$$
Then we say that $B+v$ is a $\sigma$-nearly spherical set if $\|v\|_{L^{\infty}(\mathbb S^{n-1})}+\|\nabla_\tau v\|_{L^{\infty}(\mathbb S^{n-1})}\leq \sigma,$
where $\nabla_\tau$ denotes the tangential gradient.
\end{defn}

The following result was proved in \cite[Theorem 1.10]{KM2017}. 
\begin{thm}\label{thm:Fuglede}
There exists $\sz=\sz(n)>0$ such that the following holds:
let $H=B+w$ be a $\sigma$-nearly spherical set such that
$$|H|=|B|\qquad \text{ and }\qquad 0=\int_{H} x\, dx,$$
and let $\mathcal H_{\partial H}$ denote  the (distributional) mean curvature of $\partial H$.
Assume that $\mathcal H_{\partial H} \in L^2(\partial H)$. Then (via the radial parametrization)\footnote{Here we wrote the statement from \cite[Theorem 1.10]{KM2017}.  In \cite[Theorem 1.3]{MPS2022} (see also \cite[Theorem 1.6]{FZ2023}), the slightly stronger bound
$$
\|w\|_{W^{1,2}(\mathbb S^{n-1})}\le C(n) \|\mathcal H_{\partial H}-\overline{\mathcal H}_{\partial H}\|_{L^{2}(\mathbb S^{n-1})},\qquad \overline{\mathcal H}_{\partial H}:=\mean{\partial H}\mathcal H_{\partial H}\, d\mathscr H^{n-1}.
$$
is proved.}
$$
\|w\|_{W^{1,2}(\mathbb S^{n-1})}\le C(n) \|\mathcal H_{\partial H}-(n-1)\|_{L^{2}(\mathbb S^{n-1})},
$$
for some dimensional constant $C(n)>0.$
\end{thm}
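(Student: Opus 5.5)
The plan is to run Fuglede's spectral-gap argument in the radial parametrization and to bound the linearization error by the smallness of $\sigma$. Write $\partial H=\{(1+w(\omega))\omega:\omega\in\mathbb S^{n-1}\}$. With $\Delta_\tau$ the Laplace--Beltrami operator on $\mathbb S^{n-1}$, the mean curvature pulled back to the sphere has the structure
$$
\mathcal H_{\partial H}\big((1+w)\omega\big)=(n-1)-(n-1)w-\Delta_\tau w+\mathcal R(w),
$$
where $\mathcal R(w)$ is quadratic in $(w,\nabla_\tau w)$. More precisely, $\mathcal R(w)$ is $\operatorname{div}_\tau$ of a field bounded by $C\sigma|\nabla_\tau w|+C\sigma|w|$, plus zero-order terms bounded by $C\sigma(|w|+|\nabla_\tau w|)$.

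I would not expand to second order pointwise, since $w$ is only Lipschitz. Instead I would use the weak form. For a test function $\varphi\in W^{1,2}(\mathbb S^{n-1})$, extend $\varphi$ radially and use the first variation of area along the normal field to write $\int_{\partial H}\mathcal H_{\partial H}\,\varphi\,\nu\cdot x\,d\mathscr H^{n-1}$ as an integral over $\mathbb S^{n-1}$ involving $\nabla_\tau w\cdot\nabla_\tau\varphi$ and $w\varphi$, up to errors $O(\sigma)\|w\|_{W^{1,2}}\|\varphi\|_{W^{1,2}}$. The Jacobian and $\nu\cdot x$ are $1+O(\sigma)$ in $L^\infty$, so the measure change between $\partial H$ and $\mathbb S^{n-1}$ costs only a factor of order $\sigma$.

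Next I test with $\varphi=w$. This gives
$$
\int_{\mathbb S^{n-1}}\big(|\nabla_\tau w|^2-(n-1)w^2\big)\le \int(\mathcal H_{\partial H}-(n-1))w\,(1+O(\sigma))+C\sigma\|w\|_{W^{1,2}}^2 .
$$
The constant term $(n-1)\int w$ is absorbed by the volume constraint, and the choice of comparison value $n-1$ makes the remaining constants cancel. Now I use the two constraints. The condition $|H|=|B|$ reads $\int_{\mathbb S^{n-1}}((1+w)^n-1)=0$, so $|\int w|\le C\|w\|_{L^2}^2\le C\sigma\|w\|_{L^2}$. The barycenter condition reads $\int(1+w)^{n+1}\omega=0$, so the projection of $w$ onto the first spherical harmonics is also $O(\sigma)\|w\|_{L^2}$.

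The spectral gap on $\mathbb S^{n-1}$ comes from the eigenvalues $k(k+n-2)$ of $-\Delta_\tau$. For the components of $w$ with $k\ge2$, we have $k(k+n-2)\ge 2n$, and so $\int|\nabla_\tau w|^2-(n-1)w^2\ge c(n)\|w\|_{W^{1,2}}^2$ on that part. The $k=0$ and $k=1$ parts contribute at most $C\sigma^2\|w\|^2_{L^2}$. Combining these, and bounding the right-hand side by Cauchy--Schwarz, I get
$$
c\|w\|_{W^{1,2}}^2\le \|\mathcal H_{\partial H}-(n-1)\|_{L^2}\|w\|_{L^2}+C\sigma\|w\|_{W^{1,2}}^2 .
$$
Choosing $\sigma$ small then gives the claim, with the $L^2$ norm taken on $\partial H$ or on $\mathbb S^{n-1}$, which are comparable.

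The main obstacle is making the weak formulation rigorous when only $\mathcal H_{\partial H}\in L^2$ and $w\in W^{1,\infty}$ are known. Concretely, this means checking that every nonlinear term in the divergence-form expression of the area integrand, $\sqrt{(1+w)^2+|\nabla_\tau w|^2}\,(1+w)^{n-2}$, contributes exactly the claimed $O(\sigma)$ bilinear bound. I would handle this by Taylor-expanding the integrand $F(w,\nabla_\tau w)$ to second order around $(0,0)$. Its Euler--Lagrange operator is the mean curvature times the appropriate Jacobian. Since the second derivatives of $F$ are uniformly continuous, the difference between $D F(w,\nabla_\tau w)$ and its linearization is $O(\sigma)(|w|+|\nabla_\tau w|)$, and this is exactly the estimate needed.
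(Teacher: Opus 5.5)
The paper does not prove this statement itself: it is quoted from \cite{KM2017} (Theorem 1.10). Your argument is correct and is essentially the standard Fuglede-type proof behind it, and behind \cite{MPS2022,FZ2023}. It uses the weak Euler--Lagrange identity $\int_{\mathbb S^{n-1}}(\partial_zF\,\varphi+\partial_pF\cdot\nabla_\tau\varphi)\,d\mathscr H^{n-1}=\int_{\mathbb S^{n-1}}\mathcal H_{\partial H}(1+w)^{n-1}\varphi\,d\mathscr H^{n-1}$ tested with $\varphi=w$, quadratic errors of size $C\sigma\|w\|_{W^{1,2}}^2$, and the gap $k(k+n-2)-(n-1)\ge c(n)\bigl(1+k(k+n-2)\bigr)$ for $k\ge 2$, with the volume and barycenter constraints making the $k=0,1$ modes $O(\sigma)\|w\|_{L^2}$.

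Two minor wording points. The variation you actually use is along a radial field such as $\varphi(x/|x|)\,x/|x|$, not a normal field. Also, once you write $\mathcal H_{\partial H}=(n-1)+h$, the two terms $(n-1)\int w$ cancel identically, so the volume constraint is only needed to control the $k=0$ mode.
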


In a later work \cite{CV2018}, it was demonstrated that for a connected closed hypersurface $\partial E$ of class $C^2$ with small oscillations in the mean curvature $\mathcal H_{\partial^* E}$, there exist two concentric balls $B_{r_i}(x_0)$ and $B_{r_e}(x_0)$ such that
\begin{equation}\label{CV}
B_{r_i}(x_0)\subset E\subset B_{r_e}(x_0),\qquad r_e -r_i \le C \left(\max_{x\in \partial E} \mathcal H_{\partial E}(x) -\min_{x\in \partial E} \mathcal H_{\partial E}(x)\right).
\end{equation}
This result was obtained using a refined moving plane method, and the constant $C$ depends on the $C^2$ norm of $\partial E$, subject to a uniform interior-ball condition.

This was later improved in \cite{MP2020-2} by replacing the $L^\infty$-oscillation of $\mathcal H_{\partial^* E}$ in \eqref{CV} with an $L^2$-oscillation. The resulting estimate involves a non-sharp exponent when $n>3$, as discussed in \cite{MP2019,MP2020}, where stability estimates with an $L^1$-oscillation were also studied. 

A recent result \cite{BZ2022} states that, for a closed smooth surface $\Sigma^2\subset \mathbb R^3$ with a small $L^2$-oscillation of its mean curvature and a no-bubbling assumption in terms of the total Willmore energy, there exists an immersion of $\Sigma^2$ that is close to the identity in the $W^{2,2}$-norm.

More recently, a sharp quantitative version of the Alexandrov inequality in three dimensions for $C^2$-regular sets with a perimeter bound has been obtained in \cite{JMOS2025}, and subsequently applied to volume-preserving geometric flows. More precisely, using a parametric approach based on conformal immersions, it was shown that, for every $\delta_0>0$, there exists $C=C(\dz_0)>0$ such that, if $E\subset \mathbb R^3$ is a $C^2$-regular set with
$$|E|=|B| \qquad \text{ and } \qquad P(E)\le 4\pi 2^{\frac 1 3}-\delta_0,$$
then
$$P(E)-P(B)\le C\|\mathcal H_{\partial E}-\bar {\mathcal H}_{\partial E}\|^2_{L^2(\partial E)}.$$
Here
$$\bar {\mathcal H}_{\partial E}=\frac{1}{P(E)}\int_{\partial^* E}{\mathcal H}_{\partial E} \,d\mathcal H^2.$$

\smallskip

In this paper, we prove a sharp quantitative stability result for Alexandrov's theorem in arbitrary dimension in the class of bounded $C^1$ open sets with small excess. In particular, our result extends the sharp nearly spherical theory of \cite{KM2017, MPS2022, FZ2023} to a non-parametric setting, where one does not assume a priori that the boundary is given as a graph over the sphere.

Recall that a Borel set $E\subset \mathbb R^n$ is a \emph{set of finite perimeter} if there exists a vector-valued Radon measure $D\chi_E$, with finite total variation,  such that 
$$\int_{E}{\rm div} \phi\, dx=- \int_{\mathbb R^n} \phi \cdot D\chi_{E}\qquad \forall\, \phi\in C_c^1(\mathbb R^n;\,\mathbb R^n). $$
Then, the perimeter of $E$ is defined as $P(E):=|D\chi_E|(\mathbb R^n)$. 
The \emph{reduced boundary} $\partial^*E\subset\partial E$ of $E$ is defined as the collection of points 
$x\in \partial E$ at which
$$\nu_E(x):=-\lim_{r\to 0} \frac{D\chi_E(B_r(x))}{|D\chi_E|(B_r(x))}\in \mathbb R^n, \quad \text{ and } \quad |\nu_E|=1,$$
where we call $\nu_E$ the (measure-theoretic) unit outer normal of $E$ at $x$. 
It holds $P(E)=\mathscr H^{n-1}(\partial^*E)$.
Finally, when the first variation of $\partial^*E$ is absolutely continuous with respect to $\mathscr H^{n-1}\llcorner\partial^*E$, we denote by
$\mathcal H_{\partial^* E}\in L^1_\loc(\partial^* E)$ the scalar distributional mean curvature of $E$, defined via the identity
$$\int_{\partial^* E} {\rm div}_\tau \Phi \, d\mathscr H^{n-1} = \int_{\partial^* E} \mathcal H_{\partial^* E} \, \Phi\cdot \nu_E\, d\mathscr H^{n-1} \qquad \forall\, \Phi\in C_c^1(\mathbb R^n;\,\mathbb R^n), $$
where 
${\rm div}_\tau \Phi = {\rm div} \Phi - (D\Phi\, \nu_E) \cdot \nu_E$
denotes the tangential divergence of $\Phi$ along $\partial^*E$.
For an introduction to the theory of sets of finite perimeter and the properties stated above, we refer to  \cite[Sections 12-15]{M2012}.

\smallskip

We now define the excess of a set of finite perimeter $E$ as
$${\rm Exc}(E)=\frac{1}{2}\int_{\partial^*E} \left|\nu_E - \frac{x}{|x|}\right|^2 \, d\mathscr H^{n-1}=\int_{\partial^*E} \left(1- \frac{x}{|x|}\cdot \nu_E \right) \, d\mathscr H^{n-1}.$$
As shown in \cite[Proposition 1.2]{FJ2014}, see also Lemma~\ref{inverse inequ} below,\footnote{Since the definitions in \cite{FJ2014} are slightly different, we give both a short proof of this inequality, and of a reverse one, in Lemma~\ref{inverse inequ}.}
 if $|E|=|B|$ then
 \begin{align}
|E\Delta B|^2 + P(E)-P(B) \le C(n) {\rm Exc}(E), \label{difference perimeter}
\end{align}
where 
$$E\Delta B=(E\setminus B)\cup (B\setminus E). $$
 In particular, since $P(E)\geq P(B)$ (by the isoperimetric inequality), 
  \begin{align}
|E\Delta B|^2 \le C(n) {\rm Exc}(E),  \label{exc symm diff}
\end{align}

\begin{thm}\label{main thm}
Let $n \geq 2$, and let $E\subset \mathbb R^n$ be a bounded $C^1$ open set with $|E|=|B|$ and whose scalar distributional
mean curvature satisfies $\mathcal H_{\partial E}\in L^2(\partial E).$ Then there exist $\delta=\delta(n)>0$ and $C=C(n)>0$ such that, whenever
${\rm Exc}(E) \leq \delta,$
there exists a translation vector $x_0\in\R^n$, with
$$
|x_0|\le C(n)\operatorname{Exc}(E)^{1/2},
$$
such that
\begin{equation}
\label{eq:main thm}
\operatorname{Exc}(E-x_0)+|(E-x_0)\Delta B|^2+|\mu-(n-1)|^2
\le C(n) \|\mathcal H_{\partial E}-\mu\|^2_{L^2(\partial E)} \qquad \forall\,\mu \in \R.
\end{equation}
In particular  one can choose $\mu$ as $\bar {\mathcal H}_{\partial E}$ (the integral average of $\mathcal H_{\partial E}$ over ${\partial^* E}$), and recalling \eqref{difference perimeter} we have
$$|(E-x_0)\Delta B |+{\rm Exc}(E-x_0)^{1/2} \le C(n) \inf_{\mu\in \mathbb R}\|\mathcal H_{\partial E}-\mu\|_{L^2(\partial E)}=C(n) \|\mathcal H_{\partial E}-\bar {\mathcal H}_{\partial E}\|_{L^2(\partial E)}.$$
\end{thm}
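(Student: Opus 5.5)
The plan is to reduce the statement to a Fuglede-type spectral-gap estimate on a nearly spherical ``core'' of $E$, and to control the remaining non-graphical part of $\partial E$ directly by the $L^2$-oscillation of the mean curvature.

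\textbf{Step 1: reduction to a single $\mu$.} It suffices to prove the bound for $\mu=\bar{\mathcal H}_{\partial E}$. Indeed, $\|\mathcal H_{\partial E}-\mu\|_{L^2}^2=\|\mathcal H_{\partial E}-\bar{\mathcal H}_{\partial E}\|_{L^2}^2+P(E)|\mu-\bar{\mathcal H}_{\partial E}|^2$. Moreover, by \eqref{difference perimeter}, $P(E)\approx P(B)$ when the excess is small.

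\textbf{Step 2: identifying $\bar{\mathcal H}_{\partial E}$.} Test the first variation identity with $\Phi(x)=x$, for which ${\rm div}_\tau\Phi=n-1$. This gives $(n-1)P(E)=\int \mathcal H_{\partial E}\, x\cdot\nu_E$. Since $\int x\cdot\nu_E=n|E|=n|B|=P(B)$, we obtain
$$
(n-1)P(E)-\bar{\mathcal H}_{\partial E}P(B)=\int(\mathcal H_{\partial E}-\bar{\mathcal H}_{\partial E})\,x\cdot\nu_E .
$$
This controls $|\bar{\mathcal H}_{\partial E}-(n-1)|$ by $\|\mathcal H_{\partial E}-\bar{\mathcal H}_{\partial E}\|_{L^2}(\int|x|^2)^{1/2}$ plus $C(P(E)-P(B))$. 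The term $\int|x|^2$ is only bounded once we know the tentacles are controlled, and $P(E)-P(B)$ is only quadratically small in the final quantity, so this step closes at the end of the argument.

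\textbf{Step 3: decomposition and the graphical core.} The heart of the argument is to split $\partial E$ into a graphical part, where $\partial E$ is a Lipschitz radial graph over most of $\mathbb S^{n-1}$ with small slope, and the remaining ``tentacles''. Small excess means $\nu_E\approx x/|x|$ in $L^2$, so the graphical part should cover the sphere up to a set of small measure.

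On the graphical part I would build a star-shaped rearrangement $F=B+w$. This is a $BV$ radial function: on bad directions we take, for instance, the outermost boundary radius, or a truncation, chosen so that $P(F)\le P(E)$ and ${\rm Exc}(F)\lesssim{\rm Exc}(E)$. We then translate so that the barycenter vanishes, with $|x_0|\lesssim {\rm Exc}^{1/2}$ by \eqref{exc symm diff}, and rescale to restore the volume.

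On $F$ I would run a $BV$ version of the Fuglede argument behind Theorem~\ref{thm:Fuglede}. Expand $P(F)-P(B)$ and the quantity $\int(\mathcal H-(n-1))\,w$ in spherical harmonics. The second variation of perimeter at the ball has a spectral gap on modes of degree $\ge2$; the degree $0$ mode is fixed by the volume and the degree $1$ modes by the barycenter. This gives
$$
{\rm Exc}(F)\lesssim \|w\|_{W^{1,2}}^2\lesssim \int(\mathcal H-\mu)\,w+(\text{tentacle error}),
$$
where the first variation is tested against a vector field extending $w\,x/|x|$. Cauchy--Schwarz then yields ${\rm Exc}(F)\lesssim\|\mathcal H-\mu\|_{L^2}^2+(\text{errors})$.

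\textbf{Step 4 (main obstacle): the tentacles.} One must show that $\mathscr H^{n-1}$ of the boundary inside the tentacles, together with their contribution to the excess, is bounded by $C\|\mathcal H-\mu\|_{L^2}^2$ plus a small multiple of ${\rm Exc}(E)$, which can then be absorbed.

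First attempt: use the first variation with cut-off radial fields localized to the tentacle region. On a thin tentacle, the mean curvature must be large, of order the inverse width, unless its area is small. A monotonicity-type argument then forces area $\lesssim \int_{\text{tent}}|\mathcal H-\mu|^2$.

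The difficulty is that the non-graphical region is irregular. Here I would use the $C^1$ assumption qualitatively and approximate this region by polyhedral sets, on which the divergence-theorem computations, and the comparison of the excess with that of $F$, can be done face by face with constants depending only on $n$. Passing to the limit in the approximation then gives
$$
{\rm Exc}(E-x_0)\le {\rm Exc}(F)+C\|\mathcal H-\mu\|^2+o(1)\,{\rm Exc}(E).
$$

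Combining this with Step 3, and then \eqref{exc symm diff} and Step 2, gives \eqref{eq:main thm}.
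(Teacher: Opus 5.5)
Your outline names the same ingredients as the paper: a star-shaped rearrangement, a $BV$ Fuglede argument, tentacle estimates, and polyhedral approximation. However, the two steps that carry the proof are not supplied by mechanisms that work.

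\textbf{The Fuglede step.} You propose the classical quadratic chain ${\rm Exc}(F)\lesssim\|w\|_{W^{1,2}}^2\lesssim\int(\mathcal H-\mu)w+\dots$, testing the first variation against a field extending $w\,x/|x|$. For the rearrangement of a non-graphical set this cannot hold.
\begin{itemize}
\item At a nondegenerate fold of $\partial E$ inside the annulus, the radial sections gain a new interval whose length grows like the square root of the angular distance to the projected fold edge. Hence $|\nabla_\tau w|^2\sim{\rm dist}^{-1}$ and $\|w\|_{W^{1,2}}=\infty$, while your right-hand side is finite.
\item With other rules on bad directions (e.g.\ outermost radius), $w$ jumps, so $w\,x/|x|$ is not even an admissible test field.
\item In $\mathcal C_{\mathcal B}$, the tangential divergence of such a field on the several sheets of $\partial E$ is unrelated to $w$. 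Also, $\mathcal H$ has no meaning on $\partial F\setminus\partial E$.
\end{itemize}
The paper never tests against the rearrangement. It tests the identity for $E$ with the fixed field $x-\psi(|x|)x/|x|$, which gives an upper bound for $\int_{\partial^*G}(1-(x\cdot\nu_G)^2/|x|^2)-(n-1)\int_{\partial^*G}(|x|^{-1}+|x|-2)$ in terms of curvature errors. It uses $G^S$ only to prove coercivity of this form (Proposition~\ref{stability sets}). That coercivity rests on a non-quadratic spectral gap for $\int\frac{|\nabla_\tau u|^2}{\sqrt{1+|\nabla_\tau u|^2}}+|D^s_\tau u|$ which tolerates an exceptional set $K=\mathcal B$ (Lemmas~\ref{strong l2} and~\ref{spectrum}). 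The normalization $\int x/|x|\,dx=0$ is imposed before rearranging, because, unlike the usual barycenter, it is preserved by $G\mapsto G^S$.

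\textbf{The non-graphical region.} Your Step~4 (thin tentacles carry curvature of order the inverse width, plus monotonicity) concerns only far tentacles. Even there, monotonicity is unavailable with $L^2$ curvature for $n\ge4$; the paper uses Michael--Simon on $E\setminus B_r$ together with an ODE in $r$ (Lemma~\ref{small tentacle}). Nothing in your plan controls overhangs and folds inside $A_{1-\sigma,1+\sigma}$, which are not thin, and ``face by face on polyhedra'' is not an argument. Your target ``$\le C\|\mathcal H-\mu\|^2+o(1){\rm Exc}(E)$'' is also stronger than any local argument gives: by \eqref{eq:exc G6}, the area of $\partial E\cap\mathcal C_{\mathcal B}$ is comparable to its own excess.

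The missing idea is the leveling argument.
\begin{itemize}
\item Decompose $\mathcal C_{\mathcal B_F}$ into radial layers $L_m$ and test with the divergence-free field $|x|^{-n}x$.
\item Its flux through $\partial^*L_m$ vanishes and radial walls carry none, so the constant $\lambda$ drops out.
\item Since ${\rm div}_\tau(|x|^{-n}x)=|x|^{-n}[(n-1)-n(1-(x\cdot\nu)^2/|x|^2)]$, the identity becomes an area bound.
\item The edge terms where a layer meets its radial walls have a sign ($\rho_+^{-1}-\rho_-^{-1}\le0$) up to the curvature on codimension-two faces. This is exactly where polyhedra are needed (Lemma~\ref{lem:polyhedral-edge}).
\end{itemize}
The outcome is only that $\frac1{4n}\mathscr H^{n-1}(\partial^*F\cap\mathcal C_{\mathcal B_F})$ is bounded by the tilt integral $\int_{\partial^*F\cap\mathcal C_{\mathcal B_F}}(1-(x\cdot\nu_F)^2/|x|^2)$ plus $C\|\mathcal H_F-\lambda\|^2+C|\mathbf H^s_{\partial F}|(\partial F)$, i.e.\ a fixed multiple of the tilt. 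It closes only because this area enters Proposition~\ref{stability sets} with a small weight $\kappa$, which the spectral margin $\zeta$ absorbs.

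A minor point: in Step~2, $\int_{\partial E}|x|^2$ is not controlled by tentacle area, since long thin tentacles are allowed. Use a bounded field instead, such as $\phi(|x|)x$ with $\phi(t)=(t-1)^{-1}$ for $t>2$.
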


Compared with Theorem~\ref{thm:Fuglede}, the smallness assumption is expressed in terms of the excess, rather than through a prescribed radial graph, while the estimate retains the sharp quadratic scaling and purely dimensional constants. 

It is also worth emphasizing the relation between the present result and the recent work \cite{JMOS2025}. There, a sharp quantitative Alexandrov inequality in $\mathbb R^3$ is proved for $C^2$-regular sets under a perimeter-gap assumption, via a parametric approach based on conformal immersions.  By contrast, our argument works in every dimension $n\ge 2$ for bounded $C^1$ open sets and is genuinely non-parametric. 

\begin{rem}\label{rem:perimeter-gap}
Theorem~\ref{main thm} can be combined with the compactness/no-bubbling theory of \cite{DMMN2018} under the additional hypotheses required there. More precisely, if one assumes
$$
|E|=|B|\qquad \text{and}\qquad P(E)\le 2^{1/n}P(B)-\eta_0,
$$
together with a uniform diameter bound and a pointwise positive lower bound on the mean curvature, then the compactness result of \cite{DMMN2018} implies that any sequence with vanishing $L^2$-oscillation of the mean curvature converges, up to translations, to a single unit ball. Theorem~\ref{main thm} then yields the corresponding quantitative estimate for all sufficiently small oscillations. 

If instead one has control of the critical norm of the oscillation, namely
$L^{n-1}(\partial E)$ in our notation (the $L^n$-norm in the notation of
\cite{JN2023,Santilli2026}, where the ambient space is $\R^{n+1}$), then one can
use the bubbling results of \cite{JN2023} and 
\cite{Santilli2026}. These results give convergence, under the natural perimeter
and diameter bounds, to a finite union of equal balls (more generally, to Wulff
shapes in the anisotropic setting of \cite{Santilli2026}) without imposing a
pointwise positive lower bound on the mean curvature. The same perimeter-gap
argument then excludes more than one ball.
\end{rem}

\begin{rem}\label{rem:beyond smooth}
We note that the constant $C(n)$ in \eqref{eq:main thm} depends only on the dimension. Indeed, the $C^1$ regularity assumption on $\partial E$ is only qualitative and is used in an essential way only in two points of the proof of Proposition~\ref{leveling oscillation}. First, together with the assumption $\mathcal H_{\partial E}\in L^2(\partial E)$, it yields the strong polyhedral approximation of Corollaries~\ref{cor:smooth polyhedral approximation} and~\ref{cor:piecewise polyhedral approximation}, which is needed in Lemma~\ref{lem:polyhedral-edge}. Second, after a generic translation, it ensures that the radial tangency set
$$
\{x\in \partial E:\ x\cdot\nu_E(x)=0\}
$$
projects onto a closed $\mathscr H^{n-1}$-negligible subset of $\mathbb S^{n-1}$, in the precise sense of Lemma~\ref{lem:generic-sigma}. All the other key results of the paper can be stated and proved for sets of finite perimeter.
\end{rem}

%

%
%

The proof of Theorem~\ref{main thm} proceeds in four main steps. First, in Section~\ref{sec:preliminaries} we collect the basic consequences of small excess: We relate $\mathrm{Exc}(E)$ to both the perimeter deficit and the symmetric difference from the ball, show that only a small amount of perimeter can lie in the tentacular region away from $\partial B$, normalize the weighted barycenter by a small translation, and record a curvature estimate for truncations of the set by spheres. 

Second, in Section~\ref{sec:spectral sets FP} we establish the main analytic input of the paper, namely a $BV$-version of Fuglede's spectral-gap argument for sets of finite perimeter contained in a thin annulus around the sphere. This is combined with the star-shaped rearrangement to obtain a stability estimate for annular competitors, given in Proposition~\ref{stability sets}. In particular, the weighted  barycenter normalization $\int_E  \frac{x}{|x|}\,dx=0$ is more natural here, since it is preserved by the star-shaped rearrangement. 

Third, in Section~\ref{main proof} we return to the original set and use the estimates from Section~\ref{sec:preliminaries} to cut away the exterior and interior tentacles, thereby reducing the problem to an annular configuration to which the result of Section~\ref{sec:spectral sets FP} applies. The remaining difficulty is the non-graphical region, which is handled by the leveling argument of Proposition~\ref{leveling oscillation}; this is the only point where the $C^1$ regularity is used. 

Finally, still in Section~\ref{main proof}, we combine the annular estimate with the control of the non-graphical part to conclude the proof of Theorem~\ref{main thm}.

\medskip
{\noindent \bf Notation.} Throughout the paper, we use the notation $C(\cdot)$ and $c(\cdot)$ to denote positive constants that depend on certain parameters, with the parentheses indicating the dependencies. If a constant is absolute, we simply write $C$ or $c$. Typically, $C(\cdot)$ denotes a constant greater than 1, and $c(\cdot)$ denotes a constant less than 1.
Note that the value of $C(\cdot)$ may differ between different appearances, and even within a sequence of inequalities.
To simplify the notation, sometimes we write $a\sim b$ when $a$ and $b$ are positive numbers that are comparable up to dimensional constants, that is, $c(n)a\leq b \leq C(n)a$.

We use $B_r(x)$ to denote the Euclidean ball centered at $x$ with radius $r$, and $B$ to denote the unit ball centered at the origin. $\mathbb S^{n-1}$ is the unit sphere in $\R^n$, $\mathscr H^{n-1}$ the $(n-1)$-dimensional Hausdorff measure, and $\nabla_\tau$ the tangential gradient. We write $A_{r,R}=B_R\setminus \overline{B}_r$ for $0<r<R<\infty. $

\section{Preliminaries}\label{sec:preliminaries}

In this section, we establish several preliminary results that will be crucial for the proof of our main theorem. 

\begin{lem}
\label{lem:1 x}
Let $E\subset \mathbb R^n, n\ge 2$ be a set of finite volume.
Then
$$
\int_E\frac{1}{|x|}\,dx \leq C(n)|E|^{\frac{n-1}{n}}.
$$
Assume in addition that $|E|=|B|$. Then 
\begin{equation}\label{GB}
\int_{E\setminus B} \left||x|-1\right|\, dx\ge  c(n) |E\Delta B|^2.
\end{equation}
\end{lem}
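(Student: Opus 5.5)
Both inequalities follow from bathtub (rearrangement) principles: among sets of a given volume, a radial integrand with monotone profile is extremized by a ball or an annulus, which reduces everything to explicit radial integrals.

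\emph{First inequality.} Since $1/|x|$ is radially decreasing, $\int_E \frac{1}{|x|}\,dx \le \int_{B_r}\frac{1}{|x|}\,dx$, where $B_r$ is the centered ball with $|B_r|=|E|$. To see this, split into $E\cap B_r$ and $E\setminus B_r$, and compare $E\setminus B_r$ with $B_r\setminus E$. These two sets have equal measure, and $1/|x|\le 1/r$ on the first while $1/|x|\ge 1/r$ on the second. Computing in polar coordinates,
$$
\int_{B_r}\frac{1}{|x|}\,dx=\mathscr H^{n-1}(\mathbb S^{n-1})\int_0^r s^{n-2}\,ds=\frac{n\omega_n}{n-1}\,r^{n-1},
$$
and $r=(|E|/\omega_n)^{1/n}$ gives $C(n)|E|^{(n-1)/n}$. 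Here $n\ge2$ is used so that the integral converges.

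\emph{Second inequality.} Set $m:=|E\setminus B|$. Since $|E|=|B|$, we have $|B\setminus E|=m$ and $|E\Delta B|=2m$, so it suffices to show $\int_{E\setminus B}(|x|-1)\,dx\ge c(n)m^2$. The integrand $|x|-1$ is radially increasing and vanishes on $\partial B$, so among subsets $F\subset\R^n\setminus B$ with $|F|=m$ the integral is minimized by the annulus $A_{1,R}$ with $|A_{1,R}|=m$. This is the same bathtub argument as above, with the monotonicity reversed. It then remains to estimate $g(m):=\int_{A_{1,R}}(|x|-1)\,dx$ from below. I would treat two regimes.

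\emph{Case $m\le |B_2\setminus B|$, so $R\le2$.} Here $m=\omega_n(R^n-1)\le C(n)(R-1)$, so $R-1\ge c(n)m$. The half of $A_{1,R}$ nearer $\partial B$, namely the shell $A_{1,(1+R)/2}$, has measure at most a fixed fraction of $m$. For instance, $|A_{1,(1+R)/2}|\le (1-c(n))m$, which follows from $\frac{d}{dt}t^n$ being comparable to $1$ on $[1,2]$. On the complement $A_{(1+R)/2,R}$, whose measure is $\ge c(n)m$, the integrand is at least $(R-1)/2\ge c(n)m$. Hence $g(m)\ge c(n)m^2$.

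\emph{Case $R>2$.} Then $A_{2,R}$ has measure $\ge c(n)m$ and the integrand is at least $1$ there, so $g(m)\ge c(n)m$. Since $m\le|B|$, this gives $g(m)\ge c(n)m^2$ as well.

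I expect no real obstacle. The only point needing care is the uniformity of the constant in the first case, that is, checking that a definite fraction of the annulus mass lies at distance comparable to $R-1$ from $\partial B$. This follows from elementary bounds on $t^n$ for $t\in[1,2]$.
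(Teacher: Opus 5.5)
Your proof is correct and follows the paper's route: the same bathtub comparison with $B_r$ gives the first bound, and the second bound is reduced to the annulus $A_{1,R}$ with $|A_{1,R}|=|E\setminus B|$. The only differences are that the paper evaluates $\int_1^{1+\epsilon}(t-1)t^{n-1}\,dt\sim\epsilon^2$ explicitly where you use a fraction-of-mass estimate, and that your case $R>2$ is vacuous, since $m\le|B|$ forces $R^n\le 2$.
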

\begin{proof}
Let $r>0$ be such that $|B_r|=|E|$. Since $\frac 1 {|x|}$ is radially decreasing, 
it follows that 
$$
\int_{E\setminus B_r}\frac 1 {|x|}\,dx \leq \frac1r |E\setminus B_r|=\frac{1}{r}|B_r\setminus E|\leq \int_{B_r\setminus E}\frac 1 {|x|}\,dx,
$$
therefore
\begin{multline*}
\int_E\frac{1}{|x|}\,dx=\int_{E\cap B_r}\frac{1}{|x|}\,dx+\int_{E\setminus B_r}\frac 1 {|x|}\,dx \\
\leq \int_{E\cap B_r}\frac{1}{|x|}\,dx+\int_{B_r\setminus E}\frac 1 {|x|}\,dx
= \int_{B_r}\frac{1}{|x|}\,dx=C(n)r^{n-1}=C(n)|E|^{\frac{n-1}{n}}.    
\end{multline*}

Assume  now that $|E|=|B|$. Since $\left||x|-1\right|$ is radial and it is increasing away from $\partial B$, arguing similarly as above one can prove that the minimum  of $\int_{E\setminus B} \left||x|-1\right|\,dx$ among sets with prescribed exterior volume is attained when $E\setminus B$ coincides with the annulus $A_{1, 1+\ez}$, with $\ez \in (0,1)$ so that
$$|A_{1,\,1+\ez}|=|E\setminus B|.$$
Therefore
\begin{equation}
    \int_{ E\setminus B} \left||x|-1\right|\, dx \ge   c(n)\int_{1}^{1+\ez} |r-1|r^{n-1}\, dr  
=   \frac{(1+\ez)^{n}(n\ez-1)+1}{n(n+1)} \sim \ez^2.\label{epsilons} 
\end{equation}
Noticing that $\ez\sim |E\setminus B|=\frac12|E\Delta B|$ (recall that $|E|=|B|$),
this concludes the proof of the lemma.
\end{proof}

Our second lemma proves \eqref{difference perimeter}, as well as its converse.
 
\begin{lem}\label{inverse inequ}
Let $E\subset \mathbb R^n$ be a set of finite perimeter such that $|E|=|B|$. Then
$$\frac{1}{C(n)}|E\Delta B|^2 + P(E)-P(B) \leq {\rm Exc}(E)\le P(E)-P(B)+ C(n)|E\Delta B|^{\frac {n-1} {n}}.  $$ 
\end{lem}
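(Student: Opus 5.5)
The plan is to write the excess through the divergence theorem applied to the vector field $x/|x|$. We have $\operatorname{div}(x/|x|)=\frac{n-1}{|x|}$, which is locally integrable. So, by a standard approximation (cut off near the origin, or mollify $x/|x|$), the Gauss--Green formula for sets of finite perimeter gives
\begin{equation*}
\int_{\partial^*E}\frac{x}{|x|}\cdot\nu_E\,d\mathscr H^{n-1}=(n-1)\int_E\frac{dx}{|x|}.
\end{equation*}
Hence
\begin{equation*}
{\rm Exc}(E)=P(E)-(n-1)\int_E\frac{dx}{|x|}.
\end{equation*}
For the ball the same identity gives $P(B)=(n-1)\int_B\frac{dx}{|x|}$. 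Subtracting,
\begin{equation*}
{\rm Exc}(E)=P(E)-P(B)+(n-1)\Big(\int_B\frac{dx}{|x|}-\int_E\frac{dx}{|x|}\Big)
=P(E)-P(B)+(n-1)\Big(\int_{B\setminus E}\frac{dx}{|x|}-\int_{E\setminus B}\frac{dx}{|x|}\Big).
\end{equation*}
Both inequalities will come from estimating the bracket $D:=\int_{B\setminus E}\frac{1}{|x|}-\int_{E\setminus B}\frac{1}{|x|}$.

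\emph{Upper bound.} Drop the negative term, so that $D\le\int_{B\setminus E}\frac{dx}{|x|}$. Apply the first part of Lemma~\ref{lem:1 x} to the set $B\setminus E$ to get $D\le C(n)|B\setminus E|^{\frac{n-1}{n}}\le C(n)|E\Delta B|^{\frac{n-1}{n}}$. This gives the right-hand inequality.

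\emph{Lower bound.} Here I need $D\ge c(n)|E\Delta B|^2$. I will use $\frac1{|x|}\ge 1$ on $B$, so that $\int_{B\setminus E}\frac1{|x|}\ge|B\setminus E|$. Since $|E|=|B|$, we have $|B\setminus E|=|E\setminus B|$, and therefore
\begin{equation*}
D\ge\int_{E\setminus B}\Big(1-\frac1{|x|}\Big)dx=\int_{E\setminus B}\frac{|x|-1}{|x|}\,dx .
\end{equation*}
The main obstacle is the factor $1/|x|$ in this integrand, which prevents a direct appeal to \eqref{GB}. Instead I will redo the rearrangement argument from the proof of Lemma~\ref{lem:1 x} with the radial weight $g(|x|)=1-\frac1{|x|}$, which is increasing in $|x|$ on $\{|x|>1\}$. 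Among sets outside $B$ with prescribed volume, the integral of $g$ is minimized by the annulus $A_{1,1+\ez}$ with $|A_{1,1+\ez}|=|E\setminus B|$. We may assume $\ez\le 1$: otherwise $|E\setminus B|$ is bounded below, and the bound is trivial because $|E\Delta B|\le 2|B|$ and the annular integral is then bounded below by a constant. For $\ez\le1$ we have $g(r)\ge\frac{r-1}{2}$ on $(1,1+\ez)$, so the minimum is at least $c(n)\int_1^{1+\ez}(r-1)\,dr\sim\ez^2\sim|E\Delta B|^2$. This yields
\begin{equation*}
{\rm Exc}(E)\ge P(E)-P(B)+c(n)|E\Delta B|^2,
\end{equation*}
which is the left-hand inequality.

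Finally, I must justify the divergence identity for the singular field $x/|x|$. I will apply Gauss--Green to $\phi_\varepsilon=x/\max(|x|,\varepsilon)$, which is Lipschitz, and let $\varepsilon\to0$. The left side converges by dominated convergence, since $|\phi_\varepsilon|\le1$ and $P(E)<\infty$ (noting that $\mathscr H^{n-1}(\partial^*E\cap\{0\})=0$). The right side converges because $|\operatorname{div}\phi_\varepsilon|\le\frac{n}{|x|}$, which is integrable on $E$ by Lemma~\ref{lem:1 x}. If $E$ is unbounded, I will also multiply by cutoffs at infinity; their error terms vanish because $|E|<\infty$.
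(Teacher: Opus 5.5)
Your proof is correct and follows the same route as the paper. Both use the identity ${\rm Exc}(E)=P(E)-P(B)+(n-1)D$, and both get the upper bound from Lemma~\ref{lem:1 x} applied to $B\setminus E$. The only difference is that the paper imports the identity and the bound $D\ge c(n)|E\Delta B|^2$ from \cite{FJ2014}, whereas you prove both directly (via the Lipschitz fields $x/\max\{|x|,\varepsilon\}$, and via $1/|x|\ge 1$ on $B$ followed by a bathtub argument for the weight $1-1/|x|$). One small simplification: your case $\varepsilon>1$ never occurs, since $|E\setminus B|\le |E|=|B|$ forces $(1+\varepsilon)^n\le 2$.
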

\begin{proof}
As shown in \cite[Equation (3.19)]{FJ2014}, the following identity holds:
$${\rm Exc}(E)=P(E)-P(B) + \int_{B\setminus E}\frac{n-1}{|x|}\, dx-\int_{E\setminus B} \frac{n-1}{|x|}\, dx .$$
Also, it follows from the proof of \cite[Proposition 1.2]{FJ2014} that
$$
\int_{B\setminus E}\frac{n-1}{|x|}\, dx-\int_{E\setminus B} \frac{n-1}{|x|}\, dx \geq c(n)|E\Delta B|^2.
$$
This proves the first inequality.
To obtain the second inequality, it suffices to note that applying Lemma~\ref{lem:1 x} to $B\setminus E$,
$$
\int_{B\setminus E}\frac{n-1}{|x|}\, dx-\int_{E\setminus B} \frac{n-1}{|x|}\, dx \leq \int_{B\setminus E}\frac{n-1}{|x|}\, dx\leq C(n)|B\setminus E|^{\frac{n-1}{n}}\leq C(n)|E\Delta B|^{\frac{n-1}{n}}.
$$
\end{proof}

The next lemma shows that, if the excess is small, then a set $E$ cannot have too much perimeter outside a small annulus around $\partial B$. The estimates can be done first for almost every $r$, and then for all radii by approximating an arbitrary radius by regular radii and using the lower semicontinuity of perimeter under $L^1_{\rm loc}$convergence of the truncations. We suppress this technical point.

\begin{lem}\label{small tentacle excess}
Let $E\subset \mathbb R^n$ be a set of finite perimeter such that $|E|=|B|$ and ${\rm Exc}(E)\leq 1$.
Then, for every $\gamma \in (0,1/2)$ there exists a constant $C(n,\gamma)$ such that 
\begin{align*}
&P(E\setminus B_{r})\le C(n,\gamma) {\rm Exc}(E)^{\frac{1}2} \qquad \text{ for any } \ r \geq 1+\gamma \\
&P( B_{r} \setminus E)\le C(n,\gamma) {\rm Exc}(E)^{\frac{1}2} \qquad  \text{ for any } \ r \in (1/2,1-\gamma),
\end{align*}
\end{lem}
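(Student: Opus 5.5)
The plan is to use the divergence theorem with the radial field $X(x)=x/|x|$, for which ${\rm div}\,X=(n-1)/|x|\ge 0$. This is combined with a good-slice selection via the coarea formula and a radial-projection argument that moves the estimate from one good radius to all radii. As the statement allows, we work with a.e. $r$ and pass to all radii by lower semicontinuity. Two inputs come from earlier results. First, by \eqref{exc symm diff}, $|E\setminus B|=|B\setminus E|=\tfrac12|E\Delta B|\le C(n){\rm Exc}(E)^{1/2}$. Second, by definition $\int_{\partial^*E}(1-X\cdot\nu_E)\,d\mathscr H^{n-1}={\rm Exc}(E)$, and the integrand is nonnegative.

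\emph{Exterior tentacles.} For a.e. $s$ we have $P(E\setminus B_s)=\mathscr H^{n-1}(\partial^*E\setminus \overline B_s)+\mathscr H^{n-1}(E\cap\partial B_s)$. Apply the divergence theorem to $X$ on $E\setminus B_s$, where the outer normal on $\partial B_s$ is $-X$:
$$\int_{\partial^*E\setminus B_s}X\cdot\nu_E\,d\mathscr H^{n-1}-\mathscr H^{n-1}(E\cap\partial B_s)=(n-1)\int_{E\setminus B_s}\frac{dx}{|x|}\le C(n)|E\setminus B|.$$
Hence $\mathscr H^{n-1}(\partial^*E\setminus B_s)\le {\rm Exc}(E)+C(n)|E\setminus B|+\mathscr H^{n-1}(E\cap\partial B_s)$. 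By coarea, $\int_{1+\gamma/2}^{1+\gamma}\mathscr H^{n-1}(E\cap\partial B_s)\,ds\le|E\setminus B|$, so we can pick $s\in(1+\gamma/2,1+\gamma)$ with slice at most $2\gamma^{-1}C{\rm Exc}^{1/2}$. Since ${\rm Exc}\le1$ gives ${\rm Exc}\le{\rm Exc}^{1/2}$, this yields $P(E\setminus B_s)\le C(n,\gamma){\rm Exc}(E)^{1/2}$.

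\emph{Passing to every $r\ge s$.} This is the main obstacle, because intersecting with the complement of a ball need not decrease perimeter. Set $F=E\setminus B_s$ and $r\ge s$. Every ray from the origin through a point of $F\cap\partial B_r$ must leave the bounded set $F$ at some radius larger than $r$, hence cross $\partial^*F\setminus B_r$ (up to null sets). Consider the radial projection $x\mapsto r x/|x|$ restricted to $\partial^*F\setminus B_r$. Its tangential Jacobian is $(r/|x|)^{n-1}|X\cdot\nu_F|\le1$, so the area formula gives $\mathscr H^{n-1}(F\cap\partial B_r)\le \mathscr H^{n-1}(\partial^*F\setminus B_r)$. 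Consequently $P(E\setminus B_r)=P(F\setminus B_r)\le 2\mathscr H^{n-1}(\partial^*F\setminus B_r)\le 2P(F)$, and the first estimate follows.

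\emph{Interior tentacles.} The argument is symmetric. On $B_s\setminus E$, the outer normal is $-\nu_E$ along $\partial^*E$ and $X$ along $\partial B_s$, so
$$\mathscr H^{n-1}(\partial^*E\cap B_s)=\int_{\partial^*E\cap B_s}(1-X\cdot\nu_E)+\mathscr H^{n-1}(\partial B_s\setminus E)-(n-1)\int_{B_s\setminus E}\frac{dx}{|x|}\le {\rm Exc}(E)+\mathscr H^{n-1}(\partial B_s\setminus E).$$
By coarea over $s\in(1-\gamma,1-\gamma/2)$ and $|B\setminus E|\le C{\rm Exc}^{1/2}$, we choose a good $s$ and obtain $P(B_s\setminus E)\le C(n,\gamma){\rm Exc}^{1/2}$. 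For $r\in(1/2,1-\gamma)$, so $r<s$, set $G=B_s\setminus E$. Rays starting outward from a point of $G\cap\partial B_r$ either cross $\partial^*E\cap A_{r,s}$ or reach $\partial B_s\setminus E$. The inward radial projection onto $\partial B_r$ has Jacobian at most $(r/|x|)^{n-1}\le1$. This gives $\mathscr H^{n-1}(\partial B_r\setminus E)\le P(G)$, hence $P(B_r\setminus E)\le 2P(G)\le C(n,\gamma){\rm Exc}(E)^{1/2}$. The lower bound $r>1/2$ only serves to keep the constants uniform.
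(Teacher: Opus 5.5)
Your argument is correct. Its first half (choosing a good radius $s$ by coarea, then applying the divergence theorem to $x/|x|$ on $E\setminus B_s$, resp.\ $B_s\setminus E$) is exactly the paper's. The difference lies in how you pass from $s$ to all radii.

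In the exterior case the paper gets $\mathscr H^{n-1}(E\cap\partial B_r)\le \mathscr H^{n-1}(\partial^*E\setminus B_r)$ from a second application of the divergence theorem to $x/|x|$ on $E\setminus B_r$, using only that its divergence is nonnegative. In the interior case it uses the bounded field $x/r$ on $B_r\setminus E$, which gives $\mathscr H^{n-1}(\partial B_r\setminus E)\le \mathscr H^{n-1}(\partial^*E\cap B_r)+\frac nr|B_r\setminus E|$; this is where $r>1/2$ enters. You instead count ray crossings with the area formula for the radial projection, whose tangential Jacobian $(r/|x|)^{n-1}\bigl|\frac{x}{|x|}\cdot\nu\bigr|$ is at most $1$.

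The paper's route needs nothing beyond the Gauss--Green formula. Yours relies on the ray-slicing theorem for sets of finite perimeter: for a.e.\ $\theta$ the slice is a finite union of intervals whose endpoints lie in $\partial^*F$ with $x\cdot\nu_F\neq0$. This is the same tool the paper invokes later in Lemma~\ref{small oscillation}.

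In return, your interior estimate $\mathscr H^{n-1}(\partial B_r\setminus E)\le \mathscr H^{n-1}(\partial^*E\cap A_{r,s})+\mathscr H^{n-1}(\partial B_s\setminus E)$ carries no volume error and holds for every $r\in(0,s)$. So, contrary to your last sentence, the restriction $r>1/2$ is not used at all in your argument.

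One small correction: $E$, hence $F$, is not assumed bounded. What you actually need is that a.e.\ ray meets $F$ in a bounded set. This holds because a.e.\ slice has finite perimeter and $\int_{\mathbb S^{n-1}}\int_{F_\theta}t^{n-1}\,dt\,d\mathscr H^{n-1}(\theta)=|F|<\infty$.
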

\begin{proof}
We first consider the part of $\partial^*E$ outside the unit ball.   
By the coarea formula \cite[Theorem 1, Section 3.4.2]{EG1992},  
$$|E\Delta B|\ge  \left|  E\setminus  B   \right|\ge \int_{1+\frac \gamma 2}^{1+\gamma} \mathscr H^{n-1}(  \partial B_s\cap E)\, ds.$$
Hence, there exists $\rho\in [1+\frac \gamma 2,\,1+  \gamma]$ such that
$$\mathscr H^{n-1}(  \partial B_{\rho}\cap E)\le  2 \gamma^{-1}|E\Delta B|.$$
In particular, by the bound above, the definition of excess, and the divergence theorem, we have
\begin{equation}
\label{eq:bound coarea1}
\begin{split}
\mathscr H^{n-1}(\partial^*E \setminus B_{\rho}) &\leq 2 {\rm Exc}(E)+ \int_{\partial^*E \setminus B_{\rho}} \frac{x}{|x|}\cdot \nu_E \,d\mathscr H^{n-1}\\
&=2 {\rm Exc}(E)+ \int_{\partial^*(E \setminus B_{\rho})} \frac{x}{|x|} \cdot \nu_{E \setminus B_{\rho}}\,d\mathscr H^{n-1}+\mathscr H^{n-1}(\partial B_\rho \cap E)\\
&=2 {\rm Exc}(E)+ \int_{E \setminus B_{\rho}} \frac{n-1}{|x|} \,dx+\mathscr H^{n-1}(\partial B_\rho \cap E)\\
&\leq 2 {\rm Exc}(E)+ (n-1)|E \setminus B_{\rho}| +2\gamma^{-1}|E\Delta B|\\
&\leq 2 {\rm Exc}(E)+ C(n,\gamma)|E \Delta B|,
\end{split}
\end{equation}
where we used that $\frac{1}{|x|} \leq 1$ outside $B_\rho$. 
Therefore,
$$
\mathscr H^{n-1}(\partial^*E \setminus B_{r}) \leq \mathscr H^{n-1}(\partial^*E \setminus B_{\rho}) \leq 2 {\rm Exc}(E)+ C(n,\gamma)|E \Delta B| \qquad \text{for all }r \geq \rho.
$$
Also by noting that
$$\nu_{B_r}(x) = -\nu_{E\setminus B_r}(x)\quad \text{ for any } \ x\in \partial B_r\cap E,$$
it follows from the divergence theorem that 
\begin{equation}
\label{eq:bound div1}
\begin{split}
\mathscr H^{n-1}(\partial B_r\cap E)&=\int_{\partial B_r\cap E}\frac{x}{|x|}\cdot \nu_{B_r} \, d\mathscr H^{n-1}\\
&\le  \mathscr H^{n-1}(\partial^* E\setminus B_r) - \int_{\partial^*(E\setminus B_r)} \frac{x}{|x|}\cdot \nu_{E\setminus B_r} \, d\mathscr H^{n-1}  \\
&=  \mathscr H^{n-1}(\partial^* E\setminus B_r) - \int_{E\setminus B_r} \frac{n-1}{|x|}\, dx\le  \mathscr H^{n-1}(\partial^* E\setminus B_r).
\end{split}
\end{equation}
Thus \eqref{eq:bound coarea1} and \eqref{eq:bound div1} imply that
\begin{align*}
P(E\setminus B_{r})&=\mathscr H^{n-1}(\partial B_r\cap E)+  \mathscr H^{n-1}(\partial^* E\setminus B_r)
 \leq 2\mathscr H^{n-1}(\partial^* E\setminus B_r)\\
&\leq 4 {\rm Exc}(E)+ C(n,\gamma)|E \Delta B|\qquad  \text{for all }r \geq \rho.
\end{align*}
Recalling that $|E \Delta B| \leq C(n) {\rm Exc}(E)^{\frac{1}2} $ (see Lemma~\ref{inverse inequ}), the first bound follows.

The bound inside the unit ball follows by a similar argument, but some small modifications are needed.
First, by the coarea formula, we can find
$\rho\in [1-\gamma ,\,1-\frac \gamma 2]$ such that
$$\mathscr H^{n-1}(  \partial B_{\rho}\setminus E)\le   \gamma^{-1}|E\Delta B|.$$
Then, since
$\nu_{B_\rho\setminus E}(x)= -\nu_{E}(x)$ for $\mathscr H^{n-1}$-a.e. $x\in \partial^*E\cap B_{\rho},$
arguing similarly to \eqref{eq:bound coarea1} and using now Lemma~\ref{lem:1 x}, we have
\begin{equation*}
\begin{split}
\mathscr H^{n-1}(\partial^*E \cap B_{\rho}) &\leq 2 {\rm Exc}(E)- \int_{B_{\rho}\setminus E} \frac{n-1}{|x|} \,dx+\mathscr H^{n-1}(\partial B_\rho \setminus E)\\
&\leq 2 {\rm Exc}(E)+\mathscr H^{n-1}(\partial B_\rho \setminus E)\\
&\leq 2 {\rm Exc}(E)+ \gamma^{-1}|E\Delta B|,
\end{split}
\end{equation*}
which still implies (as it was for \eqref{eq:bound coarea1}) that $\mathscr H^{n-1}(\partial^*E \cap B_{r})$ is small for all $r \in(1/2, \rho).$
Concerning \eqref{eq:bound div1}, in this case we use the bounded vector-field
$x/r$ in $B_r$. Since $x/r=\nu_{B_r}$ on $\partial B_r$, the divergence theorem
gives
\begin{equation*}
\begin{split}
\mathscr H^{n-1}(\partial B_r\setminus E)&=\int_{\partial B_r\setminus E}\frac{x}{|x|}\cdot \nu_{B_r} \, d\mathscr H^{n-1} \\
&\le  \mathscr H^{n-1}(\partial^* E\cap B_r)
+\int_{\partial^*(B_r\setminus E)} \frac{x}{r}\cdot \nu_{B_r\setminus E} \, d\mathscr H^{n-1}  \\
&= \mathscr H^{n-1}(\partial^* E\cap B_r) + \frac n r |B_r\setminus E|\\
&\leq \mathscr H^{n-1}(\partial^* E\cap B_r) + 2n|B\setminus E|\\
&= \mathscr H^{n-1}(\partial^* E\cap B_r) + n|B\Delta E|,
\end{split}
\end{equation*}
where in the last inequality we used $r \geq 1/2$ and $r\leq 1$.
The rest of the argument follows similarly to the case when $r\ge  1 +\gamma$.
\end{proof}

We now show that we can slightly translate a set close to the ball to ensure that its ``weighted barycenter'' coincides with the origin.
We write 
$$\mean{A} f\,dx := |A|^{-1}\int_A f\,dx$$
whenever $0<|A|<\infty$.

\begin{lem}\label{translation}
There exists a dimensional constant $\eta=\eta(n)>0$ such that the following is true:
Let $F\subset \R^n$ satisfy 
$$|B\Delta F| \le \eta\qquad \text{and}\qquad \mean{F} \frac {x}{|x|}\, dx=x_1\quad \text{with }|x_1|\leq \eta.$$
 Then there exists   $x_0\in \mathbb R^n$, with $|x_0|\le 2|x_1|$, such that 
$$\int_{F+x_0} \frac {x}{|x|}\, dx=0.$$ 
\end{lem}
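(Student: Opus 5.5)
Our plan is to treat this as a finite-dimensional fixed point problem for the map
$$
G(y):=\int_{F+y}\frac{x}{|x|}\,dx=\int_F \frac{z+y}{|z+y|}\,dz,\qquad y\in\R^n,
$$
and to show that $G$ is close to an invertible linear map on a small ball, so that a degree (or Brouwer fixed point) argument produces a zero $x_0$ with $|x_0|\le 2|x_1|$. By hypothesis $G(0)=|F|x_1$. The model computation is for $F=B$. Writing $g(y):=\int_{B}\frac{z+y}{|z+y|}\,dz=\int_{B+y}\frac{x}{|x|}\,dx$, symmetry gives $g(0)=0$. Moreover $\nabla_y \frac{z+y}{|z+y|}=\frac{1}{|z+y|}\big(I-\frac{(z+y)\otimes(z+y)}{|z+y|^2}\big)$, and averaging over $z\in B$ at $y=0$ yields $Dg(0)=\frac{n-1}{n}\int_B\frac{dz}{|z|}\,I=:\lambda_n I$. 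An equivalent way to see this is via the boundary formula $Dg(y)=\int_{\partial B}\frac{x+y}{|x+y|}\otimes x\,d\mathscr H^{n-1}$. In either form $\lambda_n>0$ is dimensional, and $g$ is $C^{1}$ near $0$ with $|g(y)-\lambda_n y|\le C(n)|y|^{2}$ (or at least $o(|y|)$ uniformly). The factor $1/|z+y|$ is integrable, which is why Lemma~\ref{lem:1 x} will be used for the error terms.

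Next we compare $G$ with $g$. We have $G(y)-g(y)=\int_{F}\frac{z+y}{|z+y|}dz-\int_B\frac{z+y}{|z+y|}dz$, which is bounded by $|F\Delta B|\le\eta$ since the integrand has norm $1$. This is only a $C^0$ bound, and it is not enough by itself to get $|x_0|\le 2|x_1|$. We therefore also control the difference of increments:
$$
\big|(G-g)(y)-(G-g)(0)\big|\le\int_{F\Delta B}\Big|\frac{z+y}{|z+y|}-\frac{z}{|z|}\Big|dz .
$$
Using $\big|\frac{a}{|a|}-\frac{b}{|b|}\big|\le \min\{2,\,2|a-b|/|b|\}$, this is at most $2|y|\int_{F\Delta B}\frac{dz}{|z|}\le C(n)|y|\,|F\Delta B|^{\frac{n-1}{n}}\le C(n)\eta^{\frac{n-1}{n}}|y|$, by Lemma~\ref{lem:1 x}. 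Hence
$$
G(y)=|F|x_1+\lambda_n y+R(y),\qquad |R(y)|\le C(n)\big(\eta^{\frac{n-1}{n}}+|y|\big)|y| ,
$$
where we used that $(G-g)(0)=G(0)-g(0)=G(0)$.

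The last step is the topological argument. Set $r:=2|x_1|$ and consider the homotopy $H_t(y)=\lambda_n y+t\big(|F|x_1+R(y)\big)$ on $\partial B_r$. Since $|F|\le|B|+\eta$, we need $\lambda_n r>|F||x_1|+C(\eta^{\frac{n-1}{n}}+r)r$. This holds provided $2\lambda_n>|F|$ (up to small errors), or, if that fails, after choosing the constant appropriately. Here lies the main obstacle: the claimed constant $2$ requires checking $|B|/\lambda_n<2$. Computing, $\int_B|z|^{-1}dz=\frac{n}{n-1}|B|$, so $\lambda_n=|B|$ exactly, and $|F|/\lambda_n\le 1+C\eta<2$, which is fine. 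With $\eta$ small (so that $|x_1|\le\eta$ keeps $r$ small and $C(\eta^{\frac{n-1}{n}}+r)<\lambda_n/4$), $H_t$ never vanishes on $\partial B_r$. The degree of $G=H_1$ on $B_r$ is therefore that of $\lambda_n I$, namely $1$, which gives a zero $x_0\in B_r$. Continuity of $G$ in $y$ follows from dominated convergence.
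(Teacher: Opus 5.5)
Your argument is correct and follows the same route as the paper. You linearize $y\mapsto\int_{F+y}\frac{x}{|x|}\,dx$ at $0$ and identify the linear part as $|B|\,\mathrm{Id}$ up to an $o_\eta(1)$ error; your $\lambda_n=|B|$ is exactly the paper's identity $\mean{B}\frac{1}{|x|}\big(\mathrm{Id}-\frac{x\otimes x}{|x|^2}\big)\,dx=\mathrm{Id}$. You then close with a topological argument on $\overline{B_{2|x_1|}}$: the paper applies Brouwer to $\Psi(y)=y-\mean{F+y}\frac{x}{|x|}\,dx$, and your degree homotopy is equivalent. Your splitting $G=g+(G-g)$, together with the bound $C(n)\eta^{\frac{n-1}{n}}|y|$ on the increments of $G-g$ from Lemma~\ref{lem:1 x}, makes the remainder explicitly uniform in $F$, which the paper only asserts. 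Just treat the trivial case $x_1=0$ separately: there $\partial B_r$ degenerates and $x_0=0$ works.
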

\begin{proof}
Consider the continuous map
$$\Psi:\R^n\to \R^n,\qquad  y\mapsto y- \mean{F+y} \frac {x}{|x|}\, dx.$$
By a change of variable and the fact that $|x|^{-1}\in L^1_{\loc}(\mathbb R^n), n\ge 2$, one can easily check that
$$\mean{F+t\theta} \frac {x}{|x|}\, dx=\mean{F} \frac {x+t\theta}{|x+t\theta|}\, dx=x_1+t\mean{F} \bigg(\frac{\theta}{|x|}-\frac{(x\cdot \theta) x}{|x|^3} \bigg)\, dx+o(t)$$
uniformly with respect to $\theta\in\mathbb S^{n-1}$.
Also, since
$$
\mean{B}\frac{1}{|x|}\left({\rm Id}-\frac{x\otimes x}{|x|^2}\right) \, dx= {\rm Id},\,
$$
if $|F\Delta B| \leq \eta$  then the symmetric matrix
$$
A_F:=\mean{F} \frac{1}{|x|}\left(Id-\frac{x\otimes x}{|x|^2}\right) \, dx \in \R^{n\times n}
$$
satisfies $|A_F - {\rm Id}| =o_\eta(1)$, where $o_\eta(1)\to 0$ as $\eta\to 0$.
Thus, if $|y|\leq 2 |x_1|\leq 2\eta \ll 1$, we get
$$
|\Psi(y)|=|y-x_1-A_Fy +o(|y|)|\leq |x_1|+|y -A_Fy|+o(|y|)\leq |x_1|+o_\eta(1) |y|+o(|y|)\leq 2|x_1|.
$$
This implies that
$\Psi$ maps $\overline{B_{2|x_1|}}$ to itself, so it follows from the Brouwer fixed-point theorem that there exists a point $x_0\in \overline{B_{2|x_1|}}$ such that
$\Psi(x_0)=x_0,$
or equivalently
$$\mean{F+x_0} \frac {x}{|x|}\, dx=0.$$
This proves the result.
\end{proof}

We shall use the following notation throughout the paper. If $F\subset \R^n$
has finite perimeter, we define the distributional curvature vector of
$\partial F$ as the distribution
\[
\Phi\mapsto \int_{\partial^*F}{\rm div}_\tau \Phi\,d\mathscr H^{n-1},
\qquad \Phi\in C_c^1(\R^n;\R^n).
\]
When this distribution is represented by an $\R^n$-valued Radon measure, we say
that $\mathbf H_{\partial F}\in\mathcal M^{\rm v}(\R^n)$, where
$\mathcal M^{\rm v}(\R^n)$ denotes the space of vector-valued Radon measures on
$\R^n$, and we define $\mathbf H_{\partial F}$ by
\begin{equation}\label{eq:vector curvature measure}
\int_{\partial^*F}{\rm div}_\tau \Phi\,d\mathscr H^{n-1}
=\int_{\R^n}\Phi\cdot d\mathbf H_{\partial F}
\qquad \forall\,\Phi\in C_c^1(\R^n;\R^n).
\end{equation}
If $\mathbf H_{\partial F}$ is absolutely continuous with respect to
$\mathscr H^{n-1}\llcorner\partial^*F$, then\footnote{If we write
$d\mathbf H_{\partial F}=h\,d\mathscr H^{n-1}\llcorner\partial^*F$, then
$h$ has no tangential component: this follows from the orthogonality
of the generalized mean-curvature vector of a rectifiable varifold to its
approximate tangent plane; see Allard~\cite[Section~4]{A1972}.}
\[
\mathbf H_{\partial F}
=\mathcal H_{\partial^*F}\nu_F\,\mathscr H^{n-1}\llcorner\partial^*F,
\]
where $\mathcal H_{\partial^*F}$ is the scalar distributional mean curvature.

We conclude this section with a technical result that will be used later to
control the vector-valued distributional curvature measure of a set obtained
by cutting $E$ with a ball.

\begin{lem}\label{singular mean curvature}
Let $E\subset \mathbb R^n$ be a set of finite perimeter whose scalar
distributional mean curvature is represented by a function
$\mathcal H_{\partial^*E}\in L^1(\partial^* E)$. Then, for a.e. $r>0$, the set
$$G_r=E\setminus B_r$$
has distributional curvature represented by a vector-valued Radon measure $\mathbf H_{\partial G_r}$, and
we have
$$|\mathbf H_{\partial G_r}|\le |\mathcal H_{\partial^*E}|\, \mathscr H^{n-1}\lfloor_{\partial^*E\setminus B_r}+\frac{n-1}{r}\,\mathscr H^{n-1}\lfloor_{E^{(1)}\cap\partial B_r}+ C \mathscr H^{n-2}\lfloor_{\partial^*E\cap \partial B_{r}},$$
and the constant $C$ is absolute. 
\end{lem}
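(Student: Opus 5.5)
Our plan is to compute the first variation of $\partial^*G_r$ directly by splitting its reduced boundary into two pieces. For a.e.\ $r$ we have, up to $\mathscr H^{n-1}$-null sets,
$$\partial^*G_r=(\partial^*E\setminus \overline B_r)\cup(E^{(1)}\cap\partial B_r),$$
with $\nu_{G_r}=\nu_E$ on the first piece and $\nu_{G_r}=-x/r$ on the second (see \cite[Lemma 15.12]{M2012}). In addition, $\mathscr H^{n-1}(\partial^*E\cap\partial B_r)=0$. By the coarea formula on the rectifiable set $\partial^*E$, for a.e.\ $r$ the slice $\partial^*E\cap\partial B_r$ is $(n-2)$-rectifiable with $\mathscr H^{n-2}(\partial^*E\cap\partial B_r)<\infty$. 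The goal is to express $\int_{\partial^*G_r}{\rm div}_\tau\Phi$ as a Radon measure paired with $\Phi$ and then bound its total variation.

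First, we handle the spherical piece by the classical divergence theorem on the sphere. For $\Phi\in C^1_c$ we have ${\rm div}_{\partial B_r}\Phi=\frac{n-1}{r}\Phi\cdot\frac{x}{r}+{\rm div}_{\mathbb S}(\Phi^{\rm tan})$. Integrating over the set $E^{(1)}\cap\partial B_r$, viewed as a subset of finite perimeter in $\partial B_r$ for a.e.\ $r$ (its relative boundary being the slice), gives
$$\frac{n-1}{r}\int_{E^{(1)}\cap\partial B_r}\Phi\cdot\tfrac{x}{r}+\int_{\partial^*E\cap\partial B_r}\Phi\cdot\eta_1\,d\mathscr H^{n-2}.$$
Here $\eta_1$ is the unit conormal in $\partial B_r$.

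Second, for the piece $\partial^*E\setminus\overline B_r$, we localize with a cutoff $\varphi_\epsilon(|x|)$ that increases from $0$ at $r$ to $1$ at $r+\epsilon$. Testing the definition of $\mathcal H_{\partial^*E}$ with $\varphi_\epsilon\Phi$ gives
$$\int_{\partial^*E}\varphi_\epsilon{\rm div}_\tau\Phi=\int\mathcal H\varphi_\epsilon\Phi\cdot\nu_E-\int_{\partial^*E}\varphi_\epsilon'(|x|)\,\Phi\cdot\nabla^\tau|x|.$$
As $\epsilon\to0$, the left side tends to $\int_{\partial^*E\setminus B_r}{\rm div}_\tau\Phi$, and the first right-hand term tends to $\int_{\partial^*E\setminus B_r}\mathcal H\Phi\cdot\nu_E$ by dominated convergence, since $\mathcal H\in L^1$. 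For the last term we use the coarea formula on $\partial^*E$ with the function $|x|$, whose tangential Jacobian is $|\nabla^\tau|x||$:
$$\int_{\partial^*E}\varphi_\epsilon'\,\Phi\cdot\nabla^\tau|x|=\int_r^{r+\epsilon}\varphi_\epsilon'(s)\int_{\partial^*E\cap\partial B_s}\Phi\cdot\frac{\nabla^\tau|x|}{|\nabla^\tau|x||}\,d\mathscr H^{n-2}\,ds.$$
At Lebesgue points $r$ of the $L^1_{\rm loc}$ map $s\mapsto\int_{\partial^*E\cap\partial B_s}\Phi\cdot\eta_2\,d\mathscr H^{n-2}$, this tends to the boundary term at $s=r$. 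To make the convergence uniform over $\Phi$, we take a countable dense family of $\Phi$ and conclude by density. Here $\eta_2$ is a unit vector.

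Combining the two steps, the distribution is represented by the measure
$$\mathcal H\nu_E\,\mathscr H^{n-1}\llcorner(\partial^*E\setminus B_r)+\frac{n-1}{r}\frac{x}{r}\,\mathscr H^{n-1}\llcorner(E^{(1)}\cap\partial B_r)+(\eta_1-\eta_2)\,\mathscr H^{n-2}\llcorner(\partial^*E\cap\partial B_r).$$
Its total variation is bounded by the stated right-hand side with $C=2$. The main obstacle is the a.e.-$r$ justification: the slice of $E^{(1)}\cap\partial B_r$ in $\partial B_r$ must agree with $\partial^*E\cap\partial B_r$ up to $\mathscr H^{n-2}$-null sets (slicing theory for sets of finite perimeter), and the limit in the second step must be uniform. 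If these fine-structure points become delicate, a fallback is to bound the boundary term only by $\limsup\frac1\epsilon\mathscr H^{n-1}(\partial^*E\cap A_{r,r+\epsilon})$, which for a.e.\ $r$ is finite and controlled by the coarea density. This still yields a measure representation, with a constant that is absolute because $|\eta_i|\le1$.
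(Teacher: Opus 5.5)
Your proof is correct, and its skeleton is the paper's. You split $\partial^*G_r$ into $\partial^*E\setminus\overline B_r$ and $E^{(1)}\cap\partial B_r$. On the first piece you test the curvature identity with the radial ramp $\varphi_\epsilon(|x|)\Phi$, which is literally the paper's cutoff, and use the coarea formula on $\partial^*E$. On the second piece you argue intrinsically on $\partial B_r$.

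The difference is in how far each step is pushed.

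On the outer piece, the paper only \emph{bounds} the ramp term by $\epsilon^{-1}\int_r^{r+\epsilon}\mathscr H^{n-2}(\partial^*E\cap\partial B_s)\,ds$, using that the conormal $\nabla^\tau|x|/|\nabla^\tau|x||$ has unit length. You instead \emph{identify} the limit as $\int_{\partial^*E\cap\partial B_r}\Phi\cdot\eta_2\,d\mathscr H^{n-2}$, via Lebesgue points and a countable dense family of fields. For the density step you should also take $r$ to be a Lebesgue point of $s\mapsto\mathscr H^{n-2}(\partial^*E\cap\partial B_s)$, so that the approximation error is controlled uniformly in $\epsilon$.

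On the spherical piece, the paper tests with $\max\{1-\dist(x,E)/\epsilon,0\}$ and asserts the analogous bound. You instead apply Gauss--Green to the finite-perimeter set $E^{(1)}\cap\partial B_r\subset\partial B_r$, and use the slicing theorem to identify its relative reduced boundary with $\partial^*E\cap\partial B_r$ up to $\mathscr H^{n-2}$-null sets.

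What your route buys:
\begin{itemize}
\item It gives the singular part explicitly, $(\eta_1-\eta_2)\,\mathscr H^{n-2}\llcorner(\partial^*E\cap\partial B_r)$, hence the inequality genuinely as measures with $C=2$.
\item It rests the spherical step on a standard theorem. A distance cutoff sees the topological boundary of $E$, so the paper's sketch (``cutoffs approximating the trace'') tacitly needs the same slicing information.
\end{itemize}
The paper's version is shorter and produces only a bound on total mass. That is all that is used later, in the Michael--Simon step.

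One caution: the fallback in your last sentences would not prove the statement.
\begin{itemize}
\item It drops the Jacobian. For a.e.\ $r$, the $\liminf$ as $\epsilon\to0$ of $\epsilon^{-1}\mathscr H^{n-1}(\partial^*E\cap A_{r,r+\epsilon})$ is at least $\int_{\partial^*E\cap\partial B_r}\bigl(1-(\tfrac{x}{|x|}\cdot\nu_E)^2\bigr)^{-1/2}\,d\mathscr H^{n-2}$. This can be far larger than $\mathscr H^{n-2}(\partial^*E\cap\partial B_r)$ where $\partial^*E$ meets $\partial B_r$ nearly tangentially. The right quantity is $\epsilon^{-1}\int_{\partial^*E\cap A_{r,r+\epsilon}}|\nabla^\tau|x||\,d\mathscr H^{n-1}$, exactly as in your main argument.
\item It leaves the spherical boundary term untouched. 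So some form of the slicing fact cannot be avoided: at least, the relative perimeter measure of $E^{(1)}\cap\partial B_r$ in $\partial B_r$ must be dominated by $\mathscr H^{n-2}\llcorner(\partial^*E\cap\partial B_r)$.
\end{itemize}
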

\begin{proof}
By coarea formula, we may assume that
$$\mathscr H^{n-2}(\partial^*E\cap \partial B_{r})<\infty. $$
By the definition of scalar distributional mean curvature, for $X\in C_c^\infty(\mathbb R^n;\, \mathbb R^n)$ with $|X|\le 1$ and $\varphi\in C^\infty(\mathbb R^n)$, we have
$$\int_{\partial^* E}\big( \varphi\, {\rm div}_\tau X -  \varphi\, \mathcal H_{\partial^* E}  \, X\cdot \nu_E\big)\, d\mathscr H^{n-1}=- \int_{\partial^* E} X \cdot \nabla_\tau \varphi  \,d\mathscr H^{n-1}.$$
By approximation, we can consider the Lipschitz function
$$
\varphi(x):=\phi\left(\frac{|x|}{\ez}+1-\frac{r}{\ez}\right)\qquad \text{where }\quad \phi(t):=\left\{
\begin{array}{ll}
0 &\text{for }t \leq 1,\\
t-1&\text{for }1 \leq t \leq 2,\\
1&\text{for }t \geq 2,
\end{array}
\right.
\qquad 
$$
and $\ez>0$ is small. This, together with the coarea formula \cite[Theorem 2, Section 3.4.3]{EG1992}, yields
\begin{align*}
&\left|\int_{\partial^* E\setminus B_{r}} \varphi\, {\rm div}_\tau X -  \varphi\, \mathcal H_{\partial^* E}   X\cdot \nu_E \, d\mathscr H^{n-1}\right|\\
&= \left| - \frac 1 \ez \int_{\partial^* E\cap (B_{r+\ez}\setminus \overline{B}_r)}   \phi'\left(\frac{|x|}{\ez}+1-\frac{r}{\ez}\right) \left(\frac x {|x|} -\frac{x\cdot \nu_E}{|x|} \nu_E\right)\cdot X \, d\mathscr H^{n-1}\right|\\
&\le  C  \ez^{-1}\int_{r}^{r+\ez}\int_{\partial^* E\cap \partial B_s}     \frac {\left|\frac x {|x|} -\frac{x\cdot \nu_E}{|x|} \nu_E\right|} {\sqrt{1-\left(\frac{x}{|x|}\cdot \nu_E\right)^2}} \, d\mathscr H^{n-2}\, ds. 
\end{align*}
Thus, noticing 
$$\left(\frac x {|x|} -\frac{x\cdot \nu_E}{|x|} \nu_E\right)\cdot \frac x {|x|}=1-\left(\frac{x}{|x|}\cdot \nu_E\right)^2$$
and letting $\ez\to 0$, we obtain that
\begin{equation}\label{E part}
\left|\int_{\partial^* E\setminus B_{r}}   {\rm div}_\tau X -  \mathcal H_{\partial^* E}  X\cdot \nu_E    \, d\mathscr H^{n-1}\right|\le C  \mathscr H^{n-2}(\partial^*E\cap \partial B_{r}) \qquad \text{for a.e. } r>0. 
\end{equation}
Likewise, for $X\in C_c^\infty(\mathbb R^n;\, \mathbb R^n)$ with $|X|\le 1$ and $\varphi\in C^\infty(\mathbb R^n)$, we also have
$$\int_{\partial  B_r} \varphi \,{\rm div}_\tau X - \varphi  \,\mathcal H_{\partial B_r}  X\cdot \nu_{B_r}  \, d\mathscr H^{n-1}=- \int_{\partial  B_r} X \cdot \nabla_\tau \varphi  \, d\mathscr H^{n-1}.$$
Now we choose, for $\ez>0$ small,
$$\varphi(x)=\max\left\{1-\frac{\dist(x,E)}{\ez},\,0\right\},$$
and applying a similar argument on the smooth hypersurface $\partial B_r$, with
cutoffs approximating the trace of $E^{(1)}\cap\partial B_r$,  leads to
\begin{equation}\label{B part}
\left|\int_{\partial B_{r}\cap E^{(1)}}   {\rm div}_\tau X -  \mathcal H_{\partial  B_r}  X\cdot \nu_{B_r}    \, d\mathscr H^{n-1}\right|\le C  \mathscr H^{n-2}(\partial^*E\cap \partial B_{r}). 
\end{equation}
Since, up to $\mathscr H^{n-1}$-negligible sets,
$$
\partial^*G_r=\big(\partial^*E\setminus B_r\big)\cup \big(E^{(1)}\cap \partial B_r\big),
$$
and the outer unit normal of $G_r$ on $E^{(1)}\cap \partial B_r$ is $-\nu_{B_r}$, combining \eqref{E part} and \eqref{B part} gives the stated bound on the total variation of $\mathbf H_{\partial G_r}$.
\end{proof}

\section{A spectral gap estimate on sets of finite perimeter}\label{sec:spectral sets FP}
 
In his celebrated work \cite{Fuglede1989}, Fuglede proved a sharp stability result for the isoperimetric inequality on $\sigma$-spherical sets by exploiting a spectral gap for the Laplacian on the sphere for functions.
Our aim is to find a suitable generalization of his estimate for sets of finite perimeter contained in a small annulus around the sphere. The main result of this section, Proposition~\ref{stability sets}, is based on a delicate interplay between compactness estimates on BV functions and suitable rearrangement techniques.

Recall that a function $u\in L^1(\mathbb S^{n-1})$ is of bounded variation (BV) if its distributional tangential gradient $\nabla_\tau u$ is a finite measure.
By the decomposition of measure, one can write $D_\tau u=\nabla_\tau u\,d\mathscr H^{n-1}+D^s_\tau u$, where $D^s_\tau u$ is singular with respect to $\mathscr H^{n-1}$. Also, $\|D_\tau u\|(\mathbb S^{n-1})$ denotes the total mass of $D_\tau u$, that is,
$$
\|D_\tau u\|(\mathbb S^{n-1})=\int_{\mathbb S^{n-1}}|\nabla_\tau u|\,d\mathscr H^{n-1}+\int_{\mathbb S^{n-1}}|D^s_\tau u|.
$$
 
We begin by proving the following inequality.
\begin{lem}\label{strong l2}
Let $\xi \in {BV(\mathbb S^{n-1})}$ satisfy, for some  $0<\sz=\sz(n)\ll1$,
\begin{equation}\label{small average}
    \left|\frac{1}{ n|B|}\int_{\mathbb S^{n-1}}\xi\,  d\mathscr H^{n-1}\right|\le \sz \left(\int_{\mathbb S^{n-1}} \frac{|\xi|^2}{\sqrt{1+\ez^2|\xi|^2}} \,d\mathscr H^{n-1}\right)^{\frac 1 2}.
\end{equation}
 Then,  for any $0<\ez<\frac 1 2,$
$$   \int_{\mathbb S^{n-1}} \frac{|\xi|^2}{\sqrt{1+\ez^2|\xi|^2}} \,d\mathscr H^{n-1}\le C(n)\left( \int_{\mathbb S^{n-1}} \frac{|\nabla_\tau  \xi|^2}{\sqrt{1+\ez^2|\nabla_\tau \xi|^2}} \,
 d\mathscr H^{n-1}+\ez^{-1} |D^s_\tau \xi|(\mathbb S^{n-1})\right).$$
In addition, assume that there exists a sequence $\xi_i\in BV(\mathbb S^{n-1})$ satisfying, for every $i$,
\begin{equation}\label{small average seq}
    \left|\frac{1}{ n|B|}\int_{\mathbb S^{n-1}}\xi_i\,  d\mathscr H^{n-1}\right|\le \sz \left(\int_{\mathbb S^{n-1}} \frac{|\xi_i|^2}{\sqrt{1+\ez_i^2|\xi_i|^2}} \,d\mathscr H^{n-1}\right)^{\frac 1 2},
\end{equation}
and
$$\int_{\mathbb S^{n-1}} \frac{|\nabla_\tau  \xi_i|^2}{\sqrt{1+\ez_i^2|\nabla_\tau \xi_i|^2}} \,
 d\mathscr H^{n-1}+ \epsilon_i^{-1}|D^s_\tau \xi_i|(\mathbb S^{n-1})\le M$$
for some $\ez_i\to 0$ and $M>0$.
Then, up to passing to a subsequence,
$\xi_i \to \xi$   strongly in $L^p (\mathbb S^{n-1})$ for some $1<p=p(n)<2$ (below the BV-Sobolev compactness exponent on $S^{n-1}$),
and
\begin{equation}
    \label{eq:xi_i to xi}
\lim_{i\to \infty}\int_{\mathbb S^{n-1}} \frac{|\xi_i|^2}{\sqrt{1+\ez_i^2|\xi_i|^2}} \,d\mathscr H^{n-1}=  \int_{\mathbb S^{n-1}} {|\xi|^2}  \,d\mathscr H^{n-1}.
\end{equation}
\end{lem}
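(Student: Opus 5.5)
The plan is to prove the compactness statement first and then derive the inequality from it by contradiction.

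\emph{Compactness.} Write $\Phi_\ez(s):=s^2/\sqrt{1+\ez^2s^2}$. This function behaves like $s^2$ for $s\le \ez^{-1}$ and like $\ez^{-1}s$ for $s\ge\ez^{-1}$, and always satisfies $\Phi_\ez(s)\ge \min\{s^2,\ez^{-1}s\}/\sqrt2$. From the gradient bound I will get a uniform total variation bound:
\[
\int|\nabla_\tau\xi_i| \le \int_{\{|\nabla_\tau\xi_i|\le\ez_i^{-1}\}}|\nabla_\tau\xi_i|+\int_{\{|\nabla_\tau\xi_i|>\ez_i^{-1}\}}|\nabla_\tau\xi_i| \le C\sqrt M+C\ez_i M .
\]
Adding $|D^s_\tau\xi_i|\le \ez_iM$ bounds $\|D_\tau\xi_i\|(\mathbb S^{n-1})$ uniformly.

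To bound the means $m_i$, I will use Poincar\'e in BV: $\|\xi_i-m_i\|_{L^1}\le C$. Then $\int\Phi_{\ez_i}(\xi_i)\le C(|m_i|^2+\|\xi_i-m_i\|^2_{L^2})$, which needs care. A cleaner route is to argue directly from \eqref{small average seq}. Let $Q_i:=\int\Phi_{\ez_i}(\xi_i)$. Since $\Phi_\ez$ grows at least linearly for large $s$ and $\xi_i-m_i$ is bounded in $L^{n/(n-1)}$ (BV Sobolev embedding on $\mathbb S^{n-1}$), one gets $|m_i|^2\lesssim Q_i+C$, and also $Q_i\lesssim |m_i|^2+C$ with the constant coming from the BV bound (using $\Phi_\ez(s)\le s^2$ and $\Phi_\ez(a+b)\le 2\Phi_\ez(a)+2\Phi_\ez(b)$ on the $L^{n/(n-1)}$ part once $p<2$ is accounted for). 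Combined with $|m_i|\le C\sz Q_i^{1/2}$ and $\sz$ small, this bounds $m_i$. Hence $\xi_i$ is bounded in BV, and by Rellich it converges strongly in $L^p$ for every $p<n/(n-1)$; for $n=2$ any $p<\infty$ works. I will fix one such $p\in(1,2)$.

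To prove \eqref{eq:xi_i to xi}, I need uniform integrability of $\Phi_{\ez_i}(\xi_i)$. I will split $\xi_i=\xi_i^{\le}+\xi_i^{>}$ at the level $\ez_i^{-1/2}$ (truncation). On $\{|\xi_i|\le\ez_i^{-1/2}\}$ we have $\Phi_{\ez_i}(\xi_i)=|\xi_i|^2(1+O(\ez_i))$. On the complement, $\Phi\le\ez_i^{-1}|\xi_i|$, and the measure of that set decays like Chebyshev in $L^{p}$. The key additional ingredient is a uniform $W^{1,2}$-type bound on truncations: the function $T_k\xi_i$ with $k\sim\ez_i^{-1/2}$ has gradient $|\nabla_\tau\xi_i|\chi_{\{|\xi_i|<k\}}$, whose $L^2$ norm is controlled by $M$ where the gradient is $\le\ez_i^{-1}$. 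Its large-gradient part is handled by the coarea-type estimate: the jump part and large-gradient part have small total mass $\ez_i M$, so they are killed by a cutoff. This yields equi-integrability of $|\xi_i|^2\chi_{\{|\xi_i|\le k\}}$. The tail contributes at most $\ez_i^{-1}\int_{|\xi_i|>k}|\xi_i|$, which I must show tends to $0$ using $\|D\xi_i\|$ on superlevel sets, that is, coarea combined with the isoperimetric inequality on the sphere. This equi-integrability of the tail is the \textbf{main obstacle}, and I expect it to require exactly the weighting $\ez^{-1}|D^s\xi|$ together with the linear-growth regime of the gradient term. Lower semicontinuity plus this equi-integrability then gives \eqref{eq:xi_i to xi}, and also shows $\xi\in W^{1,2}$ with $\int|\nabla_\tau\xi|^2\le \liminf(\cdots)$.

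\emph{Inequality.} Suppose the inequality fails. Then there are $\xi_i,\ez_i$ with ratio $\to\infty$. The quotient is not scale-invariant, so I will normalize $Q_i=1$ via the family $\Phi_\ez(\lambda s)=\lambda^2\Phi_{\lambda\ez}(s)$. Replacing $\xi_i$ by $\xi_i/\lambda$ and $\ez_i$ by $\lambda\ez_i$, with $\lambda=Q_i^{1/2}$, preserves the structure, though the $\ez<1/2$ constraint may change, so both regimes must be handled. This makes the right-hand side tend to $0$.

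If $\ez_i\to0$, the compactness step gives $\xi\in W^{1,2}$ with $\nabla_\tau\xi=0$ and $\|\xi\|_{L^2}=1$, so $\xi$ is a constant $c\neq0$. Passing \eqref{small average seq} to the limit gives $|c|\le\sz|c|\,C(n)$, which is a contradiction for small $\sz$.

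If $\ez_i$ stays bounded below, the $D^s$ term and the $L^1$ control still force BV convergence to a constant with $\Phi_{\ez}$-mass $1$, and the same mean condition gives the contradiction. Since this case is easier, the lemma follows.
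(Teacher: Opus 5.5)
There is a genuine gap, located at the analytic core of the lemma. Nothing in your argument prevents $\Phi_\ez(\xi)=|\xi|^2/\sqrt{1+\ez^2|\xi|^2}$ from concentrating on small sets, and you rely on this in both halves.

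To bound the means you claim $Q_i\lesssim |m_i|^2+C$ ``with the constant coming from the BV bound''. This requires $\int\Phi_{\ez_i}(\xi_i-m_i)\le C(M)$. But $BV(\mathbb S^{n-1})$ embeds at best in $L^{(n-1)/(n-2)}$, since the sphere is $(n-1)$-dimensional, and this is weaker than $L^2$ for $n\ge4$. Meanwhile $\Phi_\ez(s)\le\ez^{-1}|s|$ only gives $\ez_i^{-1}\|\xi_i-m_i\|_{L^1}$, which blows up. Total variation control genuinely cannot suffice: a zero-mean function equal (up to its small mean) to $\ez^{-1}$ on a cap of measure $\ez^{(n-1)/(n-2)}$ has $|D_\tau\xi|(\mathbb S^{n-1})\sim 1$, yet $\int\Phi_\ez(\xi)\sim\ez^{(3-n)/(n-2)}\to\infty$ for $n\ge4$. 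The bound you need is exactly the zero-mean case of the inequality you plan to deduce afterwards from compactness, so for $n\ge4$ the argument is circular.

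The same issue reappears in the energy convergence \eqref{eq:xi_i to xi}, which you leave as the ``main obstacle''. With only Chebyshev in $L^p$, $p<2$, the tail $\ez_i^{-1}\int_{\{|\xi_i|>\ez_i^{-1/2}\}}|\xi_i|$ cannot be shown to vanish, and BV bounds give no $L^2$-compactness once $n\ge3$. In your contradiction argument, the scaling $\Phi_\ez(\lambda s)=\lambda^2\Phi_{\lambda\ez}(s)$ is correct, and the branch with $\ez_i$ bounded below does close via $\Phi_\ez(s)\le\ez^{-1}|s|$. In the branch $\ez_i\to0$, however, you get $\xi_i-m_i\to0$ in BV and must still show $\int\Phi_{\ez_i}(\xi_i-m_i)\to0$. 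For $n\ge4$ this is again the no-concentration statement, i.e.\ the lemma itself.

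The missing idea is to use the quadratic regime of the gradient term, not just the total variation it controls. The paper proves the inequality directly, in four steps.
\begin{enumerate}
\item It picks $c_\ez$ so that the truncation $\psi:=T_\ez(\xi-c_\ez)$ at height $\ez^{-1}$ has zero mean, and writes $\xi-c_\ez=\psi+\eta$.
\item Since $\ez|\psi|\le1$, the pointwise bound $|\psi||\nabla_\tau\psi|\le C(\beta)\Phi_\ez(|\nabla_\tau\psi|)+\beta|\psi|^2$ combined with Poincar\'e gives $\int\psi^2+\int|\psi\nabla_\tau\psi|\le C(n)\int\Phi_\ez(|\nabla_\tau\xi|)$.
\item The remainder $\eta$ has median $0$ and is supported where $|\psi|=\ez^{-1}$. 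The $(1,1)$-Poincar\'e inequality, with a case split according to whether $\ez|\nabla_\tau\xi|\ge1$ or $<1$ there, gives $\ez^{-1}\|\eta\|_{L^1}\le C(n)\mathcal E_\ez(\xi)$, where $\mathcal E_\ez(\xi)$ denotes the right-hand side of the inequality.
\item Finally, $\Phi_\ez(\xi-c_\ez)\le C\psi^2+C\ez^{-1}|\eta|$ pointwise.
\end{enumerate}
The same decomposition yields \eqref{eq:xi_i to xi}. Indeed, $\psi_i^2$ is bounded in BV, so $\psi_i$ is precompact in $L^2$, while $\ez_i^{-1}\|\eta_i\|_{L^1}\to0$.
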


\begin{proof}
\noindent{\bf Step 1: Energy inequalities.}
Given $\xi\in BV(\mathbb S^{n-1})$,  we set for simplicity
$$
\mathcal A_\epsilon(\xi):=\int_{\mathbb S^{n-1}} \frac{|\xi|^2}{\sqrt{1+\epsilon^2|\xi|^2}}\,d\mathscr H^{n-1},
$$
$$
\mathcal E_\epsilon(\xi):=\int_{\mathbb S^{n-1}} \frac{|\nabla_\tau \xi|^2}{\sqrt{1+\epsilon^2|\nabla_\tau \xi|^2}}\,d\mathscr H^{n-1}
+\ez^{-1}|D^s_\tau \xi|(\mathbb S^{n-1}).
$$
We first prove the estimate for smooth functions. The passage to $BV$ follows by strict approximation on the sphere. Indeed, after subtracting constants, one may choose
$\xi_h\in C^\infty(\mathbb S^{n-1})$ with
$\int\xi_h=\int\xi$, $\xi_h\to\xi$ in $L^1$, and
$D_\tau\xi_h\stackrel{*}{\rightharpoonup}D_\tau\xi$ with
$|D_\tau\xi_h|(\mathbb S^{n-1})\to |D_\tau\xi|(\mathbb S^{n-1})$. Since the
recession function of $p\mapsto |p|^2(1+\epsilon^2|p|^2)^{-1/2}$ is
$\epsilon^{-1}|p|$, Reshetnyak's theorem gives
$\mathcal E_\epsilon(\xi_h) \to \mathcal E_\epsilon(\xi)$ and
$\mathcal A_\epsilon(\xi_h)\to\mathcal A_\epsilon(\xi)$. We may therefore work
in the smooth setup.

We first consider the case where $\sz=0$ in \eqref{small average}.

\medskip
\noindent{\bf Case 1}: $\|\ez \xi\|_{L^\infty(\mathbb S^{n-1})}\le 1$. Then
\begin{equation}\label{comparable}
 \frac 1 2 \int_{\mathbb S^{n-1}}  {|\xi|^2}  \,d\mathscr H^{n-1}\le \int_{\mathbb S^{n-1}} \frac{|\xi|^2}{\sqrt{1+\ez^2|\xi|^2}} \,d\mathscr H^{n-1}\le \int_{\mathbb S^{n-1}}  {|\xi|^2}  \,d\mathscr H^{n-1}.
\end{equation}
We claim that, for any $0<\bz<1$,
\begin{equation}\label{youngs}
|\xi||\nabla_\tau \xi|\le C(\bz) \frac{|\nabla_\tau  \xi|^2}{\sqrt{1+\ez^2|\nabla_\tau \xi|^2}} + \bz |\xi|^2.
\end{equation}
Indeed, if $\ez|\nabla_\tau \xi|\le 1$, then by Young's inequality,
$$|\xi||\nabla_\tau \xi|\le C(\beta)  |\nabla_\tau \xi|^2 +\bz |\xi|^2\le \sqrt{2} C(\bz) \frac{|\nabla_\tau  \xi|^2}{\sqrt{1+\ez^2|\nabla_\tau \xi|^2}} + \bz |\xi|^2. $$
When $\ez |\nabla_\tau \xi|\ge 1$, it follows from $\ez|\xi|\le 1$ that
$$|\xi||\nabla_\tau \xi|\le \ez^{-1} |\nabla_\tau \xi|\le \sqrt{2}  \frac{|\nabla_\tau  \xi|^2}{\sqrt{1+\ez^2|\nabla_\tau \xi|^2}}. $$
Thus, we conclude the validity of \eqref{youngs}.

Now, by applying a $(1,1)$-Poincar\'e inequality and \eqref{youngs} (recall that $\sz=0$ in \eqref{small average}), we get
\begin{align*}
\int_{\mathbb S^{n-1}} |\xi|^2 \,d\mathscr H^{n-1}& \leq  C(n) \int_{\mathbb S^{n-1}} |\xi \nabla_\tau \xi| \,d\mathscr H^{n-1}\\
& \leq  C(n,\bz) \int_{\mathbb S^{n-1}}  \frac{|\nabla_\tau  \xi|^2}{\sqrt{1+\ez^2|\nabla_\tau \xi|^2}}   + C(n)\bz \int_{\mathbb S^{n-1}}   |\xi|^2 \,d\mathscr H^{n-1}.
\end{align*}
Then by taking $\bz>0$ small enough, we obtain
\begin{equation}\label{xi2 L2}
 \int_{\mathbb S^{n-1}} |\xi|^2 \,d\mathscr H^{n-1} \le  C(n)  \int_{\mathbb S^{n-1}} \frac{|\nabla_\tau  \xi|^2}{\sqrt{1+\ez^2|\nabla_\tau \xi|^2}} \,d\mathscr H^{n-1},
\end{equation}
and also
\begin{equation}\label{xi2 W11}
    \int_{\mathbb S^{n-1}} |\xi \nabla_\tau \xi| \,d\mathscr H^{n-1}\le  C(n)  \int_{\mathbb S^{n-1}} \frac{|\nabla_\tau  \xi|^2}{\sqrt{1+\ez^2|\nabla_\tau \xi|^2}} \,d\mathscr H^{n-1}.
\end{equation}
Recalling \eqref{comparable}, this proves the first part of the lemma under the assumption of Case 1.

\medskip
\noindent{\bf Case 2}: General case.
Define the truncation map
$$
T_\epsilon(t):=\max\left\{-\frac1\epsilon,\min\Bigl\{t,\frac1\epsilon\Bigr\}\right\}
$$
and consider the function
$$
F(c):=\int_{\mathbb S^{n-1}} T_\epsilon(\xi-c)\,d\mathscr H^{n-1}.
$$
Since $T_\epsilon$ is continuous and nondecreasing, $F$ is continuous and nonincreasing.
Moreover,
$$
\lim_{c\to-\infty}F(c)=\frac{\mathscr H^{n-1}(\mathbb S^{n-1})}{\epsilon}>0,
\qquad
\lim_{c\to+\infty}F(c)=-\frac{\mathscr H^{n-1}(\mathbb S^{n-1})}{\epsilon}<0,
$$
hence the intermediate value theorem yields the existence of $c_\epsilon\in\mathbb R$ for which 
$$
\int_{\mathbb S^{n-1}} T_\epsilon(\xi-c_\epsilon)\,d\mathscr H^{n-1}=0.
$$
Set
$$
\tilde\xi:=\xi-c_\epsilon,\qquad
\psi:=T_\epsilon(\tilde\xi),\qquad
\eta:=\tilde\xi-\psi
=
\Bigl(\tilde\xi-\frac 1 \ez \Bigr)_+-\Bigl(\tilde\xi+\frac 1 \ez \Bigr)_-.
$$
Then
$$
\tilde\xi=\psi+\eta,\qquad \int_{\mathbb S^{n-1}}\psi\,d\mathscr H^{n-1}=0,
\qquad \|\epsilon\psi\|_{L^\infty(\mathbb S^{n-1})}\le 1.
$$
Also,
$$
\nabla_\tau\psi=\left\{
\begin{array}{ll}
\nabla_\tau\xi& \text{$\mathscr H^{n-1}$-a.e. on $\mathbb S^{n-1}\cap \{\ez|\tilde\xi|\leq  1\}$},\\
0& \text{$\mathscr H^{n-1}$-a.e. on $\mathbb S^{n-1}\cap \{\ez|\tilde\xi|> 1\}$},
\end{array}\right.
$$
and similarly
$$
\nabla_\tau\eta=\left\{
\begin{array}{ll}
0& \text{$\mathscr H^{n-1}$-a.e. on $\mathbb S^{n-1}\cap \{\ez|\tilde\xi|\leq  1\}$},\\
\nabla_\tau\xi& \text{$\mathscr H^{n-1}$-a.e. on $\mathbb S^{n-1}\cap \{\ez|\tilde\xi|> 1\}$}.
\end{array}\right.
$$
Since
$$
\left|\frac{d}{dt}\frac{|t|^2}{\sqrt{1+\epsilon^2|t|^2}}\right|
\le C\,\epsilon^{-1}\qquad\forall\,t\in\mathbb R,
$$
we have
$$
\mathcal A_\epsilon(\xi)\le \mathcal A_\epsilon(\tilde\xi)+C\epsilon^{-1}|c_\epsilon|.
$$
On the other hand, since $\int_{\mathbb S^{n-1}}\xi\,d\mathscr H^{n-1}=0$ and
$\int_{\mathbb S^{n-1}}\psi\,d\mathscr H^{n-1}=0$, we get
$$
-c_\epsilon\,\mathscr H^{n-1}(\mathbb S^{n-1})
=\int_{\mathbb S^{n-1}}\tilde\xi\,d\mathscr H^{n-1}
=\int_{\mathbb S^{n-1}}\eta\,d\mathscr H^{n-1},
$$
hence
$$
|c_\epsilon|\le C(n)\int_{\mathbb S^{n-1}}|\eta|\,d\mathscr H^{n-1}.
$$
Therefore
$$
\mathcal A_\epsilon(\xi)\le \mathcal A_\epsilon(\tilde\xi)+C(n)\epsilon^{-1}\int_{\mathbb S^{n-1}}|\eta|\,d\mathscr H^{n-1}.
$$
We claim that, pointwise,
\begin{equation}\label{pointwise tilde xi}
\frac{|\tilde\xi|^2}{\sqrt{1+\epsilon^2|\tilde\xi|^2}}
\le
C |\psi|^2 + C\epsilon^{-1}|\eta|.
\end{equation}
Indeed, if $|\eta|\le |\psi|$, then $|\tilde\xi|\le 2|\psi|$ and $|\psi|\le \epsilon^{-1}$, so
$$
\frac{|\tilde\xi|^2}{\sqrt{1+\epsilon^2|\tilde\xi|^2}}
\le |\tilde\xi|^2 \le 4|\psi|^2.
$$
If $|\eta|>|\psi|$, then $|\tilde\xi|\le 2|\eta|$, and hence
$$
\frac{|\tilde\xi|^2}{\sqrt{1+\epsilon^2|\tilde\xi|^2}}
\le \epsilon^{-1}|\tilde\xi|\le 2\epsilon^{-1}|\eta|,
$$
which concludes the validity of \eqref{pointwise tilde xi}. 
Thus
\begin{equation}\label{two terms}
    \begin{split}
     \int_{\mathbb S^{n-1}} \frac{|\xi|^2}{\sqrt{1+\ez^2|\xi|^2}} \,d\mathscr H^{n-1}
 & \leq  C \int_{\mathbb S^{n-1}}  {|\psi|^2}  \,d\mathscr H^{n-1}+ C \ez^{-1}\int_{\mathbb S^{n-1}} |\eta|  \, \,d\mathscr H^{n-1}.
    \end{split}
\end{equation}
Also, by applying the results obtained in Case 1 to $\psi$ (recall that $\int_{\mathbb S^{n-1}}\psi\,d\mathscr H^{n-1}=0$), we get
\begin{equation}
    \begin{split}
   \int_{\mathbb S^{n-1}} |\psi|^2 \,d\mathscr H^{n-1}
  &\le  C(n)  \int_{\mathbb S^{n-1}} \frac{|\nabla_\tau  \psi|^2}{\sqrt{1+\ez^2|\nabla_\tau \psi|^2}} \,d\mathscr H^{n-1}\\
 &=  C(n)  \int_{\mathbb S^{n-1}\cap \{\ez|\tilde\xi|\le 1\}} \frac{|\nabla_\tau  \xi|^2}{\sqrt{1+\ez^2|\nabla_\tau \xi|^2}} \,d\mathscr H^{n-1}\\
  &\le  C(n)  \int_{\mathbb S^{n-1} } \frac{|\nabla_\tau  \xi|^2}{\sqrt{1+\ez^2|\nabla_\tau \xi|^2}} \,d\mathscr H^{n-1}.  \label{first part}
    \end{split}
\end{equation}
and
\begin{equation}
    \begin{split}
\int_{\mathbb S^{n-1}} |\psi \nabla_\tau \psi| \,d\mathscr H^{n-1}& \le  C(n)  \int_{\mathbb S^{n-1}} \frac{|\nabla_\tau  \psi|^2}{\sqrt{1+\ez^2|\nabla_\tau \psi|^2}} \,d\mathscr H^{n-1}  \\
& \le  C(n)  \int_{\mathbb S^{n-1} } \frac{|\nabla_\tau  \xi|^2}{\sqrt{1+\ez^2|\nabla_\tau \xi|^2}} \,d\mathscr H^{n-1}. \label{psi W11}
    \end{split}
\end{equation}
Combining \eqref{first part} and \eqref{two terms}, we see that
it suffices to show that
\begin{equation} \label{eta controlled by xi}
\ez^{-1}\int_{\mathbb S^{n-1}} |\eta|  \, \,d\mathscr H^{n-1}\le C(n)\int_{\mathbb S^{n-1}}   \frac{|\nabla_\tau  \xi|^2}{\sqrt{1+\ez^2|\nabla_\tau \xi|^2}}  \, \,d\mathscr H^{n-1}.
\end{equation}
We now claim that $0$ is the median of $\eta$. Indeed, note that
$$
\{\eta>0\}=\{\psi= \epsilon^{-1}\} \quad \text{  and }\quad  \{\eta<0\}=\{\psi=-\epsilon^{-1}\}.
$$
Then, since $\psi\in [-\epsilon^{-1},  \epsilon^{-1}]$ and $\int_{\mathbb S^{n-1}}\psi\,d\mathscr H^{n-1}=0$,   
$$ \text{ both }  \ \mathscr H^{n-1}(\{\eta>0\}) \ \text{ and } \ \mathscr H^{n-1}(\{\eta<0\})$$ 
are no more than  $\frac 1 2\mathscr H^{n-1}(\mathbb S^{n-1})$, which proves the claim. Thus,  the $(1,1)$-Poincar\'e inequality yields
$$
\int_{\mathbb S^{n-1}} |\eta| \,d\mathscr H^{n-1}
\le C(n)\int_{\mathbb S^{n-1}} |\nabla_\tau\eta|\,d\mathscr H^{n-1}.
$$

\medskip
\noindent {\bf SubCase 2.1}:
We first assume that
\begin{equation}\label{eta small}
\int_{\mathbb S^{n-1}} |\eta|  \, \,d\mathscr H^{n-1}\le \ez^{-1}|\mathbb S^{n-1}\cap \{\ez|\tilde\xi|>1\}|.
\end{equation}
Therefore, as $|\psi|=\ez^{-1}$ on $\mathbb S^{n-1}\cap\{\ez |\tilde\xi|> 1\}$,
\begin{equation}\label{eta small2}
\ez^{-1}\int_{\mathbb S^{n-1}} |\eta|  \, \,d\mathscr H^{n-1}\leq
\int_{\mathbb S^{n-1}\cap\{\ez|\tilde\xi|>1\}}  \ez^{-2} \,d\mathscr H^{n-1}\leq \int_{\mathbb S^{n-1}}  {|\psi|^2}  \,d\mathscr H^{n-1}.
\end{equation}
Thus, \eqref{first part} directly gives \eqref{eta controlled by xi}.

\medskip
\noindent {\bf SubCase 2.2}:
When
\begin{equation}\label{eta large}
\int_{\mathbb S^{n-1}} |\eta|  \, \,d\mathscr H^{n-1}\ge \ez^{-1}|\mathbb S^{n-1}\cap\{\ez|\tilde\xi|>1\}|,
\end{equation}
we write
$$\mathcal E= \{\ez|\nabla_\tau\xi|<1\}\cap\{\ez|\tilde\xi|>1\}\cap\mathbb S^{n-1}, \quad  \mathcal F= \{\ez|\nabla_\tau\xi|\ge 1\}\cap\{\ez|\tilde\xi|>1\}\cap\mathbb S^{n-1}.$$
Then, since $\nabla_\tau\eta=0$ $\mathscr H^{n-1}$-a.e. on $\mathbb S^{n-1}\cap \{\ez|\tilde\xi|\leq  1\}$, the $(1,1)$-Poincar\'e inequality implies
\begin{equation}
    \begin{split}
\ez^{-1}\int_{\mathbb S^{n-1}} |\eta|  \, \,d\mathscr H^{n-1} & \leq  C(n) \ez^{-1} \int_{\mathbb S^{n-1} }  |\nabla_\tau \eta| \, \,d\mathscr H^{n-1}\\
& =  C(n) \ez^{-1} \int_{\mathcal E}  |\nabla_\tau \eta| \, \,d\mathscr H^{n-1}+ C(n) \ez^{-1} \int_{\mathcal F}  |\nabla_\tau \eta| \, \,d\mathscr H^{n-1}. \label{D eta}
    \end{split}
\end{equation}

\medskip
\noindent {\bf SubCase 2.2.1}:
Suppose first that
\begin{equation}\label{F large}
\int_{\mathcal F}  |\nabla_\tau \eta| \, \,d\mathscr H^{n-1}\ge \int_{\mathcal E}  |\nabla_\tau \eta| \, \,d\mathscr H^{n-1}.
\end{equation}
Then, by the definition of $\mathcal F$ and \eqref{D eta},
\begin{multline}\label{control 1}
\ez^{-1}\int_{\mathbb S^{n-1}} |\eta|  \, \,d\mathscr H^{n-1}  \leq  C(n) \ez^{-1} \int_{\mathbb S^{n-1} }  |\nabla_\tau \eta| \, \,d\mathscr H^{n-1}\\
\leq  2C(n) \ez^{-1} \int_{\mathcal F}  |\nabla_\tau \eta| \, \,d\mathscr H^{n-1}\le 2\sqrt{2} C(n)  \int_{\mathcal F} \frac{|\nabla_\tau  \xi|^2}{\sqrt{1+\ez^2|\nabla_\tau \xi|^2}} \, \,d\mathscr H^{n-1},
    \end{multline}
where we used that $\nabla_\tau\eta=\nabla_\tau  \xi$ $\mathscr H^{n-1}$-a.e. on $\mathbb S^{n-1}\cap \{\ez|\tilde\xi|>  1\}$ together with the definition of $\mathcal F$.
This gives \eqref{eta controlled by xi}.

\medskip
\noindent {\bf SubCase 2.2.2}: Now the remaining case is
\begin{equation}\label{E large}
\int_{\mathcal F}  |\nabla_\tau \eta| \, \,d\mathscr H^{n-1}\le \int_{\mathcal E }  |\nabla_\tau \eta| \, \,d\mathscr H^{n-1}.
\end{equation}
Notice that  \eqref{eta large} yields
$$|\mathbb S^{n-1}\cap\{\ez|\tilde\xi|>1\}|= \int_{\mathbb S^{n-1}\cap\{\ez|\tilde\xi|>1\}} 1\, d\mathscr H^{n-1} \le \ez \int_{\mathbb S^{n-1}} |\eta|  \, \,d\mathscr H^{n-1}.$$
Therefore, since $\tilde\xi=\psi+\eta$, the definition of $\mathcal E$ gives
\begin{equation}\label{control 2}
|\mathcal E|\le \int_{\mathcal E} \ez|\tilde\xi| \,  \,d\mathscr H^{n-1}= \int_{\mathbb S^{n-1}\cap\{\ez|\tilde\xi|>1\}} (1+\ez|\eta|) \,  \,d\mathscr H^{n-1}\le 2\ez \int_{\mathbb S^{n-1}} |\eta|  \, \,d\mathscr H^{n-1}.
\end{equation}
Thus, by \eqref{D eta}, \eqref{E large}, H\"older's inequality, the definition of $\mathcal E$, and \eqref{control 2}, we get
\begin{align*}
\ez^{-1}\int_{\mathbb S^{n-1}} |\eta|  \, \,d\mathscr H^{n-1}& \leq 2 C(n) \ez^{-1} \int_{\mathcal E}  |\nabla_\tau \eta| \, \,d\mathscr H^{n-1} =2C(n) \ez^{-1} \int_{\mathcal E}  |\nabla_\tau \xi| \, \,d\mathscr H^{n-1}\nonumber\\
& \leq 2 C(n)\ez^{-1} \left(\int_{\mathcal E}   \frac{|\nabla_\tau  \xi|^2}{\sqrt{1+\ez^2|\nabla_\tau \xi|^2}}  \, \,d\mathscr H^{n-1}\right)^{1/2} \left(\int_{\mathcal E}    {\sqrt{1+\ez^2|\nabla_\tau \xi|^2}}  \, \,d\mathscr H^{n-1}\right)^{1/2}\nonumber\\
& \leq 2\sqrt{2} C(n)\ez^{-1} |\mathcal E|^{1/2} \left(\int_{\mathcal E}   \frac{|\nabla_\tau  \xi|^2}{\sqrt{1+\ez^2|\nabla_\tau \xi|^2}}  \, \,d\mathscr H^{n-1}\right)^{1/2}\nonumber\\
& \leq 4 C(n) \ez^{-1/2}  \left(\int_{\mathbb S^{n-1}}   |\eta|  \, \,d\mathscr H^{n-1} \right)^{1/2}    \left(\int_{\mathcal E}   \frac{|\nabla_\tau  \xi|^2}{\sqrt{1+\ez^2|\nabla_\tau \xi|^2}}  \, \,d\mathscr H^{n-1}\right)^{1/2}
\end{align*}
which yields \eqref{eta controlled by xi}  and
\begin{equation}\label{eta W11}
    \ez^{-1} \int_{\mathcal E}  |\nabla_\tau \eta| \, \,d\mathscr H^{n-1} \le C(n)\int_{\mathcal E}   \frac{|\nabla_\tau  \xi|^2}{\sqrt{1+\ez^2|\nabla_\tau \xi|^2}}  \, \,d\mathscr H^{n-1}.
\end{equation}
This concludes the proof of the first part of the lemma when $\sz=0$.

\medskip
We now treat the general case in \eqref{small average}. Let
$$
m:=\frac{1}{n|B|}\int_{\mathbb S^{n-1}}\xi\,d\mathscr H^{n-1},
\qquad
\bar\xi:=\xi-m.
$$
Then $\int_{\mathbb S^{n-1}}\bar\xi\,d\mathscr H^{n-1}=0$ and $\mathcal E_\epsilon(\bar\xi)=\mathcal E_\epsilon(\xi)$.
Set
$$
g_\epsilon(t):=\min\{|t|^2,\epsilon^{-1}|t|\}.
$$
One easily checks that
$$
\frac1{\sqrt2}\,g_\epsilon(t)\le \frac{|t|^2}{\sqrt{1+\epsilon^2|t|^2}}\le g_\epsilon(t)
\qquad\forall\, t\in\mathbb R,
$$
and that
$$
g_\epsilon(a+b)\le 2g_\epsilon(a)+2g_\epsilon(b)\qquad\forall\, a,b\in\mathbb R.
$$
Hence
$$
\mathcal A_\epsilon(\xi)=\mathcal A_\epsilon(\bar\xi+m)\le C \mathcal A_\epsilon(\bar\xi)+C \mathcal A_\epsilon(m)\le C \mathcal A_\epsilon(\bar\xi)+C |m|^2.
$$
Applying the already proved case $\sz=0$ to $\bar\xi$, we get
$$
\mathcal A_\epsilon(\bar\xi)\le C(n)\mathcal E_\epsilon(\bar\xi)=C(n)\mathcal E_\epsilon(\xi).
$$
On the other hand, \eqref{small average} gives
$$
|m|^2\le \sz^2 \mathcal A_\epsilon(\xi).
$$
Choosing $\sz=\sz(n)>0$ sufficiently small, we can absorb the $|m|^2$ term into the left-hand side and conclude the first part of the lemma in full generality.

\medskip

\noindent{\bf Step 2: Energy convergence.} In this second part, we argue directly in $BV$.
By the first part, the quantities
$$
\mathcal  A_{\epsilon_i}(\xi_i)
$$
are uniformly bounded. In particular, \eqref{small average seq} yields a uniform bound on the averages
$$
m_i:=\frac{1}{n|B|}\int_{\mathbb S^{n-1}}\xi_i\,d\mathscr H^{n-1}.
$$
Also, since $\epsilon_i<1$ for $i$ large,
$$
|a|\le 1+\frac{2|a|^2}{\sqrt{1+|a|^2}}
\le 1+\frac{2|a|^2}{\sqrt{1+\epsilon_i^2|a|^2}}
\qquad\forall\, a\in\mathbb R^n,
$$
and therefore
$$
\int_{\mathbb S^{n-1}} |\nabla_\tau \xi_i|\,d\mathscr H^{n-1}
+|D^s_\tau \xi_i|(\mathbb S^{n-1})
\le C(n,M).
$$
Hence, up to passing to a subsequence,
$$
\xi_i\to \xi \qquad\text{strongly in }L^p(\mathbb S^{n-1})
$$
for some $1<p=p(n)<2$, and then a.e. on $\mathbb S^{n-1}$.

For each $i$, let $c_i\in\mathbb R$ be such that
$$
\int_{\mathbb S^{n-1}} T_{\epsilon_i}(\xi_i-c_i)\,d\mathscr H^{n-1}=0.
$$
Set
$$
\tilde\xi_i:=\xi_i-c_i,\qquad
\psi_i:=T_{\epsilon_i}(\tilde\xi_i),\qquad
\eta_i:=\tilde\xi_i-\psi_i.
$$
Then the argument of Case 2 above applies verbatim to $(\tilde\xi_i,\psi_i,\eta_i)$, and gives the analogues of \eqref{two terms}, \eqref{first part}, \eqref{psi W11}, \eqref{eta controlled by xi}, \eqref{control 1}, and \eqref{eta W11}, with constants independent of $i$.

In particular, from \eqref{first part}, \eqref{psi W11}, and the $BV$ chain
rule for the truncation, we obtain
$$
\sup_i \left(
\int_{\mathbb S^{n-1}} |\psi_i|^2\,d\mathscr H^{n-1}
+
\int_{\mathbb S^{n-1}} |\psi_i\nabla_\tau \psi_i|\,d\mathscr H^{n-1}
+\epsilon_i^{-1}|D^s_\tau\psi_i|(\mathbb S^{n-1})
\right)<\infty.
$$
Moreover $\|D_\tau\psi_i\|(\mathbb S^{n-1})\le
\|D_\tau\xi_i\|(\mathbb S^{n-1})$. Hence $\psi_i$ is compact in $L^1$, while
$\psi_i^2$ is compact in $L^1$, since
$|D_\tau(\psi_i^2)|\le C|\psi_i\nabla_\tau\psi_i|\,\HH^{n-1}
+C\epsilon_i^{-1}|D^s_\tau\psi_i|$. Thus, up to extracting a further subsequence,
\begin{equation}\label{strong L2 psi}
\text{$\psi_i$ converges strongly in $L^2(\mathbb S^{n-1})$ to some function $v$.}
\end{equation}
Since $\epsilon_i\psi_i\to 0$ a.e. on $\mathbb S^{n-1}$ and
$$\frac{|\psi_i|^2}{\sqrt{1+\ez_i^2|\psi_i|^2}}\le |\psi_i|^2, $$
Vitali's convergence theorem yields
\begin{equation}\label{strong L2 psi2}
\lim_{i\to \infty}\int_{\mathbb S^{n-1}}  \frac{|\psi_i|^2}{\sqrt{1+\ez_i^2|\psi_i|^2}} \,d\mathscr H^{n-1}=  \int_{\mathbb S^{n-1}} {|v|^2}  \,d\mathscr H^{n-1}.
\end{equation}

Let
$$
E_i:=\{\eta_i\neq 0\}=\{\epsilon_i|\tilde\xi_i|>1\}\subset \{|\psi_i|=\epsilon_i^{-1}\}.
$$
Then, by \eqref{strong L2 psi},
\begin{equation}\label{measure to 0}
\epsilon_i^{-2}|E_i|
=
\int_{E_i} |\psi_i|^2\,d\mathscr H^{n-1}
\le
\int_{\{|\psi_i|\ge \epsilon_i^{-1}\}} |\psi_i|^2\,d\mathscr H^{n-1}
\longrightarrow 0.
\end{equation}
Moreover, the analogue of \eqref{eta controlled by xi} gives
$$
\epsilon_i^{-1}\int_{\mathbb S^{n-1}}|\eta_i|\,d\mathscr H^{n-1}\le C(n,M),
$$
hence
$$
\int_{\mathbb S^{n-1}}|\eta_i|\,d\mathscr H^{n-1}\to 0.
$$
Since
$$
\int_{\mathbb S^{n-1}}\xi_i\,d\mathscr H^{n-1}
=
c_i\,\mathscr H^{n-1}(\mathbb S^{n-1})+\int_{\mathbb S^{n-1}}\eta_i\,d\mathscr H^{n-1},
$$
and the averages of $\xi_i$ are uniformly bounded, it follows that $\{c_i\}$ is bounded. Up to a subsequence,
$
c_i\to c\in\mathbb R.
$
Then, by \eqref{strong L2 psi},
$$
\psi_i+c_i \to v+c \qquad\text{strongly in }L^2(\mathbb S^{n-1}).
$$
Since $\eta_i\to 0$ in $L^1(\mathbb S^{n-1})$ and $\xi_i=\psi_i+c_i+\eta_i$, while $\xi_i\to\xi$ in measure, we conclude that
$$
\xi=v+c \qquad\text{a.e. on }\mathbb S^{n-1}.
$$

We now show that
$$
\lim_{i\to\infty} \mathcal  A_{\epsilon_i}(\xi_i)=\int_{\mathbb S^{n-1}} |\xi|^2\,d\mathscr H^{n-1}.
$$
On $\mathbb S^{n-1}\setminus E_i$, one has $\eta_i=0$, hence $\xi_i=\psi_i+c_i$ there. Therefore
$$
|\mathcal A_{\epsilon_i}(\xi_i)-\mathcal A_{\epsilon_i}(\psi_i+c_i)|
\le
\int_{E_i}\frac{|\xi_i|^2}{\sqrt{1+\epsilon_i^2|\xi_i|^2}}\,d\mathscr H^{n-1}
+
\int_{E_i}\frac{|\psi_i+c_i|^2}{\sqrt{1+\epsilon_i^2|\psi_i+c_i|^2}}\,d\mathscr H^{n-1}.
$$
Since $|\psi_i|=\epsilon_i^{-1}$ on $E_i$ and $c_i$ is bounded, for $i$ large enough
$$
|\psi_i+c_i|\le 2\epsilon_i^{-1}\qquad\text{on }E_i,
$$
and therefore
$$
\int_{E_i}\frac{|\psi_i+c_i|^2}{\sqrt{1+\epsilon_i^2|\psi_i+c_i|^2}}\,d\mathscr H^{n-1}
\le C\,\epsilon_i^{-2}|E_i|\to 0
$$
by \eqref{measure to 0}. Also, on $E_i$,
$$
\xi_i=(\psi_i+c_i)+\eta_i.
$$
Since $|\psi_i+c_i|\ge \frac{1}{2}\epsilon_i^{-1}$ on $E_i$ for $i$ large, one has
$$
\frac{|\xi_i|^2}{\sqrt{1+\epsilon_i^2|\xi_i|^2}}
\le C\frac{|\psi_i+c_i|^2}{\sqrt{1+\epsilon_i^2|\psi_i+c_i|^2}}+C\epsilon_i^{-1}|\eta_i|
\qquad\text{on }E_i.
$$
Hence
$$
\int_{E_i}\frac{|\xi_i|^2}{\sqrt{1+\epsilon_i^2|\xi_i|^2}}\,d\mathscr H^{n-1}
\le
C\int_{E_i}\frac{|\psi_i+c_i|^2}{\sqrt{1+\epsilon_i^2|\psi_i+c_i|^2}}\,d\mathscr H^{n-1}
+
C\epsilon_i^{-1}\int_{\mathbb S^{n-1}}|\eta_i|\,d\mathscr H^{n-1}.
$$
If SubCase 2.1 occurs along a subsequence, then by the analogue of \eqref{eta small2} and \eqref{strong L2 psi},
$$
\epsilon_i^{-1}\int_{\mathbb S^{n-1}}|\eta_i|\,d\mathscr H^{n-1}\to 0.
$$
If SubCase 2.2.1 or SubCase 2.2.2 occurs along a subsequence, then the analogues of \eqref{control 1} or \eqref{eta W11} imply that $\epsilon_i^{-1}\eta_i$ is uniformly bounded in $BV(\mathbb S^{n-1})$; since its support is contained in $E_i$ and $|E_i|\to 0$, compactness in $L^1$ implies
$$
\epsilon_i^{-1}\eta_i\to 0 \qquad\text{strongly in }L^1(\mathbb S^{n-1}).
$$
Thus, in all cases,
$$
\epsilon_i^{-1}\int_{\mathbb S^{n-1}}|\eta_i|\,d\mathscr H^{n-1}\to 0,
$$
and consequently
$$
\mathcal A_{\epsilon_i}(\xi_i)-\mathcal A_{\epsilon_i}(\psi_i+c_i)\to 0.
$$
Finally, since $\psi_i+c_i\to \xi$ strongly in $L^2(\mathbb S^{n-1})$ and
$$
\frac{|\psi_i+c_i|^2}{\sqrt{1+\epsilon_i^2|\psi_i+c_i|^2}}
\le |\psi_i+c_i|^2,
$$
Vitali's convergence theorem again yields
$$
\lim_{i\to \infty} \mathcal A_{\epsilon_i}(\psi_i+c_i)
=
\int_{\mathbb S^{n-1}} |\xi|^2\,d\mathscr H^{n-1}.
$$
Combining the last two formulae, we obtain \eqref{eq:xi_i to xi}. This concludes the proof.

\end{proof}

%

Now we are ready to generalize (and improve, as needed in our setting) the spectral gap result of Fuglede \cite{Fuglede1989}, originally proved for Lipschitz perturbations, to BV perturbations of the sphere.
In the following, $\{Y_0,\ldots,Y_n\}$ denote the first $n+1$ eigenfunctions of the Laplace--Beltrami operator on the sphere, namely
\begin{equation}
    \label{LB}
    Y_0(\theta)=\frac{1}{\sqrt{n|B|}},\qquad Y_i(\theta)=\frac{\theta_i}{\sqrt{|B|}}\quad i=1,\ldots,n,
\end{equation}
see for instance \cite[Proof of Theorem 3.1, Step 3]{F2015}.

\begin{lem}\label{spectrum}
Given any $N\ge 2n$, there exist $\zeta=\zeta(n)>0$, $\sz_0=\sz_0(n,N)>0$, and $\delta_0=\delta_0(n,N)>0$ so that the following holds.\\
For each $0<\sz<\sz_0$, any set Borel $K\subset \mathbb S^{n-1}$ with  $\mathscr H^{n-1}(K)\le \dz_0$, and any function $\xi \in BV(\mathbb S^{n-1})$ satisfying both
$$\int_{\mathbb S^{n-1}}  \frac{|\nabla_\tau  \xi|^2}{\sqrt{1+ |\nabla_\tau \xi|^2}} \,d\mathscr H^{n-1}+ \int_{\mathbb S^{n-1}}|D^s_\tau \xi| \le \sz,$$
and
\begin{equation}\label{almost orthogonal general}
\biggl|\int_{\mathbb S^{n-1}} Y_i\,\xi \,d\mathscr H^{n-1}\biggr| \leq C(n)\sigma \left(\int_{\mathbb S^{n-1}} \frac{|\xi|^2}{\sqrt{1+ |\xi|^2}} \,d\mathscr H^{n-1}\right)^{\frac 1 2}\qquad \text{for }i=0,\ldots,n,
\end{equation}
we have:\\
either
$$\int_{\mathbb S^{n-1}\setminus K}  \frac{|\nabla_\tau  \xi|^2}{\sqrt{1+ |\nabla_\tau \xi|^2}} \,d\mathscr H^{n-1} + \int_{\mathbb S^{n-1}\setminus K}|D^s_\tau \xi|\geq  \left(n-1+3 \zeta\right)  \int_{\mathbb S^{n-1}} \frac{|\xi|^2}{\sqrt{1+ |\xi|^2}} \,d\mathscr H^{n-1},$$ 
or
$$\int_{\mathbb S^{n-1}}  \frac{|\nabla_\tau  \xi|^2}{\sqrt{1+ |\nabla_\tau \xi|^2}} \,d\mathscr H^{n-1} + \int_{\mathbb S^{n-1}}|D^s_\tau \xi|\geq 2N  \int_{\mathbb S^{n-1}} \frac{|\xi|^2}{\sqrt{1+ |\xi|^2}} \,d\mathscr H^{n-1}.$$
In particular, if in addition $\|\xi\|_{L^{\infty}(\mathbb S^{n-1})}\le \sz$, then:\\
either
$$ \int_{\mathbb S^{n-1}\setminus K}  \frac{|\nabla_\tau  \xi|^2}{\sqrt{1+ |\nabla_\tau \xi|^2}} \,d\mathscr H^{n-1}+ \int_{\mathbb S^{n-1}\setminus K}|D^s_\tau \xi| \geq  \left(n-1+2 \zeta\right) \int_{\mathbb S^{n-1}} |\xi|^2 \,d\mathscr H^{n-1},$$ 
or
$$\int_{\mathbb S^{n-1}}  \frac{|\nabla_\tau  \xi|^2}{\sqrt{1+ |\nabla_\tau \xi|^2}} \,d\mathscr H^{n-1} + \int_{\mathbb S^{n-1}}|D^s_\tau \xi|\geq N  \int_{\mathbb S^{n-1}} |\xi|^2 \,d\mathscr H^{n-1}.$$
\end{lem}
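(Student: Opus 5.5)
Throughout I write $\mathcal A(\xi):=\int_{\mathbb S^{n-1}}|\xi|^2(1+|\xi|^2)^{-1/2}\,d\mathscr H^{n-1}$ and $\mathcal E_U(\xi):=\int_U |\nabla_\tau\xi|^2(1+|\nabla_\tau\xi|^2)^{-1/2}\,d\mathscr H^{n-1}+|D^s_\tau\xi|(U)$, so $\mathcal E(\xi)=\mathcal E_{\mathbb S^{n-1}}(\xi)$. I argue by contradiction and compactness, using Lemma~\ref{strong l2} with $\ez=1$ after a rescaling.

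Suppose the first statement fails for some $N\ge 2n$. Then for every $i$ there are $\sz_i\to0$, sets $K_i$ with $\mathscr H^{n-1}(K_i)\to 0$, and functions $\xi_i$ that satisfy the hypotheses but violate both alternatives:
\[
\mathcal E_{\mathbb S^{n-1}\setminus K_i}(\xi_i)<(n-1+3\zeta)\,\mathcal A(\xi_i),\qquad \mathcal E(\xi_i)<2N\,\mathcal A(\xi_i).
\]
Here $\zeta=\zeta(n)$ is fixed beforehand, for instance $\zeta=\frac{n+1}{8}$, so that $n-1+4\zeta<2n$. This works because the third eigenvalue of $-\Delta_{\mathbb S^{n-1}}$ is $2n$.

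Set $a_i:=\mathcal A(\xi_i)^{1/2}$. Then $a_i>0$, since otherwise $\xi_i=0$ and there is nothing to prove. Since $\mathcal E(\xi_i)\le \sz_i$, the first part of Lemma~\ref{strong l2} gives $a_i^2\le C\sz_i\to0$. Its hypothesis is supplied by \eqref{almost orthogonal general} for $i=0$.

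Rescale by $\hat\xi_i:=\xi_i/a_i$ and $\ez_i:=a_i$. A direct computation gives
\[
\mathcal A(\xi_i)=a_i^2\,\mathcal A_{\ez_i}(\hat\xi_i),\qquad
\mathcal E_U(\xi_i)=a_i^2\Big(\int_U\tfrac{|\nabla_\tau\hat\xi_i|^2}{\sqrt{1+\ez_i^2|\nabla_\tau\hat\xi_i|^2}}+\ez_i^{-1}|D^s_\tau\hat\xi_i|(U)\Big).
\]
Hence $\mathcal A_{\ez_i}(\hat\xi_i)=1$, the rescaled energies $\mathcal E_{\ez_i}(\hat\xi_i)$ are bounded by $2N$, and the orthogonality condition becomes $|\int Y_j\hat\xi_i|\le C\sz_i\to0$. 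By the compactness part of Lemma~\ref{strong l2}, after passing to a subsequence, $\hat\xi_i\to\xi$ in $L^p$ with $\|\xi\|_{L^2}=1$. Passing to the limit in the orthogonality condition (using $L^p$ convergence) gives $\xi\perp Y_0,\dots,Y_n$.

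The main obstacle is lower semicontinuity of the energy restricted to $\mathbb S^{n-1}\setminus K_i$, since the sets $K_i$ vary with $i$. The energy density is at least $c\min\{|p|^2,\ez_i^{-1}|p|\}$, and the truncation argument of Lemma~\ref{strong l2} (the function $\psi_i=T_{\ez_i}(\hat\xi_i-c_i)$) gives $\psi_i\to\xi-c$ weakly in $W^{1,2}$ with $\int|\nabla_\tau\psi_i|^2$ bounded. I would therefore proceed as follows.

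First, I claim that $\int_{\mathbb S^{n-1}\setminus K_i}\frac{|\nabla_\tau\hat\xi_i|^2}{\sqrt{1+\ez_i^2|\nabla_\tau\hat\xi_i|^2}}\ge\int_{\mathbb S^{n-1}\setminus K_i}|\nabla_\tau\psi_i|^2-o(1)$. The set where $\ez_i|\nabla_\tau\hat\xi_i|$ is not small contributes little to $\int|\nabla\psi_i|^2$. I expect to need equi-integrability here, and if it is unavailable I would add a further case split on this set, in the spirit of SubCase 2.2.

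Second, since $\mathscr H^{n-1}(K_i)\to0$, the weak limit of $\nabla_\tau\psi_i\,\chi_{\mathbb S^{n-1}\setminus K_i}$ is still $\nabla_\tau\xi$. Weak lower semicontinuity then gives $\int|\nabla_\tau\xi|^2\le n-1+3\zeta$.

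Since $\xi$ is orthogonal to the first two eigenspaces and has unit $L^2$ norm, the spectral gap gives $\int|\nabla_\tau\xi|^2\ge 2n>n-1+3\zeta$. This is a contradiction, which proves the first statement. The second alternative, involving $2N$, was never used; it only serves to keep the rescaled energies bounded, and I would record this in the write-up.

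For the "in particular" part, assume in addition $\|\xi\|_\infty\le\sz$. Then $\mathcal A(\xi)\ge(1+\sz^2)^{-1/2}\int|\xi|^2$ and $\mathcal A(\xi)\le\int|\xi|^2$. Choosing $\sz_0$ so small that $(n-1+3\zeta)(1+\sz_0^2)^{-1/2}\ge n-1+2\zeta$ and $2N(1+\sz_0^2)^{-1/2}\ge N$ converts the two alternatives into the stated ones.
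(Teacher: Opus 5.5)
Your overall scheme (contradiction, rescaling into the regime of Lemma~\ref{strong l2}, compactness, spectral gap for the limit) is the paper's, and the steps before the lower-semicontinuity step are fine. Normalizing by $\mathcal A$ instead of by the energy works. So does using that the limit is exactly orthogonal to $Y_0,\dots,Y_n$, which lets you apply the exact eigenvalue $2n$. One small point: Lemma~\ref{strong l2} is stated for $\ez<1/2$, so your bound $a_i^2\le C\sz_i$ needs the trivial comparability of the $\ez=1$ and $\ez=1/4$ functionals; normalizing by the energy, as the paper does, makes $\ez_i\to0$ automatic.

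The gap is exactly the step you flag as the main obstacle, and your proposed route there fails. The truncation argument of Lemma~\ref{strong l2} controls $\int|\psi_i|^2$ and $\int|\psi_i\nabla_\tau\psi_i|$, not $\int|\nabla_\tau\psi_i|^2$, and $\psi_i$ is in general only $BV$. In fact $\int|\nabla_\tau\psi_i|^2$ need not be bounded, because $T_{\ez_i}$ cuts values, not slopes. Take a ramp of height $\ez_i$ and width $\ez_i^3$ across a fixed $(n-2)$-dimensional surface in $\mathbb S^{n-1}$. There the density behaves like $\ez_i^{-1}|\nabla_\tau\hat\xi_i|$, so the ramp costs only $O(1)$ rescaled energy. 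It is untouched by the truncation, yet it contributes $\sim\ez_i^{-1}$ to $\int|\nabla_\tau\psi_i|^2$. Hence your first claim, that the energy on $\mathbb S^{n-1}\setminus K_i$ is at least $\int_{\mathbb S^{n-1}\setminus K_i}|\nabla_\tau\psi_i|^2-o(1)$, is false. You also cannot extract a weakly convergent subsequence of $\psi_i$ in $W^{1,2}$. No equi-integrability argument can show that the high-slope set contributes little to $\int|\nabla_\tau\psi_i|^2$, because it does not.

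You correctly identify the high-slope set as the culprit, but it must be removed from the $L^2$ object, not shown to be small in $L^2$. Fix $\ez>0$ and split by gradient size: set $\mathcal R_i:=\{\ez_i|\nabla_\tau\hat\xi_i|\le\ez\}$ and let $\mathcal S_i$ be its complement.

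On $\mathcal S_i$ the density is at least $c\,\ez\,\ez_i^{-1}|\nabla_\tau\hat\xi_i|$. Hence $\int_{\mathcal S_i}|\nabla_\tau\hat\xi_i|\le C(N)\ez^{-1}\ez_i\to0$, and also $|D^s_\tau\hat\xi_i|(\mathbb S^{n-1})\le 2N\ez_i\to0$.

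On $\mathcal R_i$ the density is at least $(1+\ez^2)^{-1/2}|\nabla_\tau\hat\xi_i|^2$. So $g_i:=\nabla_\tau\hat\xi_i\,\chi_{\mathcal R_i\setminus K_i}$ is bounded in $L^2$, and by Cauchy--Schwarz $\int_{\mathcal R_i\cap K_i}|\nabla_\tau\hat\xi_i|\le C\,\mathscr H^{n-1}(K_i)^{1/2}\to0$.

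Therefore $D_\tau\hat\xi_i-g_i\,\mathscr H^{n-1}\to0$ in total variation. Since $\hat\xi_i\to\xi$ in $L^1$, every weak $L^2$ limit of $g_i$ equals $\nabla_\tau\xi$, and in particular $\xi\in W^{1,2}$. Weak lower semicontinuity then gives $n-1+3\zeta\ge(1+\ez^2)^{-1/2}\int|\nabla_\tau\xi|^2\ge(1+\ez^2)^{-1/2}\,2n$. This is a contradiction once $\ez$ is small.

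This is exactly what the paper does with its sets $\mathcal R_{i,\ez}$, $\mathcal S_{i,\ez}$ and the Dunford--Pettis argument. The high-slope region is discarded because its $L^1$ gradient mass vanishes, not because it is small in $L^2$.
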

\begin{proof}
First of all, it follows from \cite[Proof of Theorem 3.1, Step 3]{F2015} that, when $\sigma \ll1 $ in \eqref{almost orthogonal}, the Poincar\'e constant for  functions in $W^{1,\,2}(\mathbb S^{n-1})$ is arbitrarily close to $2n$. More precisely,  for any $\xi \in W^{1,\,2}(\mathbb S^{n-1})$ satisfying
\begin{equation}\label{almost orthogonal}
    \biggl|\int_{\mathbb S^{n-1}} Y_i\,\xi \,d\mathscr H^{n-1}\biggr| \leq C(n)\sigma \left(\int_{\mathbb S^{n-1}}  |\xi|^2   \,d\mathscr H^{n-1}\right)^{\frac 1 2}\qquad \text{for }i=0,\ldots,n,
\end{equation}
it holds that,
$$\int_{\mathbb S^{n-1}} {|\nabla_\tau \xi|^2} \,d\mathscr H^{n-1}\geq \left(2n-C(n)\sz\right) \int_{\mathbb S^{n-1}} \xi^2 \,d\mathscr H^{n-1},    $$
In particular, given $\zeta=\zeta(n) \in \left(0,\frac{n-1}{4}\right)$,  one can choose $\sigma$ sufficiently small so that
\begin{equation} \label{sharp Poincare}
\int_{\mathbb S^{n-1}} {|\nabla_\tau \xi|^2} \,d\mathscr H^{n-1}\geq  (n-1+4\zeta) \int_{\mathbb S^{n-1}} \xi^2 \,d\mathscr H^{n-1} \quad  \forall\, \xi \in W^{1,\,2}(\mathbb S^{n-1}) \text{ satisfying \eqref{almost orthogonal}}.
\end{equation}

We prove the result by contradiction. If it fails,  Lemma~\ref{strong l2} gives numbers $\sigma_i\downarrow0$,
sets $K_i\subset\mathbb S^{n-1}$ with
$\mathscr H^{n-1}(K_i)=:\delta_i\to0$, and functions
$\xi_i\in BV(\mathbb S^{n-1})$ satisfying \eqref{almost orthogonal general},
with
\[
\int_{\mathbb S^{n-1}}  \frac{|\nabla_\tau  \xi_i|^2}{\sqrt{1+ |\nabla_\tau \xi_i|^2}} \,d\mathscr H^{n-1}
+|D^s_\tau\xi_i|(\mathbb S^{n-1})\le\sigma_i,
\]
such that both inequalities
\begin{equation} \label{contradiction1}
\int_{\mathbb S^{n-1}\setminus K_i}  \frac{|\nabla_\tau  \xi_i|^2}{\sqrt{1+ |\nabla_\tau \xi_i|^2}} \,d\mathscr H^{n-1}
+|D^s_\tau\xi_i|(\mathbb S^{n-1}\setminus K_i)
\le  \left(n-1+ 3 \zeta\right)  \int_{\mathbb S^{n-1}} \frac{|\xi_i|^2}{\sqrt{1+ |\xi_i|^2}} \,d\mathscr H^{n-1}, \end{equation}
and
\begin{equation} \label{contradiction2}
\int_{\mathbb S^{n-1}}  \frac{|\nabla_\tau  \xi_i|^2}{\sqrt{1+ |\nabla_\tau \xi_i|^2}} \,d\mathscr H^{n-1}
+|D^s_\tau\xi_i|(\mathbb S^{n-1})
\leq 2N  \int_{\mathbb S^{n-1}} \frac{|\xi_i|^2}{\sqrt{1+ |\xi_i|^2}} \,d\mathscr H^{n-1}
\end{equation}
hold.

Define
$$
\alpha_i:=\left(\int_{\mathbb S^{n-1}}  \frac{|\nabla_\tau  \xi_i|^2}{\sqrt{1+ |\nabla_\tau \xi_i|^2}} \,d\mathscr H^{n-1}
+|D^s_\tau\xi_i|(\mathbb S^{n-1})\right)^{1/2} \to 0,
$$
We may assume $\alpha_i>0$ and consider
$$\hat \xi_i=\frac{\xi_i}{\alpha_i};$$
indeed, if $\alpha_i=0$, then
$\xi_i$ is constant, and the condition \eqref{almost orthogonal general} with $Y_0$ forces
$\xi_i\equiv0$ for $i$ large; then both alternatives are trivially true.
Then according to \eqref{almost orthogonal general}, $\hat \xi_i$ satisfies 
$$\biggl|\int_{\mathbb S^{n-1}} Y_i\,\hat\xi_i \,d\mathscr H^{n-1}\biggr| \leq C(n)\sigma_i \left(\int_{\mathbb S^{n-1}} \frac{|\hat\xi_i|^2}{\sqrt{1+ \alpha_i^2|\hat\xi_i|^2}} \,d\mathscr H^{n-1}\right)^{\frac 1 2}\qquad \text{for }i=0,\ldots,n,$$
and
\begin{equation} \label{norm 1}
\int_{\mathbb S^{n-1}}  \frac{|\nabla_\tau \hat \xi_i|^2}{\sqrt{1+ \alpha_i^2|\nabla_\tau \hat \xi_i|^2}} \,d\mathscr H^{n-1}
+\alpha_i^{-1}|D^s_\tau\hat\xi_i|(\mathbb S^{n-1})=1.
\end{equation}
Also, it follows from \eqref{contradiction1}, \eqref{contradiction2}, \eqref{norm 1}, and Lemma~\ref{strong l2} that
\begin{multline}
\label{other direction}
\max\left\{ \frac{1}{n-1+3 \zeta}\int_{\mathbb S^{n-1}\setminus K_i}  \frac{|\nabla_\tau \hat \xi_i|^2}{\sqrt{1+ \alpha_i^2|\nabla_\tau \hat \xi_i|^2}} \,d\mathscr H^{n-1},
 \frac{1}{2N}\right\}\\
 \le   \int_{\mathbb S^{n-1}} \frac{|\hat \xi_i|^2}{\sqrt{1+  \alpha_i^2 |\hat \xi_i|^2}} \,d\mathscr H^{n-1}\le C(n). 
\end{multline}
In particular, using Lemma~\ref{strong l2} again, up to passing to a subsequence, the functions  $\hat \xi_i$  converge strongly in $L^p(\mathbb S^{n-1})$, $p>1$,
to some function $\hat \xi_\infty$ satisfying \eqref{almost orthogonal}.
Furthermore, 
\begin{equation} \label{norm 2} \lim_{i\to \infty}\int_{\mathbb S^{n-1}} \frac{|\hat \xi_i|^2}{\sqrt{1+\alpha_i^2|\hat\xi_i|^2}} \,d\mathscr H^{n-1}=  \int_{\mathbb S^{n-1}} {|\hat \xi_\infty|^2}  \,d\mathscr H^{n-1}.\end{equation}
In particular $\hat\xi_\infty\not\equiv 0$ as a consequence of \eqref{other direction}. 

Now, given  $\ez>0$ small, define the sets
$$\mathcal R_{i,\ez}=\{x\in \mathbb S^{n-1}\colon |\nabla_\tau \hat \xi_i|\le \ez\alpha_i^{-1}\}, \qquad \mathcal S_{i,\ez}=\{x\in \mathbb S^{n-1}\colon |\nabla_\tau \hat \xi_i|\ge \ez\alpha_i^{-1}\}.$$
Since
\begin{equation}\label{R and S}
  1\ge  \int_{\mathbb S^{n-1}}  \frac{|\nabla_\tau \hat \xi_i|^2}{\sqrt{1+ \alpha_i^2|\nabla_\tau \hat \xi_i|^2}} \,d\mathscr H^{n-1}\ge \frac 1 2 \int_{\mathcal R_{i,\ez}} |\nabla_\tau \hat \xi_i|^2\, \,d\mathscr H^{n-1} +\ez \alpha_i^{-1} \int_{\mathcal S_{i,\ez}} |\nabla_\tau \hat \xi_i|\, \,d\mathscr H^{n-1}
\end{equation} 
and $|\nabla_\tau \hat \xi_i|$ is uniformly bounded away from $0$ inside $\mathcal S_{i,\ez}$, we deduce that 
\begin{equation}\label{no singular part}
\mathscr H^{n-1}(\mathcal S_{i,\ez})+  \int_{\mathcal S_{i,\ez}} |\nabla_\tau \hat \xi_i|\, \,d\mathscr H^{n-1}\le C(n) \ez^{-1}\alpha_i \to 0 \quad \text{ as }i \to \infty. 
\end{equation}
Moreover, \eqref{R and S} also gives
\begin{equation}\label{uniform L2 bound}
    \|\nabla_\tau \hat \xi_i\,\chi_{\mathcal R_{i,\ez}}\|_{L^2(\mathbb S^{n-1})}\le C(n).
\end{equation} 
Also $|D^s_\tau\hat\xi_i|(\mathbb S^{n-1})\le \alpha_i$ by \eqref{norm 1}.
Thanks to  \eqref{no singular part} and \eqref{uniform L2 bound}, the sequence
$\hat \xi_i$ is uniformly bounded in $BV$, the singular parts vanish, and
$\nabla_\tau \hat \xi_i$ is uniformly integrable. Hence, according to Dunford--Pettis theorem (see e.g.\ \cite{B1981}), $\hat {\xi}_\infty \in W^{1,1}(\mathbb S^{n-1})$ and 
$$\nabla_\tau \hat \xi_i  \, \text{ converges weakly to } \, \nabla_\tau \hat {\xi}_\infty \ \text{ in $L^1(\mathbb S^{n-1}, \mathbb  R^n)$}.$$
Combining the uniform integrability of $\nabla_\tau \hat \xi_i$
with the fact that $\mathscr H^{n-1}(\mathcal S_{i,\ez})+\mathscr H^{n-1}(K_i)\to 0$, we actually deduce that 
$$\nabla_\tau \hat \xi_i \chi_{\mathcal R_{i,\ez}\setminus K_i} \, \text{ converges weakly to } \, \nabla_\tau \hat {\xi}_\infty \ \text{ in $L^1(\mathbb S^{n-1}, \mathbb  R^n)$}.$$
Noticing now that the left-hand side above is uniformly bounded in $L^2(\mathbb S^{n-1})$ (see \eqref{uniform L2 bound}), it follows that $\nabla_\tau \hat {\xi}_\infty \in L^2(\mathbb S^{n-1}, \mathbb  R^n)$, and that the weak convergence above actually holds in $L^2$:
$$\nabla_\tau \hat \xi_i\chi_{\mathcal R_{i,\ez}\setminus K_i}    \, \text{ converges weakly to } \, \nabla_\tau \hat {\xi}_\infty\ \text{ in } L^2(\mathbb S^{n-1}, \mathbb  R^n).$$
Therefore, by weak lower-semicontinuity we obtain
\begin{align}
\liminf_{i\to \infty}\int_{\mathcal R_{i,\ez}\setminus K_i}  \frac{|\nabla_\tau \hat \xi_i|^2}{\sqrt{1+ \alpha_i^2|\nabla_\tau \hat \xi_i|^2}} \,d\mathscr H^{n-1}& \ge  (1+\ez^2)^{-1/2} \liminf_{i\to \infty} \int_{\mathcal R_{i,\ez}\setminus K_i} |\nabla_\tau \hat \xi_i|^2  \,d\mathscr H^{n-1} \nonumber\\
& \ge  (1+\ez^2)^{-1/2} \int_{\mathbb S^{n-1}} |\nabla_\tau \hat \xi_\infty|^2 \,d\mathscr H^{n-1}. \label{gradient convergence}
\end{align}
Thus, recalling \eqref{other direction} and \eqref{norm 2}, we get
$$
    \frac{1}{(1+\ez^2)^{1/2} \left(n-1+  3 \zeta\right)} \int_{\mathbb S^{n-1}} {|\nabla_\tau  \hat \xi_\infty|^2}  \,d\mathscr H^{n-1}\le  \int_{\mathbb S^{n-1}} |\hat \xi_\infty|^2  \,d\mathscr H^{n-1}.
$$
Since $\hat \xi_\infty$ satisfies \eqref{almost orthogonal},
this contradicts \eqref{sharp Poincare} for $\ez>0$ sufficiently small. This concludes the proof of the first part of the lemma.

The second part of the lemma follows immediately, since 
$$\frac {1}{\sqrt{1+|\xi|^2}}\ge 1-\sz$$
when  $\|\xi\|_{L^{\infty}(\mathbb S^{n-1})}\le \sz \ll 1.$ Hence, it suffices to  choose $\sz$ sufficiently small, depending on $n$ and $\zeta=\zeta(n)$.
\end{proof}

Our next step is to generalize Lemma~\ref{spectrum} to general sets of finite perimeter. We first prove two auxiliary lemmas. 
In the following, given $U\subset \mathbb R^n$, we denote by $$\mathcal{C}_{U}:=\big\{t\theta\colon t>0, \theta\in U\big\}$$ 
the cone over it.

\begin{lem}\label{small oscillation}
Let $0<\sz=\sz(n)<\frac 1 {4}$ small, and $G\subset \mathbb R^n$ be a set of finite perimeter with $|G|=|B|$ and
$$B_{1-\sigma}\subset G\subset B_{1+\sigma}.$$ 
Define
\begin{align*}
 \mathcal B:=\big\{\theta\in \mathbb S^{n-1}\colon \#\big\{(\partial^*G\setminus \big\{x \in \partial^*G\,:\, x\cdot \nu_{G}=0\big\})\cap \{t\theta\colon t>0\}\big\}\ge 2\big\}.
\end{align*}
Suppose that
 $B_{1/2}\subset G\subset B_2$.
Then, for $\mathscr H^{n-1}$-a.e. $\theta\in\mathcal B$, there are numbers
$$
0<a_0(\theta)<b_1(\theta)<a_1(\theta)<\cdots < b_{k(\theta)}(\theta)<a_{k(\theta)}(\theta),
$$
with $k(\theta)\ge1$, such that
\begin{equation}
\label{eq:ray-decomposition-leveling}
G_\theta:=G\cap\{t\theta:t>0\}
 =
 (0,a_0(\theta))\cup\bigcup_{j=1}^{k(\theta)}(b_j(\theta),a_j(\theta))
\end{equation}
up to one-dimensional negligible sets. 
Moreover,
  \begin{equation}\label{eq:exc G5}
\mathscr H^{n-1}(\big\{x \in \partial^*G\,:\, x\cdot \nu_{G}=0\big\} )+\mathscr H^{n-1}(\mathcal B ) +\mathscr H^{n-1}(\partial^*G \cap \mathcal{C}_{\mathcal B }) \leq C(n) {\rm Exc}(G ).
\end{equation} 
In particular, 
\begin{equation}\label{eq:exc G6}
  \mathscr H^{n-1}(\partial^*G \cap \mathcal{C}_{\mathcal B }) \le C(n) \int_{\partial^*G\cap \mathcal C_{\mathcal B}}  \left(1-\frac{x}{|x|}\cdot \nu_G \right)\, d\mathscr H^{n-1}.
\end{equation}
\end{lem}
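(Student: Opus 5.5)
The plan is to slice $G$ along rays and use the polar form of the coarea/area formula for the radial projection $\pi(x)=x/|x|$ restricted to $\partial^*G$. For $\mathscr H^{n-1}$-a.e. point of $\partial^*G$ the tangential Jacobian of $\pi$ is $|x|^{1-n}\,|x\cdot\nu_G|/|x|$. Hence
\[
\int_{\partial^*G} g(x)\,\frac{|x\cdot \nu_G|}{|x|^{n}}\,d\mathscr H^{n-1}=\int_{\mathbb S^{n-1}}\sum_{x\in \partial^*G\cap\{t\theta\}} g(x)\,d\mathscr H^{n-1}(\theta)
\]
for nonnegative Borel $g$. By the one-dimensional slicing theory for sets of finite perimeter (e.g.\ \cite[Section 18]{M2012}), for a.e.\ $\theta$ the slice $G_\theta$ is a finite union of intervals. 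Its (essential) endpoints are exactly the points of $\partial^*G$ on the ray where $x\cdot\nu_G\neq0$, and at such a point $\nu_G\cdot\theta>0$ exactly at the right endpoints $a_j$ and $<0$ at the left endpoints $b_j$.

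Since $B_{1/2}\subset G\subset B_2$, every slice contains $(0,1/2)$ and is contained in $(0,2)$. So the first interval is $(0,a_0)$ and the last endpoint is a right endpoint. This yields the decomposition \eqref{eq:ray-decomposition-leveling}. Moreover, $\theta\in\mathcal B$ (at least two crossing points) forces $k(\theta)\ge1$, i.e.\ at least one inward crossing $b_1$ with $\theta\cdot\nu_G<0$, hence $1-\theta\cdot\nu_G\ge1$ there.

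For \eqref{eq:exc G5} I would bound each term separately.

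\emph{Tangency set.} On $\{x\cdot\nu_G=0\}$ the integrand of ${\rm Exc}(G)$ equals $1$, so its measure is at most ${\rm Exc}(G)$.

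\emph{The set $\mathcal B$.} Take $g=\chi_{\{x\cdot\nu_G<0\}}$ in the area formula. The left side is at most $2^n\int_{\{x\cdot\nu_G<0\}} d\mathscr H^{n-1}\le 2^n{\rm Exc}(G)$, using $|x|\ge1/2$ on $\partial G$. The right side is at least $\mathscr H^{n-1}(\mathcal B)$, since each $\theta\in\mathcal B$ has some $b_1$. For $\mathscr H^{n-1}(\partial^*G\cap\mathcal C_{\mathcal B})$, split $\partial^*G\cap\mathcal C_{\mathcal B}$ into three parts.
\begin{itemize}
\item The part where $x\cdot\nu_G\le |x|/2$ is controlled by $2{\rm Exc}(G)$.
\item On the remaining part, $|x\cdot\nu_G|/|x|^n\ge c(n)$. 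The area formula then bounds its measure by $C\int_{\mathcal B}\#\{\text{crossings}\}\,d\theta$.
\item On each ray in $\mathcal B$ the number of crossings is $2k+1\le 3k$, and $k$ equals the number of inward crossings. So $\int_{\mathcal B}\#\,d\theta\le 3\int_{\mathbb S^{n-1}}\#\{b_j\}\,d\theta$, which by the previous bullet is at most $C{\rm Exc}(G)$.
\end{itemize}

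The main subtlety is to get \eqref{eq:exc G6}, which localizes the excess to the cone $\mathcal C_{\mathcal B}$. For this I would redo the above estimates keeping only points in $\mathcal C_{\mathcal B}$. Tangential points on rays of $\mathcal C_{\mathcal B}$ and the points with $x\cdot\nu_G\le|x|/2$ are directly bounded by twice the localized excess. Inward crossings also lie in $\mathcal C_{\mathcal B}$ and contribute integrand $\ge1$. Hence the counting bound becomes
\[
\int_{\mathcal B}\#\{b_j\}\le C\int_{\partial^*G\cap\mathcal C_{\mathcal B}}\Big(1-\frac{x}{|x|}\cdot\nu_G\Big)\,d\mathscr H^{n-1},
\]
and everything is localized. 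The expected obstacle is the measure-theoretic justification of the slicing: a.e.\ ray meets $\partial^*G$ only in the essential boundary of the slice; the exceptional tangential points are excluded in the definition of $\mathcal B$; and the area formula holds on the rectifiable set $\partial^*G$ with the stated Jacobian. I would cite the standard slicing results for this and remark that $\mathcal H^{n-1}$-null sets of rays are irrelevant.
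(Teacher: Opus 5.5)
Your proposal is correct and follows essentially the paper's argument. Both use the area formula for the radial projection on $\partial^*G$ and the one-dimensional slicing structure of $G_\theta$. Both also control the nearly radial outward crossings by the inward crossings $b_j$, each of which carries excess integrand at least $1$. The difference is only bookkeeping: the paper pairs each nearly radial $a_i$ with $b_i$ (setting $b_0:=b_1$) inside the per-ray weighted sums. You instead bound the non-radial part directly by the excess and use the count $2k+1\le 3k$. Your version needs only $B_{1/2}\subset G\subset B_2$, not $|x|\in(1-\sigma,1+\sigma)$.
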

\begin{proof}
By the definition of excess,
\begin{equation}\label{eq:exc G1}
\begin{split}
{\rm Exc}(G )&\geq \int_{\big\{x \in \partial^*G\,:\, x\cdot \nu_{G}=0\big\} } \left(1- \frac{x}{|x|}\cdot \nu_{G } \right) \, d\mathscr H^{n-1}=\mathscr H^{n-1}(\big\{x \in \partial^*G\,:\, x\cdot \nu_{G}=0\big\} ),
\end{split}
\end{equation}
\begin{equation}\label{eq:exc G2}
\begin{split}
{\rm Exc}(G )&\geq \int_{\partial^*G \cap \mathcal{C}_{U}} \left(1- \frac{x}{|x|}\cdot \nu_{G } \right) \, d\mathscr H^{n-1} \qquad \forall\,U \subset \mathbb S^{n-1} \text{ Borel.}
\end{split}
\end{equation}
Set
$$
\mathcal T_G:=\big\{x\in\partial^*G:\ x\cdot\nu_G=0\big\}.
$$
Applying the area formula \cite[Theorem 1, Section 3.3.3]{EG1992} to
$(\partial^*G\cap\mathcal C_{\mathcal B})\setminus\mathcal T_G$, it follows from
\eqref{eq:exc G2} with $U=\mathcal B$ that
\begin{align}
{\rm Exc}(G )&\geq \int_{\partial^*G\cap \mathcal C_{\mathcal B}}  \left(1-\frac{x}{|x|}\cdot \nu_G \right)\, d\mathscr H^{n-1} \geq  \int_{\mathcal B } \sum_{\substack{x\in \partial^*G,\\ x\cdot \nu_{G }\neq 0}} \left(1- \frac{x}{|x|}\cdot \nu_{G } \right)\frac{|x|^{n}}{ |x \cdot \nu_{G }|}\, d\mathscr H^{n-1}(\theta),\label{excess G in cone}
\end{align}
and
\begin{equation}\label{measure G in cone}
    \mathscr H^{n-1}((\partial^* G\cap \mathcal C_{\mathcal B})\setminus\mathcal T_G)=\int_{\mathcal B } \sum_{\substack{x\in \partial^*G,\\ x\cdot \nu_{G }\neq 0}} \frac{|x|^{n}}{ |x \cdot \nu_{G }|}\, d\mathscr H^{n-1}(\theta).
\end{equation}
We next observe that, thanks to \cite[Chapters 3.2 and 3.11]{AFP2000} and \cite[Theorem~G]{CCF2005}, for $\mathscr H^{n-1}$-a.e. $\theta\in \mathbb S^{n-1}$ the set 
$$\{t\theta\colon t>0\}\cap G $$
 is of finite perimeter in $(0,\infty)$. In particular, by the definition of $\mathcal B $, for $\mathscr H^{n-1}$-a.e. $\theta \in \mathcal B $  there exist  $\{a_i=a_i(\theta)\}_{0\leq i \leq k(\theta)}$ and $\{b_i=b_i(\theta)\}_{1\leq i \leq k(\theta)}$, with $0<a_0<b_1<a_1<\cdots<b_{k}<a_k$ and $k=k(\theta)\ge 1$, such that
\begin{equation*} 
    \{t\theta\colon t>0\}\cap G = (0,\,a_0) \cup \bigcup_{i=1}^{k}(b_i,\,a_i).
\end{equation*}  
This proves \eqref{eq:ray-decomposition-leveling}.

Moreover, since $\partial G\subset A_{1-\sz,\,1+\sz}$, we have
$$a_i,\,b_i\in (1-\sz,\,1+\sz).$$
Now let us fix such a $\theta$. By the structure of \eqref{eq:ray-decomposition-leveling}, we always have 
\begin{equation}\label{normal at b}
    x\cdot \nu_G\leq 0 \quad \text{ when $x=b_i\theta$},
\end{equation} 
and hence at this point
\begin{equation}\label{b case}
\left(1- \frac{x}{|x|}\cdot \nu_{G } \right)\frac{|x|^{n}}{ |x \cdot \nu_{G }|}\ge  \frac{|x|^{n}}{ |x \cdot \nu_{G }|}.
\end{equation}

Now we 
discuss in two cases depending on the normal of $G$ at $x=a_i\theta,$ $ 0\le i\le k$. 

{\noindent \bf Case 1: }  $\frac{x}{|x|}\cdot \nu_G(x)<\frac 7 8.$
Then 
$$ 1- \frac{x}{|x|}\cdot \nu_{G }(x) \ge \frac 1 8,$$
and hence
\begin{equation}\label{a case 1}
    \left(1- \frac{x}{|x|}\cdot \nu_{G }(x) \right)\frac{|x|^{n}}{ |x \cdot \nu_{G }(x)|}\ge \frac 1 8 \frac{|x|^{n}}{ |x \cdot \nu_{G }(x)|}.
\end{equation} 

{\noindent \bf Case 2:}  $\frac{x}{|x|}\cdot \nu_G(x)\ge \frac 7 8.$ 
In this case, since $|x|\in (1-\sz,\,1+\sz)$, 
$$\frac{|x|^n}{|x\cdot \nu|}\le \frac 8 7(1+C(n)\sz). $$
Let $\hat x = b_i \theta$ with $b_0:=b_1.$ Then by \eqref{normal at b} one has
$$\left(1- \frac{\hat x}{|\hat x|}\cdot \nu_{G }(\hat x) \right)\frac{|\hat x|^{n}}{ |\hat x \cdot \nu_{G }(\hat x)|}\ge 1-C(n)\sz.$$
Therefore, for $\sz=\sz(n)>0$ small we get
\begin{equation}\label{a case 2}
    \left(1- \frac{\hat x}{|\hat x|}\cdot \nu_{G }(\hat x) \right)\frac{|\hat x|^{n}}{ |\hat x \cdot \nu_{G }(\hat x)|}\ge c(n) \frac{|x|^n}{|x\cdot \nu_G(x)|}. 
\end{equation}
Combining now \eqref{b case} with \eqref{a case 1} and \eqref{a case 2}, we eventually conclude that, for  $\mathscr H^{n-1}$-a.e. $\theta \in \mathcal B$, 
$$
\sum_{\substack{x\in \{t\theta\colon t>0\}\cap \partial^* G,\\ x\cdot\nu_G\neq0}}\frac{|x|^{n}}{ |x \cdot \nu_{G }|}  \le C(n)   \sum_{\substack{x\in \{t\theta\colon t>0\}\cap \partial^* G,\\ x\cdot\nu_G\neq0}} \left(1- \frac{x}{|x|}\cdot \nu_{G } \right)\frac{|x|^{n}}{ |x \cdot \nu_{G }|}.
$$
Recalling \eqref{eq:exc G1}, \eqref{excess G in cone}, and
\eqref{measure G in cone}, we get
\begin{align}
\label{eq:exc G3}
     \mathscr H^{n-1}(\partial^* G\cap \mathcal C_{\mathcal B})
     &\le
     \mathscr H^{n-1}(\mathcal T_G\cap \mathcal C_{\mathcal B})  \notag\\
     &\quad+C(n)   \int_{\mathcal B } \sum_{\substack{x\in \partial^*G,\\ x\cdot \nu_{G }\neq 0}} \left(1- \frac{x}{|x|}\cdot \nu_{G } \right)\frac{|x|^{n}}{ |x \cdot \nu_{G }|}\, d\mathscr H^{n-1}(\theta) \notag\\
     &\le C(n) {\rm Exc}(G ),
\end{align}
which also yields \eqref{eq:exc G6}. 

The estimate on $\mathscr H^{n-1}(B)$ follows from a similar (but simpler) argument. Indeed,  for $\mathscr H^{n-1}$-a.e. $\theta \in \mathcal B$, choose the first non-tangential reentry point $b_\theta\theta$ in \eqref{eq:ray-decomposition-leveling}. Since $|b_\theta|>\frac 1 2$,
\begin{equation}\label{eq:exc G4}
{\rm Exc}(G )\geq \int_{\mathcal B } \frac{b_\theta^{n}}{ b_\theta |\theta \cdot \nu_{G }(b_\theta \theta)|}\, d\mathscr H^{n-1}(\theta)\geq   \int_{\mathcal B } b_\theta^{n-1}\, d\mathscr H^{n-1}(\theta) \geq 2^{-(n-1)}\mathscr H^{n-1}(\mathcal B ).
\end{equation}
Combining \eqref{eq:exc G1}, \eqref{eq:exc G3}, and \eqref{eq:exc G4}, we deduce the validity of \eqref{eq:exc G5}.

\end{proof}

To proceed further, given a set of finite perimeter $G$, we consider its star-shaped rearrangement $G^S$ defined as follows: for $\theta\in \mathbb S^{n-1}$, define $G_\theta:= G\cap \{t\theta\colon t>0 \}$
and 
$$h(\theta)=
\left\{
\begin{array}{cc}
\int_{G_\theta} r^{n-1}\, d\mathscr H^1(r) & \text{when } \ \mathscr H^{1}(G_\theta)>0, \\
0 & \text{when } \ \mathscr H^{1}(G_\theta)=0.
\end{array}
\right.
$$
Then, set $R(\theta)= (n h(\theta))^{\frac 1 n}$ and 
$$G^S=\{t\theta\colon \theta\in \mathbb S^{n-1},\,0\leq t<R(\theta)\}. $$
We observe that 
\begin{equation}
\label{eq:vol G GS}
|G^S|=|G|,
\end{equation}
see \cite{K1985, K1986}. 
In addition, since by construction
\begin{align*}
\int_{G_\theta} r^{n-1}\,dr= h(\theta)=\int_0^{R(\theta)} r^{n-1}\, dr,
\end{align*}
we deduce that 
$$
 \left|\int_{G^S} \frac{x}{|x|}\, dx - \int_{G} \frac{x}{|x|}\, dx\right| 
\le \int_{\mathbb S^{n-1}} \left|\int_{G_\theta} r^{n-1}\,dr - \int_0^{R(\theta)} r^{n-1}\, dr\right|\, d\mathscr H^{n-1}(\theta)=0,
$$
that is, $G$ and $G^S$ have the same weighted barycenter:
\begin{equation}
\label{eq:bar G GS}
\int_{G^S} \frac{x}{|x|}\, dx =\int_{G} \frac{x}{|x|}\, dx.
\end{equation}
However it is not generally true that 
$P(G^S)\le P(G),$
see \cite[Lemma 1.2]{K1986}.
Nevertheless, we can show the following: 

\begin{lem}\label{rearrange G}
For any $\sz \in (0,1/2)$, let $G\subset \mathbb R^n$ be a set of finite perimeter satisfying
$$B_{1-\sz}\subset G\subset B_{1+\sz}. $$
Then  
$$\mathscr H^{n-1}(\partial^*G^S\cap\mathcal C_U)\le (1+C(n)\sz) \mathscr H^{n-1}(\partial^*G\cap\mathcal C_U)
\qquad \forall\, U\subset \mathbb S^{n-1} \text{ Borel.}$$
In particular,   $G^S=B+u$ with $u\in BV(\mathbb S^{n-1})$. 
\end{lem}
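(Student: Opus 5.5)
The plan is to compare the two boundaries cone by cone through the radial projection $\pi(x)=x/|x|$, using the area formula on both sides. First observe that $G^S$ is star-shaped with radial function $R(\theta)=(n h(\theta))^{1/n}$. Since $B_{1-\sz}\subset G\subset B_{1+\sz}$, we have $(1-\sz)^n/n\le h(\theta)\le (1+\sz)^n/n$, hence $1-\sz\le R\le 1+\sz$. Writing $R=1+u$ with $\|u\|_{L^\infty}\le\sz$, the claim $G^S=B+u$ with $u\in BV(\mathbb S^{n-1})$ follows once we show that $P(G^S)<\infty$. Indeed, a star-shaped set with radial function bounded above and below has finite perimeter if and only if its radial function is $BV$ on the sphere. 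So everything reduces to the perimeter comparison on cones.

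The main step is a slicing representation of $h$. For $\mathscr H^{n-1}$-a.e. $\theta$, the slice $G_\theta$ is a finite union of intervals, as in the proof of Lemma~\ref{small oscillation} (via \cite{AFP2000,CCF2005}). Hence
$$
n h(\theta)=a_0^n+\sum_j (a_j^n-b_j^n),
$$
where all endpoints lie in $(1-\sz,1+\sz)$. To compare variations, I would work first with a smooth (or polyhedral) approximation of $G$ and then pass to the limit. For smooth $G$, on the complement of the projection of the radial tangency set, the endpoints $a_j,b_j$ are locally smooth functions of $\theta$. Differentiating gives
$$
R^{n-1}\nabla_\tau R=\sum_j \pm\, c^{n-1}\nabla_\tau c ,
$$
summed over all endpoints $c\in\{a_j,b_j\}$. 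The boundary area element over $\theta$ at an endpoint $c$ is $c^{n-2}\sqrt{c^2+|\nabla_\tau c|^2}$. Using $c,R\in(1-\sz,1+\sz)$ together with the triangle inequality, and convexity of $p\mapsto\sqrt{1+|p|^2}$, we get
$$
R^{n-2}\sqrt{R^2+|\nabla_\tau R|^2}\le (1+C(n)\sz)\sum_c c^{n-2}\sqrt{c^2+|\nabla_\tau c|^2}.
$$
This is the key pointwise inequality, and it holds because the right-hand side contains at least one term. Integrating over $U$ and using the area formula on both sides yields the estimate for smooth $G$.

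For general $G$, I would proceed by the following steps.
\begin{enumerate}[(i)]
\item Approximate $G$ strictly by smooth sets $G_k$ with $G_k\to G$ in $L^1$ and $P(G_k)\to P(G)$, keeping $B_{1-2\sz}\subset G_k\subset B_{1+2\sz}$. This is possible by mollifying $\chi_G$ and choosing superlevel sets, which still lie in the annulus.
\item Note that $G_k^S\to G^S$ in $L^1$, since $h_k\to h$ in $L^1(\mathbb S^{n-1})$ by Fubini in polar coordinates.
\item For $U$ open, use lower semicontinuity of perimeter in open cones together with upper semicontinuity of $\limsup_k\mathscr H^{n-1}(\partial^*G_k\cap \overline{\mathcal C_U})$ under strict convergence (Reshetnyak). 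This gives the inequality on open $U$ whose boundary cone carries no $\partial^* G$ mass, and also gives $P(G^S)<\infty$.
\item Extend to arbitrary Borel $U$, since both sides are finite Borel measures in $U$ (pushforwards under $\pi$), and the inequality holds on a generating family.
\end{enumerate}

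The main obstacle I expect is the approximation step, specifically the handling of the radial tangency set and of the singular part. Smooth superlevel sets can have $x\cdot\nu=0$ on a set whose projection is not negligible. My first remedy is to apply Sard's theorem to $\pi|_{\partial G_k}$: its critical values have measure zero, so the differentiation formula holds off a null set of $\theta$. The jump and Cantor parts of $D_\tau R$ then appear only in the limit, and there they are controlled by the measure-level inequality passed through lower semicontinuity.
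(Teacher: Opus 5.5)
Your route differs from the paper's. The paper never approximates. It covers $\mathbb S^{n-1}$ by small pieces $\mathcal V_i$ whose cones carry no boundary mass. On each cone it uses the $(1+C(n)\sz)$-biLipschitz map $\Psi_i(x)=\big(x'/|x|,\int_{1-\sz}^{|x|}r^{n-1}dr\big)$, which sends rays to vertical lines and turns $h(\theta)$ into a vertical length. After reflecting across $\{x_n=0\}$, the star-shaped rearrangement becomes a Steiner symmetrization, and the local Steiner perimeter inequality for sets of finite perimeter gives the claim at once.

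You instead redo by hand the mechanism behind Steiner's inequality. Your pointwise bound $R^{n-2}\sqrt{R^2+|\nabla_\tau R|^2}\le(1+C\sz)\sum_c c^{n-2}\sqrt{c^2+|\nabla_\tau c|^2}$ is correct: it follows from Minkowski's inequality for the vectors $(1,|\nabla_\tau c|)$, using that there is at least one term, together with $c,R\in(1-\sz,1+\sz)$. Steps (ii)--(iv) are also sound: $h_k\to h$, lower semicontinuity on open cones, the Reshetnyak upper bound on closed cones, and extension to Borel $U$ by outer regularity of the two pushforward measures. This is more explicit than the paper's argument, but it moves all the difficulty into the smooth case, and there is a hole there.

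The hole is in ``integrating over $U$ and using the area formula on both sides''. That step presupposes
\[
\HH^{n-1}(\partial^*G^S\cap\mathcal C_U)=\int_U R^{n-2}\sqrt{R^2+|\nabla_\tau R|^2}\,d\HH^{n-1},
\]
that is, that $D_\tau R$ has no singular part on $U$. For a smooth $G$ this can fail, and Sard does not help. Sard always gives $\HH^{n-1}(\Pi(\mathcal T))=0$ for a $C^1$ boundary, so your remark about a non-negligible projection is mistaken. The real issue is that the tangency set $\mathcal T=\{x\cdot\nu_G=0\}$ itself may have positive area, for instance if $\partial G$ contains a piece of a cone with vertex at the origin. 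Then $h$, and hence $R$, jumps across the null set $\Pi(\mathcal T)$, and $\partial G^S$ contains a radial wall there. The area of that wall is not seen by integrating your pointwise inequality over regular values. So the claim that jump and Cantor parts ``appear only in the limit'' is false for general smooth approximants.

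There are two ways to patch this.

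\emph{Generic translation.} Replace $G_k$ by $G_k-a_k$ with $a_k\to0$ chosen so that $\HH^{n-1}(\mathcal T(G_k-a_k))=0$. Almost every $a$ works, by the Fubini argument of Lemma~\ref{lem:generic-sigma}, and strict convergence and the annular inclusion are preserved. Then show $h_k\in W^{1,1}$ via the identity
\[
\int_{\mathbb S^{n-1}}h\,{\rm div}_\tau\varphi\,d\HH^{n-1}=\int_{\partial^*G}|x|\,\varphi\Big(\frac{x}{|x|}\Big)\cdot\nu_G\,d\HH^{n-1}.
\]
This comes from the divergence theorem on $G$ applied to $|x|\varphi(x/|x|)$, whose divergence is $({\rm div}_\tau\varphi)(x/|x|)$ for tangential $\varphi$. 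The identity exhibits $D_\tau h$ as the pushforward under $\Pi$ of a measure on $\partial^*G$. The part living on $\partial^*G\setminus\mathcal T$ pushes forward to an absolutely continuous measure, so $D_\tau h_k$ has no singular part.

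\emph{Direct bound.} The same identity gives $|D^s_\tau h|(U)\le(1+\sz)\HH^{n-1}(\mathcal T\cap\mathcal C_U)$. This controls the radial walls of $G^S$ by exactly the portion of $\partial G\cap\mathcal C_U$ that your pointwise inequality left unused.

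With either patch your argument closes.
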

\begin{proof}
Consider a  family of open disjoint subsets $\{\mathcal V_i\}_{1\leq i \leq M=M(n,\sigma)}$ whose closures cover  $\mathbb S^{n-1}$ and such that
\begin{equation}
    \label{eq:pG V}
    \mathscr{H}^{n-1}\big((\partial^*G \cup \partial^*G^S) \cap \partial \mathcal{C}_{\mathcal V_i}\big)=0\qquad \forall\,i=1,\ldots,M.
\end{equation}
We also assume that each set $\mathcal V_i$ has diameter at most $\sz$, 
so that it can be written as a $C(n)\sz$-Lipschitz graph over $\mathcal W_i\subset \Pi_i$, where $\Pi_i\subset \R^n$ is a $(n-1)$-dimensional linear subspace. 

Fix $i \in \{1,\ldots,M\}$ and assume, without loss of generality, that $\Pi_i=\R^{n-1}\times \{0\}$ and that $\mathcal C_{\mathcal V_i}$ points in the $e_n$ direction. Then, writing $x=(x',x_n) \in \R^{n-1}\times \R$, we define
$$\Psi_i : \left(B_{1+\sz}\setminus B _{1-\sz}\right)\cap \mathcal{C}_{\mathcal V_i} \to \mathcal W_i\times [0,3\sigma], \qquad x\mapsto \left(\frac{x'}{|x|},\int_{1-\sigma}^{|x|} r^{n-1}\, dr\right).$$
In this way
$$\Psi_i\colon   \left(B_{1+\sz}\setminus B _{1-\sz}\right)\cap \mathcal{C}_{\mathcal V_i} \to \mathcal W_i\times [0,3\sigma] $$
is a  $\big(1+C(n)\sz\big)$-biLipschitz map such that:\\
- $\Psi_i$ sends radial lines to lines parallel to $e_n$;\\
- $\mathcal W_i\times \{0\} =\Psi_i\left(\partial B_{1-\sigma}\cap \mathcal C_{\mathcal V_i}\right)$;\\
- composing $\Psi_i$ with the two-valued mapping
$$\mathcal R\colon \mathcal W_i\times [0,3\sigma] \to \mathcal W_i \times [-3\sigma,3\sigma], \qquad  \mathcal R(x', x_n)=\big\{(x', - x_n),(x',x_n)\big\} $$
our star-shaped rearrangement of $G$ corresponds, inside $\mathcal C_{\mathcal V_i}$, to the Steiner symmetrization of $\mathcal R\circ \Psi_i\big((G \cap \mathcal C_{\mathcal V_i})\setminus  B_{1-\sz}\big)$ with respect to the hyperplane $\Pi_i$. 

Since the Steiner symmetrization locally decreases the perimeter \cite[Theorem 14.4]{M2012}, it follows that
\begin{multline*}
2\mathscr H^{n-1}\left(\Psi_i(\partial^*G^S\cap \mathcal C_{\mathcal V_i \cap U})\right)=\mathscr H^{n-1}\left(\mathcal R\circ \Psi_i(\partial^*G^S\cap \mathcal C_{\mathcal V_i \cap U})\right)\\
\leq 
\mathscr H^{n-1}\left(\mathcal R\circ \Psi_i(\partial^*G\cap \mathcal C_{\mathcal V_i \cap U})\right)=2\mathscr H^{n-1}\left(\Psi_i(\partial^*G\cap \mathcal C_{\mathcal V_i \cap U})\right).
\end{multline*}
Note that the comparison is made for relative perimeters in the corresponding cylinders.
Thus, since $\Psi_i$ is $\big(1+C(n)\sz\big)$-biLipschitz,
$$
\mathscr H^{n-1}\left(\partial^*G^S\cap \mathcal C_{\mathcal V_i \cap U}\right) \leq \big(1+C(n)\sz\big)
\mathscr H^{n-1}\left(\partial^*G\cap \mathcal C_{\mathcal V_i \cap U}\right).
$$
Recalling \eqref{eq:pG V}, the result follows by summing the inequalities above over $i \in \{1,\ldots,M\}$
\end{proof}

\begin{lem}\label{lem:projection exceptional cone}
Let $G\subset\R^n$ be a set of finite perimeter with
$\partial^*G\subset A_{r,R}$ for some $0<r<R<\infty$, and set
\begin{equation}\label{eq:PiTG}
    \Pi(x):=\frac{x}{|x|},\qquad
\mathcal T_G:=\{x\in\partial^*G:\ x\cdot\nu_G(x)=0\}.
\end{equation}
Then
\[
\HH^{n-1}(\Pi(\mathcal T_G))=0.
\]
Moreover, if $Z\subset A_{r,R}$ is Borel with
$\HH^{n-1}(\Pi(Z))=0$, then
\[
\HH^{n-1}\big((\partial^*G\setminus \mathcal T_G)\cap
\Pi^{-1}(\Pi(\mathcal T_G\cup Z))\big)=0.
\]
\end{lem}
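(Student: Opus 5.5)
The plan is to regard $\Pi$ restricted to the countably $(n-1)$-rectifiable set $\partial^*G$ as a Lipschitz map between $(n-1)$-dimensional objects. Since $\partial^*G\subset A_{r,R}$, $\Pi$ is Lipschitz there with constant at most $1/r$. We will use its tangential Jacobian. At $\HH^{n-1}$-a.e. $x\in\partial^*G$ the approximate tangent plane is $\nu_G(x)^\perp$, and a direct computation gives
\[
J^{\partial^*G}\Pi(x)=\frac{|x\cdot\nu_G(x)|}{|x|^{n}},
\]
the same factor that already appears in \eqref{measure G in cone}. In particular the Jacobian vanishes exactly on $\mathcal T_G$, and it is strictly positive on $\partial^*G\setminus\mathcal T_G$.

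For the first claim we apply the area formula for rectifiable sets (\cite[Theorem~1, Section~3.3.3]{EG1992}, in its rectifiable version, cf.\ \cite{AFP2000}) to the Borel set $\mathcal T_G$:
\[
\int_{\mathbb S^{n-1}}\#\big(\mathcal T_G\cap\Pi^{-1}(\theta)\big)\,d\HH^{n-1}(\theta)=\int_{\mathcal T_G}J^{\partial^*G}\Pi\,d\HH^{n-1}=0 .
\]
Hence $\#(\mathcal T_G\cap\Pi^{-1}(\theta))=0$ for $\HH^{n-1}$-a.e.\ $\theta$, that is, $\HH^{n-1}(\Pi(\mathcal T_G))=0$. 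This is essentially the Lipschitz version of Sard's theorem. One measurability point needs care: $\Pi(\mathcal T_G)$ is the image of a Borel set, hence Suslin and so $\HH^{n-1}$-measurable. Alternatively, the area formula gives directly that the image has null outer measure.

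For the second claim we set $N:=\Pi(\mathcal T_G)\cup\Pi(Z)$, which is $\HH^{n-1}$-null by the first part and the hypothesis on $Z$. We then apply the area formula to the set $S:=(\partial^*G\setminus\mathcal T_G)\cap\Pi^{-1}(N)$; if needed, we first replace $N$ by a Borel null superset so that $S$ is Borel, which only enlarges $S$. This gives
\[
\int_S\frac{|x\cdot\nu_G|}{|x|^n}\,d\HH^{n-1}(x)=\int_{N}\#\big(S\cap\Pi^{-1}(\theta)\big)\,d\HH^{n-1}(\theta)=0 .
\]
The integrand on the left is strictly positive on $S$, so $\HH^{n-1}(S)=0$.

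The main obstacle is only technical. We must justify the Jacobian formula and the area formula on the reduced boundary, where the tangent plane is $\nu_G^\perp$ only $\HH^{n-1}$-a.e. We must also keep track of measurability of the images. Both points are handled by De Giorgi's structure theorem, which says $\partial^*G$ is countably rectifiable with approximate tangent $\nu_G^\perp$, together with the Borel-superset trick above.
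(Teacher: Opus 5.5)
Your proposal is correct and follows the paper's proof essentially verbatim: it uses the tangential Jacobian $|x\cdot\nu_G|/|x|^n$ of $\Pi$ on $\partial^*G$, the area formula on $\mathcal T_G$ for the first claim, and the area formula on $(\partial^*G\setminus\mathcal T_G)\cap\Pi^{-1}(\Pi(\mathcal T_G\cup Z))$ for the second. The only differences are cosmetic. The paper exhausts this set by the pieces where the Jacobian is at least $1/m$, whereas you conclude directly from the vanishing of the integral of a strictly positive integrand. You also address the measurability of the images, which the paper leaves implicit.
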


\begin{proof}
Since $\partial^*G$ is countably $(n-1)$-rectifiable and lies in an annulus,
$\Pi$ is Lipschitz on $\partial^*G$. Its tangential Jacobian is
\[
J_{\partial G}\Pi(x)=\frac{|x\cdot\nu_G(x)|}{|x|^n}
\qquad\text{for }\HH^{n-1}\text{-a.e. }x\in\partial^*G.
\]
Hence $J_{\partial G}\Pi=0$ on $\mathcal T_G$, and the area formula gives
\[
\HH^{n-1}(\Pi(\mathcal T_G))=0.
\]
Now let $Z$ be as in the statement and set
\[
A_m:=\big((\partial^*G\setminus \mathcal T_G)\cap\Pi^{-1}(\Pi(\mathcal T_G\cup Z))\big)
\cap\{J_{\partial G}\Pi\ge m^{-1}\}.
\]
By the area formula again,
\[
\frac1m\HH^{n-1}(A_m)
\le\int_{A_m}J_{\partial G}\Pi\,d\HH^{n-1}
=\int_{\Pi(A_m)}N(A_m,\theta)\,d\HH^{n-1}(\theta)=0,
\]
because $\Pi(A_m)\subset\Pi(\mathcal T_G\cup Z)$ and this last set is
$\HH^{n-1}$-negligible. Since
\[
(\partial^*G\setminus \mathcal T_G)\cap\Pi^{-1}(\Pi(\mathcal T_G\cup Z))
=\bigcup_{m=1}^\infty A_m,
\]
the second assertion follows.
\end{proof}

\begin{lem}\label{lem:star trace outside bad}
Let $G$ be a set of finite perimeter satisfying
$B_{1-\sigma}\subset G\subset B_{1+\sigma}$, and let
$G^S=B+u$ be its star-shaped rearrangement. Let $\mathcal B$ be as in
Lemma~\ref{small oscillation}, and $\mathcal T_G$ as in \eqref{eq:PiTG}.
Then, up to
$\HH^{n-1}$-negligible sets,
$$
\partial^*G\setminus(\mathcal C_{\mathcal B}\cup\mathcal T_G)
=\partial^*G^S\setminus\mathcal C_{\mathcal B},
\qquad \text{with}\quad
\nu_G=\nu_{G^S}\quad\text{there.}
$$
\end{lem}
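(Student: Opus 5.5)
The plan is to compare $G$ and $G^S$ ray by ray, using one-dimensional slicing of sets of finite perimeter along radial lines. First, I will discard an $\HH^{n-1}$-null set of directions. By \cite[Theorem~G]{CCF2005} and the slicing theory in \cite{AFP2000}, for $\HH^{n-1}$-a.e. $\theta\in\mathbb S^{n-1}$ the slice $G_\theta$ has finite perimeter in $(0,\infty)$. Moreover, its essential boundary points are exactly the points $t\theta\in\partial^*G$ with $x\cdot\nu_G\neq0$, and the sign of $\theta\cdot\nu_G$ records whether the point is an exit or an entry. The same holds for $G^S$. I will also remove $\Pi(\mathcal T_G)$, which is $\HH^{n-1}$-null by Lemma~\ref{lem:projection exceptional cone}. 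Applying the second part of that lemma with $Z$ equal to the exceptional directions shows that the points of $\partial^*G\setminus\mathcal T_G$ lying over these bad directions are themselves $\HH^{n-1}$-negligible. The analogous statement for $G^S$ follows from the area formula in the same way, since $\partial^*G^S\subset \overline{A_{1-\sz,1+\sz}}$.

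Next, fix a good direction $\theta\notin\mathcal B$. Because $B_{1-\sz}\subset G\subset B_{1+\sz}$, the slice $G_\theta$ contains $(0,1-\sz)$ and is contained in $(0,1+\sz)$. By the definition of $\mathcal B$, the ray meets $\partial^*G\setminus\mathcal T_G$ in at most one point. A one-dimensional finite perimeter set with this structure is, up to null sets, a single interval $(0,a(\theta))$. Therefore
\[
h(\theta)=\int_0^{a(\theta)}r^{n-1}\,dr,\qquad R(\theta)=a(\theta),
\]
so $G$ and $G^S$ coincide along this ray. Consequently
\[
G\cap\mathcal C_{\mathbb S^{n-1}\setminus\mathcal B}=G^S\cap\mathcal C_{\mathbb S^{n-1}\setminus\mathcal B}
\]
up to a Lebesgue-null set, by Fubini in polar coordinates.

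To conclude, I need to pass from this equality of sets on the cone to equality of reduced boundaries and normals. Equality of sets only gives locality on open sets, so I will not use it alone. Instead, for a.e. good $\theta$ I will use the slice characterization directly. The point $a(\theta)\theta$ is the unique point of $\partial^*G\setminus\mathcal T_G$ on the ray and also the unique point of $\partial^*G^S\setminus\mathcal T_{G^S}$ there. Combined with the first step, this identifies $\partial^*G\setminus(\mathcal C_{\mathcal B}\cup\mathcal T_G)$ and $(\partial^*G^S\setminus\mathcal T_{G^S})\setminus\mathcal C_{\mathcal B}$ up to null sets. The remaining set $\mathcal T_{G^S}\setminus \mathcal C_{\mathcal B}$ should also be null. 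Since $G^S$ is a subgraph, on $\mathcal T_{G^S}$ the vector $\nu_{G^S}$ is tangential, which corresponds to the jump part of $D_\tau u$. I would handle this through Lemma~\ref{rearrange G}, or directly: where $G$ and $G^S$ agree, a vertical jump of $u$ would force a corresponding portion of $\partial^*G$ of positive measure with $x\cdot\nu_G=0$, and that portion lies in $\mathcal T_G$. I expect this step to be the most delicate. The cleanest route is to work with the Borel set $V=\mathbb S^{n-1}\setminus\mathcal B$ and the measures $D\chi_G\llcorner\mathcal C_V$ and $D\chi_{G^S}\llcorner\mathcal C_V$, and to show they coincide. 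Both are determined by their radial slices through the coarea and slicing formula
\[
\int \langle D\chi_G, \phi\rangle=\int_{\mathbb S^{n-1}}\int D\chi_{G_\theta}(\phi)\,d\theta
\]
for the radial component, together with the fact that tangential components live on $\mathcal T$.

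Finally, once $D\chi_G\llcorner(\mathcal C_V\setminus\mathcal T_G)=D\chi_{G^S}\llcorner\mathcal C_V$ holds as vector measures, the polar decomposition $D\chi=-\nu\,\HH^{n-1}\llcorner\partial^*$ gives both the equality of the reduced boundaries up to null sets and $\nu_G=\nu_{G^S}$ $\HH^{n-1}$-a.e. there, which is the statement.
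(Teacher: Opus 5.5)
Your argument matches the paper's proof as far as it goes: you remove $\Pi(\mathcal T_G)$ and the non-tangential points above it via Lemma~\ref{lem:projection exceptional cone}, note that for a.e.\ $\theta\notin\mathcal B$ the slice is $(0,a(\theta))$, so $R(\theta)=a(\theta)$, and match the unique non-tangential boundary point on each such ray. The paper stops there, asserting that the identification ``follows from the area formula on the non-tangential part''.

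The gap is your extra claim that $\mathcal T_{G^S}\setminus\mathcal C_{\mathcal B}$ is $\HH^{n-1}$-null, on which the identity $D\chi_G\llcorner(\mathcal C_V\setminus\mathcal T_G)=D\chi_{G^S}\llcorner\mathcal C_V$ rests. That claim is false, and so is the statement as literally written. Take $1-\sigma<r_-<1<r_+<1+\sigma$ (with $r_+^n+r_-^n=2$ if you also want $|G|=|B|$) and $G=(B_{r_+}\cap\{x_n>0\})\cup(B_{r_-}\cap\{x_n<0\})$.
\begin{itemize}
\item Rays off the equator meet $\partial^*G$ once, transversally.
\item Equatorial rays meet only the flat annulus $A'=\{x_n=0,\ r_-<|x|<r_+\}\subset\mathcal T_G$. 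The two $(n-2)$-dimensional edges have densities $\frac14$ and $\frac34$, so they are not in $\partial^*G$.
\item Hence $\mathcal B=\emptyset$ and $G^S=G$, so $\partial^*G^S\setminus\mathcal C_{\mathcal B}=\partial^*G\supset A'$. This set has positive $\HH^{n-1}$-measure and is removed on the left-hand side.
\end{itemize}
Your own heuristic points the same way: a jump of $u$ is matched by a wall of $\partial^*G$ lying in $\mathcal T_G$, i.e.\ by something the left-hand side excludes.

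The walls need not match even as sets, because $\mathcal C_V$ is not open. Let $G=B_{a_1}\setminus\{t\theta:\ \theta\in\overline{\mathcal B_0},\ a_0\le t\le b_1\}$ for an open cap $\mathcal B_0$ and $1-\sigma<a_0<b_1<a_1<1+\sigma$. Then $\mathcal B=\mathcal B_0$. Over $\partial\mathcal B_0$ the wall of $G$ occupies the heights $(a_0,b_1)$, while the wall of $G^S$ occupies $(R,a_1)$, where $R^n=a_0^n+a_1^n-b_1^n$.

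What the slicing argument (yours and the paper's) actually proves is the version with tangencies removed on both sides,
\[
\partial^*G\setminus(\mathcal C_{\mathcal B}\cup\mathcal T_G)=\partial^*G^S\setminus(\mathcal C_{\mathcal B}\cup\mathcal T_{G^S})\quad\text{up to }\HH^{n-1}\text{-null sets},
\]
with $\nu_G=\nu_{G^S}$ there. The tangential part of $G^S$ can only be controlled in measure. Apply Lemma~\ref{rearrange G} with $U$ a Borel $\HH^{n-1}$-null set containing $\Pi(\mathcal T_{G^S})$, and use that $\partial^*G\cap\mathcal C_U\subset\mathcal T_G$ up to null sets by Lemma~\ref{lem:projection exceptional cone}. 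This gives $\HH^{n-1}(\mathcal T_{G^S})\le(1+C(n)\sigma)\HH^{n-1}(\mathcal T_G)$. That comparison is enough for how the lemma is used later, since tangential points carry integrand $1$ there.

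A second, smaller issue concerns the normals. Radial slicing determines only the scalar measure $\frac{x}{|x|}\cdot D\chi_G$. Your phrase ``tangential components live on $\mathcal T$'' is not right: the components of $\nu_G$ orthogonal to $x$ are generally nonzero at non-tangential points. Slicing identifies the non-tangential sets and $\frac{x}{|x|}\cdot\nu$, but not the full normal. For $\nu_G=\nu_{G^S}$ you should use that two rectifiable sets have the same approximate tangent plane at $\HH^{n-1}$-a.e.\ point of their intersection. The sign is then fixed because $a(\theta)\theta$ is an exit point of both slices.
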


\begin{proof}
By Lemma~\ref{lem:projection exceptional cone} applied with $Z=\emptyset$, we have
$\HH^{n-1}(\Pi(\mathcal T_G))=0$ and
\[
\HH^{n-1}\big((\partial^*G\setminus\mathcal T_G)\cap
\Pi^{-1}(\Pi(\mathcal T_G))\big)=0.
\]
It is therefore enough to argue away from this negligible set.
By the slicing theorem used in Lemma~\ref{small oscillation}, for a.e.
$\theta\notin\mathcal B \cup \Pi(\mathcal T_G)$ the one-dimensional section $G_\theta$ has exactly one
non-tangential boundary point. Since $B_{1-\sigma}\subset G\subset B_{1+\sigma}$,
this section is $(0,a(\theta))$, up to negligible sets. The rearranged radius is
therefore determined by
$$
\int_0^{R(\theta)}r^{n-1}\,dr=\int_0^{a(\theta)}r^{n-1}\,dr,
$$
hence $R(\theta)=a(\theta)$. The equality of the reduced boundaries and of the
normals follows from the area formula on the non-tangential part. 
\end{proof}

We are now ready to show the following stability result. 
\begin{prop}\label{stability sets}
Let $G\subset \mathbb R^n$ be a set of finite perimeter satisfying
\begin{equation}\label{normalize G}
|G|=|B| \quad \text{ and } \quad \int_G \frac{x}{|x|}\,dx=0. 
\end{equation}
Then, for any $\kappa>0$ there exist $\delta_1=\dz_1(n,\,\kappa)>0$ and $0<\sz_1=\sz_1(n,\,\kappa)<\frac 1 n$ such that the following hold: Given $\sigma \in (0,\sz_1)$, assume that
$${\rm Exc}(G)\le \dz_1 \qquad \text{and}\qquad B_{1-\sz}\subset G\subset B_{1+\sz}.$$
Then
\begin{multline}\label{eq:kappa G1}
 \kappa \mathscr H^{n-1}(\partial^* G\cap \mathcal{C}_{\mathcal B})+ \int_{\partial^* G  } \left(1-\frac{(x\cdot \nu_G)^2}{|x|^2}\right) \, d\mathscr H^{n-1}  \\
 \ge (n-1+\zeta)\int_{\partial^* G} \left(\frac{1}{|x|}+|x|-2\right)  \,  d\mathscr H^{n-1},
\end{multline}
and
$$ \int_{\partial^*  G}   \left(\frac{1}{|x|}+|x|-2\right)\, d\mathscr H^{n-1}\ge c(n)| G\Delta B|^2.$$
\end{prop}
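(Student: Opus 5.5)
The plan is to transfer the inequality to the star-shaped rearrangement $G^S=B+u$ (Lemma~\ref{rearrange G}), where it becomes a Fuglede-type spectral gap estimate for a $BV$ function on the sphere, to which Lemma~\ref{spectrum} applies. The bad cone $\mathcal C_{\mathcal B}$ is treated separately and absorbed by the term $\kappa\,\mathscr H^{n-1}(\partial^*G\cap\mathcal C_{\mathcal B})$.

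\textbf{Step 1: reduction to $G^S$ off the bad cone.} By Lemma~\ref{lem:star trace outside bad}, up to null sets $\partial^*G\setminus(\mathcal C_{\mathcal B}\cup\mathcal T_G)=\partial^*G^S\setminus\mathcal C_{\mathcal B}$ with the same normals. Hence both integrands agree there. On $\mathcal T_G$ the left integrand equals $1$, while the right integrand is $O(\sigma^2)$, so that region only helps. Inside $\mathcal C_{\mathcal B}$ the right-hand integrand is at most $C\sigma^2$. Lemma~\ref{rearrange G} gives $\mathscr H^{n-1}(\partial^*G^S\cap\mathcal C_{\mathcal B})\le 2\mathscr H^{n-1}(\partial^*G\cap \mathcal C_{\mathcal B})$. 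Therefore the contributions of both $G$ and $G^S$ inside $\mathcal C_{\mathcal B}$ to the right side are bounded by $C\sigma^2\mathscr H^{n-1}(\partial^*G\cap\mathcal C_{\mathcal B})$. It thus suffices to prove, for $G^S$,
\[
\int_{\partial^*G^S\setminus \mathcal C_{\mathcal B}}\Big(1-\tfrac{(x\cdot\nu)^2}{|x|^2}\Big)\ge (n-1+\zeta)\int_{\partial^*G^S}\Big(\tfrac1{|x|}+|x|-2\Big)-\tfrac{\kappa}{2}\mathscr H^{n-1}(\partial^*G\cap\mathcal C_{\mathcal B}).
\]
Choose $\sigma_1$ so small that $C\sigma^2\le\kappa/4$.

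\textbf{Step 2: radial parametrization.} Write $G^S=B+u$ with $u\in BV$ and $|u|\le\sigma$. The rearrangement preserves volume and the weighted barycenter by \eqref{eq:vol G GS} and \eqref{eq:bar G GS}. So \eqref{normalize G} translates into $\int (1+u)^n=n|B|$ and $\int(1+u)^{n+1}\theta=0$. These give the almost-orthogonality \eqref{almost orthogonal general}, $|\int Y_i u|\le C\sigma\|u\|_{L^2}$.

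On the graph part, the area formula gives $d\mathscr H^{n-1}=(1+u)^{n-2}\sqrt{(1+u)^2+|\nabla_\tau u|^2}\,d\theta$ and $1-(x\cdot\nu)^2/|x|^2=|\nabla_\tau u|^2/((1+u)^2+|\nabla_\tau u|^2)$. Hence the left integrand pulls back to $(1+O(\sigma))|\nabla_\tau u|^2/\sqrt{1+|\nabla_\tau u|^2}$. A jump of $u$ across an $(n-2)$-dimensional set produces vertical radial walls, where the integrand is $1$; this matches $|D^s_\tau u|$ up to $1+O(\sigma)$. The right integrand is $(|x|-1)^2/|x|$, which pulls back to $(1+O(\sigma))u^2$. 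On jump walls it contributes at most $C\sigma^2|D^s u|$, which can be absorbed into the left side.

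\textbf{Step 3: spectral gap.} Apply Lemma~\ref{spectrum} (second part) with $\xi=u$ and $K=\mathcal B$. This requires $\mathscr H^{n-1}(\mathcal B)\le C\,{\rm Exc}(G)\le\delta_0$ by \eqref{eq:exc G5}. It also requires the gradient-energy smallness $\le\sigma$, which holds for $\delta_1$ small since that energy is at most $C\,{\rm Exc}$. Take $N$ large, say $N\ge 2n$.

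In the first alternative, the energy outside $\mathcal B$ is at least $(n-1+2\zeta)\int u^2$. Absorbing the $O(\sigma)$ errors yields the claim with $\zeta$.

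In the second alternative, the total energy is at least $N\int u^2$. The energy inside $\mathcal B$ is bounded by $\mathscr H^{n-1}(\partial^*G^S\cap\mathcal C_{\mathcal B})\le 2\mathscr H^{n-1}(\partial^*G\cap\mathcal C_{\mathcal B})$. Hence
\[
\mathrm{energy}(\mathbb S^{n-1}\setminus \mathcal B)\ge N\!\int u^2-2\mathscr H^{n-1}(\partial^*G\cap \mathcal C_{\mathcal B}).
\]
For this to be absorbed by $\kappa$ terms, I would split cases instead. If $\mathscr H^{n-1}(\partial^*G\cap\mathcal C_{\mathcal B})\ge \kappa^{-1}C\int u^2$, then the $\kappa$ term alone dominates the right side. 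Otherwise, $N\int u^2$ minus a small multiple of $\int u^2$ still exceeds $(n-1+\zeta)\int u^2$.

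\textbf{Step 4: the symmetric difference bound.} Note $\frac1{|x|}+|x|-2=(|x|-1)^2/|x|$. Combine the graph part $\gtrsim\int u^2\ge c\|u\|_{L^1}^2\gtrsim |G^S\Delta B|^2$ with the comparison between $|G\Delta B|$ and $|G^S\Delta B|$. That comparison has error controlled via $\mathcal C_{\mathcal B}$, of measure $\le C\,{\rm Exc}$. Alternatively, use a coarea argument akin to Lemma~\ref{lem:1 x} directly on $\partial^*G$, since each ray meets $\partial^*G$ at the radius bounding $G_\theta$.

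The main obstacle I expect is Step 3's interplay between the $\kappa$ term and the bad set. Precisely controlling how much of the $G^S$ energy and of $\int u^2$ lives over $\mathcal B$ (where $u$ is uncontrolled by $G$) is delicate, and I would attempt it via the case split above, using $|u|\le\sigma$ to bound $\int_{\mathcal B}u^2\le\sigma^2\mathscr H^{n-1}(\mathcal B)\le C\sigma^2\mathscr H^{n-1}(\partial^*G\cap \mathcal C_{\mathcal B})$.
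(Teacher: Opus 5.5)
Your Steps 1--3 follow the paper's proof of the first inequality. You pass to $G^S=B+u$, identify $\partial^*G$ with $\partial^*G^S$ off $\mathcal C_{\mathcal B}\cup\mathcal T_G$, absorb the tangency set pointwise, bound $\frac1{|x|}+|x|-2$ by $2\sigma^2$ in the bad cone, and apply Lemma~\ref{spectrum} with $K=\mathcal B$.

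Your claim that the angular energy of $u$ is at most $C\,{\rm Exc}(G)$ is correct, though you did not justify it. It follows from the identification off $\mathcal C_{\mathcal B}$, Lemma~\ref{rearrange G}, \eqref{eq:exc G5}, and $x\cdot\nu_{G^S}\ge 0$.

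Your case split in the second alternative is a valid substitute for the paper's direct estimate. The paper instead bounds the right-hand side by $\frac{1+C\sigma}{N}$ times the full angular energy of $G^S$. It then splits that energy into the part off $\mathcal C_{\mathcal B}$, which is that of $G$, and the part inside, which is at most $(1+C\sigma)\mathscr H^{n-1}(\partial^*G\cap\mathcal C_{\mathcal B})$.

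In your split, however, the multiple of $\int u^2$ you subtract is of order $1/\kappa$, not small. So $N$ must be taken $\gtrsim 1/\kappa$, as in the paper. This is harmless: $\sigma_0,\delta_0$ depend on $N$, and $\sigma_1,\delta_1$ may depend on $\kappa$.

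There are two slips:
\begin{itemize}
\item The weighted barycenter condition reads $\int_{\mathbb S^{n-1}}(1+u)^n\theta\,d\mathscr H^{n-1}=0$. Exponent $n+1$ corresponds to $\int_G x\,dx$, which the rearrangement does not preserve.
\item The pull-back of $\frac1{|x|}+|x|-2$ carries the area factor $\sqrt{1+(1+u)^{-2}|\nabla_\tau u|^2}$. So it equals $(1+O(\sigma))u^2$ only up to an extra $C\sigma^2\frac{|\nabla_\tau u|^2}{\sqrt{1+|\nabla_\tau u|^2}}$, which is absorbed exactly like your jump walls.
\end{itemize}

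In Step 4 your first route does not close. Suppose you compare $|G\Delta B|$ with $|G^S\Delta B|$ and pay $C\sigma\,\mathscr H^{n-1}(\mathcal B)\le C\sigma\,{\rm Exc}(G)$ for the directions in $\mathcal B$. This leaves an additive error $C\sigma^2\mathscr H^{n-1}(\mathcal B)^2$ in $|G\Delta B|^2$, which $\int_{\partial^*G}(\frac1{|x|}+|x|-2)$ does not dominate. The crude radial bound $\sigma$ is lossy when $\partial^*G$ over $\mathcal B$ lies much closer to $\partial B$. Moreover, the graph part only controls $\int_{\mathbb S^{n-1}\setminus\mathcal B}u^2$, not $\int_{\mathbb S^{n-1}}u^2$.

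Your alternative does work once made precise:
\begin{itemize}
\item For a.e.\ $\theta$, since $(0,1-\sigma)\subset G_\theta\subset(0,1+\sigma)$, every $t\in G_\theta\Delta(0,1)$ lies between $1$ and $|x|$ for some non-tangential point $x\in\partial^*G$ on that ray. Hence $|G_\theta\Delta(0,1)|\le\sum_x\big||x|-1\big|$.
\item The area formula for $\Pi$, with Jacobian $|x\cdot\nu_G|\,|x|^{-n}\le C$, then gives $|G\Delta B|\le C\int_{\partial^*G}\big||x|-1\big|\,d\mathscr H^{n-1}$.
\item Cauchy--Schwarz, with $P(G)\le C(n)$ and $(|x|-1)^2=|x|\,(\frac1{|x|}+|x|-2)$, finishes.
\end{itemize}

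The paper argues differently. It applies the divergence theorem on $G\setminus B$ to $(\frac1{|x|}+|x|-2)\frac{x}{|x|}$. This field vanishes on $\partial B$ and has divergence at least $\big||x|-1\big|$ for $1<|x|<1+\frac1n$. The paper then invokes the bathtub estimate of Lemma~\ref{lem:1 x}. That route needs neither slicing nor a perimeter bound and only sees $G\setminus B$. Yours instead reuses the ray slicing already set up in Lemma~\ref{small oscillation}.
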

\begin{proof}
Let $G^S$ denote the star-shaped rearrangement defined before. Then also $G^S$ satisfies the same inclusion. In particular,
$$G^S=B+u$$ 
for some $\|u\|_{L^\infty(\mathbb S^{n-1})}\le \sz. $ Moreover, $u\in BV(\mathbb S^{n-1})$ by Lemma~\ref{rearrange G}. Its graph energy is small when $\sigma$ and ${\rm Exc}(G)$ are small: indeed, Lemma~\ref{rearrange G} applied with $U=\mathbb S^{n-1}$ and \eqref{difference perimeter} give $P(G^S)-P(B)\le C(n)\big({\rm Exc}(G)+\sigma P(G)\big)$, and the area formula for $G^S=B+u$ yields
\[
\int_{\mathbb S^{n-1}}\frac{|\nabla_\tau u|^2}{\sqrt{1+|\nabla_\tau u|^2}}\,d\mathscr H^{n-1}
\,+\,|D^s_\tau u|(\mathbb S^{n-1})
\le \omega_n(\sigma,{\rm Exc}(G)),
\]
with $\omega_n(\sigma,t)\to0$ as $\sigma,t\to0$. Decreasing $\sigma_1$ and $\delta_1$ if necessary, we may therefore apply Lemma~\ref{spectrum} to $u$.

Note now that, as a consequence of
\eqref{eq:vol G GS} and \eqref{eq:bar G GS}, also $G^S$ satisfies the properties of $G$ in \eqref{normalize G}.
Using polar coordinates, these imply
$$
\int_{\mathbb S^{n-1}}\big[(1+u(\theta))^n-1\big]\,d\mathscr H^{n-1}(\theta)=0, \qquad \int_{\mathbb S^{n-1}}(1+u(\theta))^n\theta\,d\mathscr H^{n-1}(\theta)=0.
$$
Hence, since $\|u\|_{L^\infty(\mathbb S^{n-1})}\le \sz$ we get 
$$
\int_{\mathbb S^{n-1}}u(\theta)\,d\mathscr H^{n-1}(\theta)=O\left(\sigma \int_{\mathbb S^{n-1}}|u(\theta)|\,d\mathscr H^{n-1}(\theta)\right),$$
$$
\int_{\mathbb S^{n-1}}u(\theta)\theta_i\,d\mathscr H^{n-1}(\theta)=O\left(\sigma \int_{\mathbb S^{n-1}}|u(\theta)|\,d\mathscr H^{n-1}(\theta)\right)\qquad \forall\,i=1,\ldots,n.
$$
Recalling \eqref{LB}, this 
implies that $u$ satisfies \eqref{almost orthogonal general}. 

Now, let $\mathcal B\subset \mathbb S^{n-1}$ defined as in Lemma~\ref{small oscillation}. Then, with $N>2n$ to be determined, we apply Lemma~\ref{spectrum} to find 
$\sz_0,\,\dz_0>0$,
and then we choose $\delta_1$ small enough to ensure that  $\mathscr H^{n-1}(\mathcal B)\leq \delta_0$
(recall \eqref{eq:exc G5} and \eqref{exc symm diff}).

Our first goal is to prove \eqref{eq:kappa G1}. For this, we first do some preliminary estimates and then distinguish between the two cases provided by Lemma~\ref{spectrum}.

Recall from Lemma~\ref{lem:star trace outside bad} that, up to $\mathscr H^{n-1}$-negligible sets,
$$\partial^*G\setminus(\mathcal C_{\mathcal B}\cup\big\{x \in \partial^*G\,:\, x\cdot \nu_{G}=0\big\})
=\partial^*G^S\setminus\mathcal C_{\mathcal B},$$
and note that $\frac1{1+t}+(1+t)-2\leq t^2+O(t^3)$. On the tangency set
$\{x\in\partial^*G:\ x\cdot\nu_G=0\}\setminus\mathcal C_{\mathcal B}$, the
left-hand angular integrand in \eqref{eq:kappa G1} is equal to $1$, while
$\frac1{|x|}+|x|-2\le 2\sigma^2$; hence, for $\sigma$ sufficiently small, this
part is absorbed pointwise. Thus, recalling that $\|u\|_{L^\infty(\mathbb S^{n-1})}\le \sz$, when $\sigma$ is sufficiently small, we get from a change of variable that
\begin{equation}\label{volume term F}
    \begin{split}
 &\int_{\partial^* G^S\setminus \mathcal{C}_{\mathcal B}} \left(\frac{1}{|x|}+|x|-2\right) \,   d\mathscr H^{n-1}\\
 & \leq  (1+C(n) \sz)\int_{\mathbb S^{n-1}\setminus \mathcal B} |u|^2 \sqrt{1+(1+u)^{-2} |\nabla_\tau u|^2}\,  d\mathscr H^{n-1} + C(n) \sz^2 \int_{\mathbb S^{n-1}\setminus \mathcal B}|D^s_\tau u|  \\
& \leq (1+C(n) \sz)\int_{\mathbb S^{n-1}} |u|^2  \,  d\mathscr H^{n-1} +C(n) \sz^2 \left[\int_{\mathbb S^{n-1}\setminus \mathcal B} \frac{|\nabla_\tau u|^2}{\sqrt{1+|\nabla_\tau u|^2}}    \,  d\mathscr H^{n-1} + \int_{\mathbb S^{n-1}\setminus \mathcal B}|D^s_\tau u|\right],
    \end{split}
\end{equation}
where we used that, as $|u|\le \sz<\frac 1 2$,
$$
\sqrt{1+(1+t)^{-2} |p|^2} -1\leq C\frac{|p|^2}{\sqrt{1+|p|^2}} \qquad \forall\,t \in [-1/2,1/2],\,p\in\R^n.
$$
Analogously,  by the assumption $\partial^* G\subset A_{1-\sz,\,1+\sz}$, for $\sigma$ small we have
\begin{equation}
\label{eq:sigma G}
0\le  \frac{1}{|x|}+|x|-2 \le 2\sz^2  \quad \text{ for any } \ x\in \partial^* G,
\end{equation}
therefore
\begin{equation}\label{error volume term}
 \int_{\partial^* G \cap \mathcal{C}_{\mathcal B}} \left(\frac{1}{|x|}+|x|-2\right) \,  d\mathscr H^{n-1}
\le 2 \sz^2 \mathscr H^{n-1}(\partial^* G\cap \mathcal{C}_{\mathcal B}). 
\end{equation}
We can now prove \eqref{eq:kappa G1}.

\medskip
\noindent{\bf Case 1}: 
Suppose that
\begin{equation}\label{case 1}
    \int_{\mathbb S^{n-1}\setminus \mathcal B}  \frac{|\nabla_\tau  u|^2}{\sqrt{1+ |\nabla_\tau u|^2}} \,d\mathscr H^{n-1} + \int_{\mathbb S^{n-1}\setminus \mathcal B}|D^s_\tau u|\geq  \left(n-1+2 \zeta\right)  \int_{\mathbb S^{n-1}} {|u|^2} \,d\mathscr H^{n-1}.
\end{equation} 
Using the preceding identification of $\partial^*G$ and $\partial^*G^S$ outside
$\mathcal C_{\mathcal B}$ and outside the tangency set
$\{x\cdot\nu_G=0\}$, together with the pointwise absorption of the tangency
contribution explained above,   the $BV$ area formula for radial graphs, and the assumption
$\|u\|_{L^\infty(\mathbb S^{n-1})}\le \sz$ give
\begin{align}
 &  \int_{\partial^* G }  \left(1-\frac{(x\cdot \nu_G)^2}{|x|^2}\right) \, d\mathscr H^{n-1} 
\ge  \int_{\partial^* G^S  \setminus \mathcal{C}_{\mathcal B}}  \left(1-\frac{(x\cdot \nu_{G^S} )^2}{|x|^2}\right) \, d\mathscr H^{n-1}\nonumber \\ 
&= \int_{\mathbb S^{n-1}\setminus \mathcal B}  \left(1- \frac{1}{ 1+(1+u)^{-2}|\nabla_\tau u|^2 }\right)(1+u)^{n-1}\sqrt{1+(1+u)^{-2} |\nabla_\tau u|^2} \, d\mathscr H^{n-1} \nonumber\\
&\quad + \int_{\mathbb S^{n-1}\setminus \mathcal B}(1+u)^{n-2}\,d|D^s_\tau u|\nonumber \\
& \ge (1-C(n)\sz) \left[\int_{\mathbb S^{n-1}\setminus \mathcal B} \frac{|\nabla_\tau u|^2}{\sqrt{1+|\nabla_\tau u|^2}}   \, d\mathscr H^{n-1} + \int_{\mathbb S^{n-1}\setminus \mathcal B}|D^s_\tau u|\right], \label{gradient term F 1}
\end{align}
where we used that, as $|u|\le \sz<\frac 1 2$,
$$\left|1-\frac{\sqrt{1+(1+u)^{-2} |\nabla_\tau u|^2}}{\sqrt{1+ |\nabla_\tau u|^2}}\right|\le C \sz.$$
Thus, choosing $\sz_1=\sz_1(n,\kz)>0 $ small enough so that
$$
\frac{\bigl(1-C(n)\sz\bigr)(n-1+2\zeta)-C(n)\sigma^2}{1+C(n)\sz}\ge n-1+\zeta  \quad \text{ and } \quad 2\sz^2\le \kz,
$$
\eqref{eq:kappa G1} follows by combining \eqref{case 1}, \eqref{gradient term F 1}, and \eqref{volume term F}.

\medskip
\noindent{\bf Case 2}: 
Now we assume that
\begin{equation}\label{case 2}
    \int_{\mathbb S^{n-1} }  \frac{|\nabla_\tau  u|^2}{\sqrt{1+ |\nabla_\tau u|^2}} \,d\mathscr H^{n-1} + \int_{\mathbb S^{n-1}}|D^s_\tau u|\geq N  \int_{\mathbb S^{n-1}} {|u|^2} \,d\mathscr H^{n-1}.
\end{equation}
We note that (cp. \eqref{gradient term F 1})
\begin{multline}
  \int_{\partial^* G^S }  \left(1-\frac{(x\cdot \nu_{G^S} )^2}{|x|^2}\right) \, d\mathscr H^{n-1}\\ 
\ge (1-C(n)\sz) \left[\int_{\mathbb S^{n-1}} \frac{|\nabla_\tau u|^2}{\sqrt{1+|\nabla_\tau u|^2}}   \, d\mathscr H^{n-1} + \int_{\mathbb S^{n-1}}|D^s_\tau u|\right]. \label{gradient term F 2}
\end{multline}
Then it follows from \eqref{gradient term F 2}, \eqref{volume term F}, and \eqref{error volume term}, 
\begin{align*}
   & \int_{\partial^* G}\left(\frac{1}{|x|}+|x|-2\right)  \,  d\mathscr H^{n-1}\\
   &\leq (1+C(n) \sz)\int_{\mathbb S^{n-1}} |u|^2  \,  d\mathscr H^{n-1} +C(n)\sz^2 \left[\int_{\mathbb S^{n-1}\setminus \mathcal B} \frac{|\nabla_\tau u|^2}{\sqrt{1+|\nabla_\tau u|^2}}    \,  d\mathscr H^{n-1} + \int_{\mathbb S^{n-1}\setminus \mathcal B}|D^s_\tau u|\right]\\
   &\qquad + 2\sz^2 \mathscr H^{n-1}(\partial^* G\cap \mathcal{C}_{\mathcal B}) \\
    & \leq  \left(\frac{1+C(n)\sz}{N} + C(n)\sz^2\right)   \int_{\partial^* G^S }  \left(1-\frac{(x\cdot \nu_{G^S} )^2}{|x|^2}\right) \, d\mathscr H^{n-1} + 2\sz^2 \mathscr H^{n-1}(\partial^* G\cap \mathcal{C}_{\mathcal B}).  
\end{align*}
Moreover, Lemma~\ref{rearrange G} yields
$$\int_{\partial^* G^S\cap \mathcal{C}_{\mathcal B} }  \left(1-\frac{(x\cdot \nu_{G^S} )^2}{|x|^2}\right) \, d\mathscr H^{n-1}\le \mathscr H^{n-1}(\partial^* G^S\cap \mathcal{C}_{\mathcal B})\le (1+C(n)\sz) \mathscr H^{n-1}(\partial^* G\cap \mathcal{C}_{\mathcal B}). $$
Therefore, choosing $N>2n$ large enough and $\sz_1=\sz_1(n,\kz)>0 $ sufficiently small so that
$$
(n-1+\zeta) \left(\frac{1+C(n)\sz}{N} + C(n)\sz\right)\le 1 \quad \text{ and } \quad \frac{1+C(n)\sz}{N} +C(n)\sigma^2 +2\sigma^2 \le \kz,
$$
we again obtain \eqref{eq:kappa G1}.

\bigskip

Towards the second part,  note that
$$\frac{1}{|x|}+|x|-2 =0 \quad \text{ when } x\in \partial B.$$
Also, 
\begin{align*}
{\rm div}\left(\left(\frac{1}{|x|}+|x|-2\right) \frac{x}{|x|}\right)
= \frac{\big(2+n(|x|-1)\big)(|x|-1)}{|x|^2} \geq ||x|-1| \qquad \text{for $1<|x|<1 + \frac 1 n$}.
\end{align*}
Hence, recalling that $\frac{1}{|x|}+|x|-2\geq 0$, which is $0$ on $\partial B$, it follows from  the divergence theorem that
\begin{align*}
\int_{\partial^*  G}   \left( \frac{1}{|x|}+|x|-2\right)\, d\mathscr H^{n-1} &\ge \int_{\partial^*G \setminus B}\left(\frac{1}{|x|}+|x|-2\right)\, d\mathscr H^{n-1}\\
&=  \int_{\partial^*  G\setminus B}   \left(\frac{1}{|x|}+|x|-2\right) \left(\frac{x}{|x|}\cdot \nu_G\right)\, d\mathscr H^{n-1} \\
&\qquad +\int_{\partial^*  G \setminus B}   \left(\frac{1}{|x|}+|x|-2\right) \left(1-\frac{x}{|x|}\cdot \nu_G\right)\, d\mathscr H^{n-1}\\
& \ge \int_{\partial^*  G\setminus B}   \left(\frac{1}{|x|}+|x|-2\right) \left(\frac{x}{|x|}\cdot \nu_{G\setminus B}\right)\, d\mathscr H^{n-1}\\
&= \int_{\partial^*  (G\setminus B)}   \left(\frac{1}{|x|}+|x|-2\right) \left(\frac{x}{|x|}\cdot \nu_{G\setminus B}\right)\, d\mathscr H^{n-1} \\
& \ge  \int_{ G\setminus B }   \left| |x|-1\right|\, dx \ge c(n)|G\Delta B|^2,
\end{align*}
where the last inequality follows from   Lemma~\ref{lem:1 x}.
\end{proof}

 \section{Proof of Theorem~\ref{main thm}}\label{main proof}
 
We first reduce the proof to the case where the weighted barycenter vanishes.
Assume that the estimate has already been proved for all bounded $C^1$ open
sets $F$ satisfying
$$
|F|=|B|,\qquad \int_F\frac{x}{|x|}\,dx=0,\qquad
\operatorname{Exc}(F)\le\delta_*,
$$
where $\delta_*=\delta_*(n)>0$.  Let now $E$ satisfy the hypotheses of
Theorem~\ref{main thm} with $\operatorname{Exc}(E)\le\delta$, where $\delta$
will be chosen below. By \eqref{exc symm diff},
$|E\Delta B|\le C(n)\operatorname{Exc}(E)^{1/2}$. Since
$\int_B x/|x|\,dx=0$, this gives
$$
\left|\mean{E}\frac{x}{|x|}\,dx\right|
\le C(n)|E\Delta B|
\le C(n)\operatorname{Exc}(E)^{1/2}.
$$
Thus, after decreasing $\delta$ if necessary, Lemma~\ref{translation} gives a
vector $y_0\in\R^n$ with
$$
|y_0|\le C(n)\operatorname{Exc}(E)^{1/2},
\qquad
\int_{E+y_0}\frac{x}{|x|}\,dx=0.
$$
Moreover, for $|y_0|\ll1$,
$$
\big|\operatorname{Exc}(E+y_0)-\operatorname{Exc}(E)\big|
\le C(n)|y_0|^{\frac{n-1}{n}}.
$$
Indeed, using
$\operatorname{Exc}(F)=P(F)-\int_F (n-1)/|x|\,dx$, this follows by splitting
the integral of
$\left||x+y_0|^{-1}-|x|^{-1}\right|$ on $E$ into $B_{2|y_0|}$ and its
complement. Hence, choosing $\delta=\delta(n)>0$ small enough, the translated
set $F:=E+y_0$ satisfies $\operatorname{Exc}(F)\le\delta_*$. Applying the
normalized estimate to $F$, and using the invariance of the $L^2$-oscillation
of the mean curvature under translations, proves \eqref{eq:main thm} for the
original set with $x_0=-y_0$.

It remains to prove the normalized estimate.  In the rest of the proof we
relabel $F$ as $E$ and $\delta_*$ as $\delta$.

Let $E\subset\mathbb R^n$ be a bounded $C^1$ open set, and write
\begin{equation}\label{mean curvature E}
\mathcal H_{\partial^*E}=\mu +R,
\end{equation}
where $\mu \in \R$ and $R\in L^2(\partial^*E)$.  Recall that, by assumption, $|E|=|B|$,  
$\int_{E} \frac{x}{|x|}\,dx =0, $
and
${\rm Exc}(E)\leq \delta$.
Also, thanks to \eqref{difference perimeter}, $|E\Delta B|\leq C(n)\delta^{1/2}$.
Since bounded $C^1$ open sets have finite perimeter and $\partial E=\partial^*E$ up to $\mathscr H^{n-1}$-negligible sets, we shall keep using reduced boundary notation throughout. 

Given $0<r<\rho<\infty$, we recall the notation
$$A_{r,\rho}:=B_{\rho}\setminus \overline{B}_{r}. $$
Note that equation \eqref{mean curvature E} is equivalent, in weak form, to saying that
\begin{equation}\label{eq:mean div}
\int_{\partial^*E} {\rm div}_\tau \big(\Phi(x)\big) \,d\mathscr H^{n-1}=\mu \int_{\partial^*E}  \Phi(x)\cdot \nu_E \,d\mathscr H^{n-1} +  \int_{\partial^*E} R(x)  \Phi(x)\cdot \nu_E \,d\mathscr H^{n-1} 
\end{equation}
for every $\Phi \in C^1_c(\mathbb R^n;\mathbb R^n)$.
We shall consider test functions of the form $\Phi(x)=\phi(|x|)x$, and by approximation we can consider test functions that are Lipschitz and integrable on the domain of integration. For later use, we note that
\begin{equation}\label{eq:div}
\begin{split}
{\rm div}_\tau \big(\phi(|x|)x\big)&= {\rm div}_\tau(x) \phi(|x|) + \phi'(|x|) x\cdot \nabla_\tau |x|\\
&=(n-1)\phi(|x|)+ \phi'(|x|)|x|\left(1-\frac{(x\cdot \nu_E)^2}{|x|^2}\right).
\end{split}
\end{equation}

\subsection{Reducing to $R$ small in $L^2(\partial^* E)$}
Fix $\mu\in\mathbb R$ and write $R=\mathcal H_{\partial^*E}-\mu$.
All constants below are independent of this choice of $\mu$.
We first reduce the problem to the case where
\begin{equation}\label{R small}
    \|R\|_{L^2(\partial^*E)} \leq c_0
\end{equation} 
for some $c_0=c_0(n)>0$ sufficiently small. 

Indeed, 
choose $\Phi(x)=\varphi(|x|)x$,
where $\varphi \in C^\infty_c(\R)$ is a cut-off function supported on $\left(\frac 1 4,\, 2\right)$ that coincides with $1$ on $\left(\frac 1 2,\, \frac3 2\right)$, and satisfies
$\varphi'\ge 0$  on $\left(\frac 1 4,\, \frac 1 2\right)$.
Then \eqref{eq:mean div}, \eqref{eq:div}, and the divergence theorem, imply that
\begin{align*}
&\bigg|\mu \int_{E} \big( n\varphi(|x|) +\varphi'(|x|) |x| \big)\,dx\bigg| =\bigg|\mu \int_{E} {\rm div}\big(\Phi(x)\big) \,dx\bigg|=\bigg| \mu \int_{\partial^*E}  \Phi(x)\cdot \nu_E \,d\mathscr H^{n-1}\bigg|\\
&=\bigg| \int_{\partial^*E}  \bigg((n-1) \varphi(|x|) + \varphi'(|x|)|x| \left(1-\frac{(x\cdot \nu_E)^2}{|x|^2}\right) \bigg)\,d\mathscr H^{n-1} - \int_{\partial^*E} R(x)  \Phi(x)\cdot \nu_E \,d\mathscr H^{n-1} \bigg|\\
&\leq C(n)\left(P(E) + \|R\|_{L^1(\partial^*E)}\right)\leq C(n)\left(P(E) + P(E)^{1/2}\|R\|_{L^2(\partial^*E)}\right),
\end{align*}
where, for the last inequality, we used H\"older's inequality.
Thanks to \eqref{difference perimeter} and the smallness of the excess, it follows that $P(E)\leq C(n)$, thus
\begin{equation}\label{bounded mu integral}
\bigg|\mu \int_{E} \big( n\varphi(|x|) +\varphi'(|x|) |x| \big)\,dx\bigg| \leq C(n)\big(1+\|R\|_{L^2(\partial^*E)}\big).
\end{equation}
Also, by the construction of $\varphi$, 
$$
n\varphi(|x|) +\varphi'(|x|) |x|\geq \left\{
\begin{array}{ll}
0&\text{in }B_{1 /2}\\
n&\text{in }A_{1 /2 , 3 /2}\\
-C(n)&\text{outside }B_{3/ 2}.
\end{array}
\right.
$$
Therefore, choosing $|E\Delta B|\le \delta=\delta(n)$ sufficiently small, as $|E|=|B|$ we have
\begin{multline*}
\int_{E} \big( n\varphi(|x|) +\varphi'(|x|) |x| \big)\,dx \geq n\left|E\cap A_{1/2,3/2}\right|-C(n)|E\setminus  B_{3/2}| \\
 \geq n\left|E\cap A_{1/2,3/2}\right|-C(n)|E\Delta B| \ge \frac 1 2  \left|A_{1/2,3/2}\right|\ge c(n).
\end{multline*}
Combining this bound with \eqref{bounded mu integral}, we conclude that 
\begin{equation}\label{bounded mu}
|\mu|\le C(n) \big(1+\|R\|_{L^2(\partial^*E)}\big).
\end{equation}
Note that, when $\|R\|_{L^2(\partial^*E)} \geq c_0(n)$ for some dimensional
constant $c_0(n)$, \eqref{bounded mu} and ${\rm Exc}(E)\le\delta(n)\le1$ give
\[
{\rm Exc}(E)+|E\Delta B|^2+|\mu-(n-1)|^2
\le C(n)\bigl(1+\|R\|_{L^2(\partial^*E)}^2\bigr)
\le C(n)c_0(n)^{-2}\|R\|_{L^2(\partial^*E)}^2 .
\]
Thus the normalized estimate follows in the case
$\|R\|_{L^2(\partial^*E)} \geq c_0(n)$.

Hence, from now on, we shall assume that $\|R\|_{L^2(\partial^*E)} \leq c_0(n)$ for some small dimensional constant to be fixed later. In particular, thanks to \eqref{bounded mu}, $|\mu|$ is bounded by a dimensional constant.

\subsection{Cutting tentacles}

We now prove that, if the excess is sufficiently small, then we can control the area of potential ``tentacles'' of $E$ outside the annulus $A_{1-\sigma,1+\sigma}$ with $\|R\|_{L^2(\partial^*E)}^{\frac{2}{1-\eta}}$ for some $\eta=\eta(n)>0$.

\begin{lem}\label{small tentacle}
Let $E\subset\mathbb R^n$ be a set of finite perimeter with $|E|=|B|$ and
scalar distributional mean curvature
\[
\mathcal H_{\partial^*E}=\mu+R,\qquad R\in L^2(\partial^*E),
\]
where $|\mu|\le C_0(n)$. For every $\sz \in (0,1/8)$ there exists
$\delta=\delta(n,\sz)\ll 1$ such that, whenever ${\rm Exc}(E)\le \delta,$
then
\begin{align*}
&P(E\setminus B_{r})\le C(n)\|R\|_{L^2(\partial^*E)}^{\frac{2}{1-\eta}} \qquad \text{for all }r \geq 1+\sigma/2,\\
&P(B_r\setminus E)\le C(n)\|R\|_{L^2(\partial^*E)}^{\frac{2}{1-\eta}} \qquad \text{for all }r \in (1/2,1-\sigma/2).
\end{align*}
Here, $\eta=\eta(n) \in (0,1)$ is a dimensional constant.
\end{lem}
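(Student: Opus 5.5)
Fix $r\ge 1+\sigma/2$ and consider $f(s):=P(E\setminus B_s)$ for $s\ge r_0:=1+\sigma/4$; the inner case $P(B_r\setminus E)$ will be handled symmetrically, with $B_r\setminus E$ and the vector field $x/r$ as in Lemma~\ref{small tentacle excess}. The strategy is a differential inequality for $V(s):=|E\setminus B_s|$, combined with the isoperimetric inequality. This is the standard ``tentacle'' argument, with the curvature playing the role usually played by almost-minimality.

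\emph{Step 1 (first variation on the tentacle).} Test \eqref{eq:mean div} with $\Phi(x)=\phi_\varepsilon(|x|)x$, where $\phi_\varepsilon$ is a Lipschitz cutoff equal to $0$ on $[0,s]$ and $1$ on $[s+\varepsilon,\infty)$; this is exactly the computation in Lemma~\ref{singular mean curvature}. Using \eqref{eq:div} and letting $\varepsilon\to0$ for a.e.\ $s$, I expect
$$(n-1)\mathscr H^{n-1}(\partial^*E\setminus B_s)\le \mu\int_{E\setminus B_s}n\,dx+\mu\, s\,\mathscr H^{n-1}(E\cap\partial B_s)+\int_{\partial^*E\setminus B_s}|R||x|\,d\mathscr H^{n-1}+C\,\mathscr H^{n-2}\text{-type boundary terms}.$$
The cleaner route is to use $\Phi=\phi(|x|)x$ directly and the divergence theorem on $E\setminus B_s$. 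Since $|\mu|\le C_0$ and $|x|$ is bounded on the relevant region, this should give
$$\mathscr H^{n-1}(\partial^*E\setminus B_s)\le C\big(V(s)+s\,\mathscr H^{n-1}(E\cap\partial B_s)\big)+C\|R\|_{L^2}\,\mathscr H^{n-1}(\partial^*E\setminus B_s)^{1/2}.$$
A slight difficulty is that $|x|$ is unbounded on long tentacles. I would handle this by first using Lemma~\ref{small tentacle excess}, which gives $P(E\setminus B_s)\le C\delta^{1/2}$, together with a diameter-type bound obtained from monotonicity, or alternatively by using the bounded field $\phi(|x|)x/|x|$ instead.

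\emph{Step 2 (ODE).} Write $a(s):=\mathscr H^{n-1}(E\cap\partial B_s)=-V'(s)$ for a.e.\ $s$, and note that by \eqref{eq:bound div1} we have $P(E\setminus B_s)\le 2\mathscr H^{n-1}(\partial^*E\setminus B_s)$. The isoperimetric inequality gives $V(s)^{(n-1)/n}\le C P(E\setminus B_s)$. From Step 1, splitting according to which term dominates and using Young's inequality, I expect
$$V(s)^{\frac{n-1}{n}}\le C\big(-V'(s)+V(s)\big)+C\|R\|_{L^2}^2 .$$
Since $V(s)\le C\delta^{1/2}$ is small, the term $V$ is absorbed by $V^{(n-1)/n}$. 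So either $V(s)^{(n-1)/n}\le C\|R\|^2_{L^2}$, or $-V'\ge cV^{(n-1)/n}$. Integrating the latter over $[r_0,r_0+\sigma/4]$ forces $V$ to reach the threshold $(C\|R\|^2)^{n/(n-1)}$ within that interval, provided $V(r_0)^{1/n}\le c\sigma$. The proviso is ensured by choosing $\delta(n,\sigma)$ small, since $V(r_0)\le |E\Delta B|\le C\delta^{1/2}$. Hence for $s\ge 1+\sigma/2$ we get $V(s)\le C\|R\|_{L^2}^{2n/(n-1)}$.

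\emph{Step 3 (back to perimeter).} Choose a good radius $s\in[1+3\sigma/8,1+\sigma/2]$ where $a(s)\le C\sigma^{-1}V(1+3\sigma/8)$. Then feed the bounds $V\lesssim\|R\|^{2n/(n-1)}$ and $a\lesssim\|R\|^{2n/(n-1)}$ into the Step 1 inequality. Absorbing the $\|R\|\,P^{1/2}$ term yields
$$P(E\setminus B_s)\le C\big(\|R\|^{2n/(n-1)}+\|R\|^2\big).$$
Extending to all $r\ge s$ is done via \eqref{eq:bound div1} and the monotonicity of $\mathscr H^{n-1}(\partial^*E\setminus B_r)$ in $r$. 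The resulting exponent is $\min\{2n/(n-1),2\}=2$, which is weaker than the claimed $2/(1-\eta)$. To gain the extra power, I would not use Cauchy--Schwarz crudely in Step 1. Instead, I would apply H\"older with the Sobolev/isoperimetric gain on $\partial^*E\setminus B_s$: the $R$-term is $\|R\|_{L^2}P^{1/2}$, and since the left side is $\gtrsim P$, absorbing gives $P\lesssim\|R\|^2$ only when the $\mu V$ terms are already of lower order, $V\lesssim P^{n/(n-1)}$. Iterating, $P\le C\|R\|P^{1/2}+CP^{n/(n-1)}$, and smallness of $P$ gives $P\le C\|R\|^2$. Getting the strict improvement $2/(1-\eta)$ is, I expect, the main obstacle. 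My first attempt would be to exploit that $\mu\approx n-1$ exactly balances $(n-1)$ in \eqref{eq:div}, so that the leading terms cancel and leave $P$ controlled by $\|R\|\,P^{1/2}\cdot(\text{small})$ plus higher-order terms. The small factor would come from $P^{\eta/2}$ via H\"older, using an $L^{2/(1-\eta)}$-type improvement of the quantity $1-(x\cdot\nu)^2/|x|^2$ on the tentacle.
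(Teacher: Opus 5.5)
\textbf{There is a genuine gap: the proposal does not reach the stated exponent.} As you observe yourself, Steps 1--3 give at best $P(E\setminus B_r)\le C\|R\|_{L^2(\partial^*E)}^2$. The lemma asserts $C\|R\|_{L^2(\partial^*E)}^{2/(1-\eta)}$ with $2/(1-\eta)>2$, which is strictly stronger for small $\|R\|$. Your closing ideas do not close this. The lemma only assumes $|\mu|\le C_0(n)$, so there is no cancellation between $\mu$ and $n-1$ to exploit, and nothing provides higher integrability of $1-(x\cdot\nu_E)^2/|x|^2$. The obstruction is structural. The Euclidean isoperimetric inequality only improves the bound on the volume $V$, while the perimeter is recovered from the first-variation identity, where $R$ enters as $\|R\|P^{1/2}$ against $P$; that balance is exactly quadratic.

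Step 1 has a second unresolved problem: the weight $|x|$ in $\int|R||x|$ is not uniformly bounded, and neither of your fixes works. Since ${\rm div}_\tau(x/|x|)=\frac{n-2}{|x|}+\frac{(x\cdot\nu_E)^2}{|x|^3}$, the bounded field controls only an $|x|^{-1}$-weighted area, and for $n=2$ it vanishes on radial pieces of boundary. An $L^2$ curvature bound also gives no diameter bound when $n\ge5$. For example, a tube of radius $\rho$ and length $L$ attached to the ball has perimeter $\sim\rho^{n-2}L$ and $\|R\|_{L^2}^2\sim\rho^{n-4}L$, both small with $L$ large.

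\textbf{The missing ingredient is the Michael--Simon inequality}, in its version for sets of finite perimeter with curvature measure. The paper applies it to the truncated set $E_r=E\setminus B_r$ rather than to the volume: $P(E_r)^{1-\eta}\le C(n)|\mathbf H_{\partial E_r}|(\mathbb R^n)$, with $1-\eta=\frac{n-2}{n-1}$ for $n\ge3$. For $n=2$, Gauss--Bonnet gives $|\mathbf H_{\partial E_r}|(\mathbb R^2)\ge 2\pi$, which suffices because $P(E_r)\ll1$. Set $F(r):=\mathscr H^{n-1}(\partial^*E\setminus B_r)$. Lemma~\ref{singular mean curvature} bounds the right-hand side by $C|\mu|F+\|R\|_{L^2}F^{1/2}+C\mathscr H^{n-1}(E\cap\partial B_r)+C\mathscr H^{n-2}(\partial^*E\cap\partial B_r)$. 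Here \eqref{eq:bound div1} gives $\mathscr H^{n-1}(E\cap\partial B_r)\le F(r)$, and the coarea formula gives $\mathscr H^{n-2}(\partial^*E\cap\partial B_r)\le -F'(r)$.

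After Young's inequality, the linear terms $CF$ are absorbed into $F^{1-\eta}$, because $F\ll1$ by Lemma~\ref{small tentacle excess}. This sublinear power is exactly where the gain comes from. What remains is $F^{1-\eta}\le\|R\|_{L^2}^2-CF'$. Suppose $F(r)^{1-\eta}>2\|R\|_{L^2}^2$ for some $r\ge1+\sigma/2$. Monotonicity of $F$ then gives $(F^\eta)'\le -c$ on $(1+\sigma/4,1+\sigma/2)$, contradicting the smallness of $F(1+\sigma/4)$. Hence $F(r)\le C\|R\|_{L^2}^{2/(1-\eta)}$ and $P(E\setminus B_r)\le 2F(r)$.

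No weight $|x|$ appears in this argument, so long tentacles cause no difficulty. The interior case is analogous, using $B_r\setminus E$ and $F_{\rm in}(r)=\mathscr H^{n-1}(\partial^*E\cap B_r)$. There the interface term is controlled by $F_{\rm in}'\ge0$; unlike your outward cutoff, it does not come with a favorable sign.
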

\begin{proof}
We first consider the part of $\partial^*E$ outside the unit ball.

Set $E_r:=E\setminus B_{r}$ and apply Lemma~\ref{small tentacle excess} with $\gamma=\sigma/4$ to ensure that, if $\delta=\delta(n,\sigma)$ is small enough, then
$P(E_r)\ll 1$ for all $r \geq 1+\sigma/4$.
Note now that, by the coarea formula (see for instance \cite[Theorem 18.8]{M2012}),
\begin{equation}
\label{eq:coarea angle}
-\frac{d}{dr}\mathscr H^{n-1}\left(\partial^*E \setminus B_{r}\right)= \int_{\partial^*E\cap \partial B_{r}}\frac{1}{\sqrt{1-\big|\frac{x}{|x|}\cdot \nu_E\big|^2}}\,d\mathscr H^{n-2} \geq  \mathscr H^{n-2}(\partial^*E\cap \partial B_{r})
\qquad \text{for a.e. }r.
\end{equation}
In addition, by Lemma~\ref{singular mean curvature}, the vector-valued
distributional curvature measure $\mathbf H_{\partial E_r}$ satisfies
$$|\mathbf H_{\partial E_r}|\le |\mathcal H_{\partial^*E}|\, \mathscr H^{n-1}\lfloor_{\partial^*E\setminus B_r}+\frac{n-1}{r}\,\mathscr H^{n-1}\lfloor_{E^{(1)}\cap\partial B_r}+ C \mathscr H^{n-2}\lfloor_{\partial^*E\cap \partial B_{r}},$$
which implies in particular that
\begin{equation}
\label{eq:H Er}
|\mathbf H_{\partial E_r}|\leq \big(|\mu|+|R|\big)\mathscr H^{n-1}\lfloor_{\partial^*E\setminus B_{r}}+\frac{n-1}{r}\mathscr H^{n-1}\lfloor_{E\cap \partial B_{r}}+C \mathscr H^{n-2}\lfloor_{\partial^*E\cap \partial B_{r}}.
\end{equation}
Note now that, by the Michael--Simon inequality \cite{MS1973} (see also \cite{A1972})\footnote{The Michael--Simon inequality states that, if $n \geq 3$, then for any smooth bounded set $F$ it holds
\begin{equation*}
P(F)^{\frac{n-2}{n-1}}\leq C(n) |\mathbf H_{\partial F}|(\R^n).
\end{equation*}
This extends to arbitrary sets of finite perimeter, where the curvature is understood in the distributional sense of \eqref{eq:vector curvature measure}, see e.g. \cite[Theorem 5.7]{S2014}  and also the proof of \cite[Theorem 1.2]{CM2022}.
When $n=2$, it follows from the Gauss--Bonnet theorem for curves that
$$
|\mathbf H_{\partial F}|(\R^n)\geq 2\pi,
$$
so the inequality $P(F)^{\alpha}\leq  |\mathbf H_{\partial F}|(\R^n)$ holds for any $\alpha>0$ provided $P(F)\leq 1$. This applies in our case to the set $F=E_r$, since $P(E_r)\ll 1$. }, there exists a dimensional constant $\eta=\eta(n)\in (0,1)$ such that
\begin{equation}\label{eq:MS}
\mathscr H^{n-1}(\partial^*E_r)^{1-\eta}\le C(n)|\mathbf H_{\partial E_r}|(\R^n).    
\end{equation}
Thus, combining \eqref{eq:MS}, \eqref{eq:H Er}, \eqref{eq:coarea angle}, and 
H\"older's inequality, 
we get
\begin{align*}
\mathscr H^{n-1}(\partial^*E_r)^{1-\eta}
&\leq    C(n)\int_{\partial^*E\setminus B_{r}}\big(|\mu|+|R|\big)\, d\mathscr H^{n-1}+  C(n)\mathscr H^{n-1}(E\cap \partial B_{r})\\
&\qquad+ C  \mathscr H^{n-2}(\partial^*E\cap \partial B_{r}) \\
&\le  C(n)|\mu|\mathscr H^{n-1}(\partial^*E\setminus B_{r})+\|R\|_{L^2(\partial^*E\setminus B_{r})}\mathscr H^{n-1}(\partial^*E\setminus B_{r})^{1/2} \\
&\qquad+ C(n)\mathscr H^{n-1}(E\cap \partial B_r)-   C   \frac{d}{dr}\mathscr H^{n-1}\left(\partial^*E \setminus B_{r}\right)\\
&\leq C(n) \mathscr H^{n-1}(\partial^*E_r)+ \frac{1}2 \|R\|_{L^2(\partial^*E)}^2 -   C   \frac{d}{dr}\mathscr H^{n-1}\left(\partial^*E \setminus B_{r}\right),
\end{align*}
where we used that $|\mu|\leq C(n)$ and that
\begin{align*}
\|R\|_{L^2(\partial^*E\setminus B_{r})}\mathscr H^{n-1}(\partial^*E\setminus B_{r})^{1/2}
&\leq \frac12 \|R\|_{L^2(\partial^*E\setminus B_{r})}^2+\frac12 \mathscr H^{n-1}(\partial^*E\setminus B_{r})\\
& \leq 
\frac12 \|R\|_{L^2(\partial^*E)}^2+\frac12
\mathscr H^{n-1}(\partial^*E_r).
\end{align*}
Since $\mathscr H^{n-1}(\partial^*E_r)=P(E_r)\ll 1$, we can absorb the first term in the right hand side to finally obtain
\begin{align*}
\mathscr H^{n-1}(\partial^*E_r)^{1-\eta}\leq \|R\|_{L^2(\partial^*E\setminus B_{r})}^2 -   C\frac{d}{dr}\mathscr H^{n-1}\left(\partial^*E \setminus B_{r}\right).
\end{align*}
In particular, if we consider the non-increasing function $F(r):=\mathscr H^{n-1}\left(\partial^*E \setminus B_{r}\right)$, we proved that, for  some $\rho\in (1+\sz/8,\,1+\sz/4)$, 
$$F(r)^{1-\eta}\le\mathscr H^{n-1}(\partial^*E_r)^{1-\eta}\le  \|R\|^2_{L^2(\partial^*E)} -  C  F'(r)
\qquad \text{for almost every } \ r \geq \rho.$$

Recalling that $\rho \leq 1+\frac\sigma{4}$, we make the following:

\smallskip

\noindent
{\bf Claim:} {\it If $Exc(E)\leq \delta$ is small enough, then
\begin{equation}\label{F}
F(r)^{1-\eta}\le 2\|R\|^2_{L^2(\partial^*E)} \qquad \text{for  almost every } \ r \geq 1+\sigma/2.
\end{equation}}
Indeed, if not, since $F$ is non-increasing we would have
$$F(r)^{1-\eta}\le \frac12 F(r)^{1-\eta} - C F'(r) \qquad \text{for  almost every } \ r\in (1+ {\sz} /4,\,1+\sz/2),$$
or equivalently
$$
\frac{d}{dr}F(r)^\eta= \eta F'(r)F(r)^{\eta-1}\le -\frac{\eta}{2C} \qquad \text{for  almost every } r \in (1+  {\sz}/ 4,\,1+\sz/2).
$$
Integrating this ODE over $(1+ \sz/ 4,\,1+\sz/2)$ one obtains
$$\frac{\eta}{8C}\sz \le  F\left(1+\frac \sz 4\right)^{\eta}- F\left(1+ \frac \sz 2 \right)^{\eta}\le F\left(1+\frac \sz 4\right)^{\eta} = \mathscr H^{n-1}\left(\partial^*E \setminus B_{1+\sigma/4}\right)^{\eta}, $$
a contradiction to \eqref{eq:bound coarea1} when $\delta>0$ is sufficiently small.

This proves \eqref{F}, which, combined with \eqref{eq:bound div1}, implies the bound
$$\mathscr H^{n-1}(\partial^*(E\setminus B_{r}))\le C(n)\|R\|_{L^2(\partial^*E)}^{\frac{2}{1-\eta}} \qquad \text{for  almost every } \ r  \geq 1+\sigma/2,
$$
concluding the estimate for the part of $\partial^*E$
outside the unit ball.

 The interior estimate is similar, with the signs reversed. Indeed, set
$\hat E_r:=B_r\setminus E$ and
$F_{\rm in}(r):=\HH^{n-1}(\partial^*E\cap B_r)$. For a.e. $r$,
\[
\frac{d}{dr}F_{\rm in}(r)
=\int_{\partial^*E\cap\partial B_r}
\frac{1}{\sqrt{1-\big|\frac{x}{|x|}\cdot\nu_E\big|^2}}\,d\HH^{n-2}
\ge \HH^{n-2}(\partial^*E\cap\partial B_r),
\]
\[
|\mathbf H_{\partial \hat E_r}|
\le |\mathcal H_{\partial^*E}|\,\HH^{n-1}\lfloor_{\partial^*E\cap B_r}
+\frac{n-1}{r}\,\HH^{n-1}\lfloor_{E^{(0)}\cap\partial B_r}
+C\HH^{n-2}\lfloor_{\partial^*E\cap\partial B_r}.
\]
Thus, Michael--Simon, H\"older, the preceding coarea identity, and
Lemma~\ref{small tentacle excess} give
\[
F_{\rm in}(r)^{1-\eta}
\le \|R\|_{L^2(\partial^*E)}^2+C F_{\rm in}'(r)
\qquad\text{for a.e. }r\in(1-\sigma/2,1-\sigma/4).
\]
If the desired bound failed at some $r\le1-\sigma/2$, the monotonicity of
$F_{\rm in}$ would give
$F_{\rm in}^{1-\eta}\le \frac12F_{\rm in}^{1-\eta}+C F_{\rm in}'$
on $(1-\sigma/2,1-\sigma/4)$. Hence
$\frac{d}{dr}F_{\rm in}(r)^\eta\ge c(n)$ there, contradicting the smallness of
$F_{\rm in}(1-\sigma/4)$ from Lemma~\ref{small tentacle excess}. 
Finally the
same divergence estimate as in Lemma~\ref{small tentacle excess}, together with
the relative isoperimetric inequality in $B_r$, controls
$\HH^{n-1}(E^{(0)}\cap\partial B_r)$ by $F_{\rm in}(r)$.
This proves
\[
P(B_r\setminus E)
\le C(n)\|R\|_{L^2(\partial^*E)}^{\frac{2}{1-\eta}}
\qquad\text{for all }r\in(1/2,1-\sigma/2),
\]
after decreasing $\delta(n,\sigma)$ if necessary.
The bounds for all radii follow by approximating an arbitrary radius by regular radii and using lower semicontinuity of perimeter under $L^1_{\rm loc}$ convergence of the truncations.
\end{proof}

\subsection{Approximation results}\label{sec:approx}

We collect here the approximation results for the original set. The first
lemma records the local second-order regularity that follows from the weak
mean-curvature equation.

\begin{lem}\label{smooth C1 approximation}
Let $E\subset \mathbb R^n$ be a bounded Lipschitz open set, and assume that its scalar distributional mean curvature satisfies
$\mathcal H_{\partial E}\in L^2(\partial E).$
Then $\partial E$ is of class $W^{2,2}$ and there exists a sequence $\{E_j\}_{j \in \mathbb N}$ of bounded open sets with smooth boundary
such that
$$
E_j\supset E,\qquad |E_j\Delta E|\to 0,\qquad P(E_j)\to P(E),
$$
and 
$$
\int_{\partial E_j} |\mathcal H_{\partial E_j}|^2\, d\mathscr H^{n-1} \to  \int_{\partial E }|\mathcal H_{\partial E}|^2\, d\mathscr H^{n-1}.
$$
\end{lem}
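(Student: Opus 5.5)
The plan is to localize the problem: write $\partial E$ near each point as a Lipschitz graph, derive $W^{2,2}$ regularity from the weak mean curvature equation, and then smooth the graphs by mollification, shifting them slightly outward so that $E_j\supset E$.

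\textbf{Local regularity.} Since $E$ is a bounded Lipschitz open set, we cover $\partial E$ by finitely many cylinders $Q_k$. In each $Q_k$, after a rotation, $E\cap Q_k=\{x_n<f_k(x')\}\cap Q_k$ with $f_k$ Lipschitz. Testing the distributional mean curvature identity with fields $\Phi=\varphi(x')\psi(x_n)e_n$, where $\psi\equiv1$ near the graph, gives the weak equation
\[
\operatorname{div}\Big(\frac{\nabla f_k}{\sqrt{1+|\nabla f_k|^2}}\Big)=-h_k,\qquad h_k:=\mathcal H_{\partial E}(x',f_k(x')),
\]
up to the sign convention. Here $h_k\in L^2$, because the area element is bounded above and below. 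This is a uniformly elliptic divergence-form equation in $\nabla f_k$, since $\nabla f_k\in L^\infty$. We apply the difference-quotient method, i.e.\ the standard $W^{2,2}$ estimate for quasilinear equations of the form $\operatorname{div}A(\nabla f)=g$ with $A$ smooth and uniformly monotone on bounded sets. Testing with $-\Delta_{-h}(\zeta^2\Delta_h f_k)$ yields $\nabla f_k\in W^{1,2}_{\rm loc}$ with bounds in terms of $\|h_k\|_{L^2}$ and $\mathrm{Lip}(f_k)$. Hence $\partial E\in W^{2,2}$, and the strong form $\mathcal H=-\operatorname{div}(\nabla f/\sqrt{1+|\nabla f|^2})$ holds a.e.

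\textbf{Approximation.} We take a partition of unity $\{\chi_k\}$ subordinate to the cylinders. The most robust way to glue the local pieces is a global level-set function. Let $d$ be a regularized signed distance, or more simply a Lipschitz defining function $u$ with $E=\{u<0\}$ and $\nabla u$ transversal to the graph directions; locally we can take $u=x_n-f_k(x')$, patched with the $\chi_k$. Mollifying gives $u_\varepsilon=u*\rho_\varepsilon$. Transversality, which is stable under mollification of a Lipschitz function with a uniform cone condition, makes $\{u_\varepsilon=-t_\varepsilon\}$ a smooth hypersurface for a suitable small $t_\varepsilon\sim\varepsilon$. Since $|u_\varepsilon-u|\le C\varepsilon$, we get $E_j:=\{u_\varepsilon<t_\varepsilon\}\supset E$ once $t_\varepsilon\ge C\varepsilon$. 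Locally, $\partial E_j$ is the graph of $f_{k,\varepsilon}$ with $f_{k,\varepsilon}\ge f_k$, and $f_{k,\varepsilon}\to f_k$ in $W^{2,2}$ with uniformly bounded Lipschitz constants. This convergence holds because the implicit function construction applied to mollified data preserves $W^{2,2}$ convergence: the second derivatives of $f_{k,\varepsilon}$ are algebraic expressions in $D^2u_\varepsilon$ and $Du_\varepsilon$, which converge strongly in $L^2$ and a.e. boundedly, respectively.

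\textbf{Convergence of the quantities.} Uniform convergence of the graphs gives $|E_j\Delta E|\to0$. Perimeter converges by dominated convergence of $\sqrt{1+|\nabla f_{k,\varepsilon}|^2}$. Finally, $\mathcal H_{\partial E_j}$ is a continuous function of $(\nabla f,D^2f)$ that is linear in $D^2f$ with bounded coefficients. Therefore $W^{2,2}$ convergence, together with a.e.\ convergence of the gradients and the Lipschitz bounds, gives $\mathcal H_{\partial E_j}\circ(\text{graph map})\to\mathcal H_{\partial E}\circ(\text{graph map})$ in $L^2$. This yields convergence of $\int|\mathcal H|^2$, after accounting for the converging area factors.

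\textbf{Main obstacle.} The key step is the $W^{2,2}$ estimate with only an $L^2$ right-hand side and a merely Lipschitz (not $C^1$) graph. The difference-quotient argument relies only on the uniform ellipticity of $A(p)=p/\sqrt{1+p^2}$ on $\{|p|\le L\}$, so it should go through. A secondary but more delicate point is showing that the mollified level sets really converge in $W^{2,2}$. For that, I would prefer mollifying each $f_k$ directly and using the partition of unity only on the height functions over a fixed reference graph. Near points where two charts overlap I would pass to a common chart, relying on the fact that the change-of-chart map preserves $W^{2,2}$ convergence for Lipschitz graphs with a uniform cone condition.
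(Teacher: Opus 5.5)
Your regularity step is correct, and it takes a more elementary route than the paper. The paper quotes the second-order estimate of \cite{CiMa2018} to get $A(Du)\in W^{1,2}_{\rm loc}$ and then inverts $A$. Your difference quotients work directly, because $|\nabla f_k|\le L$ makes $DA$ uniformly elliptic on the segments joining $\nabla f_k(x)$ and $\nabla f_k(x+he)$. So what you call the main obstacle is not one.

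\textbf{The gap is in the approximation step}, which is the point you call secondary and which the paper takes from \cite[Theorem~2(3)]{Antonini2024}. You justify $f_{k,\varepsilon}\to f_k$ in $W^{2,2}$ by saying $D^2u_\varepsilon\to D^2u$ strongly in $L^2$ and $Du_\varepsilon\to Du$ a.e. This does not suffice. $D^2f_{k,\varepsilon}$ is built from $D^2u_\varepsilon$ and $Du_\varepsilon$ \emph{restricted to} the $\varepsilon$-dependent hypersurfaces $\{u_\varepsilon=t_\varepsilon\}$, and these are Lebesgue-null sets. Convergence in $L^2(\R^n)$, or a.e.\ in $\R^n$, says nothing there. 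In fact $W^{2,2}$ regularity of a Lipschitz defining function does not even make its zero set $W^{2,2}$. For $n=2$, the function $u=x_2-\tfrac12|x|^{1+\beta}$ with $0<\beta\le\tfrac12$ is Lipschitz, transversal and in $W^{2,2}(B_1)$. Yet $\{u=0\}$ is the graph of a function with $f''\sim|x_1|^{\beta-1}\notin L^2$.

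What makes your particular construction work is the anisotropy of the patched function, which your argument never uses. The Hessian $D^2u_k=-(D^2f_k)\circ\pi_k$ is invariant along $e_n^{(k)}$. Hence so is $D^2u_k*\rho_\varepsilon=-(D^2f_k*\tilde\rho_\varepsilon)\circ\pi_k$, where $\tilde\rho_\varepsilon$ is the marginal kernel. In chart $l$, its restriction to $\{u_\varepsilon=t_\varepsilon\}$ is $(D^2f_k*\tilde\rho_\varepsilon)\circ\Phi_\varepsilon$. Here $\Phi_\varepsilon$, the graph map of chart $l$ followed by $\pi_k$, is uniformly bi-Lipschitz and converges uniformly. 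Composition with such maps is continuous on $L^2$, as one sees by approximating with continuous functions. The commutators $(\chi_kD^2u_k)*\rho_\varepsilon-\chi_k(D^2u_k*\rho_\varepsilon)$ are bounded by $C\varepsilon\,(|D^2f_k|*\tilde\rho_\varepsilon)\circ\pi_k$, so they are $O(\varepsilon)$ in $L^2$ on the level sets. The remaining terms of $D^2(\chi_ku_k)$ involve only $u_k$ and $\nabla u_k$, and the same composition argument gives convergence in measure of $Du_\varepsilon$ on the level sets. Only with this argument, or by citing the approximation theorem as the paper does, is the curvature convergence established.

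Your fallback, height functions over a fixed reference graph, is not a proof either. For a merely Lipschitz boundary it presupposes two things you do not supply: a smooth reference hypersurface with a smooth transversal field, and the stability of $W^{2,2}\cap W^{1,\infty}$ graphs under change of chart. These are exactly the nontrivial ingredients.
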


\begin{proof}
Since $\partial E$ is compact and Lipschitz, we may cover it by finitely many coordinate cylinders in which $\partial E$ is represented as the graph of a function
$$
u\colon B'_R\subset \mathbb R^{n-1}\to \mathbb R
$$
with $u\in W^{1,\infty}(B'_R)$. In such a chart, writing the mean curvature with respect to the graph orientation, we have
\begin{equation}\label{eq:graph mean curvature weak}
-{\rm div}\bigg(\frac{Du}{\sqrt{1+|Du|^2}}\bigg)=f
\qquad \text{in } B'_R
\end{equation}
in the weak sense, where $f$ is the mean curvature read in the chart. Since the surface measure on the graph is comparable to Lebesgue measure on $B'_R$, our assumption $\mathcal H_{\partial E}\in L^2(\partial E)$ yields $f\in L^2(B'_R)$.

Set
$$
A(p):=\frac{p}{\sqrt{1+|p|^2}},\qquad p\in \mathbb R^{n-1},
$$
so that \eqref{eq:graph mean curvature weak} is of the form $-{\rm div}(A(Du))=f$. Since $u$ is Lipschitz, $DA$ is uniformly elliptic on the range of $Du$. Thus, by \cite[Theorem~2.1]{CiMa2018} we obtain
$$
A(Du)\in W^{1,2}_\loc(B'_R;\mathbb R^{n-1}).
$$
Since $A^{-1}: B_1\to \R^{n-1}$ is a diffeomorphism and $A(Du)$ is contained in a compact subset of $B_1$, it follows that
$$
Du=A^{-1}(A(Du))\in W^{1,2}_\loc(B'_R).
$$
Thus $\partial E$ is a $W^{2,2}$-boundary in the sense of \cite[Theorem~2(3)]{Antonini2024}. Applying that result, we obtain smooth inner and outer approximating domains $E^-_j\subset\subset E \subset\subset E^+_j$ such that, in each fixed chart,
$$
u^\pm_j\to u \qquad \text{in }W^{2,2}_\loc(B'_{R}).
$$
Moreover, the approximating charts remain uniformly Lipschitz, and therefore, up to a subsequence,
$$
Du^\pm_j\to Du \qquad \text{a.e. on }B'_{R}.
$$
Setting $E_j:=E_j^+$, we obtain
$$
|E_j\Delta E|\to 0,\qquad P(E_j)\to P(E).
$$
Let $a_{kl}(p):=\partial_{p_l}A_k(p)$. In each chart, with the convention of
\eqref{eq:graph mean curvature weak}, the mean curvature of the graph of $u$ is given by
$$
\mathcal H(u)= -{\rm div}(A(Du))=-a_{kl}(Du)\,\partial_{kl}u,
$$
and similarly for $u^\pm_j$. Since $u^\pm_j\to u$ in $W^{2,2}$ on every smaller ball, we have
\begin{multline*}
\|\mathcal H(u^\pm_j)-\mathcal H(u)\|_{L^2(B'_{R-\epsilon})}
\le \ \|a_{kl}(Du^\pm_j)(\partial_{kl}u^\pm_j-\partial_{kl}u)\|_{L^2(B'_{R-\epsilon})}\\
\ + \|(a_{kl}(Du^\pm_j)-a_{kl}(Du))\partial_{kl}u\|_{L^2(B'_{R-\epsilon})}
\to \ 0,
\end{multline*}
because the coefficients $a_{kl}$ are bounded on bounded sets, $D^2u^\pm_j\to D^2u$ in $L^2$, and $Du^\pm_j\to Du$ a.e. with a uniform $L^\infty$-bound. Since also
$$
\sqrt{1+|Du^\pm_j|^2}\to \sqrt{1+|Du|^2}
$$
pointwise a.e. and with a uniform $L^\infty$-bound, the area formula yields the convergence of the $L^2$-norms on each chart. Summing over the finite atlas gives the $L^2$-convergence. 
\end{proof}

We shall use the following polyhedral approximation result.
We use the notation introduced in \eqref{eq:vector curvature measure}; in
particular, for polyhedral boundaries the measure $\mathbf H_{\partial F}$
contains the codimension-two contributions.

\begin{cor}\label{cor:smooth polyhedral approximation}
Let $E\subset \R^n$ be a bounded open set of $C^1$-class and assume that
$\mathcal H_{\partial E}\in L^2(\partial E)$. Then there exists a sequence of
bounded open polyhedral sets $\{E_h\}_{h\in\mathbb N}$ such that
\[
E_h\to E\quad\text{in measure},\qquad P(E_h)\to P(E),
\]
and
\[
|\mathbf H_{\partial E_h}|(\R^n)\to
\int_{\partial E} |\mathcal H_{\partial E}|\,d\mathscr H^{n-1},
\]
where  $|\mathbf H_{\partial E_h}|$ denotes the total variation of the vector-valued measure $\mathbf H_{\partial E_h}$.
Moreover,
\[
\mathbf H_{\partial E_h}\stackrel{*}{\rightharpoonup}
\mathcal H_{\partial E}\nu_E\,\mathscr H^{n-1}\llcorner\partial E
\]
as vector-valued Radon measures. In addition, away from the
$(n-2)$-skeletons of the polyhedral boundaries, the corresponding graphs and
normals converge uniformly to those of $\partial E$.
\end{cor}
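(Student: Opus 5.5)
We prove Corollary~\ref{cor:smooth polyhedral approximation} in two stages: first smooth approximation, then polyhedral approximation of each smooth set, followed by a diagonal argument.

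\textbf{Step 1 (smooth approximation).} A $C^1$ open set is Lipschitz, so Lemma~\ref{smooth C1 approximation} applies. It gives smooth bounded open sets $E_j$ with $E_j\to E$ in measure, $P(E_j)\to P(E)$, and $\|\mathcal H_{\partial E_j}\|_{L^2}\to\|\mathcal H_{\partial E}\|_{L^2}$. In each chart the approximating graphs satisfy $u_j\to u$ in $W^{2,2}$. Hence $\mathcal H(u_j)\to\mathcal H(u)$ in $L^2$, and therefore also in $L^1$, using the area-factor convergence. This yields
\[
\int_{\partial E_j}|\mathcal H_{\partial E_j}|\to\int_{\partial E}|\mathcal H_{\partial E}|,\qquad \mathcal H_{\partial E_j}\nu_{E_j}\,\mathscr H^{n-1}\llcorner\partial E_j\stackrel{*}{\rightharpoonup}\mathcal H_{\partial E}\nu_E\,\mathscr H^{n-1}\llcorner\partial E .
\]
Because $\partial E$ is $C^1$, the construction of Antonini can be arranged so that $u_j\to u$ also in $C^1$ on the charts, which gives uniform convergence of graphs and normals. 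If this is not directly available from the cited result, I would mollify the $C^1$ chart functions. Mollification converges in $C^1$ and, since $D^2$ commutes with convolution, also in $W^{2,2}_{\rm loc}$. I would then glue with a partition of unity; the gluing errors are controlled in both norms.

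\textbf{Step 2 (polyhedral approximation of a smooth set).} Fix a smooth $F=E_j$. Take a fine triangulation of $\partial F$ with mesh size $h$ and shape-regular simplices, and let $F_h$ be the polyhedron whose boundary is the piecewise-linear interpolant. Then $F_h\to F$ in Hausdorff distance and $P(F_h)\to P(F)$, and the facet normals converge uniformly to $\nu_F$ with error $O(h)$. The measure $\mathbf H_{\partial F_h}$ is supported on the $(n-2)$-skeleton. On each codimension-two face $\Sigma$ shared by facets with normals $\nu_1,\nu_2$ it has density $\tau_1+\tau_2$, where $\tau_i$ are the outward conormals, and $|\tau_1+\tau_2|\approx$ the dihedral angle deficit.

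The key estimate is that for each ridge the vector $(\tau_1+\tau_2)\mathscr H^{n-2}(\Sigma)$ equals $\int \mathbf H_{\partial F}$ over the region of $\partial F$ dual to $\Sigma$, up to an $o(1)$ relative error. To see this, test against $\Phi$ with $\Phi\in C^1_c$. Both sides equal $\int{\rm div}_\tau\Phi$ on the respective surfaces, so
\[
\int\Phi\cdot d\mathbf H_{\partial F_h}=\int_{\partial F_h}{\rm div}_\tau\Phi\to\int_{\partial F}{\rm div}_\tau\Phi .
\]
This uses uniform convergence of the normals, and it gives the weak-$*$ convergence directly. For the total variations, lower semicontinuity yields $\liminf|\mathbf H_{\partial F_h}|(\R^n)\ge\int_{\partial F}|\mathcal H_{\partial F}|$. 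For the upper bound, I would bound the mass at each ridge by the integral of $|\mathcal H_{\partial F}|$ plus the full second fundamental form $|A|$ over a dual cell of size $h$. This bound is not sharp, since the ridge mass involves the normal curvature across the ridge rather than the mean curvature. So I would instead use a refinement. Choose the triangulation adapted to the principal directions locally on a grid where $A$ is nearly constant; for nearly constant $A$ the errors cancel up to $o(1)$ per cell. Summing and using smoothness of $F$ gives $\limsup\le\int|\mathcal H_{\partial F}|+o(1)$.

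\textbf{Step 3 (diagonal argument).} Choose $h=h(j)$ so that all the errors are below $1/j$. Weak-$*$ convergence on bounded sets is metrizable, so the diagonal sequence has all the stated properties. Uniform convergence of graphs and normals away from the skeletons follows from Step 1 combined with the $O(h)$ interpolation error. The main obstacle is the upper bound on $|\mathbf H_{\partial F_h}|$ in Step 2. A naive triangulation produces mass of order $\int|A|$ rather than $\int|\mathcal H|$, so the triangulation must be chosen carefully, adapted to the local curvature. Doing this for smooth sets before passing to the limit is what makes the argument tractable.
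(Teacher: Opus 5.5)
Your Steps 1 and 3 match the paper's proof: chart functions are approximated in $C^1$ and in $W^{2,2}$ on shrunken cylinders, glued by a partition of unity, and then $h=h(j)$ is chosen diagonally. In Step 2, the weak-$*$ convergence via $\int_{\partial F_h}{\rm div}_\tau\Phi\to\int_{\partial F}{\rm div}_\tau\Phi$ and the $\liminf$ bound are fine. You also correctly locate the obstacle. The proposed repair fails, however: triangulating along principal directions does not make the errors cancel. For a polyhedron
\[
|\mathbf H_{\partial F_h}|=\sum_S|\eta_S^++\eta_S^-|\,\HH^{n-2}\llcorner S=\sum_S 2\sin(\theta_S/2)\,\HH^{n-2}\llcorner S,
\]
where $\theta_S$ is the angle between the normals of the two facets meeting along $S$. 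This is a sum of nonnegative masses carried by disjoint faces. Convex and concave ridges therefore cancel only in the signed (weak) sense, never in total variation. On a curvature-line mesh of size $h$ in $\R^3$, ridges along $e_1$ carry about $|k_2|h^2$ and ridges along $e_2$ about $|k_1|h^2$. The masses are thus asymptotically at least $\int(|k_1|+|k_2|)$, which is strictly larger than $\int|k_1+k_2|$ wherever $k_1k_2<0$.

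In fact no sequence of polyhedra can achieve the upper bound, and the statement itself is the problem. Write $\partial F\cap U$ as the graph of $f$ over a ball $D\subset\R^{n-1}$, with $U$ the cylinder over $D$. Slice by the lines $\ell_t$ parallel to a unit vector $u$, $t\in u^\perp$. Along a.e.\ slice the normal of $\partial F_h$ is piecewise constant with jumps $\theta_S$. Integrating the sum of these jumps in $t$ gives at most $\sum_S\theta_S\HH^{n-2}(S\cap U)$. On the other hand, uniform convergence of the normals forces the total variation along each slice to be asymptotically at least the length of the Gauss image of the corresponding slice of $\partial F$. By Fatou,
\[
\liminf_{h\to\infty}|\mathbf H_{\partial F_h}|(U)\ \ge\ \int_D\big|\partial_u\big[\nu_F(x,f(x))\big]\big|\,dx .
\]
Suppose $n\ge3$ and $\partial F$ contains a non-flat minimal piece, such as a catenoidal neck. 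Then the right-hand side is positive for a suitable $u$. But weak-$*$ convergence plus convergence of the total masses gives $|\mathbf H_{\partial F_h}|\stackrel{*}{\rightharpoonup}|\mathcal H_{\partial F}|\,\HH^{n-1}\llcorner\partial F$, hence $|\mathbf H_{\partial F_h}|(\overline U)\to0$, a contradiction. Choosing $u$ locally along the larger principal curvature, the same argument excludes any set in $\R^3$ with a region of negative Gauss curvature, e.g.\ a solid torus.

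So for $n\ge3$ the total-variation convergence is incompatible with the uniform convergence of normals that the statement also asserts. It does hold for $n=2$ (inscribed polygons). What your Step 2 actually gives is weak-$*$ convergence with masses bounded by $C\int_{\partial F}|A|$ for shape-regular meshes.

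The paper does not supply the missing step. Its proof performs your Steps 1 and 3, and for the polyhedral stage it simply invokes \cite{BGM2010} (see also \cite{Fu1993,CSM2006}) for approximations with convergence of total variations. By the argument above, such approximations cannot exist in this form. This is therefore not an idea you could borrow from the paper. The total-variation part of the statement, and the strict convergence later used in Corollary~\ref{cor:piecewise polyhedral approximation}, would need to be weakened or re-proved.
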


\begin{proof}
By the local regularity proved in Lemma~\ref{smooth C1 approximation}, the $C^1$ graph functions representing $\partial E$ belong to $W^{2,2}_{\loc}$. We fix a finite atlas of coordinate cylinders and then shrink the cylinders so that the smaller cylinders still cover $\partial E$ and the $W^{2,2}$ regularity holds up to the boundary of each cylinder used below. Approximating the graph functions on the larger cylinders by smooth functions converging both in $W^{2,2}$ on the smaller cylinders and in $C^1$, and gluing the local normal displacements by a partition of unity, gives smooth domains $\widetilde E_j$ with
$$
|\widetilde E_j\Delta E|\to 0,\qquad P(\widetilde E_j)\to P(E),
$$
and
$$
 \mathcal H_{\partial \widetilde E_j}\nu_{\widetilde E_j}\,\mathscr H^{n-1}\llcorner\partial \widetilde E_j
 \to
 \mathcal H_{\partial E}\nu_E\,\mathscr H^{n-1}\llcorner\partial E
$$
as vector-valued Radon measures, with convergence of the total variations. 
Then, for each fixed smooth $\widetilde E_j$, using \cite{BGM2010} (see also \cite{Fu1993,CSM2006}) we can find polyhedral domains $P_{j,h}$ such that
$$
|P_{j,h}\Delta \widetilde E_j|\to 0,\qquad P(P_{j,h})\to P(\widetilde E_j),
$$
$$
\mathbf H_{\partial P_{j,h}}\stackrel{*}{\rightharpoonup}
\mathcal H_{\partial \widetilde E_j}\nu_{\widetilde E_j}\,\mathscr H^{n-1}\llcorner\partial \widetilde E_j,
\qquad
|\mathbf H_{\partial P_{j,h}}|(\R^n)\to
\int_{\partial \widetilde E_j}|\mathcal H_{\partial \widetilde E_j}|\,d\mathscr H^{n-1},
$$
and the normals are close to those of $\partial \widetilde E_j.$
Choosing $h=h(j)$ sufficiently large gives a diagonal sequence; after
relabeling it as $E_h$, the corollary follows.
\end{proof}

\subsection{The fixed truncation and leveling}\label{sec:truncate}

Throughout this subsection, we say that $F$ is a \emph{truncation} if there
exist a bounded $C^1$ open set $E$ with
$\mathcal H_{\partial E}\in L^2(\partial E)$, a point $x_0\in\R^n$, and radii
$0<\rho_0<\rho_1$ such that
\[
F=(E\cap B_{\rho_1}(x_0))\cup B_{\rho_0}(x_0),
\]
and
\[
\HH^{n-2}\bigl(\partial E\cap\partial B_{\rho_0}(x_0)\bigr)
+\HH^{n-2}\bigl(\partial E\cap\partial B_{\rho_1}(x_0)\bigr)<\infty .
\]
For such a set we denote by $\mathcal S_F$ the union of the two interfaces
above. In this case, Lemma~\ref{singular mean curvature} implies that the measure $\mathbf H_{\partial F}$ defined by
\eqref{eq:vector curvature measure} satisfies
\[
\mathbf H_{\partial F}
=\mathcal H_F\nu_F\,\HH^{n-1}\llcorner\partial^*F+\mathbf H^s_{\partial F},
\]
where the singular part $\mathbf H^s_{\partial F}$ of $\mathbf H_{\partial F}$ is concentrated on
$\mathcal S_F$.

We now isolate the approximation property required of the two cutting radii.
Write $d_{x_0}(x):=|x-x_0|$. A truncation
\[
F=(E\cap B_{\rho_1}(x_0))\cup B_{\rho_0}(x_0)
\]
is called \emph{admissible} if there are smooth domains $\widetilde E_j$
approximating $E$ as in the proof of
Corollary~\ref{cor:smooth polyhedral approximation}, and regular values
$\rho_{0,j}\to\rho_0$, $\rho_{1,j}\to\rho_1$ of $d_{x_0}$ on
$\partial\widetilde E_j$, such that $\rho_{0,j}<\rho_{1,j}$ and, setting
\[
F_j=(\widetilde E_j\cap B_{\rho_{1,j}}(x_0))\cup B_{\rho_{0,j}}(x_0),
\]
one has $|F_j\Delta F|\to 0$, $P(F_j)\to P(F)$, and
the vector-valued curvature measures and their total variations converge
weakly:
\[
\mathbf H_{\partial F_j}\stackrel{*}{\rightharpoonup}\mathbf H_{\partial F},
\qquad
|\mathbf H_{\partial F_j}|\stackrel{*}{\rightharpoonup}
|\mathbf H_{\partial F}|
\]
as Radon measures on $\R^n$.
We call such a sequence $\{F_j\}$ a good smooth cutting approximation of $F$.

The next lemma shows that almost every pair of radii gives an admissible
truncation.

\begin{lem}\label{lem:good-cutting-levels}
Let $E\subset\R^n$ be a bounded $C^1$ open set with
$\mathcal H_{\partial E}\in L^2(\partial E)$, let $x_0\in\R^n$, and let
$\widetilde E_j$ be a smooth approximation of $E$ as in the proof of
Corollary~\ref{cor:smooth polyhedral approximation}. Then, for a.e. pair
$0<\rho_0<\rho_1$, the truncation
\[
F=(E\cap B_{\rho_1}(x_0))\cup B_{\rho_0}(x_0)
\]
is admissible.
\end{lem}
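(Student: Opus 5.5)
The plan is to use the coarea formula on $\partial E$ and on $\partial\widetilde E_j$, together with Sard's theorem and a Fubini-type selection of radii. Fix the smooth approximations $\widetilde E_j$ from the proof of Corollary~\ref{cor:smooth polyhedral approximation}. These converge to $E$ in $C^1$ (graphs and normals uniformly in the charts) and in $W^{2,2}$, with $\mathcal H_{\partial\widetilde E_j}\nu_{\widetilde E_j}\HH^{n-1}\llcorner\partial\widetilde E_j\to \mathcal H_{\partial E}\nu_E\HH^{n-1}\llcorner\partial E$, including convergence of total variations.

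\textbf{Step 1: regular values.} By Sard's theorem, for each $j$ almost every $\rho$ is a regular value of $d_{x_0}|_{\partial\widetilde E_j}$. A countable intersection of full-measure sets still has full measure, so for a.e. $\rho$ we may take $\rho_{i,j}=\rho$ for all $j$, with no need for a perturbation.

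\textbf{Step 2: a good set of radii.} Let $T_j:=\{x\in\partial\widetilde E_j: |(x-x_0)\cdot\nu_{\widetilde E_j}|=|x-x_0|\}$ be the radial tangency sets, and define $T$ analogously for $\partial E$. The set of $\rho$ for which $\HH^{n-1}(\partial E\cap\partial B_\rho(x_0))>0$ is countable. The coarea formula gives $\int \HH^{n-2}(\partial E\cap\partial B_\rho(x_0))\,d\rho\le P(E)$, and the same bound holds for $\widetilde E_j$ with $P(\widetilde E_j)\to P(E)$. Applying Fatou's lemma in $\rho$ to the coarea slices therefore yields, along a subsequence (which can be chosen diagonally for a countable dense set and then for a.e. $\rho$ via Fatou), radii $\rho$ with two properties:
\[
\liminf_j \HH^{n-2}(\partial\widetilde E_j\cap\partial B_\rho(x_0))<\infty
\]
and convergence of the slices $\partial\widetilde E_j\cap\partial B_\rho(x_0)\to\partial E\cap\partial B_\rho(x_0)$ as $(n-2)$-currents (slicing of currents converges for a.e. $\rho$ when the currents converge in flat norm with bounded mass). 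I would also use the fact that $\HH^{n-1}(T)$-weighted slices vanish for a.e. $\rho$, since the coarea Jacobian vanishes exactly on $T$. In this way the bad set, where the slices degenerate, is negligible in $\rho$.

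\textbf{Step 3: convergence of the truncations.} Set $F_j=(\widetilde E_j\cap B_{\rho_1})\cup B_{\rho_0}$. The convergences $|F_j\Delta F|\to0$ and $P(F_j)\to P(F)$ follow from $\widetilde E_j\to E$ in measure, $P(\widetilde E_j\llcorner A)\to P(E\llcorner A)$ for annuli whose boundary spheres carry no mass (true for all but countably many radii), and convergence of the traces $\HH^{n-1}(\widetilde E_j\cap\partial B_\rho)\to\HH^{n-1}(E\cap\partial B_\rho)$, which holds for a.e. $\rho$ by Fubini. For the curvature measure, Lemma~\ref{singular mean curvature} (in its smooth, exact form for $F_j$) decomposes $\mathbf H_{\partial F_j}$ into three pieces:
\begin{itemize}
\item[(a)] the absolutely continuous part on $\partial\widetilde E_j\cap A_{\rho_0,\rho_1}$;
\item[(b)] the spherical parts $\pm\frac{n-1}{\rho}\nu\,\HH^{n-1}$ on the caps;
\item[(c)] the codimension-two parts $\mathbf{c}_j\,\HH^{n-2}\llcorner(\partial\widetilde E_j\cap\partial B_{\rho_i})$, where $\mathbf{c}_j$ is a conormal jump vector.
\end{itemize}
Piece (a) converges with its total variation by the choice of $\widetilde E_j$ and the restriction to a set whose boundary carries no limit mass. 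Piece (b) converges by the trace convergence.

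\textbf{Step 4: the edge terms, which I expect to be the main obstacle.} Weak-$*$ convergence of the edge terms $\mathbf H^s$, and above all convergence of their total variations $|\mathbf{c}_j|\HH^{n-2}$, is the hard part. The jump is $\mathbf{c}_j=\tau_{\partial\widetilde E_j}-\tau_{\partial B_\rho}$, where $\tau$ denotes the outward conormals within each hypersurface. Its modulus is a continuous function of the angle between $\nu_{\widetilde E_j}$ and the radial direction. Because the normals converge uniformly, this angle converges uniformly. The remaining issue is convergence of the measures $\HH^{n-2}\llcorner(\partial\widetilde E_j\cap\partial B_\rho)$. I would control this at regular radii where the angle stays bounded away from tangency on most of the slice, writing
\[
\HH^{n-2}\text{-slice}=\text{coarea density}\times\bigl(1-(\nu\cdot\hat x)^2\bigr)^{1/2},
\]
and then invoke the a.e.-$\rho$ convergence of coarea slices of the measures $\bigl(1-(\nu\cdot\hat x)^2\bigr)^{1/2}\HH^{n-1}\llcorner\partial\widetilde E_j$. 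This convergence follows from uniform convergence of those weights together with the weak convergence of the surface measures, after a Fubini argument in $\rho$ on differences. Near-tangential portions carry small slice mass for a.e. $\rho$ by Step 2. The lower semicontinuity half of the total-variation convergence comes for free; the upper half is exactly this slice convergence. Finally, convergence of total variations for each of the three mutually singular pieces, combined with weak-$*$ convergence of the sum, gives $|\mathbf H_{\partial F_j}|\stackrel{*}{\rightharpoonup}|\mathbf H_{\partial F}|$, because the supports separate.
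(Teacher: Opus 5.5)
\textbf{Gap: the upper bound on the edge mass at a fixed radius is not established.}

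Your architecture is the right one. You split $\mathbf H_{\partial F_j}$ into the interior, cap and edge pieces, get weak-$*$ convergence and lower semicontinuity for free, and reduce everything to an upper bound on the edge and cap masses. The gap is in the one step you flag as the main obstacle.

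You fix the radius, $\rho_{i,j}=\rho_i$ for all $j$. So you need
\[
\limsup_j m_j(\rho_i)\le m(\rho_i),\qquad
m_j(t):=\int_{\partial\widetilde E_j\cap\partial B_t(x_0)}q(\nu_{\widetilde E_j},\omega)\,d\HH^{n-2},\qquad q=\sqrt{2(1-\nu\cdot\omega)}.
\]
Your justification (uniform convergence of the weights plus weak convergence of the surface measures, ``after a Fubini argument in $\rho$'') only yields $m_j(t)\,dt\stackrel{*}{\rightharpoonup}m(t)\,dt$. Weak convergence of densities says nothing at a fixed $t$, even along subsequences: think of $m(t)(1+\sin(jt))$.

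Step 2 does not supply the missing bound either:
\begin{itemize}
\item Fatou gives $\liminf_j$ finite along $\rho$-dependent subsequences. That is the wrong direction, and it is not uniform across the two radii.
\item Flat convergence of sliced currents only gives lower semicontinuity of slice masses.
\item ``Near-tangential portions carry small slice mass for a.e.\ $\rho$'' is true for $\partial E$, since $\HH^{n-2}(T\cap\partial B_\rho)=0$ for a.e.\ $\rho$. For $\partial\widetilde E_j$, however, you only control the $\rho$-integral
\[
\int_{\{|\nabla_\tau d_{x_0}|<\delta\}}q\,|\nabla_\tau d_{x_0}|\,d\HH^{n-1}\le 2\delta P(\widetilde E_j).
\]
You do not control the slice at a given $\rho$ uniformly in $j$. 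These slices are not negligible a priori, since $q\approx 2$ at tangencies with $\nu=-\omega$.
\end{itemize}

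\textbf{Two ways to close it.}

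(i) Upgrade to $m_j\to m$ in $L^1_{\rm loc}(d\rho)$, using the $C^1$ convergence.
\begin{itemize}
\item With a smooth cutoff in $|\nabla_\tau d_{x_0}|$, on $\{|\nabla_\tau d_{x_0}|\ge\delta\}$ the implicit function theorem gives $C^1$ convergence of the slices, uniformly in $\rho$.
\item The complementary region costs at most $C\delta$ in $L^1(d\rho)$, by the bound above.
\item For the caps, $\int|a_{i,j}-a_i|\,dt\le C|\widetilde E_j\Delta E|$.
\end{itemize}
You can then extract one subsequence, still an approximation of the required kind, along which everything converges for a.e.\ $\rho$. With that, your fixed-radius scheme works.

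(ii) The paper's route avoids pointwise convergence altogether.
\begin{itemize}
\item It uses only the weak convergence $(m_j+a_{i,j})\,dt\stackrel{*}{\rightharpoonup}(m+a_i)\,dt$, which follows from coarea and $C^1$ convergence.
\item It takes $\rho_i$ to be Lebesgue points of $m+a_i$ with $\lambda(\partial B_{\rho_i}(x_0))=0$.
\item It then exploits the freedom $\rho_{i,j}\to\rho_i$ built into the definition of admissibility. It picks regular values $\rho_{i,j}$ in shrinking intervals $I_{i,j}$ at which $m_j+a_{i,j}$ exceeds its $I_{i,j}$-average by at most $1/j$.
\item This gives only $\limsup$ bounds. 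Combined with $\limsup_j\lambda_j(K_j)\le\lambda(K)$ and lower semicontinuity, that is all that is needed.
\end{itemize}
Route (ii) is cheaper. Route (i) gives the stronger fixed-radius statement, but needs the $C^1$ slicing argument you did not supply.

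\textbf{Minor point.} In your last step the supports do not separate: the edge lies in the closure of both the surface piece and the cap piece. What makes the total variations add in the limit is that the limit pieces are mutually singular, which holds whenever $\HH^{n-1}(\partial E\cap\partial B_{\rho_i}(x_0))=0$. Alternatively, as in the paper, mass convergence plus weak-$*$ convergence of the whole $\mathbf H_{\partial F_j}$ gives $|\mathbf H_{\partial F_j}|\stackrel{*}{\rightharpoonup}|\mathbf H_{\partial F}|$ directly.
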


\begin{proof}
We first compute the density of the curvature measure carried by a single
spherical interface. Let
$M$ be a smooth hypersurface with unit normal $\nu$, and let $t$ be a regular
value of $d_{x_0}\llcorner M$. On $M\cap\partial B_t(x_0)$ set
\[
\omega:=\frac{x-x_0}{|x-x_0|},\qquad a:=\nu\cdot\omega,\qquad
s:=\sqrt{1-a^2}.
\]
For the cut $E\cap B_t(x_0)$, the exterior conormal of the $M$-piece is
\[
\eta_M=\frac{\omega-a\nu}{s},
\]
while the exterior conormal of the spherical piece is
\[
\eta_B=\frac{\nu-a\omega}{s}.
\]
Thus the total variation density of this interface contribution is
\begin{equation}\label{eq:cut-interface-density}
q(\nu,\omega):=|\eta_M+\eta_B|
=\left|\frac{\omega-a\nu}{s}+\frac{\nu-a\omega}{s}\right|
=\sqrt{2(1-\nu\cdot\omega)} .
\end{equation}
For the cut $E\cup B_t(x_0)$ both conormals are reversed, so the interface
measure changes sign but its total variation density is again
\eqref{eq:cut-interface-density}. We therefore use the continuous extension
$q(\nu,\omega)=\sqrt{2(1-\nu\cdot\omega)}$ for all pairs of unit vectors. Thus,
if
\[
m_j(t):=\int_{\partial\widetilde E_j\cap\partial B_t(x_0)}
q(\nu_{\widetilde E_j},\omega)\,d\HH^{n-2},
\]
then $m_j(t)$ is the total variation of the curvature measure carried by the
interface of the approximating cut at radius $t$. The analogous function $m(t)$
for $\partial E$ gives the total variation of the corresponding interface
contribution in the limiting cut for a.e. $t$.

We claim that the measures $m_j(t)\,dt$ converge weakly to $m(t)\,dt$ on
compact intervals of radii. Indeed, by the coarea formula on
$\partial\widetilde E_j$, for every $\varphi\in C_c((0,\infty))$,
\[
\int\varphi(t)m_j(t)\,dt
=\int_{\partial\widetilde E_j}
\varphi(d_{x_0}(x))q(\nu_{\widetilde E_j},\omega)
|\nabla_{\tau_j}d_{x_0}|\,d\HH^{n-1}.
\]
The right-hand side converges to the corresponding integral on $\partial E$,
because the approximating hypersurfaces converge in $C^1$, with uniformly
converging normals, on the finite atlas used in
Corollary~\ref{cor:smooth polyhedral approximation}.
Applying the coarea formula on $\partial E$ gives the claimed weak convergence
of the slice measures.

Analogously, the mean curvature measures of the spherical pieces are
controlled by the same slicing argument. On compact intervals of radii contained
in $(0,\infty)$ set
\[
a_{1,j}(t):=\frac{n-1}{t}\HH^{n-1}
\bigl(\partial B_t(x_0)\cap\widetilde E_j\bigr),
\qquad
a_{0,j}(t):=\frac{n-1}{t}\HH^{n-1}
\bigl(\partial B_t(x_0)\setminus\widetilde E_j\bigr),
\]
and define $a_1,a_0$ analogously with $E$ in place of
$\widetilde E_j$. These are precisely the total variations of the spherical
curvature contributions for the outer and inner cuts. Since
$\chi_{\widetilde E_j}\to\chi_E$ in $L^1_{\rm loc}$, the coarea formula in
$\R^n$ gives
\[
a_{i,j}(t)\,dt\stackrel{*}{\rightharpoonup}a_i(t)\,dt,\qquad i=0,1,
\]
on every such compact interval.

Set
\[
\lambda_j:=|\mathcal H_{\partial\widetilde E_j}|\,
\HH^{n-1}\llcorner\partial\widetilde E_j,\qquad
\lambda:=|\mathcal H_{\partial E}|\,
\HH^{n-1}\llcorner\partial E .
\]
By Corollary~\ref{cor:smooth polyhedral approximation},
$\lambda_j\stackrel{*}{\rightharpoonup}\lambda$. Now, let
$0<\rho_0<\rho_1$ be such that $\rho_i$ is a Lebesgue point of both $m$ and $a_i$, $i=0,1$, 
 and such that
\[
\lambda(\partial B_{\rho_0}(x_0))=\lambda(\partial B_{\rho_1}(x_0))=0.
\]
This holds for a.e. pair $0<\rho_0<\rho_1$. Choose shrinking intervals
$I_{0,h}\downarrow\{\rho_0\}$ and $I_{1,h}\downarrow\{\rho_1\}$ with
$\sup I_{0,h}<\inf I_{1,h}$. Since
$(m_j+a_{i,j})(t)\,dt$ converges weakly to $(m+a_i)(t)\,dt$, and since
$\rho_i$ is a Lebesgue point of $m+a_i$, a diagonal argument gives indices
$h(j)\to\infty$ such that, setting $I_{i,j}=I_{i,h(j)}$,
\[
\frac1{|I_{i,j}|}\int_{I_{i,j}}(m_j(t)+a_{i,j}(t))\,dt
\to m(\rho_i)+a_i(\rho_i).
\]
Since the regular values of $d_{x_0}\llcorner\partial\widetilde E_j$ have full
measure, we may then choose regular values $\rho_{i,j}\in I_{i,j}$ with
\[
m_j(\rho_{i,j})+a_{i,j}(\rho_{i,j})
\le \frac1{|I_{i,j}|}\int_{I_{i,j}}(m_j(t)+a_{i,j}(t))\,dt+\frac1j,
\qquad i=0,1 .
\]
In this way $\rho_{i,j}\to\rho_i$, $\rho_{0,j}<\rho_{1,j}$ for all large $j$,
and
\begin{equation}
    \label{eq:limsup cut}
    \limsup_{j\to\infty}\sum_{i=0}^1
\bigl(m_j(\rho_{i,j})+a_{i,j}(\rho_{i,j})\bigr)
\le \sum_{i=0}^1\bigl(m(\rho_i)+a_i(\rho_i)\bigr).
\end{equation}
Now set
\[
F_j=(\widetilde E_j\cap B_{\rho_{1,j}}(x_0))\cup B_{\rho_{0,j}}(x_0),
\qquad
F=(E\cap B_{\rho_1}(x_0))\cup B_{\rho_0}(x_0),
\]
\[
K_j:=\overline B_{\rho_{1,j}}(x_0)\setminus B_{\rho_{0,j}}(x_0),
\qquad
K:=\overline B_{\rho_1}(x_0)\setminus B_{\rho_0}(x_0),
\]
and, for $\varepsilon>0$ small enough,
\[
K_\varepsilon:=\overline B_{\rho_1+\varepsilon}(x_0)
\setminus B_{\rho_0-\varepsilon}(x_0).
\]
For every such $\varepsilon$ and all $j$ large enough,
$K_j\subset K_\varepsilon$. Hence, by the weak
convergence $\lambda_j\stackrel{*}{\rightharpoonup}\lambda$,
\[
\limsup_{j\to\infty}\lambda_j(K_j)
\le
\limsup_{j\to\infty}\lambda_j(K_\varepsilon)
\le \lambda(K_\varepsilon).
\]
Letting $\varepsilon\downarrow0$ and using
$\lambda(\partial B_{\rho_i}(x_0))=0$, $i=0,1$, gives
\[
\limsup_{j\to\infty}\lambda_j(K_j)\le \lambda(K).
\]
Combining this with \eqref{eq:limsup cut}, we proved
\begin{equation}
    \label{eq:limsup}
\limsup_{j\to\infty}|\mathbf H_{\partial F_j}|(\R^n)
\le |\mathbf H_{\partial F}|(\R^n).
\end{equation}
On the other hand, since
$\partial F_j\to\partial F$ as varifolds, 
\eqref{eq:vector curvature measure} gives
$\mathbf H_{\partial F_j}\stackrel{*}{\rightharpoonup}\mathbf H_{\partial F}$.
The opposite inequality follows from lower semicontinuity of total variation
under weak-$*$ convergence, combined with \eqref{eq:limsup}. Hence the total
masses converge, which implies that
\[
|\mathbf H_{\partial F_j}|\stackrel{*}{\rightharpoonup}
|\mathbf H_{\partial F}|,
\]
as desired.
\end{proof}

With the good pairs of cutting levels available, we now choose the two cuts
used in the main proof. The small-tentacle estimate supplies annuli where the
interfaces have small $(n-2)$-measure, and Lemma~\ref{lem:good-cutting-levels}
lets us choose the corresponding radii as a good pair.

\begin{lem}\label{lem:good truncation}
There exists $\sz_2=\sz_2(n)>0$ such that the following holds.  Let
$0<\sz<\sz_2$, $|x_0|\le\sz/8$, and let $E$ be a $C^1$-domain satisfying the
hypotheses of Lemma~\ref{small tentacle}. Then one can find
$s^\pm\in[0,\sz/8]$ so that, setting
$$
G:=(E\cap B_{1+\sz/2+s^+}(x_0))\cup B_{1-\sz/2-s^-}(x_0),
$$
one has $\partial G\subset A_{1-2\sz,1+2\sz}$ and
\begin{equation}\label{eq:singular-mean}
|\mathbf H^s_{\partial G}|(\partial G)
\le C(n,\sz)\|R\|_{L^2(\partial^*E)}^{\frac{2}{1-\eta}} .
\end{equation}
In addition, $G$ is an admissible truncation.
\end{lem}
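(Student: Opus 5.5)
The plan is to obtain the cutting radii from Lemma~\ref{small tentacle} together with a coarea averaging, and then to use Lemma~\ref{lem:good-cutting-levels} to upgrade a positive-measure set of good radii to an admissible pair.

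\textbf{Step 1: translate the tentacle estimates.} Since $|x_0|\le\sz/8$, we have
\[
B_{1+\sz/2+s^+}(x_0)\supset B_{1+3\sz/8+s^+}, \qquad \text{for } s^+\in[0,\sz/8].
\]
Lemma~\ref{small tentacle} (applied at radii $\ge 1+\sz/4$, after shrinking $\sz$ by a factor; this only changes $\delta(n,\sz)$) then gives
\[
\HH^{n-1}\big(\partial^*E\setminus B_{1+\sz/4}\big)\le C\,\|R\|_{L^2(\partial^*E)}^{\frac{2}{1-\eta}}=:C\Lambda .
\]
The analogous interior bound gives $\HH^{n-1}(\partial^*E\cap B_{1-\sz/4})\le C\Lambda$, and likewise with $x_0$-centered balls, since these are contained in the corresponding origin-centered regions.

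\textbf{Step 2: coarea in the radial variable $d_{x_0}$.} The coarea inequality for $d_{x_0}$ on $\partial^*E$, as in \eqref{eq:coarea angle}, yields
\[
\int_{\sz/8}^{\sz/4}\HH^{n-2}\big(\partial^*E\cap\partial B_{1+\sz/2+s}(x_0)\big)\,ds\;\le\;\HH^{n-1}\big(\partial^*E\setminus B_{1+3\sz/8}\big)\le C\Lambda .
\]
After reparametrizing so that $s^+$ ranges in $[0,\sz/8]$, Chebyshev's inequality shows that the set of $s^+\in[0,\sz/8]$ with
\[
\HH^{n-2}\big(\partial E\cap\partial B_{1+\sz/2+s^+}(x_0)\big)\le C\sz^{-1}\Lambda
\]
has measure at least $\sz/16$; the same holds for $s^-$. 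The singular part $\mathbf H^s_{\partial G}$ is concentrated on the two interfaces, and by Lemma~\ref{singular mean curvature} (or the explicit density \eqref{eq:cut-interface-density}, bounded by $2$) its mass is at most $C$ times their $\HH^{n-2}$-measure. This gives \eqref{eq:singular-mean} with $C(n,\sz)=C\sz^{-1}$.

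\textbf{Step 3: admissibility and containment.} Lemma~\ref{lem:good-cutting-levels} says that a.e.\ pair $(\rho_0,\rho_1)$ is admissible. Two sets of positive measure in $s^\pm$ therefore contain such a pair, and the finiteness of the $\HH^{n-2}$ measures holds at these radii. To see $\partial G\subset A_{1-2\sz,1+2\sz}$: the spherical pieces lie at distance at most $\sz/8+3\sz/4<2\sz$ from $\partial B$. Any part of $\partial E$ between the two spheres lying outside this annulus would contradict the smallness of $P(E\setminus B_r)$ and $P(B_r\setminus E)$ for $r\ge 1+\sz$. This is because $E$ is open and $C^1$, so a boundary point of $E$ outside $B_{1+\sz}$ has a neighbourhood carrying perimeter comparable to $\sz^{n-1}$ at its scale. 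More precisely, the monotonicity/Michael--Simon lower density bound from Lemma~\ref{small tentacle}, with $|\mu|\le C$ and small $\|R\|_{L^2}$, forces a lower bound of the form $c(n)\sz^{n-1}$ on this perimeter, which is incompatible with $\Lambda\le c_0^{2/(1-\eta)}$ small.

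\textbf{Main obstacle.} The containment $\partial G\subset A_{1-2\sz,1+2\sz}$ is the step I expect to be hardest. It requires a lower density estimate for $\partial E$ under an $L^2$ curvature bound, which is not directly provided by the earlier lemmas. I would derive it by applying the differential inequality of Lemma~\ref{small tentacle} to balls centred at the far point rather than at the origin. If this approach becomes too delicate, I would instead enlarge the truncation to include the whole set $B_{1-\sz/2-s^-}(x_0)$ together with $E$ cut at the outer sphere, so that any remaining inner boundary is removed by the union.
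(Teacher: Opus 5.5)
Your treatment of the singular mass and of admissibility is the paper's argument. You feed the coarea formula for $d_{x_0}$ on $\partial E$ with Lemma~\ref{small tentacle} at a slightly smaller parameter. You then select $s^\pm$ by Chebyshev/averaging and intersect with the full-measure set of good pairs from Lemma~\ref{lem:good-cutting-levels}. Finally, you bound the interface part by $\mathscr H^{n-2}$ via (the proof of) Lemma~\ref{singular mean curvature}, recentred at $x_0$.

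One small fix: the radii are prescribed, so there is nothing to reparametrize. Integrate directly over $s\in[0,\sigma/8]$. Since $|x_0|\le\sigma/8$, the spheres $\partial B_{1+\sigma/2+s}(x_0)$ lie outside $B_{1+3\sigma/8}$, so your Step~1 bound already controls that integral, and symmetrically on the inside.

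The step you single out as the main obstacle is not an obstacle, and the argument you propose for it should be dropped. Since $s^\pm\le\sigma/8$ and $|x_0|\le\sigma/8$,
\[
B_{1-3\sigma/4}\subset B_{1-\sigma/2-s^-}(x_0)\subset G\subset B_{1+\sigma/2+s^+}(x_0)\subset B_{1+3\sigma/4}.
\]
So every point of $B_{1-3\sigma/4}$ is an interior point of $G$, and $\overline G\subset\overline B_{1+3\sigma/4}$. Hence $\partial G\subset\overline B_{1+3\sigma/4}\setminus B_{1-3\sigma/4}\subset A_{1-2\sigma,1+2\sigma}$. This is exactly what the paper means when it says the inclusion ``follows from $|x_0|\le\sigma/8$ and the definitions of the two cutting radii''.

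The lower-density estimate you invoke is not supplied by the earlier lemmas. In any case, it would only concern points of $\partial E$ outside $B_{1+\sigma}$ or inside $B_{1-\sigma}$, and those points do not belong to $\partial G$ at all. You seem to be conflating $\partial E$ with $\partial G$. Your proposed fallback, taking $B_{1-\sigma/2-s^-}(x_0)$ together with $E$ cut at the outer sphere, is literally the definition of $G$. That is precisely why the inclusion comes for free.
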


\begin{proof}
The inclusion $\partial G\subset A_{1-2\sz,1+2\sz}$ follows from
$|x_0|\le\sz/8$ and the definitions of the two cutting radii.  We choose the
radii by coarea. Since $|\nabla_\tau |x-x_0||\le1$ on $\partial E$ and the
translation by $x_0$ is absorbed by the margins in the annuli, Lemma~\ref{small tentacle}
applied with parameter $3\sigma/4$ gives
\begin{align*}
\int_0^{\sz/8}\HH^{n-2}\bigl(\partial E\cap\partial B_{1+\sz/2+s}(x_0)\bigr)\,ds
&\le C(n)\HH^{n-1}\bigl(\partial E\setminus B_{1+3\sz/8}\bigr)\\
&\le C(n,\sz)\|R\|_{L^2(\partial^*E)}^{\frac{2}{1-\eta}},
\end{align*}
and, analogously,
\begin{align*}
\int_0^{\sz/8}\HH^{n-2}\bigl(\partial E\cap\partial B_{1-\sz/2-s}(x_0)\bigr)\,ds
&\le C(n)\HH^{n-1}\bigl(\partial E\cap B_{1-3\sz/8}\bigr)\\
&\le C(n,\sz)\|R\|_{L^2(\partial^*E)}^{\frac{2}{1-\eta}} .
\end{align*}
Since the good pairs of Lemma~\ref{lem:good-cutting-levels} have full measure
in the square $[0,\sz/8]^2$, removing the bad pairs does not affect the
average of the sum of the two interface terms over that square. Thus we may
choose $s^\pm\in[0,\sz/8]$ so that the two radii below form a good pair and
\begin{equation}\label{eq:good-levels}
\HH^{n-2}\bigl(\partial E\cap\partial B_{1+\sz/2+s^+}(x_0)\bigr)
+
\HH^{n-2}\bigl(\partial E\cap\partial B_{1-\sz/2-s^-}(x_0)\bigr)\le C(n,\sz)\|R\|_{L^2(\partial^*E)}^{\frac{2}{1-\eta}} .
\end{equation}
Let
\[
\rho_0=1-\sz/2-s^-,
\qquad
\rho_1=1+\sz/2+s^+ .
\]
By Lemma~\ref{lem:good-cutting-levels}, the corresponding truncation $G$ is
admissible.
Also, by the proof of
Lemma~\ref{singular mean curvature} with the radial cutoff centered at $x_0$,
\[
|\mathbf H^s_{\partial G}|(\partial G)
\le C(n)\HH^{n-2}(\partial E\cap \partial B_{1+\sz/2+s^+}(x_0))
+C(n)\HH^{n-2}(\partial E\cap \partial B_{1-\sz/2-s^-}(x_0)).
\]
Combining this with
\eqref{eq:good-levels} gives \eqref{eq:singular-mean}.
\end{proof}

The fixed-truncation estimate will first be proved for polyhedra and then
passed to admissible truncations. We therefore record that the good smooth
cutting approximations in the definition of admissibility can be replaced by
polyhedral ones without losing the estimates needed later.

\begin{cor}\label{cor:piecewise polyhedral approximation}
Let $F$ be an admissible truncation. Then its good smooth cutting
approximations can be replaced by bounded open polyhedral sets $F_h$ such that
\[
F_h\to F\quad\text{in measure},\qquad P(F_h)\to P(F),
\]
and, away from the $(n-2)$-skeletons of $\partial F_h$, the corresponding graphs
and facet normals converge uniformly on compact subsets of
$\partial^*F\setminus\mathcal S_F$. Moreover, the curvature measures converge
strictly away from the cutting interfaces: if
$U\supset\mathcal S_F$ is open with
$(|\mathcal H_F|\HH^{n-1}\llcorner\partial^*F)(\partial U)=0$,\footnote{Since $U\supset\mathcal S_F$,
the condition $(|\mathcal H_F|\HH^{n-1}\llcorner\partial^*F)(\partial U)=0$ is equivalent to $|\mathbf H_{\partial F}|(\partial U)=0$.} then
\[
\mathbf H_{\partial F_h}\llcorner(\R^n\setminus U)
\stackrel{*}{\rightharpoonup}
\mathcal H_F\nu_F\,\HH^{n-1}\llcorner(\partial^*F\setminus U),
\qquad
|\mathbf H_{\partial F_h}|(\R^n\setminus U)
\to
\int_{\partial^*F\setminus U}|\mathcal H_F|\,d\HH^{n-1}.
\]
Finally, if $\mathcal K_h^{\rm cut}$ denotes the union of the polyhedral
codimension-two faces approximating the two cutting interfaces, then
\[
\limsup_{h\to\infty}|\mathbf H^s_{\partial F_h}|(\mathcal K_h^{\rm cut})
\le |\mathbf H^s_{\partial F}|(\partial F).
\]
\end{cor}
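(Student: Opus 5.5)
The plan is to combine the good smooth cutting approximation $\{F_j\}$ from the definition of admissibility with the smooth-to-polyhedral step used in the proof of Corollary~\ref{cor:smooth polyhedral approximation}, and then to pass to a diagonal sequence. Since $F$ is admissible, there are smooth domains $\widetilde E_j$ and regular values $\rho_{0,j}<\rho_{1,j}$ with the following properties: $F_j=(\widetilde E_j\cap B_{\rho_{1,j}}(x_0))\cup B_{\rho_{0,j}}(x_0)$ satisfies $|F_j\Delta F|\to0$ and $P(F_j)\to P(F)$; moreover $\mathbf H_{\partial F_j}\stackrel{*}{\rightharpoonup}\mathbf H_{\partial F}$ and $|\mathbf H_{\partial F_j}|\stackrel{*}{\rightharpoonup}|\mathbf H_{\partial F}|$. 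Each $F_j$ is piecewise smooth. Its boundary consists of a smooth piece of $\partial\widetilde E_j$ and two spherical caps, which meet transversally along the smooth $(n-2)$-manifolds $\partial\widetilde E_j\cap\partial B_{\rho_{i,j}}(x_0)$; transversality holds because the $\rho_{i,j}$ are regular values.

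For fixed $j$, I would polyhedrally approximate $F_j$ piece by piece. On each smooth piece I would apply \cite{BGM2010} (inscribed/triangulated approximations) with a triangulation adapted to the edge curves. This means first triangulating the two interface manifolds by $(n-2)$-dimensional simplicial complexes, and then extending to the two smooth pieces so that their triangulations share these interface simplices. With this choice the polyhedron $P_{j,h}$ has its $(n-2)$-skeleton split into two parts. The first part consists of interior edges; their contribution to $\mathbf H_{\partial P_{j,h}}$ converges, as in the corollary, to the absolutely continuous curvature of the smooth pieces, with convergence of total variations. The second part is $\mathcal K^{\rm cut}$, the edges approximating the interfaces. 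Across such an edge the dihedral jump density is $|\eta_M+\eta_B|$ computed with the facet normals. Since facet normals converge uniformly to the smooth normals, this density converges uniformly to $q(\nu,\omega)$ as in \eqref{eq:cut-interface-density}. The $(n-2)$-area of the edge complex converges to $\HH^{n-2}$ of the interface. This gives $|\mathbf H_{\partial P_{j,h}}|(\mathcal K^{\rm cut}_{j,h})\to|\mathbf H^s_{\partial F_j}|$ as $h\to\infty$.

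Uniform convergence of graphs and normals away from the skeleton, $P(P_{j,h})\to P(F_j)$, and convergence in measure all come directly from \cite{BGM2010}.

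For the diagonal step, I would fix an open $U\supset\mathcal S_F$ with $|\mathbf H_{\partial F}|(\partial U)=0$. For large $j$ the interfaces of $F_j$ lie in $U$, because $\rho_{i,j}\to\rho_i$ and $\widetilde E_j\to E$ in $C^1$. The strict convergence $|\mathbf H_{\partial F_j}|\stackrel{*}{\rightharpoonup}|\mathbf H_{\partial F}|$ together with $|\mathbf H_{\partial F}|(\partial U)=0$ then gives convergence of the masses on $\R^n\setminus U$, and also on $U$. I would then choose $h(j)$ so that all the distances above are at most $1/j$. This is done in a metric for weak-$*$ convergence, for a countable family of such $U$: take $U_k=\{\dist(\cdot,\mathcal S_F)<t_k\}$, with the $t_k$ avoiding the countably many atoms. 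For the last claim, the singular part of $\mathbf H_{\partial F_j}$ is carried by its interfaces. By \eqref{eq:limsup cut}, its mass is bounded by $\sum_i m_j(\rho_{i,j})$ up to the spherical absolutely continuous part, and its limsup is at most $\sum_i m(\rho_i)=|\mathbf H^s_{\partial F}|(\partial F)$. Combining this with the $h\to\infty$ convergence on $\mathcal K^{\rm cut}$ gives the stated limsup.

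The main obstacle is compatibility of the triangulation at the interfaces. The \cite{BGM2010} convergence of normal cycles and curvature measures is stated for approximations of a single smooth hypersurface. Here it must be applied to two pieces that meet at an angle, using triangulations matched along a common boundary. The difficulty is to ensure that the boundary layer of facets adjacent to the interface does not create spurious curvature mass, which could otherwise be counted in $\R^n\setminus U$. I would try first to localize: inside a thin collar of the interface, contained in $U$, all of the mass is counted only in $\mathcal K^{\rm cut}$ and nearby edges, and only a limsup bound is needed there. Outside the collar the standard interior result of \cite{BGM2010} applies, with fatness of the simplices kept uniform up to the collar.
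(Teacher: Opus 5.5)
Your proposal follows essentially the same route as the paper's proof. You start from a good smooth cutting approximation, triangulate the transverse $(n-2)$-dimensional interfaces first, approximate each smooth and spherical piece via \cite{BGM2010} with that complex as common boundary, and pass to a diagonal sequence. Your dihedral-density convergence to $q(\nu,\omega)$, the countable family of tubular neighborhoods, and the collar localization only make explicit what the paper leaves implicit, and, like the paper, you rely on the convergence of Corollary~\ref{cor:smooth polyhedral approximation} for pieces with boundary.

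One small fix for the last claim concerns \eqref{eq:limsup cut}. That inequality belongs to the specific construction in Lemma~\ref{lem:good-cutting-levels}, and it only controls $m_j+a_{i,j}$ jointly. Instead, use the strict convergence $|\mathbf H_{\partial F_j}|\stackrel{*}{\rightharpoonup}|\mathbf H_{\partial F}|$ from the definition of admissibility. For every bounded open $U\supset\mathcal S_F$, the interfaces of $F_j$ lie in $U$ for large $j$, so this gives $\limsup_j|\mathbf H^s_{\partial F_j}|(\R^n)\le|\mathbf H_{\partial F}|(\overline U)$. Then let $U\downarrow\mathcal S_F$, using that the absolutely continuous part gives no mass to the $\HH^{n-1}$-null set $\mathcal S_F$.
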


\begin{proof}
Let $G_j$ be a good smooth cutting approximation of $F$, and denote by
$\mathcal S_j$ the union of its two cutting interfaces. Then $\partial G_j$ is
the union of finitely many smooth pieces and two spherical pieces, meeting
transversely along compact smooth $(n-2)$-manifolds. By construction,
$G_j\to F$ in measure, $P(G_j)\to P(F)$, and the graphs and normals converge
uniformly on compact subsets of $\partial^*F\setminus\mathcal S_F$. Moreover,
by admissibility,
\[
\mathbf H_{\partial G_j}\stackrel{*}{\rightharpoonup}\mathbf H_{\partial F},
\qquad
|\mathbf H_{\partial G_j}|\stackrel{*}{\rightharpoonup}
|\mathbf H_{\partial F}|.
\]

We now approximate each fixed $G_j$ by polyhedra. Since the intersections are
transverse, we first approximate the $(n-2)$-dimensional interfaces
$\mathcal S_j$ by a polyhedral complex $\mathcal K_{j,k}^{\rm cut}$. We then
approximate each adjacent smooth or spherical piece independently, with this
complex prescribed as boundary. Applying the same polyhedral approximation
result used in Corollary~\ref{cor:smooth polyhedral approximation}, namely
\cite{BGM2010} (see also \cite{Fu1993,CSM2006}), on each piece gives bounded
open polyhedral sets $P_{j,k}$ which converge to $G_j$ in the same sense as in
Corollary~\ref{cor:smooth polyhedral approximation}, with the traces agreeing
on $\mathcal K_{j,k}^{\rm cut}$. The pieces glue without producing extra
boundary, because their traces agree and the orientations are induced by $G_j$.
Choosing $k(j)$ by a diagonal argument and setting
\[
F_h:=P_{h,k(h)},\qquad
\mathcal K_h^{\rm cut}:=\mathcal K_{h,k(h)}^{\rm cut}, \qquad h \in \mathbb N,
\]
gives the desired sequence. In particular, the convergence of the
codimension-two curvature measures gives the stated limsup bound on
$|\mathbf H^s_{\partial F_h}|(\mathcal K_h^{\rm cut})$.
Finally, the asserted strict convergence away from $\mathcal S_F$ follows by
restricting the weak convergences above to $\R^n\setminus U$, since the
hypotheses on $U$ imply $|\mathbf H_{\partial F}|(\partial U)=0$.
\end{proof}

In the rest of the subsection, $\Pi(x)=x/|x|$ denotes the radial projection.
For an admissible truncation $F$, set
\[
\Sigma_F:=\{x\in\partial^*F:\ x\cdot\nu_F(x)=0\}\cup\mathcal S_F .
\]
For a polyhedron $F$, let $\mathcal K_F$ denote the union of the
$(n-2)$-faces of $\partial F$, and set
\[
\Sigma_F:=\{x\in\partial^*F:\ x\cdot\nu_F(x)=0\}\cup\mathcal K_F .
\]
For either class of sets, recalling Lemma~\ref{small oscillation}, set
\[
 \mathcal B_F=\Big\{\theta\in \mathbb S^{n-1}:
 \#\big((\partial^*F\setminus\Sigma_F)\cap\{t\theta:t>0\}\big)\ge 2\Big\}.
\]

The radial projection is cleanest when the tangencies and interfaces project to
a negligible cone. The next lemma gives this after an arbitrarily small generic
translation, without changing the curvature quantities that enter the final
estimate.

\begin{lem}\label{lem:generic-sigma}
Let $F$ be an admissible truncation. Then, for a.e. vector
$a\in\R^n$, the set of radial tangencies
\[
        T_a:=\{x\in\partial^*(F-a):\ x\cdot\nu_{F-a}(x)=0\}
\]
satisfies $\HH^{n-1}(T_a)=0$, the set
$\Sigma_{F-a}=T_a\cup\mathcal S_{F-a}$ is closed, and
\begin{equation}\label{eq:exceptional-cone-zero}
 \HH^{n-1}\big(\partial^* (F-a)\cap \Pi^{-1}(\Pi(\Sigma_{F-a}))\big)=0 .
\end{equation}
\end{lem}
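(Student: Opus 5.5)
The plan is to obtain all three assertions from one Fubini argument over the translation parameter, and then to invoke Lemma~\ref{lem:projection exceptional cone}. First recall the structure of $F=(E\cap B_{\rho_1}(x_0))\cup B_{\rho_0}(x_0)$:
\begin{itemize}
\item $\partial F$ is the union of a relatively open $C^1$ piece of $\partial E$ and two relatively open spherical caps of $\partial B_{\rho_i}(x_0)$;
\item these pieces are glued along the compact set $\mathcal S_F\subset(\partial E\cap\partial B_{\rho_0}(x_0))\cup(\partial E\cap\partial B_{\rho_1}(x_0))$, with $\HH^{n-2}(\mathcal S_F)<\infty$;
\item off $\mathcal S_F$ we have $\partial F=\partial^*F$, and the normal $\nu_F$ is continuous on each piece.
\end{itemize}
We only consider vectors $a$ with $a\notin\partial F$, which excludes a null set. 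For such $a$, $\partial^*(F-a)$ lies in some annulus $A_{r,R}$, so $\Pi$ is Lipschitz there.

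\emph{Step 1: $\HH^{n-1}(T_a)=0$ for a.e. $a$.} For $x\in\partial^*F$, we have $x-a\in T_a$ exactly when $(x-a)\cdot\nu_F(x)=0$, i.e.\ when $a$ lies in the affine tangent hyperplane $x+\nu_F(x)^\perp$. Fix a ball $B_M\subset\R^n$. By Tonelli's theorem,
\[
\int_{B_M}\HH^{n-1}(T_a)\,da=\int_{\partial^*F}\big|\{a\in B_M:\ (x-a)\cdot\nu_F(x)=0\}\big|\,d\HH^{n-1}(x)=0,
\]
because each inner set is contained in a hyperplane. Joint measurability of $(x,a)\mapsto (x-a)\cdot\nu_F(x)$ is clear, since $\nu_F$ is Borel. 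Hence $\HH^{n-1}(T_a)=0$ for a.e. $a$.

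\emph{Step 2: $\Sigma_{F-a}$ is closed.} Take $x_k\in\Sigma_{F-a}$ with $x_k\to x$. Since $\mathcal S_{F-a}$ is compact, it suffices to treat the case $x\notin\mathcal S_{F-a}$ (after translating back by $a$). Then $x$ lies in the relatively open $C^1$ part of $\partial E$ or in an open spherical cap. In either case the $x_k$ eventually lie in the same piece and are tangency points there. Continuity of the normal on that piece gives $x\cdot\nu_{F-a}(x)=\lim_k x_k\cdot\nu_{F-a}(x_k)=0$, so $x\in T_a$. This is the step where the qualitative $C^1$ hypothesis is genuinely used, since for a general finite perimeter set $\nu$ need not be continuous. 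The only delicate point is that tangency points may accumulate on the interfaces, and this is exactly why $\mathcal S_F$ is included in $\Sigma$.

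\emph{Step 3: proof of \eqref{eq:exceptional-cone-zero}.} Apply Lemma~\ref{lem:projection exceptional cone} to $G=F-a$, with $Z=\mathcal S_{F-a}\subset A_{r,R}$. Because $\Pi$ is Lipschitz on $A_{r,R}$ and $\HH^{n-2}(\mathcal S_{F-a})<\infty$, we get $\HH^{n-1}(\Pi(Z))=0$. The lemma then yields
\[
\HH^{n-1}\big((\partial^*(F-a)\setminus T_a)\cap\Pi^{-1}(\Pi(T_a\cup Z))\big)=0 .
\]
Adding the null set $T_a$ from Step~1 gives \eqref{eq:exceptional-cone-zero}, since $\Sigma_{F-a}=T_a\cup Z$.

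The main point of care is Step~2 at the interfaces and along the boundaries of the spherical caps. There one should check that the pieces are relatively open in $\partial F$ away from $\mathcal S_F$, which holds because $\partial F\setminus\mathcal S_F$ splits into the disjoint open sets $\partial E\cap A_{\rho_0,\rho_1}(x_0)$ and the two caps.
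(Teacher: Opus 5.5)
Your proposal is correct and follows the paper's proof essentially step by step: a Fubini argument over translations gives $\HH^{n-1}(T_a)=0$, closedness of $\Sigma_{F-a}$ comes from continuity of the normal on the $C^1$ and spherical pieces together with compactness of $\mathcal S_{F-a}$, and \eqref{eq:exceptional-cone-zero} follows from Lemma~\ref{lem:projection exceptional cone} with $Z=\mathcal S_{F-a}$ after discarding the null set $T_a$. Your Step~2 simply spells out the closedness argument, including the accumulation of tangency points on the interfaces, which the paper states in one line.
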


\begin{proof}
Indeed, for every $M>0$, Fubini's theorem gives
\[
\int_{B_M}\HH^{n-1}(T_a)\,da
=\int_{\partial^*F}
\mathscr L^n\bigl(\{a\in B_M:\ (x-a)\cdot\nu_F(x)=0\}\bigr)
\,d\HH^{n-1}(x)=0,
\]
because the set inside the parenthesis is contained in an affine hyperplane.
Letting $M\to\infty$ gives $\HH^{n-1}(T_a)=0$ for a.e. $a\in\R^n$.
The closedness of $\Sigma_{F-a}$ follows from the $C^1$ and spherical pieces
and from the inclusion of the interface set $\mathcal S_{F-a}$.

It remains to prove \eqref{eq:exceptional-cone-zero}. Since $\partial^*(F-a)$ is
away from the origin for a.e. $a$, it lies in some annulus
$A_{r(a),R(a)}$. Also, $\mathcal S_{F-a}$ has finite $\HH^{n-2}$-measure, and,
because $\Pi$ is Lipschitz on the annulus containing $\partial(F-a)$,
\[
\HH^{n-1}\big(\Pi(\mathcal S_{F-a})\big)=0.
\]
Applying Lemma~\ref{lem:projection exceptional cone} to $G=F-a$ with
$Z=\mathcal S_{F-a}$ gives
\[
\HH^{n-1}\big((\partial^*(F-a)\setminus T_a)\cap
\Pi^{-1}(\Pi(\Sigma_{F-a}))\big)=0.
\]
Since $\HH^{n-1}(T_a)=0$, \eqref{eq:exceptional-cone-zero} follows.
\end{proof}

Once the exceptional cone is negligible, each direction in $\mathcal B_F$
contains alternating radial intervals inside and outside the set. The next
lemma packages the resulting finite-perimeter layers and the multiplicity
information needed in the leveling argument.

\begin{lem}\label{lem:radial-layer-package}
Let $0<\sz<1/4$, and let $F$ be either an admissible truncation or a polyhedron,
with
$B_{1-2\sz}\subset F\subset B_{1+2\sz}$. Assume
\[
\HH^{n-1}\bigl(\partial^*F\cap\Pi^{-1}(\Pi(\Sigma_F))\bigr)=0.
\]
For a.e. $\theta\in\mathcal B_F$, write
\[
F_\theta=(0,a_0(\theta))\cup
\bigcup_{\ell=1}^{k(\theta)}(b_\ell(\theta),a_\ell(\theta))
\]
and set $r_0=a_0$, $r_{2\ell-1}=b_\ell$, $r_{2\ell}=a_\ell$,
$D_m=\{\theta:2k(\theta)\ge m\}$, and
$$L_m=\{t\theta:\theta\in D_m,\ r_{m-1}(\theta)<t<r_m(\theta)\}.$$
In the annulus, set
\[
\partial_F L_m:=\partial^*L_m\cap\partial^*F,
\qquad
\partial_{\rm rad}L_m:=\partial^*L_m\setminus\partial^*F.
\]
Then each $L_m$ has finite perimeter,
\[
\mathbf 1_{(\partial^*F\cap\mathcal C_{\mathcal B_F})\setminus\Sigma_F}
\le
\sum_m\mathbf 1_{(\partial^*L_m\cap\partial^*F)\setminus\Sigma_F}
\le
2\,\mathbf 1_{(\partial^*F\cap\mathcal C_{\mathcal B_F})\setminus\Sigma_F},
\]
and $\partial_{\rm rad}L_m$ is contained in $\Pi^{-1}(\Pi(\Sigma_F))$ up to
$\HH^{n-1}$-null sets.
\end{lem}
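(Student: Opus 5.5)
We slice $F$ along rays and rebuild each layer $L_m$ from radial information; the negligibility of the exceptional cone lets the radial slices determine the reduced boundary. First, for $\HH^{n-1}$-a.e.\ $\theta$, the one-dimensional section $F_\theta$ has finite perimeter in $(0,\infty)$, by the slicing results cited in Lemma~\ref{small oscillation} (\cite{AFP2000}, \cite{CCF2005}). Its jump points are exactly the points of $\partial^*F\cap\{t\theta\}$ with $x\cdot\nu_F\neq0$. We discard the negligible set of directions $\Pi(\Sigma_F)$, which is negligible because $\Sigma_F$ consists of tangency points and of $\mathcal S_F$ or $\mathcal K_F$; by the same Jacobian argument as in Lemma~\ref{lem:projection exceptional cone}, $\HH^{n-1}(\Pi(\Sigma_F))=0$. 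Since $B_{1-2\sz}\subset F\subset B_{1+2\sz}$, the decomposition of $F_\theta$ holds, with all $r_i\in(1-2\sz,1+2\sz)$. Moreover, each $r_i(\theta)$ is a Borel function on $D_m$: by the area formula, the map $\Pi$ restricted to $\partial^*F\setminus\Sigma_F$ is locally bi-Lipschitz onto its image on pieces where $|x\cdot\nu_F|\ge 1/j$. Hence the $r_i$ are measurable and locally Lipschitz on the corresponding pieces of $D_m$.

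Next we show that each $L_m$ has finite perimeter. Write $L_m=\{t\theta:\theta\in D_m,\ r_{m-1}<t<r_m\}$. The set $\{(\theta,t): t<r_m(\theta)\}$ is a subgraph whose perimeter inside $\mathcal C_{D_m}$ is bounded, via the area formula, by $\HH^{n-1}$ of the points of $\partial^*F$ reached as the $m$-th crossing. The same bound holds for $r_{m-1}$. The cut by the cone over $D_m$ is controlled by noting that $\partial D_m$ is contained in $\Pi$ of the places where the crossing number changes. Such a change can only occur where two crossings merge (a tangency, $\Sigma_F$) or at $\Pi$ of $\mathcal S_F$. Concretely, we prove $P(L_m)<\infty$ by testing the divergence with $\phi\in C^1_c$ and using Fubini in polar coordinates. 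For a.e.\ $\theta$ the radial integral gives boundary terms at $r_{m-1}$ and $r_m$. The angular part is handled by the $BV$ regularity of $\chi_{L_m}$ along spheres: the slice of $L_m$ at fixed radius $t$ is $\{\theta\in D_m:r_{m-1}<t<r_m\}$, whose boundary lies in $\Pi(\partial F\cap\partial B_t)$. The coarea formula on $\partial^*F$ then gives a total variation bounded by $P(F)$ times a constant depending on $\sz$.

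For the multiplicity inequality, take $x\in(\partial^*F\cap\mathcal C_{\mathcal B_F})\setminus\Sigma_F$ with $\theta=\Pi(x)$ a good direction. Then $|x|=r_i(\theta)$ for exactly one index $i$. By construction $x$ belongs to $\partial^*L_i$ (as the top of layer $i$) and to $\partial^*L_{i+1}$ (as its bottom), whenever those layers exist, and to no other layer. The indexing of the $r_i$ is locally constant near $\theta$, because the crossings are transversal and isolated. So $x$ lies in at least one and at most two of the sets $\partial^*L_m$. This gives the lower bound $1$ and the upper bound $2$, $\HH^{n-1}$-a.e. The layer reduced boundaries coincide with $\partial^*F$ there, with $\nu_{L_m}=\pm\nu_F$, again by the local graph representation.

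Finally, $\partial_{\rm rad}L_m$ consists of points of $\partial^*L_m$ that are not on $\partial^*F$. Inside $\mathcal C$ of the good directions, the local graph description shows that $\partial^*L_m$ is contained in the graphs of $r_{m-1}$ and $r_m$, which lie in $\partial^*F$. Hence the remaining points project into $\Pi(\Sigma_F)$, or into the boundary of $D_m$, which is itself contained in $\Pi(\Sigma_F)$ up to null sets. The main obstacle is exactly this step: showing that $\partial D_m$, where layers begin or end, is captured by $\Pi(\Sigma_F)$. For this we would use the closedness of $\Sigma_F$ from Lemma~\ref{lem:generic-sigma}. Outside the closed cone $\Pi^{-1}(\Pi(\Sigma_F))$, the crossing count is locally constant by transversality and the implicit function theorem on the $C^1$ or polyhedral pieces, so $D_m$ is relatively open and closed there.
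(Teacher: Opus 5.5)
Your arguments for the multiplicity bounds and for $\partial_{\rm rad}L_m\subset\Pi^{-1}(\Pi(\Sigma_F))$ are fine. They are essentially the paper's argument made precise: the paper reads these facts off from $\nu_{L_m}=\pm\nu_F$ on $\partial_F L_m$ and from the observation that radial boundary can only appear over $\Pi(\Sigma_F)$. The closedness of $\Sigma_F$ that you need holds for every admissible truncation and every polyhedron, by the reasoning in Lemma~\ref{lem:generic-sigma}, not only for generic translates.

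The gap is the finite-perimeter step. Both of your arguments only see the part of $\partial L_m$ that lies on $\partial F$. The subgraph of $r_m$ also has radial walls over the jump set of $r_m$. Likewise, the spherical slice $\{\theta\in D_m:\ r_{m-1}(\theta)<t<r_m(\theta)\}$ has boundary not only in $\Pi(\partial F\cap\partial B_t)$, but at every direction where the $m$-th crossing changes sheet. This happens on $\partial D_m$, and also inside $D_m$: whenever two crossings of index below $m$ are created or annihilated at a fold, all higher indices shift by two, so $r_{m-1}$ and $r_m$ jump. These walls are exactly $\partial_{\rm rad}L_m$. Up to constants, their $\HH^{n-1}$-measure is the integral over the switching set $J\subset\Pi(\Sigma_F)$ of the jumps of $r_{m-1},r_m$ against $\HH^{n-2}$. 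Nothing bounds $\HH^{n-2}(J)$, since you only know $\HH^{n-1}(\Pi(\Sigma_F))=0$. Moreover, the jump is the distance to the next sheet along the ray, which is unrelated to the size of the fold. So no bound of the form $C(\sz)P(F)$ comes out of the coarea formula.

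In fact the claim itself can fail for admissible truncations, so this step cannot be patched. Take $n=2$; for $n\ge3$ use the same profile, independent of the other angular variables on a small patch.

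\emph{The wall.} Near $\theta=0$, let $\partial F$ contain the nearly radial curve $\{t(\cos\theta,\sin\theta):\ \theta=f(t-1),\ |t-1|\le\sz\}$, with $F$ on the side $\theta>f(t-1)$. Here $f\in C^{1,1}$ and $f(s)=s^3$ for $s\le0$. For $s>0$, $f$ is increasing except for small dips on disjoint intervals $I_k\to\{0\}$, whose value ranges $(m_k,M_k)$ are disjoint and accumulate at $0$.

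\emph{The island.} Add a small convex component of $F$ inside $\{1+\tfrac32\sz<|x|<1+\tfrac74\sz\}$ covering these directions. Take $E=F$, with cutting radii outside $[1-2\sz,1+2\sz]$.

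\emph{The jumps.} For $\theta\in(m_k,M_k)$ the crossings $a_0,b_1,a_1$ all lie on the dip, so $r_1=b_1<1+\sz/4$ for $k$ large. Just outside $[m_k,M_k]$, the point $b_1$ is the bottom of the island, beyond $1+\tfrac32\sz$. Hence $\partial^*L_1$ contains a radial segment of length larger than $\sz$ over each $m_k$ and each $M_k$. So $r_1\notin BV$ and $P(L_1)=\infty$.

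\emph{The hypotheses hold.} $\partial F$ has bounded curvature, $\Sigma_F$ is closed, and $\Pi(\Sigma_F)$ is countable near $\theta=0$. Hence $\HH^{1}(\partial F\cap\Pi^{-1}(\Pi(\Sigma_F)))=0$.

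The paper's own proof handles this step by invoking the slicing theorems of \cite{CCF2005}. The example shows that the asserted $BV$ regularity of the ordered endpoints $r_m$, and the finite perimeter of $L_m$, are not available in this generality either.

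What your transversality argument does prove is weaker. If $F$ is a polyhedron, $L_m$ has finite perimeter, because the switching set then lies in finitely many projected $(n-2)$-faces. For admissible truncations, $L_m$ has locally finite perimeter in the open cone over $\mathbb S^{n-1}\setminus\Pi(\Sigma_F)$. This weaker statement appears to be all that the proof of Proposition~\ref{leveling oscillation} uses: the curvature identity is applied to polyhedral layers, and the limiting layers are only used outside $K_\ez$. You should state and prove that version instead.
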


\begin{proof}
The finite-perimeter slicing theorem in angular coordinates
\cite[Theorems B, D, E and G]{CCF2005} gives the one-dimensional
decomposition, the $BV$ regularity of the endpoint functions $r_m$, and the
finite perimeter of the layers. Since $\nu_{L_m}=\pm\nu_F$ on
$\partial_F L_m$, while $x\cdot\nu_{L_m}=0$ on $\partial_{\rm rad}L_m$, the
multiplicity estimate follows. Moreover, the radial boundary can only be
created over directions in $\Pi(\Sigma_F)$, giving the inclusion
$\partial_{\rm rad}L_m\subset\Pi^{-1}(\Pi(\Sigma_F))$ up to
$\HH^{n-1}$-null sets.
\end{proof}

Applying the curvature identity to one radial layer leaves a boundary term on
the interface where the radial sides meet $\partial F$. In the polyhedral case
this boundary term is controlled by the curvature measure carried by the
codimension-two faces.
We shall use the next estimate only for polyhedra with no radial faces, namely
such that the affine hyperplane containing each $(n-1)$-face does not contain
the origin. This condition can be achieved by an arbitrarily small perturbation of
the vertices.

\begin{figure}[h]
\centering
\begin{tikzpicture}[scale=.82, line join=round, line cap=round]
\coordinate (O) at (1.55,-3.05);
\coordinate (A) at (1.55,3.14);
\coordinate (B) at (1.55,.80);
\coordinate (P0) at (.20,2.88);
\coordinate (P1) at (4.70,3.74);
\coordinate (P2) at (5.35,3.08);
\coordinate (P3) at (5.95,-1.15);

\fill[red!8] (A) -- (P1) -- (P2) -- (B) -- cycle;
\draw[blue, very thick, dotted] (-.35,2.77) -- (P0);
\draw[blue, very thick] (P0) -- (P1) -- (P2) -- (B) -- (P3);
\draw[blue, very thick, dotted] (P3) -- (6.75,-1.52);
\draw[red, very thick] (B) -- (A);
\draw[gray!65, dashed] (O) -- (1.55,3.55);
\draw[->, thick] (O) -- (1.55,-1.20);

\fill (O) circle (1.5pt) node[below] {$0$};
\fill[red] (A) circle (2pt);
\fill[red] (B) circle (2pt);

\node[blue] at (5.15,.05) {$\partial F$};
\node[red] at (3.35,2.35) {$L_m$};
\node[red, left] at (1.45,1.92) {$R=\partial_{\rm rad}L_m$};
\node[above] at (1.55,3.08) {$A$};
\node[left] at (1.48,.80) {$B$};
\end{tikzpicture}
\caption{Planar picture for the boundary term
$|x|^{-n}x\cdot\nu^\tau_{\partial F\cap\partial H}$ in Lemma~\ref{lem:polyhedral-edge}. The radial side $R$ of
the layer $L_m$ may meet $\partial F$ at a point $A$ in the interior of a side
of the polygon that is not a curvature point. The term at $A$ is bounded by $\rho_A^{-1}$, while
the term at the corner point $B$ is bounded by $-\rho_B^{-1}$ plus the corner
angle. Since $\rho_A\ge\rho_B$, the non-curvature part is non-positive, while
the corner error is controlled by the curvature of $\partial F$ at $B$.}
\end{figure}

\begin{lem}\label{lem:polyhedral-edge}
Let $F$ be a polyhedron with no radial faces and with
$\partial F\subset A_{1/2,2}$, and let $H=L_m$ be one of the sets constructed in
Lemma~\ref{lem:radial-layer-package}.
Set
\[
\partial_{\rm rad}H:=\partial^*H\setminus\partial^*F,
\qquad
\Gamma_H:=\partial F\cap\partial H\cap
\overline{\partial_{\rm rad}H},
\]
and let $\mathcal E(F)$ be the family of $(n-2)$-faces of $\partial F$.
Then
\[
\int_{\Gamma_H}|x|^{-n}x\cdot\nu^\tau_{\partial F\cap\partial H}\,
d\HH^{n-2}
\le
C(n)|\mathbf H_{\partial F}|\bigg(\Gamma_H\cap
\bigcup_{S\in\mathcal E(F)}S\bigg).
\]
\end{lem}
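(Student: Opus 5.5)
The plan is a pointwise comparison along $\Gamma_H$, following the figure. First I describe the geometry of $\Gamma_H$. Since $F$ has no radial faces, the radial boundary $\partial_{\rm rad}H$ is a union of radial segments over $\Pi(\Sigma_F)$, and those directions come from $\mathcal K_F$. So $\Gamma_H$ consists of two kinds of $(n-2)$-dimensional pieces.

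\emph{Corner pieces} ($B$-type) lie in some $S\in\mathcal E(F)$; these are the points whose radial projection generates the radial wall. \emph{Shadow pieces} ($A$-type) lie in the relative interior of an $(n-1)$-face. They are the radial images of corner pieces: each point $x_A$ of a shadow piece sits on the same ray $\{t\theta\}$ as a corner point $x_B$ and is joined to it by the radial wall. Up to $\HH^{n-2}$-null sets (generic position of faces), $\Gamma_H$ splits into these two parts. Pairing $A$- and $B$-points along rays, with $\theta$ ranging over $\Pi(\Gamma_H\cap\bigcup S)$, will be the basic bookkeeping.

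Second, I compute the integrand $|x|^{-n}x\cdot\nu^\tau$, where $\nu^\tau$ is the outward conormal of $\partial F\cap\partial H$ along $\Gamma_H$. At a shadow point $x_A$ the conormal is tangent to a flat face and points toward the wall. Writing $x=\rho\theta$, I use the $(n-2)$-dimensional area formula for the radial map that sends the shadow piece to the corner piece. The factor $|x|^{-n}$, together with the Jacobian of projecting onto $\mathbb S^{n-1}$, makes both contributions comparable to integrals over the common $(n-2)$-dimensional set of directions. The shadow contribution per direction is at most $\rho_A^{-1}$, because $|\theta\cdot\nu^\tau|\le1$.

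At a corner point $x_B$ the conormal of the face belonging to $\partial H$ is, up to the dihedral turning, anti-parallel to the radial wall's direction. This gives a contribution $\le -\rho_B^{-1}+C\,\alpha(x_B)\rho_B^{-1}$, where $\alpha$ is the exterior dihedral angle of the edge $S$. The layer orientation (the $a_\ell$ versus $b_\ell$ alternation, as in the proof of Lemma~\ref{small oscillation}) decides which endpoint is farther out. I must verify that the ordering always gives $\rho_A\ge\rho_B$ in the signed pairing, possibly with roles of sign swapped consistently. Then the summed non-curvature parts $\rho_A^{-1}-\rho_B^{-1}$ are $\le0$, exactly as the figure indicates.

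Third, I identify the corner error with curvature. For a polyhedron, $|\mathbf H_{\partial F}|\llcorner S=|\eta_1+\eta_2|\,\HH^{n-2}\llcorner S=2\sin(\alpha/2)\,\HH^{n-2}\llcorner S$, which is comparable to $\alpha$. Since $\rho\in(1/2,2)$, the error term is bounded by $C(n)\int_{\Gamma_H\cap S}|\mathbf H_{\partial F}|$. Summing over $S\in\mathcal E(F)$ gives the claim.

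The main obstacle is the second step. I need a rigorous matching of shadow and corner pieces with the correct Jacobians and signs, including the case where one ray meets several corners, or where a corner point is simultaneously a shadow point of another corner. I would handle this by working direction by direction on $\Pi(\Gamma_H)$ and using a slicing/area formula there. Alternatively, I would prove the bound via the divergence theorem on the wall $\partial_{\rm rad}H$ with the field $x/|x|^n$, which is tangent to the wall. This field has vanishing tangential divergence on a cone, so the wall boundary terms cancel except for the dihedral defects. This second route avoids explicit pairing, and I would try it first.
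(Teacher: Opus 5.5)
Your plan is the paper's own argument: parametrize each piece $R$ of $\partial_{\rm rad}H$ radially over $M=\Pi(R)$, bound the far endpoint by $\rho_+^{-1}$, bound the corner endpoint by $-\rho_-^{-1}$ plus the defect $|\eta_S^++\eta_S^-|$, and use $\rho_-\le\rho_+$. The step you flag as still to be verified is exactly where this breaks. It cannot be verified, because the inequality already fails for a single layer.

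Take $n=2$, $P=(0,1)$, $Q=(0,1.1)$, $V=(-0.1\tan\epsilon,1)$, and $Q'=Q+\ell(-\sin\epsilon',\cos\epsilon')$ with $\ell$ small. Let $F$ be a polygon with $B_{0.8}\subset F\subset B_{1.2}$ whose boundary contains the path $P\to V\to Q\to Q'$, with:
\begin{itemize}
\item the edge through $P$ horizontal;
\item $F$ lying below $[P,V]$ and to the left of $[V,Q]\cup[Q,Q']$;
\item the rest of $\partial F$ a radial graph.
\end{itemize}
For $\epsilon,\epsilon'>0$ no edge is radial. Rays slightly to the left of the $x_2$-axis meet $\partial F$ three times. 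Hence $L_1$ is the triangle $PVQ$, $\partial_{\rm rad}L_1=(P,Q)$, $\Gamma_{L_1}=\{P,Q\}$, and $Q$ is a fold vertex. The outward conormals of $[P,V]\cup[V,Q]$ are $e_1$ at $P$ and $(\sin\epsilon,\cos\epsilon)$ at $Q$, so the left-hand side equals $\cos\epsilon/1.1$. The right-hand side is $C\,|\mathbf H_{\partial F}|(\{Q\})=C\,|(\sin\epsilon+\sin\epsilon',\cos\epsilon-\cos\epsilon')|=O(\epsilon+\epsilon')$.

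The mechanism is as follows. At a fold edge $S$ (both adjacent faces on the same side of the ray), the bound $\theta\cdot\nu^\tau\le-1+C|\eta_S^++\eta_S^-|$ holds only for the conormal of the outer face, i.e.\ for the layer lying above the fold, as in the paper's figure. The layer lying below the fold sees the conormal of the inner face. That conormal is close to $+\theta$ when the fold is gentle, since small turning forces both faces to be nearly radial. Gentle folds are precisely what fine polyhedral approximations of a $C^1$ surface produce near radial tangencies.

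Two further features break your pairing:
\begin{itemize}
\item the corner may be the outer endpoint of the wall, as $Q$ is above;
\item a wall may have both endpoints in the interior of faces, when the jump of the index $m$ is produced by a fold lying below the layer.
\end{itemize}
So the pairing ``shadow point $\leftrightarrow$ corner with $\rho_A\ge\rho_B$'' you describe does not exist in general. The paper's proof asserts the same two facts (that the inner endpoint $B(\theta)$ always lies on a codimension-two face, and that $\pi-\gamma_-\le\delta_S$) without justification, so following it does not close the gap.

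In the example, the compensating $-\rho^{-1}$ sits in the curvature at $V$, which lies in $\partial F\cap\partial H$ but not in $\Gamma_H$. Fold contributions cancel only when the two layers adjacent to the two faces at $S$ are summed, since their conormals add up to $\eta_S^++\eta_S^-$. Here, however, the region above $[Q,Q']$ is the exterior, not a layer. So it is the statement, not just the proof, that has to change.

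The alternative you plan to try first rests on a miscomputation. On an $(n-1)$-dimensional cone, $\mathrm{div}_\tau(x|x|^{-n})=(n-1)|x|^{-n}-n|x|^{-n}=-|x|^{-n}$; the tangentially divergence-free radial field there is $x|x|^{1-n}$. The divergence theorem on the wall therefore only reproduces $\int_M(\rho_+^{-1}-\rho_-^{-1})$ in terms of the wall conormals $\pm\theta$. Converting these into $\nu^\tau_{\partial F\cap\partial H}$ costs the turning of $\partial H$, not of $\partial F$, along $\Gamma_H$. That turning is about $\pi/2$ at a face point such as $P$, so this route meets the same obstruction.
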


\begin{proof}
Set $X(x)=|x|^{-n}x$. We argue on one regular piece $R$ of
$\partial_{\rm rad}H$, and write $M=\Pi(R)$. Up to negligible sets, $R$ is
parametrized by
\[
        x=t\theta,\qquad \theta\in M,\qquad
        \rho_-(\theta)<t<\rho_+(\theta).
\]
Let $A(\theta)=\rho_+(\theta)\theta$ and
$B(\theta)=\rho_-(\theta)\theta$. We decompose $M$ so that each endpoint lies
either in a fixed face of $\partial F$ or in a fixed element of
$\mathcal E(F)$. If $\beta_\pm$ denotes the angle appearing in the coarea
formula for the radial projection at $A(\theta)$ and $B(\theta)$, then, by the area formula,
\begin{align}\label{eq:edge-coarea-pairing}
\int_{\Gamma_H\cap\overline R}|x|^{-n}x\cdot
\nu^\tau_{\partial F\cap\partial H}\,d\HH^{n-2}
&=
\int_M \rho_+^{-1}
\frac{\theta\cdot\nu^\tau_{\partial F\cap\partial H}(A(\theta))}
{\cos\beta_+(\theta)}\,d\HH^{n-2}(\theta)\notag\\
&\quad+
\int_M \rho_-^{-1}
\frac{\theta\cdot\nu^\tau_{\partial F\cap\partial H}(B(\theta))}
{\cos\beta_-(\theta)}\,d\HH^{n-2}(\theta).
\end{align}
At an endpoint contained in a codimension-two face, the quotient in
\eqref{eq:edge-coarea-pairing} denotes the sum of the two one-sided quotients
coming from the adjacent faces.
Indeed, if an endpoint is $x=\rho(\theta)\theta$, the Jacobian of $\Pi$ on the
corresponding codimension-two piece is
$\rho^{-(n-2)}\cos\beta$, whereas
$|x|^{-n}x=\rho^{1-n}\theta$.
Equivalently, we can introduce the angles $\gamma_\pm(\theta)$ by
\[
\frac{\theta\cdot\nu^\tau_{\partial F\cap\partial H}(A(\theta))}
{\cos\beta_+(\theta)}
=\cos\gamma_+(\theta),\qquad
\frac{\theta\cdot\nu^\tau_{\partial F\cap\partial H}(B(\theta))}
{\cos\beta_-(\theta)}
=\cos\gamma_-(\theta).
\]
We simply bound $\cos\gamma_+(\theta)$ by $1$. For $B(\theta)$ we observe that since no face is radial, it belongs to a codimension-two face $S=Q^+\cap Q^-\in\mathcal E(F)$. Also, if $\eta_S^\pm$ are the
exterior unit conormals to the two adjacent faces, the defect angle in the
two-dimensional normal section is
\[
\delta_S:=\arccos(-\eta_S^+\cdot\eta_S^-)
=2\arcsin\frac{|\eta_S^++\eta_S^-|}{2}.
\]
Since $\pi-\gamma_-(\theta)\le\delta_S$, we get
\[
\cos\gamma_-(\theta)
\le -\cos\delta_S
=-1+\frac{|\eta_S^++\eta_S^-|^2}{2}
\le -1+|\eta_S^++\eta_S^-|.
\]
This proves that
\begin{align*}
\int_{\Gamma_H\cap\overline R}|x|^{-n}x\cdot
\nu^\tau_{\partial F\cap\partial H}\,d\HH^{n-2}
&\le
\int_M(\rho_+^{-1}-\rho_-^{-1})\,d\HH^{n-2}
\\
&\quad +C(n)\sum_S
\int_{\Pi(\Gamma_H\cap\overline R\cap S)}
|\eta_S^++\eta_S^-|\,d\HH^{n-2}.
\end{align*}
Note that,  since $\rho_- \leq \rho_+$, the first term is non-positive. Also, since $\partial F\subset A_{1/2,2}$, by the area formula we can bound the second term by  
\[
C(n)\sum_S\int_{\Gamma_H\cap\overline R\cap S}
|\eta_S^++\eta_S^-|\,d\HH^{n-2}.
\]
Noticing that
\begin{equation}\label{eq:poly-edge-curvature}
\mathbf H_{\partial F}\llcorner S
=\pm(\eta^+_S+\eta^-_S)\,\HH^{n-2}\llcorner S,
\qquad
|\mathbf H_{\partial F}|\llcorner S
=|\eta^+_S+\eta^-_S|\,\HH^{n-2}\llcorner S,
\end{equation}
summing over the regular pieces $R$ gives the result.
\end{proof}

We can now prove the fixed-truncation leveling estimate. The preceding
polyhedral estimate handles the approximating polyhedra, and
Corollary~\ref{cor:piecewise polyhedral approximation} passes the result back
to the admissible truncation.

\begin{prop}\label{leveling oscillation}
There exists $\sz_2=\sz_2(n)>0$ such that, if $0<\sz<\sz_2$ and
$F$ is an admissible truncation satisfying
$$
B_{1-2\sz}\subset F\subset B_{1+2\sz},
$$
then, for every constant $\lambda\in\R$,
\begin{multline}\label{eq:leveling-fixed-F}
\frac1{4n}\HH^{n-1}(\partial^*F\cap\mathcal C_{\mathcal B_F})
+\int_{\partial^*F\setminus\mathcal C_{\mathcal B_F}}
\left(1-\frac{(x\cdot\nu_F)^2}{|x|^2}\right)\,d\HH^{n-1}\\
\le
\int_{\partial^*F}
\left(1-\frac{(x\cdot\nu_F)^2}{|x|^2}\right)\,d\HH^{n-1}
+C(n)\|\mathcal H_F-\lambda\|_{L^2(\partial^*F)}^2
+C(n)|\mathbf H^s_{\partial F}|(\partial F).
\end{multline}
\end{prop}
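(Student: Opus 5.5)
The plan is to reduce \eqref{eq:leveling-fixed-F} to a bound on the bad cone alone. Since the angular integral over $\partial^*F\setminus\mathcal C_{\mathcal B_F}$ appears on both sides, it suffices to prove
\[
\frac1{4n}\HH^{n-1}(\partial^*F\cap\mathcal C_{\mathcal B_F})
\le \int_{\partial^*F\cap\mathcal C_{\mathcal B_F}}\Big(1-\frac{(x\cdot\nu_F)^2}{|x|^2}\Big)\,d\HH^{n-1}
+C(n)\|\mathcal H_F-\lambda\|^2_{L^2(\partial^*F)}+C(n)|\mathbf H^s_{\partial F}|(\partial F).
\]
I would first prove this for polyhedra with no radial faces, then pass to admissible truncations. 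For the approximation I use Corollary~\ref{cor:piecewise polyhedral approximation}, after a generic translation given by Lemma~\ref{lem:generic-sigma}. That translation makes the exceptional cone $\Pi^{-1}(\Pi(\Sigma_F))$ negligible, so that Lemma~\ref{lem:radial-layer-package} applies. In the limit, the strict convergence of the curvature measures away from the cutting interfaces handles the regular part, the limsup bound on $\mathcal K_h^{\rm cut}$ handles $|\mathbf H^s_{\partial F}|$, and lower semicontinuity handles the left-hand side.

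For a polyhedron $F$, I decompose $F\cap\mathcal C_{\mathcal B_F}$ into the radial layers $L_m$ of Lemma~\ref{lem:radial-layer-package}. On each layer $H=L_m$ with $m\ge1$, which lies in the annulus, I test the first-variation identity on the piece $\partial_F H$ with the divergence-free field $X=|x|^{-n}x$. This field satisfies
\[
{\rm div}_\tau X=|x|^{-n}\Big(n\frac{(x\cdot\nu)^2}{|x|^2}-1\Big).
\]
Integrating by parts on $\partial_FH$, which has boundary $\Gamma_H$, gives
\[
\int_{\partial_FH}|x|^{-n}\Big(n\tfrac{(x\cdot\nu)^2}{|x|^2}-1\Big)
=\int_{\partial_FH}X\cdot d\mathbf H_{\partial F}+\int_{\Gamma_H}X\cdot\nu^\tau_{\partial F\cap\partial H}\,d\HH^{n-2}.
\]
The boundary term is bounded above by $C|\mathbf H_{\partial F}|$ on the codimension-two faces, by Lemma~\ref{lem:polyhedral-edge}. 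Summing over $m$ and using the multiplicity bound ($\le2$), these edge contributions add up to at most $C(n)|\mathbf H_{\partial F}|$ on $\mathcal K_F$. For the approximating polyhedra this splits into the cut part, which tends to $|\mathbf H^s_{\partial F}|$, and a part on the skeleton approximating the regular boundary, which tends to $\int|\mathcal H_F|$ restricted to the cone. The regular part of the curvature term is written as $(\mathcal H_F-\lambda)X\cdot\nu_F+\lambda X\cdot\nu_F$. The first piece is handled by Cauchy--Schwarz, producing $\varepsilon\,\HH^{n-1}(\partial^*F\cap\mathcal C_{\mathcal B_F})+C_\varepsilon\|\mathcal H_F-\lambda\|^2_{L^2}$; the area term is absorbed.

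The main obstacle is the $\lambda$-term, because $\lambda$ is arbitrary. The key point is a counting argument along rays. Over a.e. $\theta\in\mathcal B_F$ there are at least three non-tangential crossings $r_0<r_1<r_2\le\dots$. The flux $\int X\cdot\nu_F$ collapses, via the area formula, to an alternating sum $\sum_j(\pm1)$ per direction, which equals $+1$. The left-hand side, by contrast, equals $\sum_j(n\cos^2-1)/|\cos|\approx (n-1)\cdot\#\{\text{crossings}\}\ge3(n-1)$ when the crossings are nearly radial. Thus I would not sum over all layers with a single sign. Instead I would pair consecutive layers $L_{2\ell-1}$ and $L_{2\ell}$, which share the reentry point $b_\ell$, with opposite signs, so that their $\lambda$-fluxes cancel exactly over $D_{2\ell}$. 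The pairing is arranged so that the contribution at $b_\ell$, where $x\cdot\nu_F<0$, enters with a definite sign, which I hope to justify via \eqref{normal at b}.

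What survives is a positive multiple $\ge c(n)$ of the nearly radial part of the area. The non-radial part is bounded directly by $C\int(1-(x\cdot\nu)^2/|x|^2)$, exactly as in Cases 1 and 2 of Lemma~\ref{small oscillation}. This yields the constant $\frac1{4n}$ after shrinking $\sigma_2$, which makes $|x|^{-n}=1+O(\sigma)$. If the exact cancellation fails at the level of the radial sides $\partial_{\rm rad}L_m$, my fallback is this: since $X\cdot\nu=0$ there, those sides contribute nothing to the flux, and the only leftover is again the $\Gamma_H$ edge term already controlled by Lemma~\ref{lem:polyhedral-edge}.
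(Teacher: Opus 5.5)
Your overall architecture is the paper's: generic translation via Lemma~\ref{lem:generic-sigma}, radial layers, testing with $X=|x|^{-n}x$, Lemma~\ref{lem:polyhedral-edge}, then polyhedral approximation. However, the step you single out as the main obstacle is handled incorrectly. The missing observation is elementary: the $\lambda$-flux vanishes on \emph{each} layer. Indeed ${\rm div}\,X=0$ away from the origin, $X\cdot\nu_H=0$ on $\partial_{\rm rad}H$, and $\nu_H=\pm\nu_F$ with a fixed sign on $\partial^*H\cap\partial^*F$. So the divergence theorem on $H$ gives
\[
\int_{\partial^*H\cap\partial^*F}|x|^{-n}\,x\cdot\nu_F\,d\mathscr H^{n-1}=0 .
\]
Equivalently, along each ray the two endpoints $r_{m-1},r_m$ of a layer carry opposite signs of $x\cdot\nu_F$. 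Your value $+1$ per direction is the flux through $\partial^*F\cap\mathcal C_{\mathcal B_F}$ with each crossing counted once. The sum of the layer identities counts interior crossings twice and has zero flux. Hence $\lambda$ can be subtracted inside every layer, and the layer identities are simply added with the same sign.

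Your proposed pairing is in fact harmful. Multiplying the identity for $L_{2\ell}$ by $-1$ and adding it to that for $L_{2\ell-1}$ (note $D_{2\ell-1}=D_{2\ell}$) cancels the $\lambda$-flux, but it also cancels the left-hand side on the common sheet $\{b_\ell(\theta)\theta\}$. That is exactly the positive $(n-1)|x|^{-n}$ contribution you need. What remains is the difference of the contributions at $a_{\ell-1}$ and $a_\ell$, which has no sign. No split into nearly radial and non-radial crossings is needed either. Rearranging the layer identity as $(n-1)\int|x|^{-n}\le n\int|x|^{-n}\bigl(1-\frac{(x\cdot\nu_F)^2}{|x|^2}\bigr)+\dots$ gives the factor $\frac{n-1}{n}-C(n)\sigma$ directly. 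Multiplicity at most $2$ and Young's inequality then yield $\frac1{4n}$. The Case~2 device of Lemma~\ref{small oscillation} would not transfer anyway: at reentry points with $\nu_F\approx -x/|x|$ the integrand $1-\frac{(x\cdot\nu_F)^2}{|x|^2}$ vanishes.

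The second gap is the bookkeeping of the edge terms in the limit. Lemma~\ref{lem:polyhedral-edge} charges only the codimension-two faces on the interface $\Gamma_H$. If you enlarge this to $|\mathbf H_{\partial F_k}|(\mathcal K_{F_k})$, you take essentially all the curvature of the approximating polyhedra, since it is entirely carried by their skeletons. In the limit you then get $C(n)\int_{\partial^*F\cap\mathcal C_{\mathcal B_F}}|\mathcal H_F|\,d\mathscr H^{n-1}$. This involves $|\mathcal H_F|$ rather than $|\mathcal H_F-\lambda|$, so it contains $|\lambda|$ times the area of the bad cone and cannot be absorbed for arbitrary $\lambda$.

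What is needed is the localization used in the paper, in two parts:
\begin{enumerate}
\item The curvature on edges in the relative interior of $\partial^*H\cap\partial^*F$ must stay together with the $\lambda$-term as one signed quantity. That quantity converges to an integral of $(\mathcal H_F-\lambda)|x|^{-n}x\cdot\nu_F$.
\item For $k$ large, all interfaces $\Gamma_{L_{m,k}}$ lie in $K_\varepsilon=A_{1-2\sigma,1+2\sigma}\cap\Pi^{-1}(U_\varepsilon)$, with $U_\varepsilon\downarrow\Pi(\Sigma_F)$. After the generic translation, $\mathscr H^{n-1}\bigl(\partial^*F\cap\Pi^{-1}(\Pi(\Sigma_F))\bigr)=0$. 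So the absolutely continuous curvature gives vanishing mass to $K_\varepsilon$ as $\varepsilon\to0$. Only the cut contribution $|\mathbf H^s_{\partial F}|(\partial F)$ survives, through the limsup bound of Corollary~\ref{cor:piecewise polyhedral approximation}.
\end{enumerate}
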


\begin{proof}
We first reduce to generic translations. Choose
$a_j\to0$ among the translation vectors given by
Lemma~\ref{lem:generic-sigma}, and set $F_j:=F-a_j$. 
All the objects below,
including the tangency set, the directions in $\mathcal B_{F_j}$, and the
radial layers, are understood for this translated set; in particular
\begin{equation}\label{eq:zero-projected-tangencies}
\HH^{n-1}\bigl(\partial^*F_j\cap\Pi^{-1}(\Pi(\Sigma_{F_j}))\bigr)=0.
\end{equation}
Moreover $F_j$ satisfies the same annular inclusion with
$\sigma$ replaced by $\sigma+|a_j|$, and this only changes the estimates below
by $C(n)|a_j|$.
It is enough to prove the estimate for $F_j$ and then let $j\to\infty$. Indeed,
translation preserves the curvature terms:
\[
\|\mathcal H_{F_j}-\lambda\|_{L^2(\partial^*F_j)}
=\|\mathcal H_F-\lambda\|_{L^2(\partial^*F)},\qquad
|\mathbf H^s_{\partial F_j}|(\partial F_j)
=|\mathbf H^s_{\partial F}|(\partial F).
\]
Also, the integrals of
\[
1-\frac{(x\cdot\nu_{F_j})^2}{|x|^2}
\]
over $\partial^*F_j$ converge by dominated convergence on the $C^1$ and
spherical pieces of the admissible truncation. 

For the terms containing $\mathcal C_{\mathcal B_{F_j}}$, fix an
open set $U\supset\Pi(\Sigma_F)$. Since $a_j\to0$, the tangencies and the
interfaces of $F_j$ project inside $U$ for $j$ large. Since on
$\mathbb S^{n-1}\setminus U$ all intersections of the rays with $\partial F$
are transverse, the implicit function theorem allows us to represent
$\partial F_j\cap\Pi^{-1}(\mathbb S^{n-1}\setminus U)$ by the same number of
$C^1$ radial graphs as for $F$, converging in $C^1$ and with the same order
along each ray. Thus
\[
\mathbf 1_{\mathcal C_{\mathcal B_{F_j}}}(x-a_j)
\to
\mathbf 1_{\mathcal C_{\mathcal B_F}}(x)
\qquad\hbox{in }L^1\bigl(\partial F\setminus\Pi^{-1}(U)\bigr).
\]
By the area formula applied to the radial projection on each $C^1$ piece,
\[
\HH^{n-1}\bigl((\partial^*F\setminus\Sigma_F)
\cap\Pi^{-1}(\Pi(\Sigma_F))\bigr)=0.
\]
Thus, up to a negligible set, the remaining part of the limiting cone is
contained in $\Sigma_F$, where
$1-(x\cdot\nu_F)^2/|x|^2=1$. Letting first $j\to\infty$ and then
$U\downarrow\Pi(\Sigma_F)$ gives the desired estimate for $F$. We may therefore
assume \eqref{eq:zero-projected-tangencies} and omit the index $j$.

Now, let $L_m$ be the consecutive radial layers given by
Lemma~\ref{lem:radial-layer-package}. Then each $L_m$ has finite perimeter,
the boundary $\partial_{\rm rad}L_m$ is radial, and
\[
\mathbf 1_{(\partial^*F\cap\mathcal C_{\mathcal B_F})\setminus\Sigma_F}
\le
\sum_{m\ge1}\mathbf 1_{(\partial^*L_m\cap\partial^*F)\setminus\Sigma_F}
\le
2\,\mathbf 1_{(\partial^*F\cap\mathcal C_{\mathcal B_F})\setminus\Sigma_F}.
\]
Also, let $H$ be one of these consecutive radial layers, and let
\[
\partial_{\rm rad}H:=\partial^*H\setminus\partial^*F,
\qquad
\Gamma_H:=\partial F\cap\partial H\cap
\overline{\partial_{\rm rad}H}
\]
be the codimension-two interface where $\partial^*H\cap\partial^*F$ meets
$\partial_{\rm rad}H$. We
also set
\[
\mathfrak R_F^\lambda(H):=
\left|
\int_{\partial F\cap\partial H}|x|^{-n}\langle x,d\mathbf H_{\partial F}\rangle
-\lambda\int_{\partial^*H\cap\partial^*F}|x|^{-n}x\cdot\nu_F\,d\HH^{n-1}
\right|.
\]
We first prove that, for $F$ polyhedral with no radial faces,
\begin{multline}\label{mean curvature control polyhedron}
\left(\frac{n-1}{n}-C(n)\sz\right)\HH^{n-1}(\partial^*H\cap\partial^*F)\\
\le
\int_{\partial^*H\cap\partial^*F}
\left(1-\frac{(x\cdot\nu_F)^2}{|x|^2}\right)\,d\HH^{n-1}
+C(n)\mathfrak R_F^\lambda(H)\\
+C(n)|\mathbf H_{\partial F}|(\Gamma_H).
\end{multline}
To see this, test the vector-valued curvature identity
\eqref{eq:vector curvature measure} for $F$ with $|x|^{-n}x\varphi_\ez$,
where $\varphi_\ez$ is a cut-off on $\partial F$
which equals $1$ on $\partial H\cap\partial F$ and whose tangential gradient is
supported in an $\ez$-neighborhood of $\Gamma_H$.  Letting
$\ez\to0$ gives
\begin{multline}
\int_{\partial F\cap\partial H}
\left[(n-1)|x|^{-n}
+n|x|^{-n}\left(\frac{(x\cdot\nu_F)^2}{|x|^2}-1\right)\right]\,d\HH^{n-1}
\\
 =
\int_{\Gamma_H}
|x|^{-n}x\cdot\nu^\tau_{\partial F\cap\partial H}\,d\HH^{n-2}
+\int_{\partial F\cap\partial H}|x|^{-n}\langle x,d\mathbf H_{\partial F}\rangle .
\label{x test}
\end{multline}
On $\partial_{\rm rad}H$ one has
$x\cdot\nu_H=0$, and
$\int_{\partial^*H}|x|^{-n}x\cdot\nu_H\,d\HH^{n-1}=0$.  On
$\partial^*H\cap\partial^*F$ one has $\nu_H=\pm\nu_F$, with a fixed sign for
the layer.  Therefore
\[
\int_{\partial^*H\cap\partial^*F}|x|^{-n}x\cdot\nu_F\,d\HH^{n-1}=0.
\]
Thus the $\lambda$-part may be subtracted from the curvature term before taking
absolute values, and the curvature contribution in \eqref{x test} is bounded
by $C(n)\mathfrak R_F^\lambda(H)$.

It remains to estimate the boundary term in \eqref{x test}. By
Lemma~\ref{lem:polyhedral-edge},
\[
\int_{\Gamma_H}
|x|^{-n}x\cdot\nu^\tau_{\partial F\cap\partial H}\,d\HH^{n-2}
\le
C(n)|\mathbf H_{\partial F}|(\Gamma_H).
\]
Combining this with \eqref{x test} and using
$\partial F\subset A_{1-2\sz,1+2\sz}$ proves
\eqref{mean curvature control polyhedron}.

We pass from polyhedra to the present admissible truncation after
summing over the layers. Let $F_k$ be the polyhedral approximations given by
Corollary~\ref{cor:piecewise polyhedral approximation}. Since the condition
that no face be radial is generic under arbitrarily small perturbations of the
vertices, we perturb $F_k$, if necessary, by $o_k(1)$ and keep the notation
$F_k$; the perturbation is chosen small enough to preserve the convergences and
the limsup bound in Corollary~\ref{cor:piecewise polyhedral approximation}.

Choose open neighborhoods $U_\ez\supset\Pi(\Sigma_F)$, decreasing to
$\Pi(\Sigma_F)$, and set
\[
K_\ez:=A_{1-2\sz,1+2\sz}\cap\Pi^{-1}(U_\ez).
\]
By choosing the neighborhoods slightly, we may assume
$|\mathbf H_{\partial F}|(\partial K_\ez)=0$ for every $\ez$.

Let $L_{m,k}$ be the consecutive radial
layers of $F_k$. We shall use only their behavior away from $K_\ez$. Indeed,
on $\mathbb S^{n-1}\setminus U_\ez$ the boundary $\partial F$ is given by
finitely many transverse $C^1$ radial graphs, with positive separation. The
uniform convergence of the polyhedra and of their normals gives the
corresponding graphs for $\partial F_k$, with the same order. Thus, after
relabeling the layers that meet $A_{1-2\sz,1+2\sz}\setminus K_\ez$ and passing
to a diagonal subsequence, for every fixed such $m$,
\[
|(L_{m,k}\Delta L_m)\setminus K_\ez|\to0,
\]
and
\[
\HH^{n-1}\llcorner(\partial_{F_k}L_{m,k}\setminus K_\ez)
\stackrel{*}{\rightharpoonup}
\HH^{n-1}\llcorner(\partial_F L_m\setminus K_\ez)
\]
with convergence of the masses. Moreover, for $k$ large, all interfaces
between $\partial_{F_k}L_{m,k}$ and $\partial_{\rm rad}L_{m,k}$ are contained
in $K_\ez$. In particular, outside $K_\ez$ only finitely many indices $m$
occur, uniformly for $k$ large.

We now apply \eqref{mean curvature control polyhedron} to all consecutive radial
layers of $F_k$ and then sum. After summing over $m$, the portions
$\partial^*L_{m,k}\cap\partial^*F_k$ cover
$\partial^*F_k\cap\mathcal C_{\mathcal B_{F_k}}$ with multiplicity between
$1$ and $2$, whereas the interface terms have bounded overlap and are contained in
$K_\ez$. Let $\mathcal K_k^{\rm cut}$ denote the union of the
codimension-two faces of $\partial F_k$ approximating the two interfaces
$\mathcal S_F$. Passing first $k\to\infty$ outside $K_\ez$, and using the
limsup bound on
$|\mathbf H^s_{\partial F_k}|(\mathcal K_k^{\rm cut})$ from
Corollary~\ref{cor:piecewise polyhedral approximation} inside $K_\ez$, gives
\begin{multline*}
\left(\frac{n-1}{n}-C(n)\sz\right)
\HH^{n-1}(\partial^*F\cap\mathcal C_{\mathcal B_F})\\
\le
2\int_{\partial^*F\cap\mathcal C_{\mathcal B_F}}
\left(1-\frac{(x\cdot\nu_F)^2}{|x|^2}\right)\,d\HH^{n-1}
+C(n)\int_{\partial^*F\cap\mathcal C_{\mathcal B_F}}
|\mathcal H_F-\lambda|\,d\HH^{n-1}\\
+C(n)\int_{\partial^*F\cap K_\ez}|\mathcal H_F|\,d\HH^{n-1}
+C(n)|\mathbf H^s_{\partial F}|(\partial F)
+C(n)\HH^{n-1}(\partial^*F\cap K_\ez).
\end{multline*}
Since 
$\HH^{n-1}(\partial^*F\cap\Pi^{-1}(\Pi(\Sigma_F)))=0$ (by \eqref{eq:zero-projected-tangencies}), the
absolutely continuous part of $\mathbf H_{\partial F}$ gives no mass to the
limiting cone.
Thus, letting $\ez\to0$, we obtain
\begin{multline}\label{mean curvature control H}
\left(\frac{n-1}{n}-C(n)\sz\right)
\HH^{n-1}(\partial^*F\cap\mathcal C_{\mathcal B_F})\\
\le
2\int_{\partial^*F\cap\mathcal C_{\mathcal B_F}}
\left(1-\frac{(x\cdot\nu_F)^2}{|x|^2}\right)\,d\HH^{n-1}
+C(n)\int_{\partial^*F\cap\mathcal C_{\mathcal B_F}}
|\mathcal H_F-\lambda|\,d\HH^{n-1}\\
+C(n)|\mathbf H^s_{\partial F}|(\partial F).
\end{multline}
Finally, by Young's
inequality,
$$
C(n)\int_{\partial^*F\cap\mathcal C_{\mathcal B_F}}|\mathcal H_F-\lambda|\,d\HH^{n-1}
\le
\frac1{8n}\HH^{n-1}(\partial^*F\cap\mathcal C_{\mathcal B_F})
+C(n)\|\mathcal H_F-\lambda\|_{L^2(\partial^*F)}^2.
$$
Choosing $\sz_2=\sz_2(n)$ small enough, dividing by $2$, and adding the integral over
$\partial^*F\setminus\mathcal C_{\mathcal B_F}$ to both sides gives
\eqref{eq:leveling-fixed-F}, concluding the proof.
\end{proof}

\subsection{$\mu$ is quantitatively close to $n-1$}
Consider, by approximation, the Lipschitz test vector-field $\Phi(x)=\phi(|x|)x$ with
$$\phi(t)=\left\{
\begin{array}{cl}
  1 & \text{ when } \ 0<t<2 \\
\frac{1}{t-1} &  \text{ when } \ t>2
\end{array}\right. .$$ 
Recalling \eqref{eq:div}, it holds
$$ {\rm div}_\tau \big(\Phi(x)\big)=\left\{
\begin{array}{cl}
  n-1 & \text{ when } \ 0<|x|<2 \\
  \frac{n-1}{|x|-1} - \frac{1}{(|x|-1)^2}\left(|x| -\frac{(x\cdot \nu_E)^2}{|x|}\right)&  \text{ when } \ |x|>2
\end{array}\right.$$
and
$$ {\rm div} \big(\Phi(x)\big)=\left\{
\begin{array}{cl}
  n & \text{ when } \ 0<|x|<2 \\
  \frac{n}{|x|-1} - \frac{|x|}{(|x|-1)^2}&  \text{ when } \ |x|>2.
\end{array}\right. $$
Hence, using \eqref{eq:mean div} and the divergence theorem inside $E$, we get
\begin{align*}
&(n-1)\mathscr H^{n-1}(\partial^*E\cap B_2)+ \int_{\partial^*E\setminus B_2} \bigg(\frac{n-1}{|x|-1} - \frac{1}{(|x|-1)^2}\left(|x| -\frac{(x\cdot \nu_E)^2}{|x|}\right)\bigg)\,d\mathscr H^{n-1}\\
&= \mu \int_{\partial^*E} \Phi(x)\cdot \nu_E\,d\mathscr H^{n-1} + \int_{\partial^*E} R(x)  \Phi(x)\cdot \nu_E \,d\mathscr H^{n-1}\\
&=\mu n |E\cap B_2| -\mu \int_{E\setminus B_2} \bigg(\frac{n}{|x|-1} - \frac{|x|}{(|x|-1)^2}\bigg)\,dx + \int_{\partial^*E} R(x)  \Phi(x)\cdot \nu_E \,d\mathscr H^{n-1}.
\end{align*}
Rearranging terms, this gives
\begin{align*}
(n-1)P(E)-\mu n|E|&=
 \int_{\partial^*E\setminus B_2} \bigg((n-1)-\frac{n-1}{|x|-1} + \frac{1}{(|x|-1)^2}\left(|x| -\frac{(x\cdot \nu_E)^2}{|x|}\right)\bigg)\,d\mathscr H^{n-1}\\
 &-\mu \int_{E\setminus B_2} \bigg(n+\frac{n}{|x|-1} - \frac{|x|}{(|x|-1)^2}\bigg)\,dx + \int_{\partial^*E} R(x)  \Phi(x)\cdot \nu_E \,d\mathscr H^{n-1}.
\end{align*}
Note now that $n|E|=n|B|=P(B)$.
Also, since by assumption $\|R\|_{L^2(\partial^*E)}$ is small, \eqref{bounded mu} implies that $|\mu|$ is bounded by a dimensional constant. Hence, recalling \eqref{difference perimeter} and noticing that all the integrands involving $\Phi$ and its divergence are bounded, using the isoperimetric inequality we obtain
\begin{align*}
|(n-1) - \mu|&\leq C(n)\Big(\big(P(E)-P(B)\big)+\mathscr H^{n-1}(\partial^*E \setminus B_2)+|E\setminus B_2|+\|R\|_{L^1(\partial^*E)}\Big) \\
&\leq C(n)\Big({\rm Exc }(E)+P(E\setminus B_2)+P(E\setminus B_2)^{\frac{n}{n-1}}+\|R\|_{L^2(\partial^*E)}\Big).
\end{align*}
Recalling Lemma~\ref{small tentacle}, this proves that
 \begin{equation} \label{general mu}
|(n-1) - \mu|\leq C(n)\Big({\rm Exc }(E)+\|R\|_{L^2(\partial^*E)}\Big).
\end{equation}

 \subsection{Construct a suitable replacement of $E$}
 Let 
$$ G:=(  E\cap B_{\rho_1})\cup B_{\rho_0},$$
where 
$$\rho_0=1-\frac\sz 2-s^-\in \left(1-\sz,\,1-\frac \sz 2\right),\quad \rho_1=1+\frac\sz 2+s^+\in \left(1+\frac \sz 2,\,1+\sz\right)$$ 
 are given by  Lemma~\ref{lem:good truncation} with center at the origin. 
By Lemma~\ref{small tentacle} and the isoperimetric inequality, for $\delta=\delta(n,\sigma)$ small enough we have
 $$|G\Delta E|^{\frac{n-1}n}\le C(n) \mathscr H^{n-1}(\partial^*(G\Delta E))\le C(n)\|R\|_{L^2(\partial^*E)}^{\frac{2}{1-\eta}},$$
therefore
$$ \left|\mean{G} \frac{x}{|x|}\, dx- \mean{E} \frac{x}{|x|}\,dx\right|\le C(n)|E\Delta G| \leq  C(n)\|R\|_{L^2(\partial^*E)}^{\frac{2}{1-\eta}\cdot \frac{n}{n-1}}.$$
Set
\[
r:=\left(\frac{|G|}{|B|}\right)^{1/n}.
\]
Thus, by Lemma~\ref{translation} applied to the volume-normalized set $G/r$,
there exists $y_0\in \mathbb R^n$ such that
$\int_{(G+y_0)/r}x/|x|\,dx=0$.  Setting $x_0:=-y_0$, we have
\begin{equation}\label{r x0}
|r-1|+|x_0|\le C(n) |G\Delta E|\le C(n)\|R\|_{L^2(\partial^*E)}^{\frac{2}{1-\eta}\cdot \frac{n}{n-1}},
\end{equation}
so that
$$\hat E:=\frac {E-x_0}{r}  \quad \text{ and } \quad \hat G=\frac {G-x_0}{r}$$
 satisfy
\begin{equation}\label{mean curvature hat}
  |\hat G|=|B|,\quad \mean{\hat G}\frac{x}{|x|}\,dx=0,\quad \mathcal H_{\partial^* \hat E}(x)=r\mathcal H_{\partial^* E}(x_0+rx)= r \big(\mu+\hat R(x)\big),
\end{equation}
with $\hat R(x):=R(x_0+rx)$.
In addition, provided $\|R\|_{L^2(\partial^*E)}\le c(n)$ is sufficiently small,  we can ensure that
\[
B_{1-2\sigma}\subset \hat G\subset B_{1+2\sigma},
\qquad
\partial \hat G\subset A_{1-\sz,\,1+\sz},
\qquad
{\rm Exc}(\hat G)\le\delta_1(n,\kappa),
\]
and, since $G$ is an admissible truncation by Lemma~\ref{lem:good truncation}
and admissible truncations are invariant under the preceding translation and
dilation, $\hat G$ is admissible. Thus Proposition~\ref{stability sets} and
Proposition~\ref{leveling oscillation} apply to $\hat G$.
Moreover, $\partial^*\hat G$ is contained in the union of
$\partial^*\hat E$ and of two translated spheres. We decompose the
vector-valued curvature measure of $\hat G$ as
\begin{equation*}
\mathbf H_{\partial\hat G}
=H_{\hat G}\nu_{\hat G}\,\mathscr H^{n-1}\llcorner\partial^*\hat G
+\mathbf H^s_{\partial\hat G},
\qquad
\hat R^a_G:=H_{\hat G}-r\mu .
\end{equation*}

The scaling $x\mapsto (x-x_0)/r$ sends vector curvature measures to
$r^{2-n}$ times their pull-backs, while scalar mean curvatures are multiplied
by $r$. Since \eqref{r x0} gives $r\sim1$, Lemma~\ref{lem:good truncation}
therefore gives, after the harmless rescaling and translation used to define
$\hat G$,
\begin{equation}\label{RGhat singular bound}
|\mathbf H^s_{\partial\hat G}|(\partial\hat G)
\le C(n,\,\sz)\|R\|_{L^2(\partial^*E)}^{\frac{2}{1-\eta}}.
\end{equation}
Moreover, $\hat R^a_G=r\hat R$ on $\partial^*\hat G\cap \partial^*\hat E$. On $\partial^*\hat G\setminus \partial^*\hat E$, the boundary consists of pieces of spheres with radii comparable to $1$, and therefore, by \eqref{general mu} and \eqref{r x0},
$$|\hat R^a_G|\le C(n)\qquad \text{on }\partial^*\hat G\setminus \partial^*\hat E.$$
All uses of Lemma~\ref{small tentacle} for $\hat E$ below are understood after
pulling the estimate back to $E$; the relevant annuli are preserved because
$|x_0|+|r-1|\ll\sigma$. In particular
$\mathscr H^{n-1}(\partial^*\hat G\setminus \partial^*\hat E)\le C(n)\|R\|_{L^2(\partial^*E)}^{\frac{2}{1-\eta}}$, and we obtain
\begin{equation}\label{RGhat bound}
\|\hat R^a_G\|_{L^2(\partial^*\hat G)}^2 \le C(n)\|r\hat R\|_{L^2(\partial^*\hat E)}^2 + C(n)\|R\|_{L^2(\partial^*E)}^{\frac{2}{1-\eta}}.
\end{equation}

We also note that, since\footnote{For $n\geq 3$, the integral $\int_{B_{1/2}} \frac{1}{|x||x-x_0|}\,dx$ is uniformly bounded as $|x_0| \to 0$, while in dimension $n=2$ it diverges as $\big|\log|x_0|\big|$ as $|x_0|\to 0$. Thus
$$
|x_0|\int_{E} \frac{1}{|x||x-x_0|}\, dx \leq C(n)\left\{
\begin{array}{cl}
|x_0| & \text{for $n \geq 3$},\\
|x_0|\big|\log|x_0|\big| & \text{for $n =2$},
\end{array}
\right.
\leq C(n)|x_0|^{\frac{n-1}{n}} \qquad \text{for $|x_0|\ll 1$}.
$$
}
\begin{align*}
  \left|\int_{E} \frac{n-1}{|x|} - \frac{n-1}{|x-x_0|}\, dx\right| 
\le (n-1)|x_0|\int_{E} \frac{1}{|x||x-x_0|}\, dx \leq C(n)|x_0|^{\frac{n-1}{n}},
\end{align*}
it follows from the divergence theorem and \eqref{r x0} that
\begin{align}
\big|{\rm Exc}(\hat E)-{\rm Exc}(E)\big|&= \bigg| P(\hat E)- \int_{\hat E} \frac{n-1}{|x|}\, dx -P(E)+\int_{E} \frac{n-1}{|x|}\, dx\bigg|\nonumber\\
&=\bigg| r^{1-n}P(E)-r^{1-n} \int_{E-x_0} \frac{n-1}{|x|}\, dx -P(E)+\int_{E} \frac{n-1}{|x|}\, dx\bigg|  \nonumber\\
&\leq C(n)|r-1|+ C(n) \left|\int_{E} \frac{1}{|x|} - \frac{1}{|x-x_0|}\, dx\right|\leq C(n)\| R\|^{\frac 2{1-\eta}}_{L^2(\partial^*E)}. \label{difference 1}
\end{align}
Similarly, defining $\hat B=\frac{B-x_0}{r},$ by the triangle inequality and \eqref{r x0} we get
\begin{align}
|\hat E\Delta B| &\ge  |\hat E\Delta \hat B|  - |\hat B\Delta B|= r^{-n}|E\Delta B|  - C(n)\big(|r-1|+|x_0|\big) \nonumber\\
 &\ge  |E\Delta B|  - C(n)\big(|r-1|+|x_0|\big) \geq |E\Delta B| -C(n)\| R\|^{\frac 2{1-\eta}}_{L^2(\partial^*E)}. \label{difference 2}
\end{align}

 \subsection{Stability estimates}
 
Consider the test function $\Phi(x) :=x-\psi(|x|)\frac{x}{|x|}$, where
$$\psi(t)=\left\{
\begin{array}{cl}
(1-\sz)^{-1} t& \text{ when } \ 0<t<1-\sz \\
1 &  \text{ when } \ 1-\sz\le t\le 1+\sz\\
t-\frac{\sz}{t-\sz} & \text{ when } \ t>1+\sz
\end{array}\right. .$$ 
We note that
\begin{equation}
\label{psi}
\left\{\begin{array}{cl}
|\Phi(x)|=\big|\psi(|x|)-|x|\big|\le \sz&\text{for all }x \in \R^n,\\
t^{-1}\psi(t)=\psi'(t)=(1-\sigma)^{-1}  &\text{for } t<1-\sz,\\
1-\frac{\psi(t)}{t}\le C\sz,\ 1-\psi'(t)\le C  \sz, \  \left|t^{-1}\psi(t)-\psi'(t)\right|\le C\sz  &
 \text{for }   t>1+\sigma.
\end{array}
\right.
\end{equation}
Using $\Phi(x)=x-\psi(|x|)\frac{x}{|x|}$ as a test function in the analogue of \eqref{eq:mean div} for $\hat E$, and applying \eqref{eq:div} together with \eqref{mean curvature hat} we get
\begin{equation}
\begin{split}
& \int_{\partial^* \hat E}  (n-1)\left(1-\frac{\psi(|x|)}{|x|}+r\big[\psi(|x|)-|x|\big]\frac{x}{|x|}\cdot \nu_{\hat E}\right)   \, d\mathscr H^{n-1} \\
&\qquad + \int_{\partial^* \hat E}\left(\frac{\psi(|x|)}{|x|} -  \psi'(|x|)\right)\left(1-\frac{(x\cdot \nu_{\hat E})^2}{|x|^2}\right)  \, d\mathscr H^{n-1} \\
&= r(\mu-n+1)  \int_{\partial^* \hat E}  \big[|x|-\psi(|x|)\big] \frac{x}{|x|}\cdot \nu_{\hat E} \, d\mathscr H^{n-1} +  r\int_{\partial^* \hat E} \hat R(x)  \Phi\cdot \nu_{\hat E} \,d\mathscr H^{n-1} \label{test general}
\end{split}
\end{equation}

We now estimate the terms in \eqref{test general}. Set
\begin{align*}
&I:=\int_{\partial^* \hat E}  (n-1)\left(1-\frac{\psi(|x|)}{|x|}+r\big[\psi(|x|)-|x|\big]\frac{x}{|x|}\cdot \nu_{\hat E}\right)   \, d\mathscr H^{n-1} ,\\
&II:=\int_{\partial^* \hat E}\left(\frac{\psi(|x|)}{|x|} -  \psi'(|x|)\right)\left(1-\frac{(x\cdot \nu_{\hat E})^2}{|x|^2}\right)  \, d\mathscr H^{n-1},\\
&III:=r(\mu-n+1)  \int_{\partial^* \hat E}  \big[|x|-\psi(|x|)\big] \frac{x}{|x|}\cdot \nu_{\hat E} \, d\mathscr H^{n-1}, \\
&IV:=r \int_{\partial^* \hat E} \hat R(x)  \Phi\cdot \nu_{\hat E} \,d\mathscr H^{n-1}.
\end{align*}

\subsubsection*{Estimating I}
By  \eqref{r x0}, \eqref{psi}, and Lemma~\ref{small tentacle}, and using that $\nu_{\hat E}=\nu_{\hat G}$ on $\partial^*\hat E\cap \partial^*\hat G$, while the integrand is uniformly bounded on $A_{1-\sigma,\,1+\sigma}$, we can write 
\begin{align}
I&\geq (n-1)\int_{\partial^* \hat G}  \left(1-\frac{1}{|x|}+r(1-|x|)\frac{x}{|x|}\cdot \nu_{\hat G}\right) \, d\mathscr H^{n-1}- C(n)\mathscr H^{n-1}(\partial^*  \hat G\Delta \partial^*  \hat E)\nonumber \\ 
& =   -(n-1)\int_{\partial^* \hat G}  \left(\frac{1}{|x|}+|x|-2\right)\, d\mathscr H^{n-1} +(n-1)\int_{\partial^* \hat G}\left(|x|-1\right)\left(1-r\frac{x}{|x|}\cdot \nu_{\hat G}\right) \, d\mathscr H^{n-1}\nonumber\\
&\qquad - C(n)\mathscr H^{n-1}(\partial^*  \hat G\Delta \partial^*  \hat E)\nonumber\\
&\geq   -(n-1)\int_{\partial^* \hat G}  \left(\frac{1}{|x|}+|x|-2\right)\, d\mathscr H^{n-1}-C(n)\int_{\partial^* \hat G}\big||x|-1\big| \left(1-\frac{x}{|x|}\cdot \nu_{\hat G}\right) \, d\mathscr H^{n-1}\nonumber\\
&\qquad -C(n)|r-1|- C(n)\mathscr H^{n-1}(\partial^*  \hat G\Delta \partial^*  \hat E)\nonumber\\
& \ge  - (n-1)\int_{\partial^* \hat G}  \left(\frac{1}{|x|}+|x|-2\right)\, d\mathscr H^{n-1} -C(n)\sz{\rm Exc}(\hat G)- C(n)  \| R\|^{\frac 2{1-\eta}}_{L^2(\partial^*E)}, \label{1st term}
\end{align}
where, in the last inequality, we used that $\partial^* \hat G \subset A_{1-\sigma,1+\sigma}$. Note that the boundary portions created or removed by the truncation are controlled by Lemma~\ref{small tentacle}.

\subsubsection*{Estimating II}

Note that $\psi\equiv 1$ on $\partial^* \hat G.$ 
The part of $\partial^*\hat E\cap\hat G$ not contained in $\partial^*\hat G$
contributes with a nonnegative integrand, so it may be discarded in the lower bound. Hence,
applying Lemma~\ref{small tentacle} to $\partial\hat E\setminus \hat G$ and
recalling that $\partial^* \hat G \subset A_{1-\sigma,1+\sigma}$, we have
\begin{align}
II &\ge \int_{\partial^* \hat G}  \frac{1}{|x|}  \left(1-\frac{(x\cdot \nu_{\hat G})^2}{|x|^2}\right) \, d\mathscr H^{n-1} +  \int_{\partial^*  \hat E\setminus \hat G}  \left(\frac{\psi(|x|)}{|x|} -  \psi'(|x|)\right)\left(1-\frac{(x\cdot \nu_{\hat E})^2}{|x|^2}\right) \, d\mathscr H^{n-1}\nonumber\\
& \quad\quad -  \int_{\partial^*  \hat G\setminus\partial^* \hat E}  \frac{1}{|x|}  \left(1-\frac{(x\cdot \nu_{\hat G})^2}{|x|^2}\right) \, d\mathscr H^{n-1} \nonumber\\
&\ge  (1-\sz) \int_{\partial^* \hat G}   \left(1-\frac{(x\cdot \nu_{\hat G})^2}{|x|^2}\right) \, d\mathscr H^{n-1}  - C(n) \|\hat  R\|_{L^2(\partial^* \hat E)}^{\frac{2}{1-\eta}}-C(n)\|R\|_{L^2(\partial^*E)}^{\frac{2}{1-\eta}\cdot \frac{n}{n-1}}, \label{2nd term}
\end{align}
where we applied \eqref{r x0} to get
$$ \left|1-\frac{(x\cdot \nu_{\hat G})^2}{|x|^2}\right| \le C(n)|x_0|\le C(n)\|R\|_{L^2(\partial^*E)}^{\frac{2}{1-\eta}\cdot \frac{n}{n-1}}, \quad x\in \partial^*  \hat G\setminus\partial^* \hat E.$$

\subsubsection*{Estimating III}
Observe that 
$$|x|=1=\psi(|x|)  \quad  \text{ for } x\in \mathbb S^{n-1}.$$
Then, the divergence theorem together with \eqref{psi} give
\begin{align*} 
  &\left|\int_{\partial^* \hat E}  \big[|x|-\psi(|x|)\big] \frac{x}{|x|}\cdot \nu_{\hat E} \, d\mathscr H^{n-1}\right| \\
&\le \left|\int_{\hat E\setminus B} (n-1) \left(1-\frac{\psi(|x|)}{|x|}\right)+1-\psi'(|x|)\, dx\right|\\
&\quad\quad + \left|\int_{B\setminus \hat E} (n-1) \left(1-\frac{\psi(|x|)}{|x|}\right)+1-\psi'(|x|)\, dx\right|\\
&\le C(n)|\hat E\Delta B|.
\end{align*}
Thus, recalling \eqref{general mu}, \eqref{difference 1}, and that $r \sim 1$,
it follows by Young's inequality that, for any $0<\ez<1$,
\begin{multline} 
  \left|r(\mu-n+1)  \int_{\partial^* \hat E}  \big[|x|-\psi(|x|)\big] \frac{x}{|x|}\cdot \nu_{\hat E} \, d\mathscr H^{n-1}\right|
 \le   C(n )   |\hat E\Delta B| \left( {\rm Exc}(\hat E)  +  \| R\|_{L^2(\partial^* E)}\right) \\
  \le  C(n )     |\hat E\Delta B| {\rm Exc}(\hat E)  +C(n)\ez |\hat E\Delta B|^2+   C(n)\ez^{-1}\| R\|^2_{L^2(\partial^*E)}. \label{mu term}
\end{multline}

\subsubsection*{Estimating IV}
We employ  Lemma~\ref{small tentacle} and \eqref{psi} to obtain that
\begin{align*}
&\int_{\partial^*\hat  E} [|x|-\psi(|x|)]^2 \left(\frac{x}{|x|}\cdot \nu_{\hat E}\right)^2 \, d\mathscr H^{n-1}\\
& \leq  \int_{\partial^*\hat G} (|x|-1)^2 \left(\frac{x}{|x|}\cdot \nu_{\hat G}\right)^2 \, d\mathscr H^{n-1}   + C(n)  \|\hat  R\|_{L^2(\partial^* \hat E)}^{\frac{2}{1-\eta}}\\
& \leq  (1+ \sz) \int_{\partial^* \hat G}  \left(\frac{1}{|x|}+|x|-2\right)\, d\mathscr H^{n-1}  +  C(n) \|\hat  R\|_{L^2(\partial^* \hat E)}^{\frac{2}{1-\eta}},
\end{align*}
where we applied that, for $\sz>0$ small, as $\partial \hat G\subset A_{1-\sz,\,1+\sz}$, 
\begin{align*}
    (|x|-1)^2 \left(\frac{x}{|x|}\cdot \nu_{\hat G}\right)^2  \le  (|x|-1)^2 &  = |x|  \left(\frac{1}{|x|}+|x|-2\right)\\
   &  \le (1+\sz)   \left(\frac{1}{|x|}+|x|-2\right) \quad\quad  \text{ for any }  x\in \partial \hat G.
\end{align*} 
Then directly via Young's inequality, for $0<\ez<1$, 
\begin{multline}
  \int_{\partial^* \hat E} r \hat R(x)  \Phi\cdot \nu_{\hat E} \,d\mathscr H^{n-1} 
\le  \ez \int_{\partial^*\hat  E} [|x|-\psi(|x|)]^2 \left(\frac{x}{|x|}\cdot \nu_{\hat E}\right)^2 \, d\mathscr H^{n-1} + \ez^{-1} \|r\hat R\|^2_{L^2(\partial^* \hat E)}   \\
\le  \ez^{-1}  \|r \hat  R\|^2_{L^2(\partial^* \hat  E)} +C(n,\sz) \ez  \|\hat R\|_{L^2(\partial^* \hat E)}^{\frac{2}{1-\eta}} +\ez (1+ \sz) \int_{\partial^* \hat G}  \left( \frac{1}{|x|}+|x|-2\right)\, d\mathscr H^{n-1}.    \label{R term}
\end{multline}

\subsubsection*{Conclusion}

By plugging \eqref{1st term}, \eqref{2nd term}, \eqref{mu term} and \eqref{R term} into \eqref{test general} and applying \eqref{R small}, we conclude
\begin{align}
 & (1-\sz)  \int_{\partial^* \hat G}   \left(1-\frac{(x\cdot \nu_{\hat G})^2}{|x|^2}\right) \, d\mathscr H^{n-1} -C(n)\sz{\rm Exc}(\hat G) \nonumber \\
 & \quad -(n-1+C(n)\sz)(1+\ez)\int_{\partial^* \hat G}   \left(\frac{1}{|x|}+|x|-2\right)\, d\mathscr H^{n-1} \nonumber  \\ 
& \quad \quad
- C(n )  |\hat E\Delta B| {\rm Exc}(\hat E) -C(n)\ez |\hat E\Delta B|^2  \nonumber \\
&\le    C(\ez) \|r \hat R\|^2_{L^2(\partial^* \hat  E)} +C(n,\sz)  \|r \hat R\|_{L^2(\partial^* \hat E)}^{\frac{2}{1-\eta}} + C(n,\,\ez)  \| R\|^2_{L^2(\partial^*E)}. \label{final estimate}
\end{align}
In the rest of the proof we write
$\mathcal B:=\mathcal B_{\hat G}$ and
$\mathcal C_{\mathcal B}:=\mathcal C_{\mathcal B_{\hat G}}$.

We explain the parameter order in the argument below as follows: First $c_0(n)$  is chosen small in the reduction $\|R\|_{L^2}\le c_0(n)$. Then we fix $\kappa=\kappa(n)$ and choose $\sigma=\sigma(n)$ small. Next $\epsilon=\epsilon(n)$ is chosen small. Finally the initial excess $\delta(n)$ is chosen small enough.

Let
\[
\mathcal T_{\hat G}:=\{x\in\partial^*\hat G:\ x\cdot\nu_{\hat G}=0\},
\qquad
\Sigma_{\hat G}:=\mathcal T_{\hat G}\cup\mathcal S_{\hat G}.
\]
If $\hat G^S$ denotes the star-shaped rearrangement of $\hat G$, then Lemma~\ref{lem:star trace outside bad} gives
\[
\partial^* \hat G \setminus(\mathcal C_{\mathcal B}\cup\mathcal T_{\hat G})
=\partial^* \hat G^S \setminus\mathcal C_{\mathcal B}
\]
up to negligible sets. Since the additional interface set
$\mathcal S_{\hat G}$ is $\HH^{n-1}$-negligible, the same identity holds with
$\Sigma_{\hat G}$ in place of $\mathcal T_{\hat G}$, and
$\nu_{\hat G}=\nu_{\hat G^S}$ on the left-hand
side. On the
boundary of the star-shaped set $\hat G^S$ one has
$\frac{x}{|x|}\cdot \nu_{\hat G^S}\ge 0$. On the tangency part of
$\Sigma_{\hat G}$ the two integrands below are both equal to $1$, while
$\mathcal S_{\hat G}$ is negligible for $\HH^{n-1}$. Hence
\begin{equation*}
\int_{\partial^* \hat G \setminus \mathcal C_{\mathcal B}}   \left(1-\frac{(x\cdot \nu_{\hat G})^2}{|x|^2}\right) \, d\mathscr H^{n-1}\ge \int_{\partial^* \hat G \setminus \mathcal C_{\mathcal B}}   \left(1-\frac{x}{|x|}\cdot \nu_{\hat G}\right) \, d\mathscr H^{n-1}.
\end{equation*}
Then Proposition~\ref{leveling oscillation}, applied to $\hat G$ with
$\lambda=r\mu$, together with  \eqref{RGhat singular bound} yields
\begin{equation}\label{to excess}
    \begin{split}
\int_{\partial^* {\hat G}} \left(1-\frac{(x\cdot \nu_{\hat G})^2}{|x|^2}\right) \, d\mathscr H^{n-1}
&\ge \ c(n)\mathscr H^{n-1}(\partial^* \hat G\cap \mathcal C_{\mathcal B})+ c(n)\int_{\partial^* \hat G \setminus \mathcal C_{\mathcal B}}   \left(1-\frac{(x\cdot \nu_{\hat G})^2}{|x|^2}\right) \, d\mathscr H^{n-1}\\
&\quad \quad - C(n) \|\hat R^a_G\|_{L^2(\partial^*\hat G)}^{2}-C(n)\| R\|_{L^2(\partial^*E)}^{\frac{2}{1-\eta}}\\
&\ge  \  c(n) {\rm Exc}(\hat G)- C(n) \|\hat R^a_G\|_{L^2(\partial^*\hat G)}^{2}-C(n)\|R\|_{L^2(\partial^*E)}^{\frac{2}{1-\eta}}. 
\end{split}
\end{equation}
Now, according to Proposition~\ref{stability sets},  by first choosing $\kappa=\kappa(n)>0$ small so that
$$C(n)\kappa\le \frac \zeta {32(n-1+\zeta)},$$ and then  $\sz=\sz(n)>0$ together with $\ez=\ez(n)>0$ small enough, we conclude  that
 \begin{align*}
&\ (1-\sz)   \int_{\partial^* \hat G }   \left(1-\frac{(x\cdot \nu_{\hat G})^2}{|x|^2}\right) \, d\mathscr H^{n-1}  -C(n)\sz{\rm Exc}(\hat G) \\
&\quad \quad -\left(n-1+C(n)\sz\right)(1+\ez)\int_{\partial^* \hat G}   \left(\frac{1}{|x|}+|x|-2\right)\, d\mathscr H^{n-1}\\
  &\ge \  \frac \zeta {8(n-1+\zeta)} \left[ \kappa \mathscr H^{n-1}(\partial^* \hat G\cap \mathcal{C}_{\mathcal B})+ \int_{\partial^* \hat G }   \left(1-\frac{(x\cdot \nu_{\hat G})^2}{|x|^2}\right) \, d\mathscr H^{n-1}  \right]\\
 &\quad  + \frac{\zeta}{16}  \int_{\partial^* \hat G}   \left( \frac{1}{|x|}+|x|-2\right)\, d\mathscr H^{n-1}   -C(n)\sz{\rm Exc}(\hat G)-(1-\sz)\kappa \mathscr H^{n-1}(\partial^* \hat G\cap \mathcal{C}_{\mathcal B})\\
 &\ge   \  c(n) \left[ {\rm Exc}(\hat G) + |\hat G\Delta B|^2  \right]-C(n) \|r\hat R\|_{L^2(\partial^* \hat  E)}^{2}-C(n)\|R\|_{L^2(\partial^*E)}^{\frac{2}{1-\eta}},
\end{align*}
where, in the last line, the negative
$\kappa\,\mathscr H^{n-1}(\partial^*\hat G\cap\mathcal C_{\mathcal B})$ term
is absorbed by \eqref{to excess} and the choice of $\kappa$, while the
$C(n)\sigma{\rm Exc}(\hat G)$ term is absorbed by choosing $\sigma$ small; we
also used \eqref{RGhat bound}. 
By the construction of $G$ and Lemma~\ref{small tentacle}, after scaling and
translating,
\[
|\hat E\Delta \hat G|+\mathscr H^{n-1}(\partial^*\hat G\Delta\partial^*\hat E)
\le C(n)\left(  \|r \hat  R\|_{L^2(\partial^* \hat E)}^{\frac{2}{1-\eta}} + \|R\|_{L^2(\partial^*E)}^{\frac{2}{1-\eta}} \right).
\]
Moreover, the construction of $G$ and  \eqref{r x0} imply
$$\left|1-\frac x {|x|}\cdot \nu_{\hat G}\right|\le C(n)\|R\|_{L^2(\partial^*E)}^{\frac{2}{1-\eta}}  \quad \text{ for }  \mathscr H^{n-1}\text{-a.e. }x\in \partial^*\hat G\setminus \partial^*\hat E,$$
and therefore
$$|{\rm Exc}(\hat E)-{\rm Exc}(\hat G)|\le C(n)\left(  \|r \hat  R\|_{L^2(\partial^* \hat E)}^{\frac{2}{1-\eta}} + \|R\|_{L^2(\partial^*E)}^{\frac{2}{1-\eta}} \right). $$
Consequently, we conclude from \eqref{final estimate} and the triangle inequality that, for $\sz=\sz(n)>0$ small and $\ez=\ez(n)>0$ even smaller, and then for the initial excess $\delta(n)$ small enough to absorb the product
$|\hat E\Delta B|{\rm Exc}(\hat E)$,
\begin{align}
  & C(n) \|r \hat  R\|^2_{L^2(\partial^* \hat  E)} +C(n)  \|r \hat  R\|_{L^2(\partial^* \hat E)}^{\frac{2}{1-\eta}} + C(n)  \| R\|^2_{L^2(\partial^*E)} \nonumber \\
& \ge  c(n) \left[ {\rm Exc}(\hat G) + |\hat G\Delta B|^2   \right]  - C(n )  |\hat E\Delta B| {\rm Exc}(\hat E) -C(n)\ez |\hat E\Delta B|^2\nonumber \\
& \ge  c(n) \left(  {\rm Exc}(\hat E) +|\hat E\Delta B|^2\right) -C(n)\left(  \|r \hat  R\|_{L^2(\partial^* \hat E)}^{\frac{2}{1-\eta}} + \|R\|_{L^2(\partial^*E)}^{\frac{2}{1-\eta}} \right). \label{almost result}
\end{align}
Now, recalling \eqref{difference 1}, \eqref{difference 2}, $\| R\|_{L^2(\partial^*E)}\le c(n)<1$, and that 
\[
\|r\hat R\|_{L^2(\partial^*\hat E)}^2
=r^{3-n}\|R\|_{L^2(\partial^*E)}^2
\]
with $r\sim1$, we conclude from \eqref{almost result} that
$$ {\rm Exc}( E) +| E\Delta B|^2 \le  C(n)     \| R\|^2_{L^2(\partial^*E)}. $$
This together with \eqref{general mu} proves the normalized estimate, and
therefore, by the initial translation reduction, proves \eqref{eq:main thm}.

\end{document}